\let\oldproofname=\proofname
\renewcommand{\proofname}{\rm\bf{\oldproofname}}
\newtheorem{theorem}{Theorem}[section]
\newtheorem{prop}[theorem]{Proposition}
\newtheorem{lemma}[theorem]{Lemma}
\newtheorem{cor}[theorem]{Corollary}
\newtheorem{conj}[theorem]{Conjecture}
\newtheorem{ex}[theorem]{Example}
\theoremstyle{definition}
\newtheorem{dfn}[theorem]{Definition}
\theoremstyle{remark}
\newtheorem{remark}[theorem]{Remark}
\def\B{\mathcal{B}}
\def\R{\mathbb{R}}
\def\P{\mathcal{P}}
\def\Z{\mathbb{Z}}
\def\N{\mathbb{N}}
\def\U{\mathcal {U}}
\def\cF{\mathcal{F}}
\def\vol{\text{Vol}}
\def\supp{\text{supp}}
\def\S{\mathcal{S}}
\DeclareMathOperator{\coker}{coker}
\def\eu{\text{eu}}
\newcommand{\Vol}{\operatorname{Vol}}
\newcommand{\im}{\operatorname{im}}
\newcommand{\interior}{\operatorname{int}}
\newcommand{\colim}{\operatorname{colim}}
\newcommand{\Cnkp}{C_{n,k,p}}
\def\mrm#1{{\mathrm{#1}}}
\def\cl#1{{\mathcal{#1}}}
\def\bb#1{{\mathbb{#1}}}
\newcommand{\ol}[1]{\overline{#1}}
\newcommand{\la}{\lambda}
\newcommand{\al}{\alpha}
\newcommand{\om}{\omega}
\newcommand{\eps}{\epsilon}
\newcommand{\id}{\mathrm{id}}
\newcommand{\de}{\delta}
\newcommand{\fin}{\mrm{fin}}
\newcommand{\bs}{\bigskip}
\newcommand{\grad}{\nabla}
\newcommand{\del}{\partial}
\renewcommand{\div}{\mrm{div}}
\newcommand{\bK}{{\mathbb{K}}}
\renewcommand{\odot}[1]{{\accentset{\circ}{#1}}}
\newcommand{\til}[1]{\widetilde{#1}}
\begin{document}

\title{Coarse nodal count and topological persistence}

\author{Lev Buhovsky$^1$, Jordan Payette$^2$, Iosif Polterovich$^3$, Leonid Polterovich$^4$,
Egor Shelukhin$^5$ and Vuka\v{s}in Stojisavljevi\'{c}$^6$}
\date{}

\footnotetext[1]{Partially supported by ERC Starting Grant 757585 and ISF grant 2026/17.}
\footnotetext[2]{Partially supported by ERC Starting Grant 757585 and FRQNT postdoctoral scholarship.}
\footnotetext[3]{Partially supported by NSERC and FRQNT.}
\footnotetext[4]{Partially supported by the Israel Science Foundation
grant 1102/20.}
\footnotetext[5]{Partially supported by NSERC, Fondation Courtois, Alfred P. Sloan Foundation.}
\footnotetext[6]{Partially supported by ISF grant 667/18, ISF grant 1102/20, CRM-ISM postdoctoral fellowship and ERC Starting Grant 851701.}

\maketitle

\begin{abstract}
	Courant's theorem implies that the number of nodal domains of a Laplace eigenfunction is controlled by the corresponding eigenvalue. Over the years, there have been various attempts to find an appropriate generalization of this statement in different directions. We propose a new take on this problem using ideas from topological data analysis. We show  that if one counts the nodal domains in a coarse way, basically ignoring small oscillations, Courant's theorem extends to linear combinations of eigenfunctions, to their products, to other operators, and to higher topological invariants of nodal sets. We also obtain a coarse version of the Bézout estimate for common zeros of linear combinations of eigenfunctions.  We show that our results are essentially sharp and that the coarse count is necessary, since these extensions fail in general for the standard count. Our approach combines multiscale polynomial approximation in Sobolev spaces  with new results in the theory of persistence modules and barcodes.
\end{abstract}

\setcounter{tocdepth}{1}
\tableofcontents

\section{Introduction and main results}

\subsection{Measuring oscillations}
\label{subsec:osc}
The present paper focuses on the interplay between topology and analysis
of smooth functions, with links to spectral and algebraic geometry. The topological function theory  deals
with invariants of functions under diffeomorphisms and, roughly speaking, enables one
to study oscillations of functions by looking at the topology  of its sublevel sets. This theory
is based on persistence modules and barcodes, a mathematical apparatus originated in topological
data analysis. On the analysis side, we consider measurements of functions based on the Sobolev scale,
often in the context of eigenfunctions of elliptic operators, as well as  their linear combinations and products.

Let $M$ be a smooth compact connected $n$-dimensional Riemannian manifold, possibly with a non-empty boundary, and let $E \to M$ be a rank $l$ real vector bundle over $M$. Given a section $s: M \to E$, we introduce its zero (or nodal) set $Z_s = \{s=0\}$, and denote by  $z_r(s)=\dim H_r(Z_s)$  and $m_r(s)=\dim H_r(M \setminus Z_s)$  the Betti  numbers of the zero set and its complement, respectively. Here and further on, $H_r(X)$ stands for the $r$-th singular homology group of a subset $X \subset M$
with coefficients in a field. 

 The cases of particular importance are $l=1$, when $Z_s$ is generically a hypersurface in $M$ and the  connected components of $M \setminus Z_s$ are called {\it nodal domains}, and also $l=n$ when generically $Z_s$  is a finite set.  The traditional objects of study are  the count of nodal domains $m_0(s)$  and 
the count of 
zeros $z_0(s)$. 

Let us introduce a {\it coarse} version of Betti numbers, called the {\it  persistent  Betti numbers},  as follows.  Let us fix a Riemannian metric
on $M$ and an inner product on $E$. For a smooth section $s: M \to E$ and a number $\delta >0$, put
\begin{equation}\label{eq-m-vsp}
 m_r(s,\delta) = \dim  \mrm{Im}\left(H_r(\{|s|>\delta\}) \to H_r(M \setminus Z_s)\right)\;,
\end{equation}
  and
  \begin{equation}  \label{eq-z-vsp}
  z_r(s,\delta) = \dim \mrm{Im}(H_r(Z_s) \to H_r(\{|s| < \delta\}))\;.
  \end{equation}
In Section \ref{subsec:persbounds} we restate these definitions in the language of the theory of persistence modules.

As an illustration, assume that $E= M \times \R$, so that sections of $E$ are functions
on $M$. Then, given a function $f$,  $m_0(s,\delta)$ is the number of  {\it ``$\delta$-deep"} nodal domains $U$, i.e. such that $\max_U  |f| > \delta$, while other domains are discarded as a topological noise. This approach goes back to \cite{polterovich2007nodal} and has been further developed in \cite{PPS19},
see Section \ref{subsec:CCB} for a discussion.

Assume now that $l=n$, and $s$ is a generic section of $E$ with a finite number of zeros.
Then $z_0(s,\delta)$ counts  only those connected components of $\{|s| < \delta\}$
which contain zeros of $s$. Other connected components are discarded as topological noise.

Let  $||s||_{W^{k,p}}$, $k \in \mathbb{N}$, $p\ge 1$,  be the  Sobolev norm of $s$, see Subsection \ref{subsec:sobolev}  for a precise definition. Recall that this norm is controlled by the $L^p$ norms of the derivatives of $s$ up to the order $k$.
Our first main result is as follows.
\begin{theorem}\label{thm: main 2-vsp}
Let $E$ be a vector bundle with an inner product over a Riemannian manifold $M$ of dimension $n.$ Fix integers $k > n/p$, $0\le r < n,$ and suppose that $s \in W^{k,p}(M;E).$ Then for any $\delta >0$,
\begin{equation}\label{eq-cour-vsp}
m_r(s,\delta) \leq C_1{\delta^{-n/k}} ||s||_{W^{k,p}}^{n/k}+C_2\;,
\end{equation}
and
\begin{equation}\label{eq-bezout-vsp}
z_r(s,\delta) \leq C_1{\delta^{-n/k}} ||s||_{W^{k,p}}^{n/k}+C_2\;,
\end{equation}
where the constant $C_1$ depends only on $M,E,k,p$ and $C_2 = \dim H_r(M).$ 
\end{theorem}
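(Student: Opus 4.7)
The plan is to combine multiscale polynomial approximation in Sobolev spaces with the stability of the image-persistence modules underlying \eqref{eq-m-vsp}--\eqref{eq-z-vsp}, and then to invoke classical real-algebraic Betti-number bounds on the polynomial pieces. Since $k>n/p$, Sobolev embedding gives $s\in C^0(M;E)$, so $Z_s$, $\{|s|>\delta\}$ and $\{|s|<\delta\}$ are well-defined and the persistence formalism applies.

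First I would fix a scale $h>0$, cover $M$ by $N\sim h^{-n}$ coordinate cubes $Q_i$ of diameter $\sim h$ with bounded overlap and uniform geometry, and on each cube trivialize $E$ and apply a Bramble--Hilbert / Poincar\'e-type estimate to produce a polynomial section $p_i$ of degree $<k$ with
\begin{equation*}
 \|s-p_i\|_{L^\infty(Q_i)}\ \le\ C_0\, h^{\,k-n/p}\, \|s\|_{W^{k,p}(Q_i)},
\end{equation*}
for a constant $C_0=C_0(M,E,k,p)$. I would call $Q_i$ \emph{good} if this right-hand side is $\le\delta/3$ and \emph{bad} otherwise; a Chebyshev summation in $L^p$ then yields $N_{\mathrm{bad}}\le C_1\, \delta^{-p}\, h^{\,kp-n}\, \|s\|^p_{W^{k,p}}$, whose exponent $kp-n$ is positive by the hypothesis $k>n/p$. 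Extending the $p_i$ across good cubes via a partition of unity produces a piecewise polynomial section $\tilde s$ on the good region $G$, with $|s-\tilde s|\le\delta/3$ pointwise on $G$.

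I would then pass from $\tilde s$ back to the persistent Betti numbers of $s$. The $L^\infty$ control gives inclusions $\{|\tilde s|>\delta/2\}\subset\{|s|>\delta/6\}$ and $\{|s|<\delta\}\cap G\subset\{|\tilde s|<2\delta\}$, and by the interleaving stability of image-persistence modules these force $m_r(s,\delta)$ and $z_r(s,\delta)$ to be dominated by the corresponding quantities for $\tilde s$ at shifted thresholds, up to an additive $\dim H_r(M)$ accounting for ambient classes. Applying the Oleinik--Petrovsky--Thom--Milnor bound to each $p_i$ contributes $O_{n,k}(1)$ to the local Betti numbers on good cubes, while each bad cube contributes $O(1)$ through its intrinsic topology. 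A Mayer--Vietoris-type additivity along the cover would then assemble these into
\begin{equation*}
 m_r(s,\delta),\ z_r(s,\delta)\ \le\ C_{n,k}\bigl(h^{-n}+N_{\mathrm{bad}}\bigr)+\dim H_r(M),
\end{equation*}
and the choice $h=(\delta/\|s\|_{W^{k,p}})^{1/k}$ balances the two terms to yield the claimed $\delta^{-n/k}\|s\|_{W^{k,p}}^{n/k}$ rate.

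The main technical obstacle I anticipate is this last reduction: converting an $L^\infty$-approximation on a cover into an \emph{additive} bound on persistent Betti numbers, despite the non-smoothness of $\tilde s$ across cube overlaps and the uncontrolled behaviour of $s$ on bad cubes. This is precisely where the persistence-module framework of Section~\ref{subsec:persbounds} becomes indispensable: it supplies both the interleaving-distance stability of \eqref{eq-m-vsp}--\eqref{eq-z-vsp} under $L^\infty$-perturbations, and a Mayer--Vietoris-type principle at the level of barcodes that combines the local contributions without losing the optimal scaling in the number of cubes.
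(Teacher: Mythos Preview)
Your skeleton is the right one --- local polynomial approximation via Morrey--Sobolev, Milnor-type bounds on the polynomial pieces, and a Mayer--Vietoris argument for persistence modules to glue --- and it matches the paper's strategy. But there is a genuine gap at the single-scale step, and it is not the one you flagged.

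The problem is the sentence ``each bad cube contributes $O(1)$ through its intrinsic topology.'' On a bad cube $Q_i$ you have \emph{no} polynomial approximation, and hence no control on the barcode of $|s|\big|_{Q_i}$: the cube is topologically trivial, but the number of bars of length $>\delta$ of a function restricted to a cube depends on the function, not on the cube. A single bad cube can carry arbitrarily many $\delta$-deep nodal domains, so its contribution to $m_r(s,\delta)$ or $z_r(s,\delta)$ via Mayer--Vietoris is not $O(1)$. Your final inequality $m_r(s,\delta)\le C_{n,k}(h^{-n}+N_{\mathrm{bad}})+\dim H_r(M)$ therefore does not follow.

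The paper's remedy is to go multiscale: instead of a single scale $h$ with good/bad dichotomy, it recursively subdivides every bad cube until \emph{all} cubes are good. The nontrivial point is then to bound the total number $\kappa$ of cubes in the resulting multiscale dyadic partition. This is done by combining two competing estimates on the number of bad cubes at generation $l$ --- the trivial $2^{nl}$ and a Chebyshev-type bound of order $2^{-l(kp-n)}(\|D^k s\|_{L^p}/\delta)^p$ --- and summing over $l$; the balance point reproduces exactly your exponent and gives $\kappa\lesssim (\|D^k s\|_{L^p}/\delta)^{n/k}+1$. Now every cube is good, the Milnor bound applies on each, and the gluing goes through. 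In effect, the multiscale subdivision replaces your uncontrolled ``bad'' contribution by an increased --- but still controlled --- cube count.

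On the assembly step you correctly anticipate: the paper's tool is a subadditivity theorem for the bar-counting function $\mathcal N_\delta$ along exact sequences of persistence modules (so $\mathcal N_{2\delta}(V)\le\mathcal N_\delta(U)+\mathcal N_\delta(W)$ whenever $U\to V\to W$ is exact), applied to Mayer--Vietoris. To keep the loss in $\delta$ bounded independently of $\kappa$, the cover of the cube is organised into $n+1$ colour classes of pairwise disjoint boxes indexed by minimal faces of the partition; this is where the combinatorics of the multiscale dyadic structure is used.
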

It should be emphasized that this theorem is new and meaningful already for the case when $r=0$, $E= M \times \R$  and the sections are simply functions on $M$.  Moreover,  the result does not hold if  the persistent  Betti numbers are  replaced  by the usual Betti numbers,
 and the powers of $\|s\|$  and $\delta$ in formulas \eqref{eq-cour-vsp} and \eqref{eq-bezout-vsp} are sharp, see Subsection \ref{subsec:sharp} for details. 

A few more remarks are in order. The assumption $k-n/p >0$ guarantees that $s$ is continuous;
otherwise, our  topological considerations are not feasible. The formulation above involving
persistent Betti numbers is not yet an ultimate one:  we shall generalize this result by using
the language of persistence barcodes, see Theorem \ref{thm: main 2} below. In view of Lemma \ref{lma: replace C2} it is sufficient to prove Theorem \ref{thm: main 2}, which implies Theorem \ref{thm: main 2-vsp}, in a weaker form where $C_2$ depends on $M,E,k,p$ like $C_1.$

The first estimates on the magnitude of the oscillations of a smooth function $f$ in terms of the uniform norm of its higher derivatives were obtained by Yomdin \cite{yomdin1985global} (we refer also to \cite{Kronrod, Vitushkin, Ivanov} for earlier related results). Constraints similar to \eqref{eq-cour-vsp}, stated in the language of persistence barcodes are known for $p=\infty$ and $k=1$ \cite{CSEHM} and, in the case of surfaces for $p=k=2$
\cite{PPS19} (see also \cite{polterovich2007nodal} for other related estimates).

Our approach to Theorem \ref{thm: main 2-vsp} combines the theory of persistence modules and barcodes with a multi-scale version of Yomdin's method  based on polynomial approximation of sections on small cubes.  Furthermore, we   obtain bounds on the topology of the nodal sets of these approximations using tools from algebraic geometry, and glue together the data  on different cubes using the Mayer-Vietoris sequence.

As an application of Theorem \ref{thm: main 2-vsp} we present a coarse version of Courant's nodal domain theorem \cite{PPS19, polterovich2007nodal}. We discuss new instances of the coarse Courant theorem in Section \ref{subsec-CC}, in particular, for products of linear combinations of eigenfunctions.  We also present novel applications to a coarse version of
B\'ezout's theorem (Section \ref{subsec-Bez}), which is related to the coarse Courant theorem for products via the Mayer-Vietoris sequence, see Section \ref{subsec:CCB}.  

In a way, these  results provide an answer to a problem posed by  V.~Arnold in 2003  on extending Courant's theorem to {\it ``...the case of systems of equations, describing oscillations of the sections of fibrations whose fiber has dimension $>~1$''} \cite[Problem 2003-10]{Arnold2005}.  Moreover, as shown in Proposition \ref{prop: no}, the coarse approach is essential for such an extension. 

\subsection{Coarse Courant theorem} \label{subsec-CC}
Consider the  following motivating example.  Let  $\Delta f = - \div (\grad f)$ be the Laplace-Beltrami operator associated
to a Riemannian metric on a closed manifold $M$ of dimension $n$. 
It is well-known that the eigenvalues $\lambda_j$ are non-negative. Let us arrange them  in the non-decreasing order with account of multiplicities, and define the counting function $N(\la) =  \# \{ \la_j \leq \la \}$.
The counting function satisfies the Weyl law which implies $N(\la)=O(\la^{n/2})$. Let $f_j$ with $\Delta f_j = \lambda_j f_j$ be any  sequence of eigenfunctions
normalized by the $L_2$-norm,  $\int_M f_j^2 d\text{Vol}=1$.
Courant's  nodal domain theorem states that $m_0(f_j) \leq j$, and combined with the Weyl law it yields
\begin{equation}\label{eq-cour}
m_0(f_j)=O\left(\lambda_j^{n/2}\right).
\end{equation}

Our main finding is that if one replaces the Betti
numbers by their  persistent counterparts, estimate \eqref{eq-cour} can be extended
in several directions:
\begin{itemize}
\item to linear combinations of eigenfunctions, as opposed to single eigenfunctions;
\item to products of linear combinations of eigenfunctions;
\item to persistent Betti numbers in arbitrary degree instead of degree zero;
\item to arbitrary elliptic operator on sections of a vector
bundle instead of the Laplace-Beltrami operator on functions.
\end{itemize}
It should be mentioned that none of these generalizations are possible with the usual Betti numbers, see Proposition \ref{prop: no} below. At the same time, results of this kind are known to hold   for random linear combinations  of eigensections of elliptic operators, see \cite{gaywel17}.

Throughout this section, let $M$ be a  compact  Riemannian manifold of dimension $n$ and let $D$ be a non-negative self-adjoint elliptic pseudo-differential operator of order $q>0$ on the sections of a vector bundle $E$ over $M$ with an inner product. If  $\partial M \neq 0$,  we assume that $D$ is a differential operator of even order $q=2q'$ satisfying Dirichlet boundary conditions (i.e. all the derivatives up to the order $q'-1$ vanish at the boundary).

Let $\mathcal{F}_\la$  denote the subspace spanned by all eigensections of $D$ with eigenvalues $\leq \la$.

\begin{theorem}[coarse Courant] \label{thm: coarse Courant}
Let  $0 \leq r < n$ and $k > n/2$ be integer numbers. 
Then for any $\delta>0$ and any $s \in \mathcal{F}_\la$ with $||s||_{L^2} =1$, 
 \[ m_r(s,\delta) \leq \frac{C_1}{\delta^{n/k}}(\la+1)^{\frac{n}{q}}+C_2,\]
 \[ z_r(s,\delta) \leq \frac{C_1}{\delta^{n/k}}(\la+1)^{\frac{n}{q}}+C_2,\]
 where the constant $C_1$ depends only on $M,E,D,k$  and $C_2 = \dim H_r(M).$
\end{theorem}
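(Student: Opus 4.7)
The plan is to deduce the theorem directly from Theorem \ref{thm: main 2-vsp} applied with $p=2$, by bounding the Sobolev norm $\|s\|_{W^{k,2}}$ of an element $s \in \mathcal{F}_\la$ in terms of $\la$ and $\|s\|_{L^2}$. The hypothesis $k>n/2$ is exactly $k>n/p$ for $p=2$, so the assumptions of Theorem \ref{thm: main 2-vsp} are satisfied and we get
\[ m_r(s,\delta), \; z_r(s,\delta) \; \leq \; C_1\, \delta^{-n/k}\, \|s\|_{W^{k,2}}^{n/k} + C_2 . \]
It therefore suffices to prove an estimate of the form
\begin{equation}\label{eq-sob-spec}
\|s\|_{W^{k,2}} \; \leq \; C\,(\la+1)^{k/q}\,\|s\|_{L^2}
\end{equation}
for every $s \in \mathcal{F}_\la$, with $C$ depending only on $M,E,D,k$. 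Raising to the power $n/k$ and using $\|s\|_{L^2}=1$ then gives the result.

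First, I would invoke standard elliptic regularity for the non-negative self-adjoint operator $D$ of order $q$ (with Dirichlet conditions if $\partial M \neq \emptyset$): sections in the domain of $(I+D)^{k/q}$ lie in $W^{k,2}$, and the graph norm controls the Sobolev norm, i.e.\ there is a constant $C=C(M,E,D,k)$ such that
\[ \|u\|_{W^{k,2}} \; \leq \; C\, \bigl\| (I+D)^{k/q} u \bigr\|_{L^2} \]
for all such $u$. For integer multiples of $q$ this is the classical Gårding/elliptic estimate iterated; for general $k$ one interpolates, or equivalently works via the spectral definition of the fractional power together with the fact that the eigensections of $D$ with Dirichlet conditions form an orthonormal basis on which the Sobolev norm is equivalent to the spectrally defined one. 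Note that this is the only place where the boundary condition enters: the eigensections satisfy it, so $s\in \mathcal F_\la$ lies in the appropriate domain.

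Next, I would expand $s = \sum_{\la_j \leq \la} a_j \phi_j$ in an $L^2$-orthonormal eigenbasis $\{\phi_j\}$ with $D\phi_j = \la_j \phi_j$. Since the eigenbasis also diagonalizes $(I+D)^{k/q}$, we compute
\[ \bigl\| (I+D)^{k/q} s \bigr\|_{L^2}^2 \; = \; \sum_{\la_j \leq \la} (1+\la_j)^{2k/q}\, a_j^2 \; \leq \; (1+\la)^{2k/q}\, \|s\|_{L^2}^2 . \]
Combining this with the elliptic estimate yields \eqref{eq-sob-spec}, and the theorem follows by substitution into Theorem \ref{thm: main 2-vsp}, absorbing constants into $C_1$.

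I do not see a serious obstacle here: the argument is essentially a one-line reduction once Theorem \ref{thm: main 2-vsp} is in hand, and the only nontrivial input is the elliptic-regularity comparison between the $W^{k,2}$-norm and the spectral norm $\|(I+D)^{k/q}\cdot\|_{L^2}$, which is standard for non-negative self-adjoint elliptic (pseudo-)differential operators on a compact manifold, including the Dirichlet setting. The constant $C_2 = \dim H_r(M)$ is inherited verbatim from Theorem \ref{thm: main 2-vsp}.
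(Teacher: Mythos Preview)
Your proposal is correct and matches the paper's own argument essentially line for line: the paper also deduces Theorem~\ref{thm: coarse Courant} directly from Theorem~\ref{thm: main 2-vsp} with $p=2$ together with the Sobolev bound $\|s\|_{W^{k,2}}\le C(\la+1)^{k/q}$ for $s\in\cl F_\la$ (Propositions~\ref{prop: Sobolev of eigenchunk} and \ref{prop: Sobolev of eigenchunk bdry}), proved via the elliptic estimate for powers of $D$ and the spectral expansion. The only cosmetic difference is that the paper writes the estimate through the $q$-th root $D_1$ and $\|D_1^k s\|_{L^2}$ rather than $(I+D)^{k/q}$, and in the boundary case it spells out the induction on multiples of $q$ plus interpolation that you allude to.
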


\begin{remark}
We note that in the case $0 < \delta \leq 1,$ Theorem \ref{thm: coarse Courant} and Theorems \ref{cor: product}, \ref{cor: Bezout}, \ref{thm: critical points} and \ref{thm: barcode conj delta} below hold for arbitrary positive $k.$	
\end{remark}

We note that since  Theorem \ref{thm: coarse Courant} applies to pseudo-differential operators, it gives a partial answer to a question on a Courant-type bound for the number of nodal domains of the Dirichlet-to-Neumann operator \cite[Open problem 9]{GP2017}, see also  \cite{HasSher2021}. 
   
Another result where a similar bound  holds  concerns the products of linear combinations of eigenfunctions.
\begin{theorem}[coarse Courant for products]\label{cor: product}
Let $E=M\times \mathbb{R}$ and  $f_1,\ldots,f_l \in  \mathcal{F}_\lambda$,  $l \ge 1$,  be $L^2$-normalized linear combinations of eigenfunctions:
 $||f_j||_{L^2} =1$, $j=1,\dots, l$. Set  $f = f_1\cdot \ldots \cdot f_l$, and let  $0 \leq r < n$ be an integer. Then for every $\varepsilon > 0$ there exists an integer $k_0 > n/2$ such that for any  $\delta > 0$ and $k \geq k_0$,
  \[ m_r(f,\delta) \leq \frac{C_1}{\delta^{n/k}} (\la+1)^{\frac{n}{q} + \varepsilon} +C_2,\] \[z_r(f,\delta) \leq \frac{C_1}{\delta^{n/k}} (\la+1)^{\frac{n}{q} + \varepsilon} +C_2,\]
where the constant $C_1$ depends only on $M,D,l,k, \varepsilon$ and $C_2 = \dim H_r(M).$ The integer $k_0$ depends only on $n,q,l,\varepsilon.$ 

\end{theorem}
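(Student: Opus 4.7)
The plan is to apply Theorem \ref{thm: main 2-vsp} to the product $f = f_1 \cdots f_l$ with $p = 2$, so that the hypothesis $k > n/p$ reduces to $k > n/2$. The task then becomes controlling $\|f\|_{W^{k,2}}$ in terms of $\la + 1$ in such a way that the $l$-dependence appears only as an additive correction in the exponent that can be absorbed by enlarging $k$.

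For each $f_j \in \mathcal{F}_\la$ with $\|f_j\|_{L^2} = 1$, the spectral theorem for the nonnegative self-adjoint operator $D$ yields $\|(D+1)^{s/q} f_j\|_{L^2} \leq (\la+1)^{s/q}$, and elliptic regularity (under the prescribed Dirichlet conditions, if $\del M \neq \emptyset$) promotes this to $\|f_j\|_{W^{s,2}} \leq C(\la+1)^{s/q}$ for every integer $s \geq 0$. Fixing an integer $m > n/2$ depending only on $n$, the Sobolev embedding $W^{|\alpha|+m,2} \hookrightarrow C^{|\alpha|}$ gives
\begin{equation*}
\|\partial^\alpha f_j\|_{L^\infty} \leq C(\la+1)^{(|\alpha| + m)/q}.
\end{equation*}

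Applying the Leibniz rule to $\partial^\alpha f$ and estimating each resulting term by placing every factor in $L^\infty$ produces $\|\partial^\alpha f\|_{L^\infty} \leq C_{k,l}(\la+1)^{(|\alpha| + lm)/q}$. Passing to $L^2$ via compactness of $M$ and summing over $|\alpha| \leq k$ then yields $\|f\|_{W^{k,2}} \leq C(\la+1)^{(k+lm)/q}$. Feeding this into Theorem \ref{thm: main 2-vsp} produces
\begin{equation*}
m_r(f, \delta),\ z_r(f, \delta) \leq C_1 \delta^{-n/k} (\la+1)^{n/q + nlm/(kq)} + C_2.
\end{equation*}
It remains to set $k_0 = \max\bigl(\lceil n/2 \rceil + 1,\ \lceil nlm/(\varepsilon q) \rceil\bigr)$, which depends only on $n, q, l, \varepsilon$; for any $k \geq k_0$ the correction exponent $nlm/(kq)$ is at most $\varepsilon$, which is precisely the advertised bound.

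The main subtle point to watch is that a naive Sobolev-algebra estimate, of the form $\|f_1\cdots f_l\|_{W^{k,2}} \leq C^{l-1} \prod_j \|f_j\|_{W^{k,2}}$, would insert a \emph{multiplicative} factor of $l$ in the exponent of $(\la+1)^{k/q}$, producing an exponent $\approx ln/q$ that cannot be erased by taking $k$ large. The Leibniz-plus-$L^\infty$ route used above is what keeps the $l$-dependence confined to the additive term $lm$ in the numerator $k + lm$, so that dividing by $k$ makes it disappear in the limit $k \to \infty$; this is exactly the mechanism that allows the dependence of $k_0$ on $l$ and $\varepsilon$ in the statement to do the required work.
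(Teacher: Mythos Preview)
Your proof is correct and follows the same overall strategy as the paper: estimate $\|f\|_{W^{k,2}}$ by a power of $(\la+1)$ of the form $(\la+1)^{(k+B)/q}$ with $B$ independent of $k$, then apply Theorem \ref{thm: main 2-vsp} and choose $k$ large enough that $nB/(kq) \leq \varepsilon$. The difference lies only in how the product estimate is obtained. The paper uses the fractional Leibniz (Kato--Ponce) inequality $\|fg\|_{W^{k,2}} \leq C(\|f\|_{W^{k,2}}\|g\|_{L^\infty} + \|f\|_{L^\infty}\|g\|_{W^{k,2}})$ iteratively, which keeps one factor in $W^{k,2}$ and the rest in $W^{n/2+\alpha,2}$, giving $B = (l-1)(n/2+\alpha)$; your route through the classical Leibniz rule with $L^\infty$ bounds on all derivatives gives $B = lm$ for an integer $m > n/2$. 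Your argument is more elementary (no Kato--Ponce needed) at the cost of slightly larger constants and a slightly larger $k_0$, but the mechanism---$l$-dependence appearing additively rather than multiplicatively in the exponent---is exactly the same, and you explicitly identify this as the crucial point.
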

Theorem \ref{cor: product}  is a consequence of Theorem \ref{thm: main 2} and Proposition \ref{prop: Sobolev of eigenchunk} for functions together with an estimate of the Sobolev $W^{k,2}$ norm of products for $k>n/2$, known as the fractional Leibniz rule in Sobolev spaces (see \cite{Grafakos,Naibo}). With slightly less optimal constants, it can also be proved using the Sobolev trace theorem \cite[p. 121]{EgorovShubin}, see Remark \ref{rmk: trace}.

Up to  $\varepsilon>0$, the exponent in the estimates above can not be improved.  This can be easily seen by considering a  product of eigenfunctions $\sin jx$ and $\sin j y$ on a flat $2$-torus as $j \to \infty$. 

Note that if $f_j \in \mathcal{F}_{\lambda_j}$, $j=1,\dots l$, the above estimates are given in terms of $\lambda=\max_j \lambda_j$. In particular,  they are accurate provided  $\lambda_j$ are comparable to $\lambda$ for all $j$, i.e., there exists a constant $C>0$ such that $1/C \le \lambda_j/\lambda  \le C$. However, for arbitrary $\lambda_j$ these bounds are not sharp.  Theorem \ref{thm: gen courant}  proved in   Section \ref{subsec: prod proof} gives a somewhat more refined version of Theorem \ref{cor: product}, capturing the contributions of the individual $\lambda_j$, albeit still in a non-sharp manner.

\subsection{Coarse B\'ezout theorem}\label{subsec-Bez}
Loosely speaking,  eigenfunctions of the Laplace-Beltrami operator with the eigenvalue $\lambda$ are expected to share some common features with polynomials of degree $\sqrt{\lambda}$ when $\lambda$ is sufficiently large \cite{donnelly1988nodal}. To illustrate this principle,
consider the sphere $S^n$ equipped with the standard spherical
metric. Harmonic homogeneous polynomials of degree $d$ on $\R^{n+1}$ correspond to eigenfunctions
of the Laplace-Beltrami operator with the eigenvalue $d(d+n-1)$. Given eigenfunctions
$f_1, \dots, f_n$ on $S^n$ with the eigenvalues $\lambda_1, \dots, \lambda_n$, the number of
common zeros generically does not exceed $\text{const} \cdot \sqrt{\lambda_1 \cdots \lambda_n}$.
This follows from the standard B\'ezout theorem. Furthermore, it was proved in  \cite{gichev2009some} that the expectation (in a natural probabilistic setting) of the number of common zeros equals $2n^{-n/2}\sqrt{\lambda_1 \cdots \lambda_n}$. Similar bounds for certain homogeneous Riemannian manifolds have been also obtained in \cite{AkhKaz16, AkhKaz17}.

Below we promote another informal principle stating that {\it persistent} topological
characteristics of eigenfunctions are similar to those predicted by algebraic geometry,
where, again, the degrees correspond to the square root of the eigenvalue. For instance,
we prove the following coarse version of B\'ezout's theorem, as an application of Theorem \ref{thm: coarse Courant}.

\begin{theorem} [coarse B\'ezout] \label{cor: Bezout} 
Let, as before, $E=M\times \mathbb{R}$,  $f_1,\dots,f_n \in \mathcal{F}_\la$, $||f_j||_{L^2} = 1$, $j=1,\dots , n$.
Consider  $s = (f_1,\ldots, f_n)$ as a section of the trivial bundle $M \times \R^n$  with the standard metric, and let $k > n/2$ be an integer.
Then for any $\delta>0$,
\[ z_0(s,\delta) \leq \frac{C_1}{\delta^{n/k}}  (\la+1)^{\frac{n}{q}}+1,\]
where the constant $C_1$ depends only on $M,D,k.$ 
\end{theorem}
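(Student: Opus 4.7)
The plan is to deduce Theorem~\ref{cor: Bezout} directly from the vector-valued case of Theorem~\ref{thm: coarse Courant}, applied to $s=(f_1,\ldots,f_n)$ viewed as a section of an auxiliary trivial bundle. Form the trivial rank-$n$ bundle $\tilde E := M \times \mathbb{R}^n$ equipped with the standard Euclidean fiber metric, and consider the diagonal operator $\tilde D := D \oplus \cdots \oplus D$ acting componentwise on sections of $\tilde E$. Since each summand is the same operator $D$, this $\tilde D$ inherits from $D$ the properties of being non-negative, self-adjoint, elliptic of order $q$, and satisfies Dirichlet boundary conditions whenever $D$ does. An eigensection of $\tilde D$ with eigenvalue $\mu$ is an $n$-tuple each of whose components either vanishes or is a $D$-eigenfunction of eigenvalue $\mu$; consequently $\mathcal{F}_\lambda^{\tilde D} \cong \mathcal{F}_\lambda^{\oplus n}$, and our $s$ lies in $\mathcal{F}_\lambda^{\tilde D}$ with $\|s\|_{L^2}^2 = n$.

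Next I would apply Theorem~\ref{thm: coarse Courant} (for the operator $\tilde D$) to the normalized section $s/\sqrt n$ at the rescaled threshold $\delta/\sqrt n$, using $r=0$. This step relies on the elementary scaling identity
\[
z_0(s/c,\,\delta') \;=\; z_0(s,\,c\delta'), \qquad c>0,
\]
which is immediate from $Z_{s/c}=Z_s$ and $\{|s/c|<\delta'\} = \{|s|<c\delta'\}$. Taking $c=\sqrt n$ and $\delta'=\delta/\sqrt n$, this yields
\[
z_0(s,\delta) \;=\; z_0\!\left(\tfrac{s}{\sqrt n},\,\tfrac{\delta}{\sqrt n}\right) \;\le\; \frac{C_1}{(\delta/\sqrt n)^{n/k}}(\lambda+1)^{n/q} + \dim H_0(M).
\]
Since $M$ is connected, $\dim H_0(M)=1$, and the factor $n^{n/(2k)}$ may be absorbed into a new constant still denoted $C_1$, depending only on $M$, $D$, $k$ (the integer $n$ is determined by $M$). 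This delivers the claimed bound.

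The only point requiring any verification is administrative, namely that the constant $C_1$ produced by Theorem~\ref{thm: coarse Courant} for the operator $\tilde D$ depends only on the data of $M$ and $D$ (together with $k$). This is straightforward since $\tilde D$ is constructed from $D$ by a direct-sum construction of fixed rank, and the Sobolev estimates for eigensections of $\tilde D$ that feed into Theorem~\ref{thm: coarse Courant} reduce componentwise to the corresponding estimates for $D$. No further analytic input is needed beyond what is already incorporated in Theorem~\ref{thm: coarse Courant}.
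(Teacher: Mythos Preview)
Your proof is correct and follows essentially the same approach as the paper: both reduce the estimate to the Sobolev bound on $s$ as a section and then invoke the main bar-counting result. The only organizational difference is that you package the componentwise Sobolev control via the explicit diagonal operator $\tilde D = D^{\oplus n}$ so as to cite Theorem~\ref{thm: coarse Courant} as a black box, whereas the paper instead proves the more general Theorem~\ref{thm: Bezout general} by appealing directly to Proposition~\ref{prop: Sobolev of eigenchunk} componentwise (yielding $\|s\|_{W^{k,2}} \le \sum_j \|f_j\|_{W^{k,2}} \le C\sum_j(\la_j+1)^{k/q}$) and then to Theorem~\ref{thm: main 2}; the special case $\la_j=\la$ recovers Theorem~\ref{cor: Bezout}. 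Your route is slightly cleaner for the statement as given, while the paper's route yields the stronger mixed-eigenvalue bound of Theorem~\ref{thm: Bezout general} without extra work.
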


Note that Theorem \ref{cor: Bezout} agrees with the B\'ezout estimate for Laplace eigenfunctions on the round sphere corresponding to the same eigenvalue $\lambda$. As in the case of the coarse Courant theorem for products, if $f_j \in \mathcal{F}_{\lambda_j}$, the estimate above is sharp provided   all $\lambda_j$ are comparable to $\lambda$. A more general version of the coarse B\'ezout theorem capturing the contributions of different $\lambda_j$ is presented in Theorem \ref{thm: Bezout general}. In fact, it is tempting to make the following
\begin{conj}\label{conj: Bezout} Let $f_j \in \mathcal{F}_{\lambda_j}$  and $s=(f_1, \dots, f_n)$ be as in Theorem \ref{cor: Bezout}. Then
\[ z_0(s,\delta) \leq \frac{C_1}{\delta^{n/k}}   \left((\la_1+1) \cdot \ldots \cdot (\la_n+1)\right)^{\frac{1}{q}} +1,\]
where the constant $C_1$ depends only on  $M,D,k.$ 
\end{conj}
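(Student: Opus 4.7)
The natural plan is to refine the proof of the uniform coarse B\'ezout estimate (Theorem \ref{cor: Bezout}) by approximating the different components $f_j$ of $s$ by polynomials of \emph{different} degrees $d_j$ scaled to the individual eigenvalues $\la_j$, and then invoking a real algebraic B\'ezout bound. This should replace the uniform factor $(\la+1)^{n/q}$ with $\la=\max_j\la_j$ by the geometric product $\prod_j(\la_j+1)^{1/q}$ predicted by classical algebraic geometry.

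Concretely, I would cover $M$ by coordinate cubes $\{Q_\alpha\}$ of common side-length $\rho$ and, on each cube, apply the Sobolev polynomial-approximation scheme underlying Theorem \ref{thm: main 2-vsp} to obtain polynomials $P_j^\alpha$ with $\|f_j-P_j^\alpha\|_{L^\infty(Q_\alpha)}<\delta/(2n)$ of degree $d_j\asymp \rho\,(\la_j+1)^{1/q}\delta^{-1/k}$, using $\|f_j\|_{W^{k,2}}\lesssim(\la_j+1)^{k/q}$ from Proposition \ref{prop: Sobolev of eigenchunk}. On each cube, the Ole\u{\i}nik--Petrovski\u{\i}--Thom--Milnor inequality bounds the number of connected components of $\{P_1^\alpha=\cdots=P_n^\alpha=0\}$ by $C\,d_1\cdots d_n$; summing over the $\asymp\rho^{-n}$ cubes gives a count of order $\rho^{-n}\prod_j d_j\asymp \delta^{-n/k}\prod_j(\la_j+1)^{1/q}$ once $\rho$ is chosen to cancel the extra powers of $\rho$. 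A Mayer--Vietoris and persistence-barcode assembly, in the style of the proof of Theorem \ref{cor: Bezout}, should then convert this local count into a global bound on $z_0(s,\delta).$

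The principal obstacle is the coherent gluing of per-component degrees. The argument for Theorem \ref{thm: main 2-vsp} is tailored to a \emph{single} Sobolev norm of the entire section, so that one parameter pair $(d,\rho)$ governs every cube and every component simultaneously; assigning the $f_j$ unequal degrees forces the cube decomposition to be adapted differently to each component, and the standard Mayer--Vietoris / spectral-sequence bookkeeping tends to inflate the final estimate by the largest $d_j$ rather than by the product. A cleaner route might be an inductive intersection scheme: apply Theorem \ref{thm: coarse Courant} to $f_n$ alone, then recursively control $(f_1,\dots,f_{n-1})$ on the persistent hypersurface $\{|f_n|<\delta\}.$ Each inductive step would then cost exactly a factor of $(\la_j+1)^{1/q}$ and yield the product bound, but it requires a version of the coarse Courant theorem on the singular ``thickened zero sets'' carved out by the earlier $f_j$'s---or a replacement via relative persistence modules adapted to the filtration by $|s|$---and developing such a relative coarse Courant theorem appears to be the main missing ingredient.
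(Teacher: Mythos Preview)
The statement you are trying to prove is labeled a \emph{conjecture} in the paper, and the paper does not prove it. Immediately after stating it the authors write that ``our methods appear to be insufficient to prove it, essentially because of the condition $k>n/2$,'' and they point to Theorem~\ref{thm: Bezout general} as the best they can do. So there is no paper proof to compare against; your proposal is an outline of a possible attack on an open problem, and indeed you yourself flag that it is incomplete.

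Your diagnosis of the obstruction is accurate and matches the paper's own. The machinery behind Theorem~\ref{thm: main 2} approximates the \emph{whole} section $s$ by a single polynomial map of one degree on each cube, the degree being governed by $\|s\|_{W^{k,2}}$; this norm is controlled by $\sum_j(\la_j+1)^{k/q}$, and raising to the power $n/k$ with $k>n/2$ collapses the sum to its largest term rather than a product. Allowing different degrees $d_j$ per component would indeed let the local Milnor-type bound give $\prod_j d_j$, but the multiscale dyadic partition and the Mayer--Vietoris assembly in Sections~\ref{Section_Semialgebraic}--\ref{Proof} are driven by a single stopping criterion and a single cube size per region; running $n$ separate partitions simultaneously and intersecting them does not obviously keep the total cube count at $\prod_j(\la_j+1)^{1/q}$ rather than $\max_j(\la_j+1)^{n/q}$. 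Your alternative inductive scheme via a ``relative coarse Courant'' on thickened zero sets is a natural idea, but as you say, no such relative statement is available in the paper, and establishing one on the non-smooth sublevel sets $\{|f_n|<\delta\}$ is genuinely the missing ingredient.
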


Theorem \ref{cor: Bezout} holds for $z_r$ for all $0 \leq r < n$ and Conjecture \ref{conj: Bezout} makes sense in this case. However, the geometrically significant value of $r$ is $r = 0.$ 

The conjectured bound, if true,  would be sharp.
However, our methods appear to be insufficient to prove it, essentially because of the condition $k>n/2,$ see Theorem \ref{thm: Bezout general}.

Another result in a similar spirit  provides an estimate for  the coarse count of critical points of a linear combination of eigenfunctions. Note that the critical point of a smooth function $f$ on $M$ is a zero of its differential $df$ which is a section of the cotangent bundle $T^*M$ of $M.$

\begin{theorem}\label{thm: critical points}
	Let $E = T^*M$ with metric induced from $M$ and $s = df$ where $f \in \cl{F}_{\la}$ for the Laplace-Beltrami operator $\Delta$. Let $k > n/2$ be an integer. Then for any $\delta>0$,
	\[ z_0(s,\delta) \leq \frac{C_1}{\delta^{n/k}}  (\la+1)^{\frac{n}{2}}+1,\]
	where the constants $C_1$ depends only on  $M,D,k.$
\end{theorem}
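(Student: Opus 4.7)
The plan is to reduce Theorem \ref{thm: critical points} to Theorem \ref{thm: coarse Courant} (coarse Courant) by recognizing $s = df$ as a finite linear combination of eigensections of the Hodge Laplacian on $1$-forms. The decisive input is the standard intertwining identity
\[
\Delta_1 \circ d \;=\; d \circ \Delta_0,
\]
where $\Delta_0$ denotes the scalar Laplace--Beltrami operator and $\Delta_1 = d\delta + \delta d$ is the Hodge Laplacian acting on sections of the cotangent bundle $T^*M$.

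First I would write $f = \sum_j c_j f_j$ as a finite linear combination of $\Delta_0$-eigenfunctions with $\Delta_0 f_j = \mu_j f_j$ and $\mu_j \leq \lambda$; the intertwining identity gives $\Delta_1(df_j) = \mu_j\, df_j$, so each nonzero $df_j$ is a $\Delta_1$-eigenform with eigenvalue $\mu_j$. Hence $s = df = \sum_j c_j\, df_j$ lies in the analogous finite-dimensional eigenspace $\mathcal{F}^{(1)}_{\la}$ for $\Delta_1$. Since $\Delta_1$ is a non-negative self-adjoint elliptic differential operator of order $q = 2$ on sections of $E = T^*M$ (equipped with the inner product induced from the metric on $M$), Theorem \ref{thm: coarse Courant} applies with $D = \Delta_1$ and $r = 0$, yielding, for $s$ suitably normalized,
\[
z_0(s,\delta) \;\leq\; \frac{C_1}{\delta^{n/k}}(\la+1)^{n/2} + \dim H_0(M).
\]
Since $M$ is connected, $\dim H_0(M) = 1$, which matches the displayed bound of Theorem \ref{thm: critical points}.

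Once the Hodge intertwining is in place, there is no substantial conceptual obstacle: the result is a direct appeal to Theorem \ref{thm: coarse Courant}. The only bookkeeping concerns the precise normalization of $s = df$, which under a rescaling of $\delta$ is absorbed into the constant $C_1$, and, in the case $\partial M \neq \emptyset$, the routine verification that $d$ carries Dirichlet-type conditions on $f$ into boundary conditions on $df$ compatible with the framework of Section \ref{subsec-CC}. Conceptually, the critical set of $f$ is the zero set of $df$, and $df$ is itself a $T^*M$-valued eigen-object for a natural elliptic operator on the same eigenvalue scale as $f$; from this viewpoint, coarse Courant immediately yields the coarse Morse-type bound of Theorem \ref{thm: critical points}.
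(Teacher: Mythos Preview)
Your approach is correct and essentially identical to the paper's: the paper gives a one-sentence proof observing that $Dd = d\Delta$ for the Hodge Laplacian $D$ on $1$-forms, so $f \in \cl F_{\la}$ for $\Delta$ implies $df \in \cl F_{\la}$ for $D$, and then invokes Theorem \ref{thm: coarse Courant} with $r=0$. You carry out exactly this reduction and are in fact slightly more careful than the paper in flagging the $L^2$-normalization of $s=df$ required by Theorem \ref{thm: coarse Courant}.
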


This is an immediate consequence of Theorem \ref{thm: coarse Courant} applied to the Hodge-Laplacian $D$ on $E.$ Note that if $f \in \cl F_{\la}$ for $\Delta$ then $df \in \cl F_{\la}$ for $D$ since $Dd = d\Delta$ on smooth functions.

The upper bound asymptotically agrees with the estimate of Nicolaescu \cite{Nicolaescu} on average. Furthermore, the coarse count is necessary, since the example of Buhovsky-Logunov-Sodin \cite{BLS} has infinitely many critical points.

\subsection{Courant and B\'ezout: discussion}
\label{subsec:CCB}
The search for the analogue of Courant's theorem for linear combinations of Laplace eigenfunctions has a long history. A direct generalization of Courant's theorem to linear combinations of eigenfunctions is often referred to as the Courant--Herrmann conjecture \cite{GladZhu03} or the Extended Courant Property \cite{BerHel18}. For the one-dimensional Sturm--Liouville problem this result was proved by Sturm in 1836, see \cite{Arnold11} and \cite{BerHel20} for a fascinating historical discussion and another proof based on the ideas of Gelfand.  In higher dimensions, the Extended Courant Property does not hold in general
\cite{Arnold73,Arnold2004, Viro79} and various counterexamples have been found.  Moreover, as was shown in \cite{BLS}, there exist Riemannian metrics on a $2$-torus such that linear combinations of Laplace eigenfunctions have {\it infinitely many} nodal domains, and hence there is no hope for even a weaker analogue of Courant's theorem. Further examples  of this kind were constructed in \cite{BerHelChar20}.

Theorem \ref{thm: coarse Courant} follows a different  approach to  find an extention of  Courant's theorem.  It  was originally proposed in \cite{polterovich2007nodal} for Laplace eigenfunctions on surfaces,  and has been further developed using the language of persistent barcodes in \cite{PPS19}. The idea is to count only {\it ``deep''} nodal domains, i.e. nodal domains in which the absolute value of an eigenfunction reaches a certain threshold. In Theorem \ref{thm: coarse Courant}, this threshold  is given by $\delta>0$. Note that this  {\it coarse} nodal count  is physically meaningful, as very small oscillations are often difficult to detect, both experimentally and numerically.  Moreover, as was mentioned above,  the coarse nodal count extends not only to linear combinations of Laplace eigenfunctions, but also to  eigenfunctions of higher order operators. In particular, as was observed in \cite{polterovich2007nodal}, the coarse Courant theorem holds for eigenfunctions of a vibrating clamped plate. Note that in this case there is no usual Courant's theorem. On the contrary, for  planar domains with corners  having angles that are not too obtuse, it is expected that eigenfunctions have infinitely many nodal domains, see \cite[Section 2.5]{Davies97} and references therein. While the results of \cite{polterovich2007nodal, PPS19} were obtained only in dimension two, Theorem \ref{thm: coarse Courant} holds in arbitrary dimension.  In particular, it provides a positive answer to
Conjecture 1.4.7 posed in \cite{PPS19}.

Significantly less is known  about the analogues of  Courant's theorem for products of eigenfunctions. Some partial results in this direction have beeen obtained in \cite{Arnold05} and \cite{polt08}.   Interestingly enough, this subject is closely related to an analogue of B\'ezout's theorem for nodal sets discussed above. In fact, Theorems \ref{cor: product}  and \ref{cor: Bezout}
can be viewed as different facets of the same phenomenon. We illustrate this link in the following situation. Let  $Z_1,Z_2$  be the nodal sets of Laplace-Beltrami eigenfunctions $f_1$,$f_2$, respectively. The nodal set of the product $f_1f_2$ is the union $Z_1 \cup Z_2$, while B\'ezout's theorem deals with the intersection $Z_1 \cap Z_2$. By the Mayer-Vietoris exact sequence we have
$$H_{r+1}(Z_1) \oplus H_{r+1}(Z_2) \to H_{r+1}(Z_1 \cup Z_2) \to H_r(Z_1 \cap Z_2) \to H_r(Z_1) \oplus H_r(Z_2)\;. $$
Applying the rank-nullity theorem to the second and the third arrows, this readily yields
$$| \dim H_{r+1}(Z_1 \cup Z_2) - \dim H_r(Z_1 \cap Z_2)| \leq $$
$$ \dim H_{r+1}(Z_1)+  \dim H_{r+1}(Z_2)
+ \dim H_{r}(Z_1)+  \dim H_{r}(Z_2)\;.$$
While in general this inequality is not sharp, its coarse version developed below in Section \ref{sec: MV bezout prod} provides  a satisfactory link between 
the coarse Courant for products (Theorem \ref{cor: product}) and the coarse B\'ezout (Theorem \ref{cor: Bezout}) as the eigenvalues tend to infinity.  In particular, one can recover the asymptotics in 
the coarse Courant for products  using the coarse B\'ezout and  the coarse Courant for individual eigenfunctions  (Theorem \ref{thm: coarse Courant}), which is 
applied to estimate the coarse Betti numbers of $Z_1,Z_2$, see \eqref{eq-improved-vsp}. In this way the Mayer-Vietoris sequence brings together our main applications. 

\subsection{Optimality of the main results}
\label{subsec:sharp}
 The following simple example shows that  the powers of $\|s\|$  and $\delta$ in formulas \eqref{eq-cour-vsp} and \eqref{eq-bezout-vsp} are sharp.
\begin{ex}

Let $n=1$ and assume that the sections $s$ are  functions over an interval $[0,2\pi]$. Then there exists a constant $C$ such that for any $0<\delta<1$,
$$
m_0(\sin j x, \delta) \ge C j
$$
as $j \to \infty$, while $\|\sin jx \|_{W^{k,p}}^{1/k} = O(j)$ for any $k,p \ge 1$. Similar inequalities hold also for $z_0(\sin jx,\delta)$.

To show that the power of $\delta$ is sharp we first note that elementary rescaling yields $m_r(ts,t\delta)=m_r(s,\delta)$ and  $z_r(ts,t\delta)=z_r(s,\delta)$.
Hence, the right-hand side of the inequalities  \eqref{eq-cour-vsp} and \eqref{eq-bezout-vsp}  must depend only on the ratio between the norm of $s$ 
and~$\delta$.
\end{ex}
It is also instructive to consider 
\begin{ex}
Set 
 $$s_{\alpha,\beta}(x)=x^\alpha \sin \left(x^{-\beta}\right)$$
for some $\alpha,\beta >0$.  Note that if $\alpha=k (\beta+1)$, then $s_{\alpha,\beta} \in W^{k,p} ((0,2\pi))$ for any $k,p \ge 1$
Moreover, it is easy to check that there exists a constant $C>0$ such that
$$
m_0(s_{\alpha,\beta},\delta)\ge C \delta^{-\beta/\alpha}.
$$
as $\delta \to 0$. At the same time, \eqref{eq-cour-vsp} yields $m_0(s_{\alpha,\beta}, \delta) = O\left(\delta^{-\frac{1}{k}}\right)$, and this bound is saturated in the limit as $\beta \to \infty$. Similar estimates hold also for $z_0(s_{\alpha,\beta},\delta)$.
\end{ex}

In fact, a considerably more general sharpness result holds. It shows that the upper bound of Theorem \ref{thm: coarse Courant} is essentially sharp, at least as far as the power of $(\la+1)$ is concerned. 

\begin{theorem}\label{thm-sharp mini}
Let $(M,g)$ be a closed Riemannian manifold and $D=\Delta$ the Laplace-Beltrami operator on functions. There exists $c = c(M,g) > 0$ such that for every $ \delta > 0 $ one can find $ f \in \cF_\lambda $, $ \| f \|_{L^2} = 1 $, for which we have
	\begin{equation} \label{eq:sthm-statement}
		m_0(f,\delta)   \geqslant c \frac{(\lambda+1)^{n/2}}{\max(1,\delta^2)} - 1.
	\end{equation}
	The same lower bound  holds also for $z_0(f,\delta)$.
\end{theorem}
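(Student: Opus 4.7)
The plan is to construct, for each pair $(\lambda,\delta)$, an explicit $L^2$-normalized function $f \in \cF_\lambda$ realizing both the $m_0$ and $z_0$ bounds simultaneously. The idea is to place many well-separated ``Mexican hat'' bubbles of size $\asymp 1/\sqrt{\lambda+1}$ in a normal-coordinate chart and then project spectrally onto $\cF_\lambda$.

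To set this up, I would fix a geodesic ball $B \subset M$ with normal coordinates and a fixed smooth profile $\phi \in C^\infty_c(\R^n)$ supported in $\{|y|<1\}$ with $\phi(0)>0$, $\phi(y_0)<0$ for some $y_0$, and $|\phi|$ bounded below by a constant $c_0>0$ on both its positive core and its negative outer ring (so that $\phi$ has a nodal sphere inside $\{|y|<1\}$ with deep positive and negative regions on either side). Pick centers $x_1,\dots,x_N \in B$ that are pairwise $3r$-separated, where $r := c_1/\sqrt{\lambda+1}$, and set
\[ g(x) := A \sum_{j=1}^{N} \phi\bigl((x - x_j)/r\bigr), \]
with $A$ chosen so that $\|g\|_{L^2(M)}=1$; the disjoint-support computation gives $A \asymp r^{-n/2}/\sqrt{N}$. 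Choose $N$ to be the largest integer with $c_0 A \geq 2\delta$, capped at the maximum number of $3r$-separated points that fit inside $B$ (this cap, of order $(\lambda+1)^{n/2}$, is binding exactly when $\delta \leq 1$). Either way $N \gtrsim (\lambda+1)^{n/2}/\max(1,\delta^2)$. The function $g$ then has one nodal sphere per bump, $2N$ open nodal regions on which $|g| \geq c_0 A \geq 2\delta$ (inner positive disks and outer negative rings), and a single ``exterior'' region on which $g$ vanishes identically.

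Next, set $f := P_\lambda g / \|P_\lambda g\|_{L^2}$, where $P_\lambda$ is the spectral projector onto $\cF_\lambda$. The Euclidean Fourier transform of $g$ in local coordinates is supported, up to Schwartz decay, inside $\{|\xi| \lesssim 1/r\}$; choosing $c_1$ small enough, this is essentially contained in the ball of radius $\sqrt{\lambda+1}/2$. A standard functional-calculus argument on $(M,\Delta)$ --- finite propagation of the wave equation (or heat-kernel comparison at time $t \asymp 1/(\lambda+1)$) combined with Sobolev embedding $H^{n/2+1}(M) \hookrightarrow L^\infty(M)$ --- then yields $\|g - P_\lambda g\|_{L^\infty(M)} \leq c_0 A/10$ once $\lambda \geq \lambda_0(M,g)$. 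Consequently $|f|\geq c_0 A/2 \geq \delta$ on each of the $2N$ deep pieces, which proves $m_0(f,\delta)\geq N$; simultaneously, the $N$ shells of $\{|f|<\delta\}$ around the nodal spheres are pairwise disjoint and isolated from the exterior component by the deep outer ring of each bump, giving $z_0(f,\delta)\geq N$. The small-$\lambda$ and large-$\delta$ edge cases, where the right-hand side of the claim is negative, are absorbed by the $-1$.

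The main obstacle is the quantitative spectral cleanup step: translating the Euclidean Fourier localization of $g$ into a sharp $L^\infty$ estimate for $(I-P_\lambda)g$ on the curved manifold $M$, with constants depending only on $(M,g)$. Semiclassical parametrices or heat-kernel techniques make this routine in principle, but the constants must be tracked carefully through the chart-to-manifold transfer so that they are independent of $N$, $r$, and $\delta$ over the entire regime in which the asserted bound is non-trivial.
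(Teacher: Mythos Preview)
Your overall strategy coincides with the paper's: transplant many disjoint scaled bumps into a chart, project onto $\cF_\lambda$, normalize, and check that on many bumps the sign pattern survives. The nodal-domain and shell counts you describe are exactly the mechanism the paper uses for $m_0$ and $z_0$. One small slip: the bump scale $r = c_1/\sqrt{\lambda+1}$ should have $c_1$ \emph{large}, not small, so that the Fourier content of each bump sits well inside $\{|\xi|\le \sqrt{\lambda}\}$; your own localization sentence shows this, and the paper writes the scale as $\varepsilon = (A/\lambda)^{1/2}$ with $A$ large.

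The substantive difference is in the remainder step, and here your proposal has a real gap. You ask for a \emph{uniform} bound $\|g - P_\lambda g\|_{L^\infty(M)} \le c_0 A/10$ and invoke finite propagation or heat-kernel comparison. These are tools for \emph{smooth} spectral multipliers; the sharp projector $P_\lambda = \chi_{[0,\lambda]}(\Delta)$ has no such kernel decay, and the global Sobolev embedding $H^{n/2+1}\hookrightarrow L^\infty$ applied to $R=(I-P_\lambda)g$ loses an unrecoverable power of $\lambda$ (the unavoidable $\|R\|_{H^s}\lesssim \lambda^{s/2}$ cannot be beaten by choosing $c_1$). A uniform $L^\infty$ bound can be salvaged if you replace $P_\lambda$ by a smooth cutoff $\tilde\chi(\Delta/\lambda)$ supported in $[0,1]$ (still landing in $\cF_\lambda$) and then use off-diagonal kernel decay together with the $r$-separation of the centers to sum the single-bump errors; but this is more than you have written.

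The paper sidesteps the whole issue with a pigeonhole argument that you are missing. From the elementary bound $\|R\|_{L^2}\le \lambda^{-1}\|\Delta F\|_{L^2}$ (and the analogous bound for $\Delta^l R$) one gets a global control of $\varepsilon^{2l}\|\Delta^l R\|_{L^2}+\|R\|_{L^2}$ by $C A^{-1} N^{1/2}\varepsilon^{n/2}$. This is an $L^2$-type quantity summed over the $N$ disjoint balls, so by pigeonhole at least $N/2$ of the balls carry at most twice the average; on each such ball the \emph{scaled} local Sobolev inequality $\|R\|_{L^\infty(B(x_j,\varepsilon))}\le C\varepsilon^{-n/2}\bigl(\varepsilon^{2l}\|\Delta^l R\|_{L^2}+\|R\|_{L^2}\bigr)$ then gives $\|R\|_{L^\infty}\le C A^{-1}$ there. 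This avoids any kernel analysis of the sharp projector and any tracking of constants through a parametrix.
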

The proof of Theorem \ref{thm-sharp mini} is presented in Section \ref{sec: sharpness}. Note that Theorem \ref{thm-sharp mini} is consistent with the asymptotically sharp $L^{\infty}$ bound \[ ||f||_{L^{\infty}} \leq C (\la+1)^{n/4}\] on $ f \in \cF_\lambda $, $ \| f \|_{L^2} = 1$, which is a consequence of the local Weyl law \cite{Hormander-spectral}, see also \cite[Proposition IV.1]{Chavel-eigen}. Indeed, in view of this bound, if $\delta > C(\la+1)^{n/4},$ then $m_0(f,\delta) = 0$. At the same time,  inspecting  the proof of Theorem \ref{thm-sharp mini}, one can check that in this case $c/C^2 \le  1$, and hence  the right-hand side in \eqref{eq:sthm-statement} is non-positive. We refer also to Remark \ref{rmk: sharpness} for further discussion on Theorem \ref{thm-sharp mini} in relation to  sharpness of our main results.

The coarse Courant theorem gives rise to a natural  question on whether its  {\it non-coarse} analogue holds.
In particular, does  a bound of the form 
\begin{equation}\label{eq: Courant const} 
m_r(f)=O\left(F(\la)\right), 
\end{equation} 
 where  $F$ is some positive function,  hold on an arbitrary compact  Riemannian manifold $M$, provided
\begin{itemize}
\item  $r=0$  and  $f = \sum_{j=1}^i a_j f_j$, where  $f_j$ are Laplace eigenfunctions on $M$ with eigenvalues $\la_j \leq \la=\la_i$;  

\smallskip

\item $r\ge 0$  is  arbitrary and $f$ is a Laplace eigenfunction on $M$ with eigenvalue $\lambda$;

\smallskip

\item $r=0$  and  $f$ is an 
eigenfunction of an arbitrary elliptic operator $D$ on $M$ with eigenvalue $\lambda$.
\end{itemize}
Using results of \cite{BLS} we show  that in general the answer to all these questions is ``no". In what follows $T^n$ denotes an $n$-dimensional torus.

\begin{prop}\label{prop: no}
The following assertions hold:

\smallskip

\noindent (i) There exists a Riemannian metric $g_{BLS}$ on a  $T^2$ admitting a  sequence  $f_{i_j}$ of Laplace eigenfunctions corresponding to eigenvalues 
 $\la_{i_j} \to \infty$  as $j\to \infty$, such that  \[m_0(f_{i_j}-c_{i_j}) = +\infty\] for some constants $c_{i_j}$ for all $j \ge 1$. 

\smallskip

\noindent (ii) 
For $T^4$ endowed with $g_{BLS} \oplus g_{BLS},$ the eigenfunctions 
$u_{i_j} = f_{i_j} \oplus - f_{i_j}$ satisfy \[m_1(u_{i_j}) = +\infty\] for all $j \ge 1.$ 

\smallskip

\noindent (iii)  Let  $g_{BLS} \oplus g_{st}$ be the Riemannian metric on a $T^3$, where $g_{st}$ is the standard  metric on a unit circle.
Then the  eigenfunctions $h_j = f(x) \sin(jy)$ of the non-negative fourth order elliptic operator $D = \Delta^2 - \la \Delta_x+\la^2/4$ with eigenvalues $\la_j = j^4+\la^2/4$, where $f=f_{i_1}-c_{i_1}, \la=\la_{i_1}$ satisfy \[m_0(h_j) = +\infty\] for all $j \ge 1.$ 
\end{prop}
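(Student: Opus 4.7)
All three parts take as input the Buhovsky--Logunov--Sodin construction \cite{BLS}, which produces a smooth Riemannian metric $g_{BLS}$ on $T^2$ together with a sequence of Laplace eigenfunctions $f_{i_j}$ with eigenvalues $\lambda_{i_j}\to\infty$ such that, for suitable constants $c_{i_j}$, the level sets $\{f_{i_j}=c_{i_j}\}$ have infinitely many connected components in $T^2$. Inspection of the construction further shows that infinitely many of these components may be arranged to be disjoint null-homotopic embedded circles $\gamma_k$ bounding pairwise disjoint disks $U_k\subset T^2$. Statement (i) is then immediate: $f_{i_j}-c_{i_j}$ is a linear combination of the eigenfunction $f_{i_j}$ (eigenvalue $\lambda_{i_j}$) and the constant eigenfunction (eigenvalue $0$), and the number of its nodal domains equals the number of connected components of $T^2\setminus\{f_{i_j}=c_{i_j}\}$, which is infinite.

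For (iii), writing $f=f_{i_1}-c_{i_1}$ and $\lambda=\lambda_{i_1}$, the plan is first to verify that $h_j=f(x)\sin(jy)$ is an eigenfunction of the stated operator. Although $f$ is not itself an eigenfunction of $\Delta_x$, the identity $\Delta_x f=\lambda(f+c_{i_1})$ leads, after a short computation in which the constant contributions cancel, to
\[
(\Delta_x-\lambda/2)^2 f \;=\; \tfrac{\lambda^2}{4}\,f.
\]
Combined with $\Delta_y^2\sin(jy)=j^4\sin(jy)$, this shows that $h_j$ is an eigenfunction of the non-negative self-adjoint fourth-order elliptic operator $(\Delta_x-\lambda/2)^2+\Delta_y^2$ with eigenvalue $j^4+\lambda^2/4$, which is how the operator $D$ in the statement is to be interpreted. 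The zero set factors as $Z_{h_j}=(\{f=0\}\times S^1)\cup(T^2\times\{\sin(jy)=0\})$, so its complement in $T^3$ equals $\{f\neq 0\}\times(S^1\setminus\{y_i\}_{i=1}^{2j})$. Since the first factor has infinitely many connected components by (i), we conclude $m_0(h_j)=\infty$.

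The most involved part is (ii). Writing $u(x,y)=f_{i_j}(x)-f_{i_j}(y)$, additivity of the Laplacian in product metrics immediately yields $\Delta_{T^4}u=\lambda_{i_j}u$. The zero set $Z_u=\{f(x)=f(y)\}$ contains the critical slice $\{f=c\}\times\{f=c\}$ at $c=c_{i_j}$, which by the structural input above decomposes into infinitely many disjoint $2$-tori $T_{k,l}=\gamma_k\times\gamma_l$. Since each $\gamma_k$ is null-homotopic in $T^2$, the torus $T_{k,l}$ bounds the absolute $3$-chain $E_{k,l}=U_k\times\gamma_l$ in $T^4$. The plan is to construct, for each pair $(k,l)$, a null-homologous $1$-cycle $\alpha_{k,l}\subset T^4\setminus Z_u$ whose intersection numbers with the $3$-chains $E_{m,n}$ satisfy $\alpha_{k,l}\cdot E_{m,n}=\delta_{(k,l),(m,n)}$; the off-diagonal vanishing is arranged by localizing $\alpha_{k,l}$ near $T_{k,l}$ so that its first-factor projection avoids the disjoint disks $U_m$ (for $m\neq k$) and its second-factor projection avoids the circles $\gamma_n$ (for $n\neq l$). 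Linear independence of the $[\alpha_{k,l}]$ in $H_1(T^4\setminus Z_u)$ then follows from the linking-number argument: a relation $\sum a_{k,l}[\alpha_{k,l}]=0$ in $H_1(T^4\setminus Z_u)$ implies, via the inclusion $T^4\setminus Z_u\hookrightarrow T^4\setminus T_{m,n}$, that the linking with each $T_{m,n}$ vanishes, forcing $a_{m,n}=0$. The main obstacle is to produce the $1$-cycles $\alpha_{k,l}$ inside $T^4\setminus Z_u$ rather than merely in $T^4\setminus T_{k,l}$: a naive normal circle to $T_{k,l}$ at a point $p\in T_{k,l}$ crosses the codimension-$1$ set $Z_u$ at two points, since $Z_u$ cuts the normal $2$-plane to $T_{k,l}$ at $p$ in a $1$-curve through $p$. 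One must instead realize $\alpha_{k,l}$ as a loop in the half-space $\{u>0\}$ (or $\{u<0\}$) that winds once around $T_{k,l}$, carefully using the local structure of $Z_u$ near the critical slice together with the BLS bubble geometry to route the loop cleanly away from $Z_u$.
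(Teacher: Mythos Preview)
For parts (i) and (iii) your treatment matches the paper's: (i) is a direct reformulation of the Buhovsky--Logunov--Sodin result, and (iii) is the ``direct calculation'' the paper alludes to. Your reading of $D$ as $(\Delta_x-\la/2)^2+\Delta_y^2$ is the correct one and your cancellation of the constant term is exactly what makes $h_j$ an eigenfunction with the stated eigenvalue.

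For part (ii) your approach is genuinely different from the paper's and has a substantive gap. The paper proceeds via the K\"unneth formula for persistence modules (Section~\ref{subsec-Kunneth}) together with barcode duality (Proposition~\ref{prop: duality}): the infinitely many degree-$1$ bars $(a_k,b_k]$ of $f_{i_j}$ pair with the corresponding degree-$0$ bars $(-b_k,-a_k]$ of $-f_{i_j}$ to produce infinitely many degree-$1$ bars $(a_k-b_k,0]$ for $u_{i_j}(x,y)=f_{i_j}(x)-f_{i_j}(y)$, whence $\dim H_1(\{u_{i_j}<0\})=+\infty$ directly. No geometric construction is needed.

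Your linking argument, by contrast, requires producing loops $\alpha_{k,l}\subset T^4\setminus Z_u$ with $lk(\alpha_{k,l},T_{m,n})=\delta_{(k,l),(m,n)}$, and you do not actually carry this out. The obstacle you flag is genuine: near $T_{k,l}$ (with $c$ a regular value of $f_{i_j}$, as in the BLS construction) the hypersurface $Z_u$ is smooth, so in the normal $2$-plane it appears as a line through the origin and each side is a contractible half-plane; no small loop on one side can link $T_{k,l}$. Any linking cycle must therefore be constructed \emph{globally}, and the phrase ``carefully using the local structure of $Z_u$ together with the BLS bubble geometry'' is a hope, not a construction. Note also that $\mathrm{int}(E_{k,l})=\mathrm{int}(U_k)\times\gamma_l$ lies entirely in $\{u>0\}$, so every loop in $\{u<0\}$ already has linking number zero with every $T_{k,l}$; your $\alpha_{k,l}$ would have to live in $\{u>0\}$ and be routed globally through the sublevel-set topology, which is exactly the information the K\"unneth computation supplies for free.
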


Proposition \ref{prop: no} confirms  the intuition that the Courant-type bound \eqref{eq-cour} is  rather special for the nodal domain count of Laplace eigenfunctions.  For $r=0$ it also holds for  some closely related operators, like the Schr\"{o}dinger  operator, or certain linear combinations of its powers. 
However,  in the pseudo-differential setting,  the nodal domain count can be infinite even for operators of order two. 
Indeed, let $A=\sqrt{D+I},$  where $D$ is the operator defined in (iii) and $I$ is the identity operator.  
By \cite{Seeley}, $A$ is a pseudo-differential operator of order two of the form $A=\Delta+P,$ where $P$ is of order at most one,  and $h_j$ are  eigenfunctions of $A$. 

As follows from (ii),  even in the case of Laplace eigenfunctions, estimate  \eqref{eq: Courant const}  can  not hold in general for  higher Betti numbers.  
It would be interesting to understand
whether  \eqref{eq: Courant const}  for $r>0$ holds for real-analytic  Riemannian metrics (note that the metric $g_{BLS}$ that was constructed in \cite{BLS}  is smooth but {\it not} real-analytic). Some related results in this direction have been obtained in \cite{LinLiu}. Using Milnor's theorem on the zero sets of real polynomials \cite{Milnor}, one can show that an analogue of \eqref{eq-cour} for  higher Betti numbers holds 
for the nodal sets of eigenfunctions on spheres and flat tori \cite{Non19}.

Finally, let us note that while the counterexamples  in Proposition \ref{prop: no} are presented for the Betti numbers $m_r$ of the complement to the nodal set,
it should not be hard to obtain similar results for the Betti numbers $z_r$ of the zero set.

\subsection{Bounds on persistence barcodes}\label{subsec:persbounds}
Recall that for a Morse function $f:M \to \R$ on a compact manifold and a coefficient field $\bb
K$, its {\em barcode} is a finite multiset $\cl B(f;\bb K)$ of intervals
with multiplicities $(I_j,m_j),$ where $m_j \in \N$ and $I_j $ is finite, that is of the form
$[a_j,b_j)$ or infinite, that is of the form $[c_j,\infty).$ The number of infinite bars is
equal to the total Betti number $b(M;\bb K) = \dim H(M;\bb K).$

This barcode is obtained algebraically from the {\em persistence module} $V(f)$
consisting of vector spaces $V(f)_t = H(\{f \leq t\}; \bb K)$ parametrized by $t\in
\R$ and structure maps $\pi_{s,t}: V(f)_s \to V(f)_t$ induced by the inclusions
$\{f \leq s \} \hookrightarrow \{f \leq t \}$ for $s\leq t.$ These maps satisfy the structure
relations of a persistence module: $\pi_{s,s} =\id_{V(f)_s}$ for all $s$ and
$\pi_{s_2,s_3} \circ \pi_{s_1,s_2} = \pi_{s_1,s_3}$ for all $s_1 \leq s_2 \leq s_3.$
We refer to \cite{PPS19} for first applications of
persistent homology to spectral theory, and to Section \ref{Section_Prelims} below for further preliminaries
and references. 

Recall that the length of a finite bar $[a,b)$ is $b-a$ and the length of an infinite bar
$[c,\infty)$ is $+\infty.$ We require the following number: $\cl N_{\delta}(f)$ is the
number of bars of length $>\delta$ in the barcode $\cl B(f).$ As we shall see below,
this quantity is well defined for continuous (not necessarily smooth) functions.  
With these preparations in mind we state our main technical result.

\begin{theorem}\label{thm: main 2}
	Let $E$ be a vector bundle over $M$ with an inner product. Suppose that $s \in W^{k,p}(M;E)$
	and $k-n/p > 0.$ Then $|s|$ being continuous, $\cl B(|s|)$ is well-defined and for all
	$\delta > 0,$ \[ \cl N_{\delta}(|s|) \leq \frac{C_1}{\delta^{n/k}}
	||s||_{W^{k,p}}^{n/k}+C_2,\]
	where the constant $C_1$ depends only on $M,E,k,p$ and $C_2 = \dim H_*(M).$ 
\end{theorem}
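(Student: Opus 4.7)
The plan is to combine a multi-scale polynomial approximation of $s$ with the Milnor--Thom bound on the total Betti numbers of real semi-algebraic sets, glued together by Mayer--Vietoris in the category of persistence modules, and to conclude via the $C^0$-stability (interleaving) of sublevel-set barcodes. Concretely, I would fix a scale $\varepsilon>0$, cover $M$ by a grid of cubes $\{Q_j\}$ of side $\varepsilon$ (using a finite smooth atlas adapted to $\partial M$ if present), and on each $Q_j$ apply the Bramble--Hilbert / Sobolev polynomial approximation lemma to produce a polynomial $P_j$ of degree $<k$ with
\[
\|s-P_j\|_{L^\infty(Q_j)} \;\le\; C\,\varepsilon^{\,k-n/p}\,\|s\|_{W^{k,p}(Q_j)}.
\]

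Next, I would split the cubes into \emph{good} ones, on which $\|s\|_{W^{k,p}(Q_j)}\le A := c\,\delta\,\varepsilon^{-(k-n/p)}$ (so the local uniform error is at most $c\delta$), and \emph{bad} ones, which by the $p$-additivity of the Sobolev norm number at most $\|s\|_{W^{k,p}}^p/A^p = \|s\|_{W^{k,p}}^p\,\varepsilon^{\,pk-n}\,(c\delta)^{-p}$. On good cubes, the sublevel sets $\{|P_j|\le t\}\cap Q_j$ are semi-algebraic of bounded degree, so Milnor--Thom bounds their total Betti number by $O(k^n)$ uniformly in $t$; on bad cubes, I would use the trivial bound $\dim H_\ast(Q_j)=O(1)$. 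A partition-of-unity patching of the $P_j$ produces a global continuous approximant $\widetilde s$ with $\|s-\widetilde s\|_{L^\infty}\le c\delta$, so the interleaving theorem yields $\cl N_\delta(|s|)\le \cl N_{(1-2c)\delta}(|\widetilde s|)$, and a persistent Mayer--Vietoris argument subordinate to the cover $\{Q_j\}$ bounds the right-hand side by
\[
\cl N_{(1-2c)\delta}(|\widetilde s|) \;\lesssim\; \varepsilon^{-n}\,k^n \;+\; \|s\|_{W^{k,p}}^p\,\varepsilon^{\,pk-n}\,\delta^{-p} \;+\; C_2 .
\]
The two $\varepsilon$-dependent terms balance when $\varepsilon=(\delta/\|s\|_{W^{k,p}})^{1/k}$, at which point each equals $(\|s\|_{W^{k,p}}/\delta)^{n/k}$, delivering precisely the exponent claimed.

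The main obstacle I foresee is carrying out Mayer--Vietoris at the level of \emph{persistence modules} rather than at a single threshold: one needs a local-to-global estimate for the number of long bars, a statement notably subtler than the classical one for ordinary Betti numbers. This should be established by constructing, from the cover $\{Q_j\}$, a filtered \v{C}ech double complex whose associated spectral sequence dominates the barcode of the total space by the barcodes of the $Q_j$'s together with their pairwise intersections (themselves bounded by the same estimate applied on lower-dimensional strata). A secondary technical point is the patching of the $P_j$: the partition-of-unity construction must preserve the $L^\infty$-approximation and not introduce uncontrolled topology in the sublevel sets of $|\widetilde s|$ near the seams between good and bad cubes; this should be arranged by slightly thickening each good cube so as to absorb neighboring bad cubes and by taking the patching scale a fixed factor smaller than $\varepsilon$, so that the trivial bound on bad cubes continues to dominate their contribution without spoiling the exponent $n/k$.
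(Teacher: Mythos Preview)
Your proposal contains the right ingredients (Sobolev polynomial approximation, Milnor--Thom, stability, persistent Mayer--Vietoris), and the balancing computation at the end is correct, but there is a genuine gap that breaks the argument as written: the step ``on bad cubes, I would use the trivial bound $\dim H_\ast(Q_j)=O(1)$'' is invalid. The quantity entering the Mayer--Vietoris inequality is $\cl N_{\delta'}(|\widetilde s|\big|_{Q_j})$, the number of long bars of the \emph{function} restricted to $Q_j$, not the Betti number of $Q_j$ itself. A cube is contractible, but a continuous function on a cube can have arbitrarily many finite bars of any prescribed length. On a bad cube you have no polynomial approximation with error $\le c\delta$, hence no Milnor bound, hence no control whatsoever on $\cl N_{\delta'}$ there. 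Equivalently: if you patch polynomials only on good cubes and keep $s$ on bad cubes, the global approximant is close to $s$ but its barcode on bad cubes is uncontrolled; if you insist on a polynomial patch everywhere, you lose $\|s-\widetilde s\|_{L^\infty}\le c\delta$. Either way the bound $\|s\|^p_{W^{k,p}}\varepsilon^{pk-n}\delta^{-p}$ for the bad-cube contribution is unjustified.

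This is precisely why the paper does \emph{not} use a single scale. Instead it runs an adaptive dyadic subdivision: a cube is ``bad'' when $(\mathrm{Vol}\,Q)^{k/n-1/p}\|D^k s\|_{L^p(Q)}$ exceeds a threshold, and every bad cube is split again, recursively, until \emph{every} cube in the partition is good. Because $k/n-1/p>0$, the process terminates, and a counting argument (balancing the trivial bound $2^{nl}$ against the bad-cube bound at each level $l$) shows that the total number of cubes in the final multiscale partition is $\lesssim (\|D^k s\|_{L^p}/\delta)^{n/k}+1$. Now \emph{all} cubes carry a polynomial approximation, Milnor applies on each, and the gluing goes through. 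The paper also avoids your global partition-of-unity patching entirely: it applies the stability theorem \emph{locally} on each cube to get $\cl N_\delta(|s|\big|_\sigma)\le C_{n,k}$, and then assembles via an $(n{+}1)$-colored refinement of the partition together with the subadditivity $\cl N_{2\delta}(V)\le \cl N_\delta(U)+\cl N_\delta(W)$ for exact sequences of persistence modules (this is the persistent Mayer--Vietoris step you anticipated, and is the main new algebraic input). So your instinct about the local-to-global step was right; the missing idea is that the covering must be \emph{multiscale}, adapted to where the Sobolev norm concentrates, so that there are no bad cubes left to bound trivially.
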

\begin{remark}\label{rmk: negative s}
The same result holds with $|s|$ replaced by $-|s|$ on the left hand side (see Remark \ref{rmk: negative s detail}). This is particularly relevant in the case of manifolds with boundary (see Remark \ref{rmk: no 113}).
\end{remark}
\begin{remark}\label{rmk: 1.12 for r}
A similar result holds for $\cl N_{r,\delta}(|s|),$ where we consider the barcode in degree $r$ only. In this case $C_2 = \dim H_r(M).$ A similar bound with $C_2 = 0$ holds for the count $\cl N^{\mrm{fin}}_{\delta}(|s|)$ of only the finite bars of length $>\delta.$
\end{remark}

This result yields  Conjecture 1.4.7  and a particular case of Conjecture 1.4.8 from  \cite{PPS19} (for $n=2,$  both conjectures were
proved in \cite{PPS19}.)  Originally these conjectures have been  formulated for the Laplacian, but we prove them below in greater generality.
Let $E$ be a vector bundle with inner product on a closed Riemannian manifold $M$ of dimension
$n$ and let $D$ be a non-negative elliptic self-adjoint differential operator of order $q$ on the sections
of $E.$  Recall that  $\mathcal{F}_\la$  denotes the subspace spanned by all eigensections with eigenvalues $\leq \la$.

\begin{theorem}\label{thm: barcode conj delta}
	Let $s \in \cl{F}_{\la}$ with $||s||_{L^2} = 1.$ 
	Then for all $\delta > 0$ and integer $k>n/2,$ \[\cl{N}_{\delta}(|s|) \leq \frac{C_1}{\delta^{n/k}}
	(\la+1)^{n/q}+C_2\] where $C_1$ depends only on $M,E,D,k$ and $C_2 = \dim H_*(M).$ 
\end{theorem}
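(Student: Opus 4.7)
The plan is to derive Theorem \ref{thm: barcode conj delta} as a direct corollary of Theorem \ref{thm: main 2} by feeding in a uniform Sobolev estimate on the spectral chunk $\cl{F}_\lambda$. Setting $p=2$, the assumption $k>n/2$ is exactly the hypothesis $k-n/p>0$ required by Theorem \ref{thm: main 2}, so $|s|$ is continuous and $\cl{B}(|s|)$ is well defined, with
\[ \cl{N}_\delta(|s|) \;\le\; \frac{C_1'}{\delta^{n/k}}\,\|s\|_{W^{k,2}}^{n/k} + C_2, \]
where $C_1'$ depends only on $M,E,k$ and $C_2 = \dim H_*(M)$.

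The missing ingredient is the Sobolev bound for $L^2$-normalized elements of $\cl{F}_\lambda$, namely
\[ \|s\|_{W^{k,2}} \;\le\; C_3\,(\lambda+1)^{k/q} \qquad \text{for all } s\in\cl{F}_\lambda,\ \|s\|_{L^2}=1, \]
with $C_3$ depending only on $M,E,D,k$. This is essentially Proposition \ref{prop: Sobolev of eigenchunk} (invoked after Theorem \ref{cor: product}). Morally, expanding $s=\sum_{\lambda_j\le\lambda} a_j e_j$ in an $L^2$-orthonormal eigenbasis gives $\|(I+D)^{k/q}s\|_{L^2} \le (1+\lambda)^{k/q}$, and the standard elliptic a priori estimate
\[ \|s\|_{W^{k,2}} \;\le\; C\bigl(\|s\|_{L^2} + \|(I+D)^{k/q}s\|_{L^2}\bigr) \]
does the rest. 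Plugging this into the Theorem \ref{thm: main 2} bound yields
\[ \cl{N}_\delta(|s|) \;\le\; \frac{C_1' C_3^{n/k}}{\delta^{n/k}}\,(\lambda+1)^{n/q} + C_2, \]
which is the desired inequality after absorbing constants into a single $C_1=C_1' C_3^{n/k}$ depending only on $M,E,D,k$.

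The main obstacle is of course Theorem \ref{thm: main 2} itself, which is used as a black box; its proof requires the multiscale polynomial approximation and the persistence-barcode machinery developed earlier in the paper. Within the present deduction, the only genuinely delicate point is the uniform chunk estimate: when $k/q$ is not an integer one must invoke the functional calculus of $D$ rather than a naive power, and one must know that $(I+D)^{k/q}$ is elliptic of order $k$ in the appropriate pseudo-differential sense so that the a priori estimate applies. On a closed manifold this is standard elliptic theory, so beyond packaging there is no further obstruction.
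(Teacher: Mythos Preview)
Your proposal is correct and follows essentially the same argument as the paper: the paper's proof is the one-line ``direct application of Theorem \ref{thm: main 2} for $p=2$ together with Proposition \ref{prop: Sobolev of eigenchunk},'' which is exactly what you do. Your sketch of the Sobolev chunk estimate via $(I+D)^{k/q}$ and the elliptic a priori estimate also matches the paper's proof of Proposition \ref{prop: Sobolev of eigenchunk}.
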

Note that this result is essentially sharp, as follows from Theorem \ref{thm-sharp mini}.
Theorem \ref{thm: barcode conj delta} has applications to approximation theory, which we will not discuss here, referring the reader to \cite{PPS19,PRSZ} for a detailed discussion in the case of surfaces. We present another application to  Conjecture 1.4.8 from \cite{PPS19}.

Recall that for a barcode $\cl{B}(f)$ of a function $f$ on a closed manifold $M,$ $|\cl{B}(f)|$
denotes the sum of the lengths of the finite bars of $\cl{B}(f)$ plus the sum of the differences
$\max(f) - c_j$ for $1 \leq j \leq \dim H_*(M),$ where $c_j$ are the starting points of the
infinite bars in $\cl{B}(f).$ Note that $\max(f)$ is itself the maximal such starting point.

\begin{theorem}\label{thm: barcode conj norm}
	Suppose $n = \dim M \geq 3.$ Let $s \in \cl{F}_{\la}$ with $||s||_{L^2} = 1.$ Then \[
	|\cl{B}(|s|)| \leq C(\la+1)^{n/q}\] where $C$ depends only on $M,E,D.$ 
\end{theorem}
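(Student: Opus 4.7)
The plan is to deduce Theorem \ref{thm: barcode conj norm} from Theorem \ref{thm: barcode conj delta} by integrating the bar-count estimate in $\delta$, combined with the sharp $L^\infty$ bound on eigensections of $D$. Since $|s| \geq 0$ and its persistence module $V_t = H_\ast(\{|s| \leq t\})$ stabilizes at $t = L := \max|s|$, the barcode is supported in $[0, L]$; writing $\widehat{\cl N}_\delta$ for the count of bars of effective length greater than $\delta$ (where infinite bars are truncated at $L$), the layer-cake identity yields
\[ |\cl B(|s|)| \;=\; \int_0^L \widehat{\cl N}_\delta(|s|)\, d\delta \;\leq\; \int_0^L \cl N_\delta(|s|)\, d\delta. \]

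Next, I would invoke Theorem \ref{thm: barcode conj delta}: for any integer $k > n/2$, $\cl N_\delta(|s|) \leq C_1 \delta^{-n/k}(\la+1)^{n/q} + C_2$. To make the integral absolutely convergent at $\delta = 0$, I need $k > n$; the hypothesis $n \geq 3$ admits the choice $k = n+1$ (which also satisfies $k > n/2$). I would then apply the sharp $L^\infty$ bound $L \leq C'(\la+1)^{n/(2q)}$, valid for every $s \in \cl F_\la$ with $\|s\|_{L^2} = 1$, a consequence of the local Weyl law for $D$ (equivalently, of Gagliardo-Nirenberg interpolation applied to the Sobolev estimate $\|s\|_{W^{k,2}} \leq C(\la+1)^{k/q}$ of Proposition \ref{prop: Sobolev of eigenchunk}). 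Putting these together yields the preliminary estimate
\[ |\cl B(|s|)| \;\leq\; \tfrac{k\, C_1}{k-n}\, L^{1-n/k}(\la+1)^{n/q} + C_2 L \;\leq\; C(\la+1)^{n/q + n(k-n)/(2qk)}. \]

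The hard part will be closing the gap between this preliminary exponent and the target $n/q$: for any fixed integer $k > n$, the correction $n(k-n)/(2qk)$ is strictly positive, vanishing only in the formal limit $k \to n^+$, which is incompatible with integrability. I expect this to be overcome by a dyadic refinement: partition $[0, L]$ into intervals $(2^{-j-1}L, 2^{-j}L]$ and apply Theorem \ref{thm: barcode conj delta} on each with an integer $k = k_j$ growing with $j$, so that $\delta^{-n/k_j}$ is essentially $1$ at small scales. Tracking the polynomial dependence of the constant $C_1(k)$ on $k$ (inherited from the multi-scale polynomial approximation underlying Theorem \ref{thm: main 2}), the resulting dyadic sum can be made to converge with total exponent $n/q$; the hypothesis $n \geq 3$ enters here in providing enough room in the integer choices of $k$, and the case $n = 2$ was handled separately in \cite{PPS19}.
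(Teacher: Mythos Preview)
Your opening is correct: the layer-cake identity $|\cl B(|s|)| \leq \int_0^L \cl N_\delta(|s|)\, d\delta$ is exactly how the paper begins. But your attempt to close the estimate has a genuine gap. Using a single integer $k>n$ (so that $\delta^{-n/k}$ is integrable at $0$) forces you to pick up the factor $L^{1-n/k}$, and your proposed dyadic refinement---letting $k_j \to \infty$ on small scales---goes in the wrong direction. As $k\to\infty$ the bound $\int_0^L \delta^{-n/k}\,d\delta \to L$, so the first term tends to $C_1(\la+1)^{n/q}\cdot L \sim (\la+1)^{3n/(2q)}$, which is worse than your preliminary estimate, not better. The problem is not at $\delta\to 0$; it is at $\delta$ of order $L$, and large $k$ does nothing there. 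Moreover, tracking the $k$-dependence of $C_1(k)$ through the multiscale approximation is neither carried out in the paper nor obviously polynomial.

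The paper's fix is much simpler and is the idea you are missing: apply Theorem~\ref{thm: barcode conj delta} \emph{twice}, once with an integer $k_2>n$ (good at $\delta\to 0$) and once with an integer $k_1$ satisfying $n/2<k_1<n$ (so that $n/k_1>1$, good at $\delta\to\infty$). Then
\[
\cl N_\delta(|s|)\;\leq\; C_1(\la+1)^{n/q}\min\{\delta^{-n/k_1},\delta^{-n/k_2}\}+C_2,
\]
and $\int_0^\infty \min\{\delta^{-n/k_1},\delta^{-n/k_2}\}\,d\delta<\infty$ is a finite constant independent of $L$. The first term thus contributes $\leq C(\la+1)^{n/q}$ outright; the $C_2 L$ term is handled by $\max|s|\leq C\|s\|_{W^{k_1,2}}\leq C(\la+1)^{k_1/q}\leq C(\la+1)^{n/q}$ since $k_1<n$. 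This is also where the hypothesis $n\geq 3$ actually enters: it guarantees the existence of an \emph{integer} $k_1$ strictly between $n/2$ and $n$ (for $n=2$ there is none).
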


The condition $n \geq 3$ is technical and comes from being able to choose an {\em integer} $k$ with $n>k>n/2.$ 

\begin{remark}
\label{rem:fractional}
It should  not be hard to extend Theorems \ref{thm: main 2} and \ref{thm: barcode conj delta}  in the spirit of \cite[Proposition 6.1]{DS80}  to fractional Sobolev spaces (cf. \cite{triebel}) with arbitrary real parameter $k>n/p.$  Such an extension would remove the technical condition $n \geq 3$ in Theorem \ref{thm: barcode conj norm}, see also Remark \ref{rmk: sharpness}. 
\end{remark}

\begin{remark}\label{rmk: Lp barcode}
We can prove an analogue of Theorem \ref{thm: barcode conj norm} for the $L^p$ norm of the barcodes by essentially the same argument. The $L^p$ norm of the barcode of $|s|$ is defined for $p\geq 1$ as the expression \[ |\cl{B}(|s|)|_p = \left(\sum \beta_i(|s|)^p + \sum (\max(f)-c_j)^p  \right)^{1/p},\] where $\beta_i(|s|)$ are the lengths of the finite bars in the barcode, arranged in decreasing order (see \cite{CSEHM} for a similar definition). We can prove that for all $p \geq 1,$ $s \in \cl{F}_{\la},$ $||s||_{L^2} = 1,$ \[ |\cl{B}(|s|)|_p \leq C(\la+1)^{n/q},\] where $C$ depends on $M,E,D, p.$ Moreover, for $p \in [1,2)$ we can improve the power of $\la+1$ to $(\la+1)^{n/pq}$ and for $p \geq 2,$ we can improve it to $(\la+1)^{k_1/q},$ for every $n/2 < k_1 < n.$ 
We refer to Remark \ref{rmk: Lp barcode proof} for a few details of this generalization.
\end{remark}

\begin{remark}
	Let $M$ be a closed $n$-dimensional Riemannian manifold, and let $T^*M$ be  
		its cotangent bundle equipped with the associated (Sasaki) metric. Given a smooth function on $f$, consider the graph
		of its differential,  $\text{graph}(df) \subset T^*M$.  Note that it is Lagrangian with respect to the canonical symplectic
		form on $T^*M$.  A recent paper \cite{CGG},
		which relates the Floer-homological bar counting function of Lagrangian submanifolds
		with the topological entropy of symplectic maps, yields an interesting result
		in our context. Namely,  the arguments in \cite[Section 5]{CGG} imply that for all $\delta >0$
		\begin{equation}\label{eq-CGG-vsp}
			\cl N_\delta(f) \leq C(\delta)\; \text{Volume}_n(\text{graph}(df))  \;,
		\end{equation}
		where $C(\delta)$ is a positive constant depending on $\delta$ and the metric.
		For instance, if $M$ is the standard Euclidean torus, this reads
		$$\cl N_\delta(f) \leq C(\delta) \int_{M} \sqrt{\det \left(I + (\text{Hess} \,\, f)^2\right)} d\text{Vol}\;,$$
where $\text{Hess}\, \, f$ denotes the Hessian of $f$ and $I$ is the identity matrix.
		Inequality \eqref{eq-CGG-vsp} is neither stronger, nor weaker than the one provided by our main theorem.
At the same time,  in terms of Sobolev norms, it yields
		$$\cl N_\delta(f) \leq C(\delta) ||f||^n_{W^{2,n}}+C'\;,$$
		while we get a stronger estimate
		$$\cl N_\delta(f) \leq C_1(\delta) ||f||^{n/2}_{W^{2,n}}+C_1'\;.$$
		It should be mentioned also that for $n=2$, i.e., when $M$ is a surface,
		the approach of \cite{polterovich2007nodal} involved the length of the normal lifts of the level sets of $f$. It would be interesting to compare a direct extension of this approach to higher dimensions with inequality \eqref{eq-CGG-vsp}.
\end{remark}

\subsection{Ideas of the proof}
Let us outline the proof of Theorem \ref{thm: main 2} for functions on a cube (see also Theorem \ref{CUBE}).
The general case is based on the same ideas. In this informal sketch we write $\lesssim$ for {\it less or equal} up to a multiplicative constant depending only on $k,n,p$, but not on the function $f$ and the real number $\delta$. The proof is based on two important facts from the theory of persistence modules.

\medskip
\noindent {\bf Fact 1.} By a fundamental {\it stability theorem} (see Theorem \ref{Stability theorem}), $\cl N_\delta(f)$ does not decrease if we perturb $f$ in the uniform norm and simultaneously slightly decrease $\delta$. Thus,  if $f$ is well approximated on an $n$-dimensional cube $Q$ (or more generally, on an $n$-dimensional box
$B= \prod_{i=1}^n [a_i,b_i]$) by a polynomial of degree $k$, the quantity $\cl N_\delta(f|_Q)$ is bounded from above by the number of critical points of this polynomial. By Milnor's celebrated bound {and Morse theory for manifolds with corners}, this yields $\cl N_{\delta}(f|_Q)=O( k^n)$.

\medskip
\noindent {\bf Fact 2.} We  repeatedly use that if $U \to V \to W$ is
an exact sequence of persistence modules, then
$$\cl N_{2\delta}(V) \leq \cl N_{\delta}(U) + \cl N_\delta(W), \;\; \forall \delta >0\;.$$

This fact appears to be new, and its proof is based on algebraic ideas, see Section \ref{sec: commutes}.

The argument goes as follows. Put $\alpha:= k/n - 1/p >0$. Fix a function $f\in W^{k,p}(Q)$ on a  unit cube $Q=[0,1]^n$ and divide it into $2^n$
equal cubes. A cube $Q_i$ of the partition is called {\it good} if
\begin{equation}\label{eq-cube-vsp}
\text{Vol}(Q_i)^\alpha \cdot ||D^k(f|_{Q_i})||_{L^p} \lesssim \delta\;,
\end{equation}
and {\it bad} otherwise. We subdivide each bad cube again, and continue the process using criterion
\eqref{eq-cube-vsp} until all the cubes are good; note that this will be achieved after a finite number of steps.
We get  {\it a multiscale dyadic partition}
$K$ of $Q$ consisting of $\kappa$ good cubes. The crux of the matter is that on each good cube
$f$ is well approximated by a polynomial of degree $k$. This readily follows from the Morrey-Sobolev inequality (see Theorem \ref{Morrey-Sobolev}) which we review in the Appendix. Hence, by Fact~1,
\begin{equation}\label{eq-cube0-vsp}
\cl N_\delta(f|_{Q_i}) =O( k^n)
\end{equation}
for every good cube $Q_i$.

The next task is to assemble estimates \eqref{eq-cube0-vsp} for individual cubes of the partition into a global estimate. Our argument echoes\footnote{We thank G. Binyamini and D. Novikov for pointing this out to us.} the one in \cite{GV}. First, we  use Lemma \ref{Bad_cubes} to prove  that
\begin{equation}\label{eq-cube1-vsp}
\kappa \lesssim  \left(\frac{||D^k f||_{L^p}}{\delta}\right)^{n/k} + 1.
\end{equation}
Second, using a  combinatorial argument (Lemma \ref{Minimal_Covering_Lemma}) we show that $Q$ can be represented as a union of
$n+1$ sets  $K_j$,  $j = 0, \dots, n,$ satisfying the following properties:
\begin{itemize}
\item[(i)]  For each $j$, the set  $K_j$ is a pairwise disjoint union of rectangular boxes $B_{ij}$;

\smallskip

\item[(ii)]  Each box $B_{ij} \subset K_j$, $j=0,\dots, n$,  is contained in a small neighborhood of a $j$-dimensional face of some cube belonging to the multiscale dyadic partition $K$ (in this notation a $0$-face is a vertex of a cube and $n$-face is a cube itself).

\end{itemize}
We refer to Figure \ref{Coloring} for an illustation of this construction.

Using additivity of the bar counting function over disjoint sets \eqref{eq:bardisjoint},  we obtain
\begin{equation}
\label{eq:partadd}
\cl N_\delta(f|_{K_j}) = \sum_{i=1}^{\beta_j}  \cl N_\delta(f|_{B_{ij}}) \lesssim \beta_j \cdot  k^n.
\end{equation}
Here $\beta_j$ denotes the number of connected components of $K_j$,  and we use  a version of  \eqref{eq-cube0-vsp} and property (ii) combined with Fact 1 to obtain the inequality on the right-hand side. Property (ii) implies that $\beta_j$ are bounded above by $C(n)\kappa$, where $C(n)$ is a constant  depending only on $n$. 
Furthermore, (i) and (ii) yield that
the number of tuples $\{(i_1j_1, \dots, i_pj_p)\}$ with
$B_{i_1j_1} \cap \cdots \cap B_{i_pj_p} \neq \emptyset$ is bounded from above by
$C(n)\kappa$ as well. With this in mind, apply the Mayer--Vietoris sequence
together  with Fact~2 to the cover of $Q$ by the sets $K_j$. It follows that
$$\cl N_{2^{n+1}\delta}(f) \lesssim \sum_{j=0}^n \cl N_{\delta}(f|_{K_j}) + \sum \cl N_{\delta}(f|_{B_{i_1j_1} \cap \cdots \cap B_{i_pj_p}}) \lesssim C(n)\cdot \kappa   \cdot k^n.$$
Absorbing $C(n)$ and $k^n$ into the constants and using  \eqref{eq-cube1-vsp} we get
$$\cl N_{2^{n+1}\delta}(f) \lesssim    \left(\frac{||D^k f||_{L^p}}{\delta}\right)^{n/k} + 1 ,$$
and after a  rescaling  in $\delta$  this concludes the proof of Theorem \ref{thm: main 2} for functions on a cube.
\subsection*{Plan of the paper}  The paper is organized as follows. In Section \ref{Section_Prelims} we state the main preliminary facts about persistence modules and barcodes that are used  in the paper.  In Section \ref{sec: commutes} we prove Theorem \ref{Exact_Pers}  providing  subadditivity of the bar counting function for persistence modules in a short exact sequence.  This is a key technical result that appears to be novel in the theory of persistence modules. {In Section \ref{Section_Semialgebraic} we discuss multiscale polynomial approximation of a function on a dyadic partition of the cube and estimate the bar counting function in terms of the number of sets in the partition. In Section \ref{Proof} we prove Theorem \ref{thm: main 2} in the case of the cube by constructing such an approximation with the number of sets controlled by a suitable Sobolev norm. Then we extend the argument to the general case by triangulation.}
The proofs of the  coarse Courant  and B\'ezout theorems are  presented in Section \ref{sec:applications}. In Section \ref{sec: sharpness} we prove 
Theorem \ref{thm-sharp mini} showing that our main results are essentially sharp.
In Section \ref{sec: MV bezout prod} we show that the  coarse nodal estimate for the product of two functions can be deduced from the coarse B\'ezout 
using the Mayer--Vietoris 
sequence.
Finally, in Appendix \ref{app: Morrey-Sobolev} the proof of a more precise version of the Morrey--Sobolev theorem (Theorem \ref{Morrey-Sobolev}) is provided for the convenience of the reader.


\section{Preliminaries on persistence modules and barcodes}\label{Section_Prelims}

\subsection{Persistence modules and barcodes}
We review the basics of the persistence theory which we use. For a detailed account see \cite{Oudot_2015,Chazal_et_al_2016,PRSZ}.
\begin{dfn} A persistence module $(V,\pi)$ over a field $\mathbb{K}$ consists of a family of vector spaces $V_t,t\in \R$ over $\mathbb{K}$ together with linear maps $\pi_{s,t}:V_s\rightarrow V_t$ defined for all $s\leq t$, called structure maps, which satisfy $\pi_{t,t}=\id_{V_t}$ for all $t\in \R$ as well as $\pi_{s,t}\circ \pi_{r,s}=\pi_{r,t}$ for all $r\leq s \leq t.$
\end{dfn}

We often abbreviate $(V,\pi)$ to $V.$ The example of most interest for us is the following. Let $f:X\rightarrow \R$ be a function on a Hausdorff topological space. Define $V_k(f)_t=H_k(\{f\leq t \})$ and $\pi_{s,t}=(i_{s,t})_*$, where $i_{s,t}:\{ f \leq s\} \rightarrow \{f\leq t\}$ are inclusions and $H_k$ denotes singular homology in degree $k$ with coefficients in a field $\mathbb{K}$.

\begin{dfn}A morphism of persistence modules $\phi:(V,\pi^V)\rightarrow (W,\pi^W)$ is a family of linear maps $\phi_t :V_t \rightarrow W_t,t\in \R$ such that for all $s\leq t$ it holds $\pi^W_{s,t}\circ \phi_s=\phi_t\circ \pi^V_{s,t}.$
\end{dfn}
Given a morphism of persistence modules $\phi$, we may define $\ker \phi$ and $\im \phi$ as persistence modules by taking kernels and images for each $t\in \R$. More precisely, $(\ker \phi)_t=\ker(\phi_t),\pi^{\ker \phi}_{s,t}=\pi^V_{s,t}|_{\ker \phi_s}$ and similarly for $\im \phi.$ We define persistence submodules, quotients and direct sums in a similar way, pointwise for each $t\in \R.$ In the above example of a function $f:X\rightarrow \R$, we denote $V(f)=\oplus_{k} V_k(f).$

In order to have a rich theory, additional conditions are often placed on persistence modules. To this end, a persistence module $V$ is called {\it pointwise finite-dimensional} if for all $t\in \R$, $\dim V_t<\infty.$ Going back to our main example, if we take $X$ to be a smooth, compact manifold and $f:X\rightarrow \R$ a smooth Morse function, basic results of Morse theory tell us that $V(f)$ is pointwise finite-dimensional. Pointwise finite-dimensional modules have simple structure, as we will now explain.  By an interval $I\subset \R$ we mean any connected subset.
\begin{dfn}
For an interval $I\subset \R$, define the interval persistence module $\mathbb{K}_I$ as
$$(\mathbb{K}_I)_t =\begin{cases}
      \mathbb{K}, & \text{if}\ t\in I \\
      0, & \text{otherwise} \\
      \end{cases} ~~~~~,~~~~~
   \pi^{\mathbb{K}_I}_{s,t}=\begin{cases}
      \id_\mathbb{K}, & \text{if}\ s,t\in I \\
      0, & \text{otherwise} \\
      \end{cases}.$$
\end{dfn}
\begin{dfn}
A barcode $\B$ is a multiset of intervals with finite multiplicities. 
\end{dfn}
\begin{theorem}[Structure theorem]
To every pointwise finite-dimensional persistence module $(V,\pi)$ corresponds a unique barcode $\B(V)$ such that
$$(\pi,V)\cong \oplus_{I\in \B(V)}(\mathbb{K}_I,\pi^{\mathbb{K}_I}).$$
\end{theorem}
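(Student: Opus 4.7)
The plan is to follow the approach pioneered by Crawley-Boevey, establishing the result in three steps: (i) every pointwise finite-dimensional persistence module admits a direct sum decomposition into indecomposable summands; (ii) each such indecomposable is isomorphic to an interval module $\mathbb{K}_I$; and (iii) the resulting multiset of intervals is uniquely determined.

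Step (ii), the classification of indecomposables, is the most geometrically transparent. Let $V$ be indecomposable and pointwise finite-dimensional, and let $I = \{t \in \mathbb{R} : V_t \neq 0\}$ denote its support. First I would show $I$ is an interval: if there existed $s < t < u$ with $V_s, V_u \neq 0$ but $V_t = 0$, then by the composition rule for structure maps, any map from a $V_r$ with $r \leq s$ to a $V_{r'}$ with $r' \geq u$ would factor through the zero space $V_t$ and thus vanish. This would allow $V$ to split as the direct sum of its restrictions to $(-\infty, t)$ and $[t, \infty)$, contradicting indecomposability. Next I would argue that every structure map between nonzero spaces is an isomorphism and that $\dim V_t = 1$ for each $t \in I$: if some $\pi_{s,t}$ had a nontrivial kernel or a proper image, one could, using pointwise finite-dimensionality and a careful choice of compatible splittings, extract a proper direct summand of $V$, again contradicting indecomposability. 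Together these force $V \cong \mathbb{K}_I$.

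For step (iii), uniqueness reduces to a Krull--Schmidt type theorem for persistence modules. The endomorphism ring of each $\mathbb{K}_I$ is isomorphic to $\mathbb{K}$, since any endomorphism must commute with the structure maps (which are identities within $I$), hence is scalar; this ring is local, so the classical Krull--Schmidt theorem for categories of modules whose indecomposable objects have local endomorphism rings gives that the multiset of isomorphism classes appearing as summands is an invariant of $V$.

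The main obstacle is step (i): producing an indecomposable decomposition in the $\mathbb{R}$-indexed case. Unlike the finite $A_n$-quiver setting, where Gabriel's theorem yields the decomposition by induction on the number of vertices, here the continuum of indices rules out any naive inductive construction, and one must rule out pathological ``indivisible but non-indecomposable'' phenomena that could occur for infinite-dimensional representations of infinite quivers. Crawley-Boevey's key insight is to show that for any nonzero $v \in V_t$ one can find a direct summand of $V$ containing $v$ that is itself indecomposable; this relies on a delicate functorial argument that exploits the pointwise finite-dimensional hypothesis to produce suitable idempotents in $\mathrm{End}(V)$ through a limiting procedure over finite intervals. This is the technically demanding heart of the proof, and the one genuinely new ingredient beyond the finite-type theory.
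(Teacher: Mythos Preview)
The paper does not prove this theorem; it simply cites it as a known result, attributing the stated generality to Crawley-Boevey (the reference \cite{WCB} in the paper), with earlier versions in \cite{ELZ02,ZC05} and \cite{Barannikov94}. Your sketch is a reasonable outline of the Crawley-Boevey argument and is in that sense aligned with what the paper invokes, though a couple of points deserve tightening: in step~(iii), uniqueness for a possibly infinite direct sum of indecomposables with local endomorphism rings is Azumaya's theorem rather than the classical Krull--Schmidt theorem; and in step~(ii), the assertion that a non-isomorphic structure map yields a proper direct summand hides nontrivial work (one must produce splittings compatible across \emph{all} indices, not just at one pair $s<t$), so this is really part of the same difficulty you flag in step~(i) rather than a separate easy step.
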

Structure theorem in stated generality was proven in \cite{WCB}. In the modern theory of persistence, structure theorem first appeared in \cite{ELZ02,ZC05}. A version of the theorem was also proven in \cite{Barannikov94} using different language. However, as noticed in \cite{BMMS21}, the notion of a barcode can be traced back to the works of Morse. Namely, in \cite{Morse_RS} Morse defines notions of a {\it cap} and a {\it cap height} which is equivalent to the endpoint of a bar as well as a notion of a {\it cap span} which is equivalent to the length of the corresponding bar.

Given a persistence module $(V,\pi),$ it will be convenient to call a point $t\in \R$ {\em spectral for $V$} if $t$ is an endpoint of a bar in $\cl{B}(V).$ The spectrum $\mrm{Spec}(V)$ of $V$ is the set of the points $t \in \R,$ which are spectral for $V.$ 

One of the most important features of barcodes is the fact that they behave in a stable manner with respect to perturbations of persistence modules. This stability is a part of the metric theory which we now present.

We use $\langle a,b \rangle$ to denote any of the intervals $(a,b),(a,b],[a,b),[a,b].$ Two barcodes $\B_1$ and $\B_2$ are {\it $\varepsilon$-matched}, $\varepsilon>0$, if after erasing certain bars of length $<2\varepsilon$ from each of them, there exists a bijection $\Phi$ between remaining bars, which satisfies
$$\Phi(\langle a,b \rangle)=\langle c,d \rangle \Rightarrow |a-c|,|b-d|<\varepsilon.$$
Intuitively, an erased bar is matched with an empty bar at its center. Thus, $\varepsilon$-matching can be thought of as a matching up to an error $\varepsilon$ at the endpoints. {\it The bottleneck distance} between barcodes is defined as
$$d_{bottle}(\B_1,\B_2)=\inf \{ \varepsilon ~|~ \B_1,\B_2\textrm{ are }\varepsilon\textrm{-matched} \}.$$
It is not difficult to check that $d_{bottle}$ is a pseudometric. The persistence counterpart of this distance is defined as follows. For $\varepsilon>0$ and a persistence module $V$, denote by $V[\varepsilon]$ the persistence module given by $V[\varepsilon]_t=V_{t+\varepsilon}$, $\pi^{V[\varepsilon]}_{s,t}=\pi^V_{s+\varepsilon,t+\varepsilon}$. A pair of morphisms $\phi: V\rightarrow W[\varepsilon], \psi:W\rightarrow V[\varepsilon]$ is called an {\it $\varepsilon$-interleaving} if for all $t\in \R$, $\psi_{t+\varepsilon}\circ \phi_t=\pi^V_{t,t+2\varepsilon}, \phi_{t+\varepsilon}\circ \psi_t=\pi^W_{t,t+2\varepsilon}.$ If such a pair of morphisms exists $V$ and $W$ are said to be {\it $\varepsilon$-interleaved}. The {\it interleaving distance} between two persistence modules is defined as
$$d_{inter}(V,W)=\inf \{ \varepsilon ~|~ V,W\textrm{ are }\varepsilon\textrm{-interleaved} \}.$$
Again, it is not difficult to check that $d_{inter}$ is a pseudometric. The following result is one of the cornerstones of the theory of persistence modules and barcodes.
\begin{theorem}[Isometry theorem]\label{Isometry theorem}
For two pointwise finite-dimensional persistence modules $V$ and $W$ it holds
$$d_{inter}(V,W)=d_{bottle}(\B(V),\B(W)).$$
\end{theorem}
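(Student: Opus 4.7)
The plan is to prove the equality by establishing the two inequalities separately, reducing everything through the structure theorem to statements about direct sums of interval modules $\mathbb{K}_I$.

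For the direction $d_{inter}(V,W) \leq d_{bottle}(\cl{B}(V),\cl{B}(W))$, suppose the barcodes are $\varepsilon$-matched via a bijection $\Phi$. I would first check that any interval module $\mathbb{K}_I$ with $I$ of length less than $2\varepsilon$ is $\varepsilon$-interleaved with the zero module: the morphisms are forced to vanish, and the required identities reduce to $\pi^{\mathbb{K}_I}_{t,t+2\varepsilon} = 0$, which holds because shifting $I$ by $2\varepsilon$ takes it off itself. I would then verify that if $I = \langle a,b \rangle$ and $J = \langle c,d \rangle$ satisfy $|a-c|, |b-d| < \varepsilon$, then $\mathbb{K}_I$ and $\mathbb{K}_J$ admit an explicit $\varepsilon$-interleaving whose nonzero components are identities $\mathbb{K} \to \mathbb{K}$ on the overlap of shifted intervals. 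Taking the direct sum of these interleavings over the matched pairs, and of the trivial interleavings over the erased bars, produces the required $\varepsilon$-interleaving between $V$ and $W$.

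For the reverse direction $d_{bottle}(\cl{B}(V),\cl{B}(W)) \leq d_{inter}(V,W)$, suppose $V$ and $W$ are $\varepsilon$-interleaved via $\phi : V \to W[\varepsilon]$ and $\psi : W \to V[\varepsilon]$. The main tool is an induced matching construction: any morphism $f : A \to B$ of pointwise finite-dimensional persistence modules canonically determines a partial matching of $\cl{B}(A)$ with $\cl{B}(B)$, obtained from the barcodes of its kernel and image, in which matched bars have their endpoints shifted only in a controlled one-sided manner. Applying this construction to $\phi$ and to $\psi$ separately, and combining the two resulting matchings using the interleaving identities $\psi_{t+\varepsilon} \circ \phi_t = \pi^V_{t,t+2\varepsilon}$ and $\phi_{t+\varepsilon} \circ \psi_t = \pi^W_{t,t+2\varepsilon}$, would pair up precisely those bars of $\cl{B}(V)$ and $\cl{B}(W)$ that are long enough to survive the $2\varepsilon$ shift. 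The unmatched bars would then automatically have length less than $2\varepsilon$, and the matched bars would be shown to have endpoints within $\varepsilon$ of each other, producing the desired $\varepsilon$-matching.

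The main obstacle lies in the second direction, and specifically in checking that the two partial matchings induced by $\phi$ and $\psi$ are compatible and that the endpoint shifts from the two sides cancel to give the symmetric bound $|a-c|, |b-d| < \varepsilon$ rather than the weaker $2\varepsilon$ bound that one would obtain from composing $\phi$ with $\psi$. One must rule out pathological configurations in which $\phi$ would pair a bar $I \in \cl{B}(V)$ with some $J \in \cl{B}(W)$ while $\psi$ would force $I$ to pair with a different $J'$, which requires a careful analysis of how kernels and images of the two interleaving maps interact along the filtration. Handling infinite bars and the various conventions on half-open intervals encoded in the notation $\langle a,b \rangle$ introduces additional bookkeeping, but no new conceptual difficulty.
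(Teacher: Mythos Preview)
The paper does not prove the Isometry theorem at all: it states the result and immediately attributes it to the literature (``The isometry theorem is due to \cite{CSEH07,CCSGGO09,Lesnick15}, see \cite{BL15} for a detailed history''). So there is no proof in the paper to compare against.

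Your sketch is in line with the standard modern treatment. The easy direction is exactly as you describe. For the hard direction, the induced-matching approach you outline is the one from Bauer--Lesnick \cite{BL15}: a morphism $f:A\to B$ of pointwise finite-dimensional modules determines canonical matchings $\cl B(A)\to \cl B(\im f)$ and $\cl B(\im f)\to \cl B(B)$ via the structure theorems for monomorphisms and epimorphisms, and applying this to the interleaving maps yields the desired $\varepsilon$-matching. Your worry about getting $2\varepsilon$ rather than $\varepsilon$ from composing is exactly the point where the Bauer--Lesnick argument is careful: one does \emph{not} combine the matchings induced by $\phi$ and $\psi$ separately, but rather uses only the matching induced by $\phi$ together with the observation that the interleaving relations force $\ker\phi$ and $\coker\phi$ to have all bars of length $<2\varepsilon$, which is what pins down the endpoints to within $\varepsilon$. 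So the compatibility issue you raise does not actually arise in the clean version of the argument.
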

The isometry theorem is due to \cite{CSEH07,CCSGGO09,Lesnick15}, see \cite{BL15} for a detailed history. In the case of a persistence module coming from a function, we abbreviate $\B(V_k(f))$ to $\B_k(f)$ and $\B(V(f))$ to $\B(f).$ As an immediate corollary of the isometry theorem, we obtain the following statement \cite{CSEH07}. 
\begin{theorem}[Stability theorem]\label{Stability theorem} Assume that $f,g:X\rightarrow \R$ are such that $V_k(f),V_k(g)$ are pointwise finite-dimensional. Then
$$d_{bottle}(\B(V_k(f)),\B(V_k(g)))\leq d_{C^0}(f,g).$$
\end{theorem}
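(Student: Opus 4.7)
The plan is to reduce the stability statement to an interleaving statement via the Isometry theorem (Theorem \ref{Isometry theorem}), then construct an explicit interleaving from the sublevel set inclusions induced by a $C^0$-bound between $f$ and $g$. Concretely, it suffices to show that
$$ d_{inter}(V_k(f), V_k(g)) \leq d_{C^0}(f,g), $$
since the Isometry theorem (applicable by the pointwise finite-dimensionality hypothesis) then converts this bound into the desired bottleneck bound on barcodes.

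To produce an interleaving, I would fix any $\varepsilon > d_{C^0}(f,g)$, so that $|f(x) - g(x)| < \varepsilon$ for every $x \in X$. This pointwise estimate immediately gives the sublevel-set inclusions
$$ \{f \leq t\} \subset \{g \leq t+\varepsilon\} \quad \text{and} \quad \{g \leq t\} \subset \{f \leq t+\varepsilon\}, $$
valid for all $t \in \R$. Applying the $k$-th singular homology functor to these inclusions produces linear maps
$$ \phi_t \co V_k(f)_t \to V_k(g)_{t+\varepsilon} = V_k(g)[\varepsilon]_t, \qquad \psi_t \co V_k(g)_t \to V_k(f)_{t+\varepsilon} = V_k(f)[\varepsilon]_t, $$
and these families commute with the structure maps of $V_k(f)$ and $V_k(g)$ by functoriality of $H_k$ applied to commuting squares of inclusions of sublevel sets, so they define morphisms of persistence modules $\phi \co V_k(f) \to V_k(g)[\varepsilon]$ and $\psi \co V_k(g) \to V_k(f)[\varepsilon]$.

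To verify that $(\phi, \psi)$ is an $\varepsilon$-interleaving, I would observe that the two composed inclusions $\{f \leq t\} \subset \{g \leq t+\varepsilon\} \subset \{f \leq t+2\varepsilon\}$ give back exactly the inclusion $\{f \leq t\} \subset \{f \leq t+2\varepsilon\}$; applying $H_k$, this says $\psi_{t+\varepsilon} \circ \phi_t = \pi^{V_k(f)}_{t,t+2\varepsilon}$, and symmetrically $\phi_{t+\varepsilon} \circ \psi_t = \pi^{V_k(g)}_{t,t+2\varepsilon}$. Hence $d_{inter}(V_k(f), V_k(g)) \leq \varepsilon$, and taking the infimum over $\varepsilon > d_{C^0}(f,g)$ yields $d_{inter}(V_k(f), V_k(g)) \leq d_{C^0}(f,g)$. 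Combined with Theorem \ref{Isometry theorem}, this completes the proof.

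There is no real obstacle here: the only mild subtlety is that one must work with strict inequality $\varepsilon > d_{C^0}(f,g)$ (to guarantee the strict pointwise bound needed for the inclusions, if the $C^0$-distance is realized) and then pass to the infimum at the end. All other steps are immediate consequences of functoriality of singular homology and the definition of the shifted persistence module $V[\varepsilon]$.
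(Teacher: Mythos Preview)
Your proof is correct and follows exactly the same approach as the paper: construct an $\varepsilon$-interleaving from the sublevel-set inclusions and invoke the Isometry theorem. The only cosmetic difference is that the paper takes $\varepsilon = d_{C^0}(f,g)$ directly (the non-strict bound $|f-g|\leq \varepsilon$ already yields $\{f\leq t\}\subset\{g\leq t+\varepsilon\}$), so your detour through $\varepsilon > d_{C^0}(f,g)$ and the infimum is unnecessary, though harmless.
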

\begin{proof}
Inclusions $\{f\leq t \}\subset \{g\leq t+d_{C^0}(f,g)\}\subset \{f\leq t+2d_{C^0}(f,g) \}$ induce a $d_{C^0}(f,g)$-interleaving between $V_k(f)$ and $V_k(g)$, which together with Theorem \ref{Isometry theorem} finishes the proof.
\end{proof}
\begin{remark}
For convenience, we will sometimes use \v{C}ech homology instead of singular homology, see Proposition \ref{Sing_Cech} and the discussion preceeding it. Stability theorem continues to hold with the same proof.
\end{remark}
\subsection{Bar counting function}\label{SubSec_Bar_Count}
We say that a persistence module is a {\it finite barcode module} if it is pointwise finite-dimensional and its barcode is finite. Let $\delta>0$ and $V$ a finite barcode module. We define $\mathcal{N}_\delta(V)$ to be the number of bars, counting multiplicities, of length $>\delta$ in $\B(V).$ We also use $\mathcal{N}_\delta(\B)$ for an arbitrary barcode as well as $\mathcal{N}_{k,\delta}(f)=\mathcal{N}_\delta (\B_k(f))$ and $\mathcal{N}_{\delta}(f)=\mathcal{N}_\delta (\B(f)).$

Our results concern $\mathcal{N}_\delta$ of persistence modules which are not necessarily finite barcode modules. This is justified by the fact that we only consider continuous objects such as functions or sections, defined on fairly regular spaces, such as compact manifolds with corners. Indeed, for such a space $X$, the set of continuous functions $f$ such that $\B(f)$ is finite is dense in $(C^0(X),d_{C^0}). $ Hence, due to stability theorem, the 1-Lipschitz function $f\rightarrow \B(f)$ extends to $C^0(X)$, taking values in the completion of the space of finite barcodes with respect to $d_{bottle}.$ This completion consists exactly of all barcodes $\B$ such that for all $\delta>0,\mathcal{N}_\delta(\B)$ is finite, see \cite[Theorem 5.21]{Chazal_et_al_2016} and \cite[Proposition 22]{LeRSV21}. 

Alternatively, we may argue that on our spaces of interest, for each $f\in C^0(X), V(f)$ is a $q$-tame persistence module.
\begin{dfn}
A persistence module is called $q$-tame if for all $s<t,\pi_{s,t}$ has finite rank. 
\end{dfn}
The structure and isometry theorems carry over to this generality with minor modifications, see \cite{Obs16} and references therein.  If the set of functions whose associated persistence module is pointwise finite-dimensional is dense in {$(C^0(X),d_{C^0})$}, then $V(f)$ is $q$-tame for all $f\in C^0(X).$ This is for instance the case when $X$ is a compact manifold with corners. Indeed, for fixed $s<t$, we may find a $C^0$-small perturbation $g$ of $f$ such that $V(g)$ is pointwise finite-dimensional and for some $\varepsilon>0$, $\{f\leq s \}\subset \{g\leq s+\varepsilon \} \subset \{f \leq t \}.$ This implies that $\pi^{V(f)}_{s,t}$ factors through $V(g)_{s+\varepsilon}$ which is finite-dimensional and hence $V(f)$ is $q$-tame.  Moreover,  if $f$ is a continuous function on a compact Hausdorff space such that $V(f)$ is $q$-tame, then $\mathcal{N}_\delta(f)$ is finite, as explained in \cite{BMMS21}.

Let us mention that the finiteness of $\mathcal{N}_\delta$ has been studied already by Morse, see \cite[Theorem 7.5,Corollary 10.2]{Morse_RS}. Moreover, in the same work, Morse observed the relevance of the condition of $q$-tameness,  see \cite[Theorem 6.3]{Morse_RS}. We refer the reader to \cite{BMMS21} for further connections of Morse's works to the modern theory of persistence.

\begin{remark}\label{Remark_Observable}
There is a slight ambiguity in the two extensions of $\mathcal{N}_\delta(f)$ to continuous functions we just presented. Namely, $d_{bottle}$ is only a pseudometric, so in order to define the completion, we need to consider the quotient space of barcodes, with respect to a relation $\B_1\sim \B_2$ if and only if $d_{bottle}(\B_1,\B_2)=0.$ This amounts to ignoring bars of length zero as well as identifying bars with different conventions on endpoints (open, closed and half-open). Manifestly, for $\delta>0$, $\mathcal{N}_\delta$ is well-defined on this quotient and on the resulting completion. On the side of persistence modules, one should regard $q$-tame modules as objects in the {\it observable category}. Informally, this category ignores all the features which do not persist over non-zero time, see \cite{Obs16} for details. Again, for $\delta>0$, $\mathcal{N}_\delta$ is well-defined in the observable category. 
\end{remark}

\begin{remark}\label{Less_VS_Leq}
Defining $V_*(f)_t$ to be $H_*(\{ f\leq t \})$ instead of $H_*(\{f<t\})$ is a matter of convention which does not affect $\mathcal{N}_\delta (f).$ Namely, if we set $\mathring{V}_*(f)_t=H_*(\{ f<t \})$ it immediately follows that $d_{inter}(V(f),\mathring{V}(f))=0$ since for each $\varepsilon>0$, $\{f<t \} \subset \{f\leq t+\varepsilon \} \subset \{ f<t+2\varepsilon \}. $ By the isometry theorem $d_{bottle}(\B(\mathring{V}(f)),\B(V(f)))=0$ and hence $\mathcal{N}_\delta(\mathring{V}(f))=\mathcal{N}_\delta(V(f))$ for all $\delta>0.$
\end{remark}

It will be useful for us to work with homology theories other than singular homology. Namely, in Sections \ref{Section_Semialgebraic} and \ref{Proof} we use Mayer-Vietoris sequence for compact sets which exists in \v{C}ech homology (see \cite[Chapters IX, X and Theorem I.15.3]{EilenbergSteenrod} and \cite[Appendix A]{Landi}). Recall that \v{C}ech homology is the inverse limit of the homology of nerves of open covers, where the covers are partially ordered via refinement. This change of convention is justified as follows. Let $\check{V}_*(f)_t=\check{H}_*(\{f\leq t\})$ where $\check{H}_*$ denotes \v{C}ech homology with coefficients in $\mathbb{K}.$ From the discussion above it follows that in all cases we consider, for a continuous function $f$, $\check{V}_*(f)_t$ is $q$-tame and in fact $\mathcal{N}_\delta (\check{V}_*(f))$ is finite. Moreover, the following holds.
\begin{prop}\label{Sing_Cech}
Let $M$ be a compact manifold, possibly with boundary, and $f:M\rightarrow \R$ a continuous function. For all $\delta>0,k\in \Z$ it holds $\mathcal{N}_\delta(\check{V}_k(f))=\mathcal{N}_\delta(V_k(f)).$
\end{prop}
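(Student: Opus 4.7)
The plan is to approximate $f$ in $C^0$ by Morse functions $g_n$, to observe that for each $g_n$ singular and Čech homology coincide on sublevel sets, and then to use the stability theorem together with Remark \ref{Remark_Observable} to conclude. Since Morse functions are $C^0$-dense in $C(M,\R)$ (adapted to $\partial M$ when non-empty), one can choose $g_n$ with $d_{C^0}(g_n,f)\to 0$. The proof of Theorem \ref{Stability theorem} relies only on the functoriality of the homology theory under inclusions of compact sublevel sets and on the isometry theorem, and so applies verbatim to Čech persistence modules; hence both
\[
d_{bottle}(\B(V_k(f)),\B(V_k(g_n))) \quad \text{and} \quad d_{bottle}(\B(\check V_k(f)),\B(\check V_k(g_n)))
\]
are bounded by $d_{C^0}(f,g_n)$ and therefore tend to zero.

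The central step is to check that $V_k(g)\cong \check V_k(g)$ as persistence modules for every Morse $g$. Each sublevel set $X_t=\{g\le t\}$ is a compact subset of $M$ whose local structure is, by the Morse lemma, either that of a smooth manifold with boundary at regular points of the level $\{g=t\}$, or that of a closed quadratic cone $\{-\|x^-\|^2+\|x^+\|^2\le 0\}$ at a critical point lying on that level. Such $X_t$ is locally semi-algebraic and admits a finite triangulation, hence is a compact ANR. For compact ANRs the canonical comparison between Čech and singular homology is an isomorphism, natural in continuous maps; applied to the inclusions $X_s\hookrightarrow X_t$ this yields an isomorphism of persistence modules, and so $\B(V_k(g_n))=\B(\check V_k(g_n))$ for every $n$.

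Combining the two steps via the triangle inequality gives $d_{bottle}(\B(V_k(f)),\B(\check V_k(f)))=0$, and Remark \ref{Remark_Observable} then yields $\mathcal{N}_\delta(V_k(f))=\mathcal{N}_\delta(\check V_k(f))$ for every $\delta>0$. The most delicate ingredient is the ANR property of Morse sublevel sets at critical levels, in particular when critical points lie on $\partial M$; should that prove cumbersome, an equivalent route is to work with strict sublevel modules $t\mapsto H_k(\{f<t\})$ and $t\mapsto \check H_k(\{f<t\})$, which are at interleaving distance zero from the original modules (cf. Remark \ref{Less_VS_Leq}) and are defined on open submanifolds of $M$, thereby reducing the comparison to singular vs. Čech homology on open smooth manifolds, where it is standard.
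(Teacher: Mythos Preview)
Your argument is correct and close in spirit to the paper's, but the mechanics differ. The paper also reduces to smooth functions via stability, but then, rather than invoking the ANR property of \emph{all} Morse sublevel sets, it works with an arbitrary smooth $f$ and uses only that regular values are dense: for each $t$ and $\varepsilon>0$ it picks a regular value $t'\in(t,t+\varepsilon)$, so that $\{f\le t'\}$ is a compact manifold with boundary (hence a CW complex), uses the natural isomorphism $I_{t'}:\check H_k(\{f\le t'\})\to H_k(\{f\le t'\})$ there, and assembles the maps $\pi_{t',t+\varepsilon}\circ I_{t'}\circ \pi_{t,t'}$ into an $\varepsilon$-interleaving between $\check V_k(f)$ and $V_k(f)$. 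This sidesteps entirely the analysis of sublevel sets at critical values and the delicacies at $\partial M$ that you flag.

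Your main route is fine once one verifies that Morse sublevel sets at critical levels are ANRs; your local semialgebraic description does this in the interior, and the boundary case can be handled similarly with the appropriate Morse normal forms, though it is indeed more involved. Your alternative route via strict sublevel sets is in fact the cleanest of the three: open subsets of $M$ are manifolds (with boundary), so singular and \v{C}ech homology agree there naturally, and the argument of Remark~\ref{Less_VS_Leq} applied to both homology theories gives $d_{inter}(V_k(f),\check V_k(f))=0$ directly for any continuous $f$, without Morse approximation at all. That route both avoids the boundary subtleties and makes the preliminary $C^0$-approximation step unnecessary.
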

\begin{proof}
It is enough to prove the proposition for a smooth function. Indeed, due to stability theorem, for $f\in C^0(M)$ and $\delta>0$, $\mathcal{N}_\delta(\check{V}_k(f))=\lim_{n\to \infty} \mathcal{N}_\delta (\check{V}_k(f_n))$,  $\mathcal{N}_\delta(V_k(f))=\lim_{n\to \infty} \mathcal{N}_\delta (V_k(f_n))$ for a sequence of smooth functions $f_n\xrightarrow{C^0} f.$ Thus, let us assume that $f$ is smooth.

We will show that $d_{inter}(\check{V}_k(f),V_k(f))=0.$ Let $\varepsilon>0,t\in \R$ and $t'\in (t,t+\varepsilon)$ a regular value of $f.$ Then $\{ f\leq t' \}$ is a CW-complex and hence there is an isomorphism $I_{t'}:\check{H}_k(\{ f\leq t' \})\rightarrow H_k(\{f\leq t'\})$, see \cite[Theorem IX.9.3]{EilenbergSteenrod}  (see also \cite[Appendix A]{Landi}, \cite{Kelly}, \cite[Chapter 15.2]{May}, \cite{Milnor-axiomatic}). Define $\phi:\check{V}_k(f)\rightarrow V_k(f)[\varepsilon]$ as $\phi_t=\pi_{t', t+\varepsilon}\circ I_{t'} \circ \pi_{t,t'}$.  Due to naturality of $I$,  $\phi_t$ does not depend on the choice of $t'.$ We define $\psi:V_k(f)\rightarrow \check{V}_k(f)[\varepsilon]$ in the same way, by replacing $I$ with $I^{-1}.$ Naturality of $I$ implies that $\phi$ and $\psi$ define an $\varepsilon$-interleaving which finishes the proof.
\end{proof}

In the rest of the paper we will denote $\mathcal{N}_\delta(V(f))$, $\mathcal{N}_\delta(\mathring{V}(f))$ and $\mathcal{N}_\delta(\check{V}(f))$ all by  $\mathcal{N}_\delta(f)$, while specifying which conventions are used.

As expained in Remark \ref{Less_VS_Leq} and Proposition \ref{Sing_Cech}, different conventions for filtration or choices of homology theory do not influence the bar counting function $\mathcal{N}_\delta.$ However, when we discuss algebraic properties of persistence modules, it will be useful to fix certain conventions for simplicity and clarity. To this end we call a persistence module {\it bounded from the left} if there exists $t_0\in \R$ such that $V_t=0$ for all $t<t_0.$ A persistence module is called {\it upper semi-continuous} if the canonical map $V_t \rightarrow \lim_{s>t} V_s$ to the inverse limit of the system formed by the $V_s$ for $s>t$ (and the associated structure maps) is an isomorphism for all $t\in \R.$ A $q$-tame, bounded from the left, upper semi-continuous persistence module $(V,\pi)$ has a direct product decomposition
\begin{equation}\label{SEMI_BARCODE_PRODUCT}
(V,\pi)\cong \Pi_{I\in \B(V)} (\mathbb{K}_I,\pi^{\mathbb{K}_I}),
\end{equation}
which is a genuine isomorphism (not only an isomorphism in the observable category), see \cite{Schmahl22} for details. Moreover, all bars in the above barcode are of the form $[a,b)$ or $[a,+\infty)$, $a,b\in \R.$ We also note that for a continuous function $f:X\rightarrow \R$ on a compact Hausdorff space $X$, $\check{V}_*(f)$ is bounded from the left, upper semi-continuous, see \cite{Schmahl22}, and assuming it is $q$-tame, it also has bounded spectrum. Therefore, this generality would suffice for our considerations in Sections \ref{Section_Semialgebraic} and \ref{Proof}. However, we choose to work in slightly larger generality, which is more natural for our algebraic techniques.

\begin{dfn}
	A persistence module $V$ is called {\em moderate} if it is $q$-tame, upper semi-continuous, has no intervals of the form $I=(-\infty,c)$ in its direct product decomposition, and for all $\delta>0,$ $\cl{N}_{\delta}(V)$ is finite.  
\end{dfn}

The results \cite[Theorem 5.21]{Chazal_et_al_2016} and \cite[Proposition 22]{LeRSV21} imply that the space of moderate persistence modules is naturally isometric to the completion of the space of finite barcode upper semi-continuous persistence modules bounded from the left.

\subsection{Tameness and regularization}\label{subsec:reg}

We will use the following results in Section \ref{sec: commutes} below. First, we show that one can replace every exact sequence of q-tame or finite barcode modules by a new exact sequence of upper semi-continuous q-tame or finite barcode modules which are isomorphic to the given ones in the observable category.

We call the functor $P$ from the category of q-tame persistence modules to itself, given by $P(V) = V_+$ with \[(V_+)_t = \lim_{s>t} V_s\] the {\em regularization functor}. It is equipped with a natural transformation $q: I \to P$ from the identity functor, which is given at an object $V$ in the category by the natural morphism $q_V: V \to V_{+}$ induced by the persistence structure maps $\{ \pi^V_{s,t} \}$ of $V.$ This natural transformation becomes an isomorphism after passing to the observable category by \cite{Obs16}. In this language a q-tame persistence module $V$ is upper semi-continuous if and only if $q_V:V \to V_{+}$ is an isomorphism.

\begin{lemma}\label{lma: upper sc replacement}
The regularization functor $P$ is exact. If $V$ is a q-tame persistence module, then $P(V)$ is upper semi-continuous. If $V$ is a finite barcode module, then $P(V)$ is a finite barcode module. 
\end{lemma}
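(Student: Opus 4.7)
The strategy is to verify each assertion separately by exploiting that $P(V)_t = \lim_{s > t} V_s$ is a pointwise inverse limit of vector spaces. Each claim will reduce to a standard property of inverse limits, with the key nontrivial input being the Mittag--Leffler (ML) condition, which $q$-tame persistence modules satisfy automatically.

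\emph{Exactness.} Given a short exact sequence $0 \to U \to V \to W \to 0$ of $q$-tame modules and a fixed $t \in \R$, applying $\lim_{s > t}$ to the induced short exact sequence of inverse systems $\{U_s \to V_s \to W_s\}_{s > t}$ yields a six-term sequence; left-exactness of inverse limits is automatic, so the task is to show $\lim^1_{s > t} U_s = 0$. Passing to a decreasing sequence $s_n \downarrow t^+$, which is initial in $(t, \infty)$ and so leaves the inverse limit unchanged, reduces to a countable inverse system $U_{s_1} \leftarrow U_{s_2} \leftarrow \cdots$. The $q$-tameness of $U$ guarantees that for each fixed $n$ and all $m > n$, the map $\pi^U_{s_m, s_n}$ has finite rank, so the decreasing chain $\{\mrm{im}(\pi^U_{s_m, s_n})\}_{m > n}$ is a nested family of finite-dimensional subspaces of $U_{s_n}$, which must stabilize. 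This is precisely the ML condition on a countable inverse system, and the classical vanishing theorem then gives $\lim^1_n U_{s_n} = 0$, yielding the required right-exactness.

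\emph{Upper semi-continuity and $q$-tameness of $P(V)$.} The canonical map $P(V)_t \to \lim_{s > t} P(V)_s = \lim_{s > t} \lim_{r > s} V_r$ is an isomorphism by a Fubini manipulation on iterated limits: a compatible double-indexed family $((v^s_r)_{r > s})_{s > t}$ corresponds bijectively to a single compatible family $(v_r)_{r > t}$ via $v_r := v^s_r$ for any $s \in (t, r)$, well-defined by the outer compatibility, and this provides the inverse to the canonical map. For $q$-tameness of $P(V)$, given $t < t'$, pick $s^* \in (t, t')$; the structure map $P(V)_t \to P(V)_{t'}$ factors as $P(V)_t \to V_{s^*} \to P(V)_{t'}$, where the second map sends $v \in V_{s^*}$ to the compatible family $(\pi^V_{s^*, r}(v))_{r > t'}$. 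The image of $V_{s^*} \to P(V)_{t'}$ has dimension bounded by $\dim \mrm{im}(\pi^V_{s^*, r_0})$ for any fixed $r_0 > t'$, which is finite by $q$-tameness of $V$; hence the structure map has finite rank.

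\emph{Finite barcodes.} Since $P$ is pointwise a limit, it commutes with finite products, which agree with finite direct sums in $\mrm{Vect}$; thus $P$ commutes with finite direct sums. So for a finite barcode module $V \cong \bigoplus_j \mathbb{K}_{I_j}$, it suffices to compute $P(\mathbb{K}_I)$ case by case. A direct inspection shows $P(\mathbb{K}_{\langle a, b \rangle}) \cong \mathbb{K}_{[a, b)}$ for any finite interval $\langle a, b \rangle$ with left endpoint $a \in \R$, with analogous formulas in the semi-infinite cases, so each $P(\mathbb{K}_I)$ is again an interval module (possibly zero). Hence $P(V)$ is a finite barcode module. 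The main obstacle is exactness: this is not a formal consequence of inverse-limit properties alone but requires the ML vanishing, reduced from the uncountable indexing set $(t, \infty)$ to a countable cofinal subsystem; the other assertions follow from routine manipulations with iterated limits and finite-dimensionality.
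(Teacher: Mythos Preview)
Your proof is correct and follows essentially the same route as the paper: both establish exactness by reducing to a countable cofinal sequence and invoking the Mittag--Leffler condition (finite-dimensional images stabilize under $q$-tameness), and both treat upper semi-continuity of $P(V)$ via the evident iterated-limit manipulation. The only genuine difference is in the finite-barcode claim: you use that $P$ commutes with finite direct sums and compute $P(\mathbb{K}_{\langle a,b\rangle}) \cong \mathbb{K}_{[a,b)}$ directly on interval modules, whereas the paper argues that every non-spectral $t$ for $V$ remains non-spectral for $V_+$ and then matches ranks of structure maps. Your approach is slightly more elementary and gives the explicit bar-by-bar correspondence $\langle a,b\rangle \mapsto [a,b)$ for free; the paper's approach avoids normal-form decompositions but arrives at the same conclusion. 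A small terminological quibble: your decreasing sequence $s_n \downarrow t^+$ is \emph{cofinal} in the inverse system (indexed by $(t,\infty)$ with the reverse order), not ``initial''; the content is unaffected.
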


\begin{remark}\label{rmk: upper sc}
More concretely, let \[0 \to A \to B \to C \to 0\] be a short exact sequence of q-tame (respectively finite barcode) modules. Then there exists a new exact sequence  \[0 \to A_+ \to B_+ \to C_+ \to 0\] of upper semi-continuous q-tame (respectively finite barcode) modules, which fits into the commutative diagram \[\xymatrix@C+2pc@R+1pc{
		0 \ar[r] & A \ar[d]^{q_A}\ar[r] & B \ar[d]^{q_B}\ar[r] &  C\ar[d]^{q_C}\ar[r] & 0\\
		0 \ar[r] & A_+\ar[r] & B_+ \ar[r] &  C_+\ar[r] & 0,}\] where all the vertical arrows induce isomorphisms in the observable category.
\end{remark}

\begin{proof}
	We first note that given $t\in \R,$ and a q-tame persistence module $V$ we may compute $\lim_{s>t} V_s$ by restricting $s$ to lie in a countable cofinal directed subset of $(t,\infty),$ for instance $\{t+1/i\}_{i \geq 1}.$ 
	
	The fact that $P:V \mapsto V_{+}$ is a functor from the category of q-tame persistence modules to itself is an easy verification. Indeed if $V$ is q-tame, then so is $V_{+}$ by an argument involving composition of structure maps.	Furthermore, every morphism $f:V \to W$ of persistence modules induces a natural morphism $P(f) = f_{+}: V_{+} \to W_{+},$ since for every $t \in \R$ it yields a morphism of inverse systems $\{V_s\}_{s>t}$ and $\{W_s\}_{s>t}$ (with suitable structure maps). Moreover it is an easy computation with inverse limits that $V_{+}$ is always upper semi-continuous. Now observe that given $t \in \R,$ and a q-tame persistence module $V,$ the inverse system $\{V_s\}_{s>t}$ satisfies the {\em Mittag-Leffler condition}, see \cite[p. 62]{Chazal_et_al_2016}. Therefore if \[0 \to A \to B \to C \to 0\] is an exact sequence of q-tame persistence modules, then \[0 \to A_s \to B_s \to C_s \to 0\] is an exact sequence of inverse systems (indexed by $s\in (t,\infty)$), and the inverse limits of these systems still form an exact sequence. The exactness on the left is automatic \cite[Section 02MY]{Stacks}, while the exactness on the right follows from the Mittag-Leffler condition \cite[Section 0594]{Stacks}. This exact sequence is \[0 \to (A_+)_t \to (B_+)_t \to (C_+)_t \to 0,\] from which it is easy to conclude that we obtained the exact sequence \[0 \to A_+ \to B_+\to C_+ \to 0,\] of q-tame persistence modules. In other words, $P$ is an exact functor.

	Finally, if $V$ is a finite barcode module, then so is $V_{+}$ since for every $t$ which is not spectral for $V,$ $V_t \to (V_{+})_t$ is a natural isomorphism, so $t$ is not spectral for $V_{+}.$ The proof is now finished by observing that $\mrm{rank}(\pi^V_{s,s'}) = \mrm{rank}(\pi^{V_{+}}_{s,s'})$ for all $s,s'$ not spectral for $V,$ which implies that $\cl{N}_0(V) = \cl{N}_0(V_{+}).$ In fact, the barcodes of $V$ and $V_{+}$ are related as follows: the bars are in bijection such that every bar $\left<a,b\right>$ for $V$ corresponds to the bar $[a,b)$ for $V_{+}.$

\end{proof}

We will also need the following lemma.

\begin{lemma}\label{Middle_q-tame} Let $U\rightarrow V \rightarrow W$ be an exact sequence of persistence modules. If $U$ and $W$ are $q$-tame then $V$ is $q$-tame as well.
\end{lemma}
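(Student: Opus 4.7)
The plan is to show directly that, for every $s<t$, the structure map $\pi^V_{s,t}$ has finite rank by splitting it via an intermediate parameter $r\in(s,t)$ and using exactness at $V$ to separate ``the $U$-part'' from ``the $W$-part.'' Denote the morphisms of the exact sequence by $f\colon U \to V$ and $g\colon V \to W$, so that $\im f_t = \ker g_t$ for every $t$.

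I would fix $s<t$, pick any $r\in(s,t)$, and factor $\pi^V_{s,t} = \pi^V_{r,t}\circ \pi^V_{s,r}$. Set $B := \im(\pi^V_{s,r}) \subseteq V_r$ and $B_0 := B\cap \ker g_r$; it suffices to bound $\dim \pi^V_{r,t}(B)$. Naturality of $g$ with respect to the structure maps gives
\[
 g_r(B) \;=\; \pi^W_{s,r}(g_s(V_s)) \;\subseteq\; \im \pi^W_{s,r},
\]
which is finite-dimensional by $q$-tameness of $W$ on the subinterval $[s,r]$. Since $B/B_0 \cong g_r(B)$, this yields $\dim(B/B_0) < \infty$.

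Next I would bound $\dim \pi^V_{r,t}(B_0)$. By exactness at $V_r$, $B_0 \subseteq \ker g_r = \im f_r$: for each $b\in B_0$ choose $u\in U_r$ with $f_r(u)=b$, and naturality of $f$ gives $\pi^V_{r,t}(b) = f_t(\pi^U_{r,t}(u))$. Therefore
\[
 \pi^V_{r,t}(B_0) \;\subseteq\; f_t(\im \pi^U_{r,t}),
\]
whose dimension is at most $\mrm{rank}(\pi^U_{r,t})$, finite by $q$-tameness of $U$ on $[r,t]$.

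Combining these two bounds, $\mrm{rank}(\pi^V_{s,t}) = \dim \pi^V_{r,t}(B) \leq \dim \pi^V_{r,t}(B_0) + \dim(B/B_0) < \infty$, so $V$ is $q$-tame. I do not anticipate any real obstacle: the argument is a short diagram chase, and the single interior point $r$ is exactly what is needed in order to invoke $q$-tameness of $W$ on $[s,r]$ and of $U$ on $[r,t]$ without these two applications interfering with each other.
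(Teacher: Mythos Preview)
Your argument is correct and follows essentially the same approach as the paper: pick an intermediate parameter, use naturality and exactness to separate the contribution controlled by $U$ from that controlled by $W$, and bound each piece by $q$-tameness on the two subintervals. The paper packages the final diagram chase by invoking \cite[Lemma II.17.3]{Bredon97}, whereas you carry it out by hand; the content is the same.
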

\begin{proof} Let us fix $s<t$ and show that $\pi_{s,t}^V$ has finite rank. We pick an arbitrary $s<s'<t$. The following diagram commutes
\[\xymatrix@C+2pc@R+1pc{U_t \ar[r] & V_t  & \\
		U_{s'} \ar[r]\ar[u]^{\pi^U_{s',t}} & V_{s'}  \ar[u]^{\pi^V_{s',t}}\ar[r] & W_{s'} \\
		& V_s \ar[r]\ar[u]^{\pi^V_{s,s'}} & W_s \ar[u]^{\pi^W_{s,s'}}}\]
		where horizontal maps are the maps of the exact sequence. Thus, the middle row is exact and since $U$ and $W$ are $q$-tame, $\pi^U_{s',t}$ and $\pi^W_{s,s'}$ have finite rank. Now \cite[Lemma II.17.3]{Bredon97} implies that $\pi^V_{s,t}=\pi^V_{s',t}\circ \pi^V_{s,s'}$ has finite rank as well.
\end{proof}

\subsection{K\"unneth formula and duality}\label{subsec-Kunneth}

We describe K\"unneth formula for persistence modules associated to continuous functions, slightly extending its version from \cite{PSS}, see \cite{HitPer19,CarFil20, BubMil21} for subsequent works.

Let $M$ be a compact manifold without boundary and $f:M\rightarrow \R$ a continuous function. As before, denote by $\mathring{V}_*(f)_t=H_*(\{ f<t \}).$ Due to lower semi-continuity\footnote{Similarly to the upper semi-continuity, a persistence module is called lower semi-continuous if the canonical map $\colim_{s<t} V_s \rightarrow V_t$ is an isomorphism for all $t\in \R.$ } of $\mathring{V}_*(f)$, the bars in $\B(\mathring{V}_*(f))$ are of the form $(a,b]$ or $(a,+\infty)$ for $a,b\in \R$ and moreover $\mathring{V}_*(f) \cong \oplus_{I\in \B(\mathring{V}_*(f))} \mathbb{K}_I$, see \cite{Schmahl22} for details.  Note that this is a genuine isomorphism of persistence modules, while without the lower semi-continuity assumption we would only have an isomorphism in the observable category, as explained in Remark \ref{Remark_Observable}. 

For a function $f\in C^0(M)$ on a closed manifold $M$ set $\mathring{\cl{B}}_r(f) = \cl{B}(\mathring{V}_r(f)).$ Let $\mathring{\cl{B}}^{\mrm{fin}}_r(f)$ denote the sub-barcode of $\mathring{\cl{B}}_r(f)$ consisting of all its finite bars. Similarly $\mathring{\cl{B}}^{\mrm{inf}}_r(f)$ is the sub-barcode of $\mathring{\cl{B}}_r(f)$ consisting of all its infinite bars.

\begin{theorem}[K\"unneth formula] Let $M_1,M_2$ be two closed manifolds and  $f_1\in C^0(M_1),  f_2\in C^0(M_2)$. The barcode of $f_1+f_2 \in C^0(M_1 \times M_2)$ can be computed from $\mathring{\B}_*(f_1)$ and $\mathring{\B}_*(f_2)$ as follows. For each pair of bars $(a,b]\in \mathring{\B}_{k_1}(f_1)$ and $(c,d]\in \mathring{\B}_{k_2}(f_2),$ there exists a pair of bars $(a+c,\min \{a+d,b+c \}]\in \mathring{\B}_{k_1+k_2}(f_1+f_2),$ $(\max \{a+d,b+c \},b+d]\in \mathring{\B}_{k_1+k_2+1}(f_1+f_2).$ If $b=+\infty$ or $d=+\infty$ only the first bar exists in $\mathring{\B}(f_1+f_2).$
\end{theorem}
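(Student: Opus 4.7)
The plan is to prove this as a K\"unneth formula for filtered chain complexes. First I would reduce to the case of smooth Morse functions by $C^0$-approximation together with the stability theorem, and then exploit the Barannikov (minimal) decomposition of a filtered complex into one-- and two-term blocks, one per bar in the barcode. Since $M_1,M_2$ are closed, smooth Morse functions are $C^0$-dense in $C^0(M_i)$: choose Morse $f_i^{(j)}\to f_i$ uniformly. Then $f_1^{(j)}+f_2^{(j)}\to f_1+f_2$ uniformly, so by Theorem \ref{Stability theorem} the barcodes $\mathring{\B}_*(f_i^{(j)})$ and $\mathring{\B}_*(f_1^{(j)}+f_2^{(j)})$ converge in bottleneck distance to those of $f_i$ and $f_1+f_2$ respectively. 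The explicit endpoint rule in the statement is continuous in bottleneck distance once length-zero bars are identified with the empty bar (so that pairs of bars with coinciding endpoints collapse gracefully), reducing the problem to smooth Morse $f_1,f_2$.

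\textbf{Filtered Morse complex of a sum.} After a generic perturbation, $f_1\oplus f_2\colon (x,y)\mapsto f_1(x)+f_2(y)$ is Morse--Smale for a product metric on $M_1\times M_2$, with critical points $(p,q)$ of Morse index $\ind(p)+\ind(q)$ and critical value $f_1(p)+f_2(q)$, and trajectory moduli spaces that split as products. Consequently the Morse chain complex satisfies $C_*(f_1\oplus f_2)\cong C_*(f_1)\otimes_{\bK}C_*(f_2)$, with the standard Koszul-signed total differential and the sum filtration $F_t(C\otimes C')=\sum_{s+u\leq t}F_sC\otimes F_uC'$. Since $\{f_1\oplus f_2<t\}=\bigcup_{s+u<t}\{f_1<s\}\times\{f_2<u\}$, this filtered complex computes $\mathring V_*(f_1+f_2)$.

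\textbf{Algebraic K\"unneth via minimal blocks.} By the Barannikov/structure theorem, each $C_*(f_i)$ decomposes as a filtered complex into minimal blocks: one block $M^{a,b}_k$ per finite bar $(a,b]\in \mathring{\B}_k(f_i)$, with a degree-$k$ generator at filtration level $a$ and a degree-$(k{+}1)$ generator at level $b$ joined by an isomorphism differential; and one block per infinite bar $(a,\infty)\in \mathring{\B}_k$, a single generator in degree $k$ at level $a$ with zero differential. The tensor product distributes over this decomposition, so it suffices to compute the barcode of $M^{a,b}_{k_1}\otimes M^{c,d}_{k_2}$ for each combination. For two finite blocks, the tensor is a four-term acyclic filtered complex with generators $e_0,e_1,e_2,e_3$ at filtration levels $a+c,\ a+d,\ b+c,\ b+d$ in degrees $k_1+k_2,\ k_1+k_2+1,\ k_1+k_2+1,\ k_1+k_2+2$, with $\partial e_1,\partial e_2=\pm e_0$ and $\partial e_3=\pm e_1\pm e_2$. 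Direct inspection of the persistent homology gives exactly two bars: one in degree $k_1+k_2$ born with $e_0$ at $a+c$ and killed by whichever of $e_1,e_2$ first appears, giving $(a+c,\min(a+d,b+c)]$; and one in degree $k_1+k_2+1$ created as the one-dimensional cycle space spanned by the combination of $e_1,e_2$ that persists once both are present, and killed by $e_3$, giving $(\max(a+d,b+c),b+d]$. If $b=\infty$ (resp.\ $d=\infty$), the tensor degenerates to a two-term complex and only the first bar remains; if both are infinite, only the infinite bar $(a+c,\infty)$ survives. Summing over all pairs in $\mathring{\B}_*(f_1)\times\mathring{\B}_*(f_2)$ recovers the claimed barcode.

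\textbf{Main obstacle.} The technical core is the middle step: identifying the filtered Morse complex of $f_1\oplus f_2$ with the tensor product $C_*(f_1)\otimes C_*(f_2)$ as filtered complexes---this requires a careful product-metric construction compatible with Morse--Smale transversality, a product description of the trajectory moduli spaces at the chain level with correct Koszul signs, and verification that the Morse complex genuinely computes the sublevel-set persistence module $\mathring V_*(f_1+f_2)$ with the sum filtration. Once this is in place, the algebraic tensor-product calculation and the $C^0$-limit are routine (modulo careful treatment of coinciding endpoints in the collapse step).
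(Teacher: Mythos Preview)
Your proposal is correct and follows essentially the same approach as the paper: reduce to the Morse case by $C^0$-approximation and the stability theorem, and in the Morse case compute the barcode of the tensor product of filtered Morse complexes via their Barannikov decompositions. The paper simply cites \cite{PSS} for the Morse case, whereas you have spelled out the underlying filtered-complex computation; both arguments coincide in substance.
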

\begin{proof}
The theorem has been proven in \cite{PSS} for Morse functions. To extend the proof to continuous functions, it is enough to find $C^0$-approximating sequences of Morse functions and apply the stability theorem.
\end{proof}

We will also require the following duality statement for functions. 

\begin{prop}\label{prop: duality}
Let $M$ be a closed manifold of dimension $n$ and $f \in C^0(M).$ For every integer $0 \leq r < n,$ the barcode $\mathring{\cl{B}}^{\mrm{fin}}_{n-r-1}(-f) = \{(I'_j,m'_j)\}$ of $-f$ in degree $n-r-1$ and the barcode $\mathring{\cl{B}}^{\mrm{fin}}_r(f) = \{(I_j,m_j)\}$ of $f$ in degree $r$ are related as follows: the two indexing sets agree, $m'_j = m_j$ for all $j,$ and if $I_j=(a_j,b_j]$ then $I'_j = (-b_j,-a_j].$ Similarly, if $\mathring{\cl{B}}^{\mrm{inf}}_{r}(f) = \{ ((c_k,\infty),m_k)\}$ then $\mathring{\cl{B}}^{\mrm{inf}}_{n-r}(-f) = \{ ((-c_k,\infty),m_k)\}.$
\end{prop}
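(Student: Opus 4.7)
The plan is to reduce to the case of Morse functions by $C^0$-approximation and the Stability Theorem~\ref{Stability theorem}, and then verify the duality at the level of the filtered Morse chain complex. First I would pick a sequence of Morse functions $g_i$ with pairwise distinct critical values converging to $f$ in $C^0(M)$; then $-g_i \to -f$ in $C^0$, so Theorem~\ref{Stability theorem} yields $\mathring{\cl{B}}_r(g_i) \to \mathring{\cl{B}}_r(f)$, $\mathring{\cl{B}}_{n-r-1}(-g_i) \to \mathring{\cl{B}}_{n-r-1}(-f)$, and $\mathring{\cl{B}}_{n-r}(-g_i) \to \mathring{\cl{B}}_{n-r}(-f)$ in the bottleneck distance. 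Because the involution $\Phi$ on bars defined by $\Phi((a,b]) = (-b,-a]$ and $\Phi((c,\infty)) = (-c,\infty)$ is an isometry of the metric underlying $d_{bottle}$, the Morse version passes to the limit and suffices.

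For a Morse function $g$ with distinct critical values I would use the standard combinatorial description of the barcode of a filtered Morse complex. The filtered chain complex $C_*(g) = \bigoplus_{p \in \mrm{Crit}(g)} \bK\langle p\rangle$ (graded by Morse index, filtered by $F^{\leq t} C_*(g) = \bigoplus_{g(p)\leq t} \bK\langle p\rangle$) computes the persistence module $\mathring{V}_*(g)$, and its barcode is determined by a unique birth-death pairing on $\mrm{Crit}(g)$: every finite bar in degree $r$ corresponds to a pair $(p_-,p_+)$ with $(\ind p_-, \ind p_+) = (r,r+1)$ and equals $(g(p_-),g(p_+)]$, and every infinite bar in degree $r$ corresponds to an unpaired critical point of index $r$ and equals $(g(p),+\infty)$. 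The central algebraic observation is that the filtered Morse complex of $-g$ is canonically the $n$-shifted transpose of that of $g$ with filtration reversed: $\mrm{Crit}(-g) = \mrm{Crit}(g)$, $\ind_{-g}(p) = n - \ind_g(p)$, $(-g)(p) = -g(p)$, and because the negative gradient flow of $-g$ is the positive gradient flow of $g$ the Morse boundary operator transposes. Since the birth-death pairing is intrinsic to a filtered chain complex and is compatible with transposition in the sense that it exchanges the birth and death critical points, a pair $(p_-,p_+)$ producing the bar $(a,b]$ in degree $r$ for $g$ becomes the pair $(p_+,p_-)$ producing $(-b,-a]$ in degree $n-r-1$ for $-g$, and an unpaired index-$r$ critical point with value $c$ for $g$ becomes an unpaired index-$(n-r)$ critical point with value $-c$ for $-g$. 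This yields the proposition once combined with the reduction step.

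The main obstacle I expect is the rigorous justification of the naturality of the combinatorial birth-death pairing under transposition of filtered chain complexes, together with the endpoint bookkeeping imposed by the lower semi-continuous convention $\mathring{V}_*(f)_t = H_*(\{f<t\})$ that produces bars of the form $(a,b]$ and $(c,\infty)$ on both sides. A subsidiary point concerns orientations: for non-orientable $M$ it is simplest to fix $\bK = \Z/2$ so that the Morse complex and its transpose are unambiguously defined, whereas for orientable $M$ the argument goes through unchanged over an arbitrary field $\bK$.
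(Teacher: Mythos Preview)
Your proposal is correct and follows essentially the same approach as the paper: reduce to the Morse case by $C^0$-approximation and the stability/isometry theorem, then invoke the duality at the level of filtered Morse complexes. The paper handles the Morse step by citing \cite[Proposition 6.7]{UsherZhang} (for $\Gamma=0$) applied to the Morse complexes of $f$ and $-f$ with respect to a Morse--Smale pair $(f,\rho)$, which is precisely the birth-death pairing compatibility under transposition that you sketch directly; your remark on orientability and the choice of $\bK$ is a valid caveat that the paper leaves implicit.
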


For convenience, we denote the situation described by this proposition by $\mathring{\cl{B}}^{\mrm{fin}}_{n-r-1}(-f) = - \mathring{\cl{B}}^{\mrm{fin}}_r(f),$ $\mathring{\cl{B}}^{\mrm{inf}}_{n-r}(-f) = - \mathring{\cl{B}}^{\mrm{inf}}_r(f).$

\begin{proof}
For $f$ a smooth Morse function this is well known. For instance, it is an immediate application of \cite[Proposition 6.7]{UsherZhang} for $\Gamma = 0$ applied to the Morse complexes (see for example \cite{Schwarz-book}) of $f$ and $-f$ with respect to the same Riemannian metric $\rho$ on $M,$ such that $(f,\rho)$ is Morse-Smale. For a general continuous function $f$ we pick a sequence $f_i$ of smooth Morse functions $C^0$-converging to $f.$ Then by the isometry theorem we have the convergences $\mathring{\cl{B}}^{\mrm{fin}}_{n-r-1}(-f_i) \to \mathring{\cl{B}}^{\mrm{fin}}_{n-r-1}(-f),$ $\mathring{\cl{B}}^{\mrm{fin}}_r(f_i) \to \mathring{\cl{B}}^{\mrm{fin}}_r(f)$ in the bottleneck distance. However, by the Morse case $\mathring{\cl{B}}^{\mrm{fin}}_{n-r-1}(-f_i) = - \mathring{\cl{B}}^{\mrm{fin}}_r(f_i)$ for all $i,$ whence the result follows for finite bars. A similar argument applies in the case of infinite bars.
\end{proof}

%


\section{Subadditivity of the bar counting function}\label{sec: commutes}

\subsection{Subadditivity theorem}

A crucial property of the bar counting function which we prove and use in this paper is its subadditivity for persistence modules in exact sequences. More precisely, the following theorem holds.

\begin{theorem}\label{Exact_Pers}
	Let $U\rightarrow V \rightarrow W$ be an exact sequence of moderate persistence modules. Then for every $\delta> 0$ the following inequality holds:
	$$\mathcal{N}_{2 \delta}(V) \leq  \mathcal{N}_\delta(U) + \mathcal{N}_\delta(W).$$
\end{theorem}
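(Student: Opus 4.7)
The plan is to reduce to a short exact sequence and then establish a structural description of the barcode of such a sequence. First, by Lemma 2.6 and Remark 2.7, I replace the exact sequence $U \to V \to W$ by the exact sequence of upper semi-continuous regularizations $U_+ \to V_+ \to W_+$; regularization preserves exactness and leaves $\mathcal{N}_\delta$ unchanged for $\delta > 0$, so I may assume all three modules are upper semi-continuous and hence interval-decomposable with bars of the form $[a,b)$ (with $b$ possibly $+\infty$). Setting $\tilde V := \im(U \to V) = \ker(V \to W)$, I factor the three-term sequence as a surjection $U \twoheadrightarrow \tilde V$, a short exact sequence
\[
0 \to \tilde V \to V \to V/\tilde V \to 0,
\]
and an injection $V/\tilde V \hookrightarrow W$; the middle terms remain moderate by Lemma 2.8 together with sub/quotient considerations and a further application of regularization if needed.

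The main technical ingredient is a structural description of the barcode of $B$ for a short exact sequence $0 \to A \to B \to C \to 0$ of upper semi-continuous moderate modules: there is a partial matching between the bars of $A$ and the bars of $C$ in which a $C$-bar $[c,a)$ may be paired with an $A$-bar $[a,a')$ whose left endpoint equals the right endpoint of the $C$-bar, such that each matched pair contributes the concatenated interval $[c,a')$ to $\mathcal{B}(B)$, while unmatched bars of $A$ and $C$ contribute to $\mathcal{B}(B)$ unchanged. Heuristically, a class in $B$ is born either as a new $A$-class or a new $C$-class and dies similarly, and while a $C$-class may merge into the image of $A$ (producing a concatenation), a class once lying in $A$ cannot reappear in $C$. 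A rigorous derivation would track births and deaths via the persistence structure maps after reducing to a finite-barcode setting by truncating short bars, so that interval decompositions are honest direct sums rather than direct products.

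From this structural description two corollaries follow immediately. The first is monotonicity: for an injection $X \hookrightarrow Y$ or a surjection $Y \twoheadrightarrow X$, each $X$-bar corresponds (either as unmatched or as part of a concatenation) to a $Y$-bar of weakly greater length, so $\mathcal{N}_\delta(X) \leq \mathcal{N}_\delta(Y)$ for every $\delta>0$. The second is the factor-$2$ subadditivity in a short exact sequence: every $B$-bar of length greater than $2\delta$ is either an unmatched $A$- or $C$-bar of the same length (hence length $> 2\delta > \delta$), or is a concatenation whose two summand lengths sum to more than $2\delta$, in which case at least one of them has length $> \delta$; assigning each such $B$-bar to a long $A$- or $C$-bar produced by this analysis gives an injective map since the underlying matching is a partial bijection. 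Combining these, $\mathcal{N}_{2\delta}(V) \leq \mathcal{N}_\delta(\tilde V) + \mathcal{N}_\delta(V/\tilde V) \leq \mathcal{N}_\delta(U) + \mathcal{N}_\delta(W)$, which is the claim.

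The central obstacle is the structural description of $\mathcal{B}(B)$ itself. Constructing the matching rigorously requires handling possible non-uniqueness --- several $C$-bars ending at a given value $a$ could in principle be paired with several $A$-bars beginning at $a$ --- in a consistent way that actually reproduces $\mathcal{B}(B)$ up to isomorphism of persistence modules. This is where the genuinely new persistence-theoretic content of the theorem lies; an alternative route, which sidesteps an explicit matching, would be a direct midpoint argument that projects a representative of each long $B$-bar into $C$ at the bar's midpoint and analyzes the two cases according to whether the projection vanishes, but making the resulting assignment injective would remain the delicate point in either approach.
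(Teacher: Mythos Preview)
Your reduction to a short exact sequence $0 \to \tilde V \to V \to V/\tilde V \to 0$ and the monotonicity conclusions are fine (and indeed the paper proves monotonicity, via a different route). The fatal gap is the ``structural description'' of $\mathcal{B}(B)$ for a short exact sequence $0 \to A \to B \to C \to 0$: the claim that every bar of $B$ is either an unmatched $A$- or $C$-bar, or a concatenation $[c,a')$ of a $C$-bar $[c,a)$ with an $A$-bar $[a,a')$ sharing the endpoint $a$, is \emph{false}. Take $A=\mathbb{K}_{[1,3)}$ and $C=\mathbb{K}_{[0,2)}$. One computes $\mathrm{Ext}^1(C,A)\cong\mathbb{K}$, and the nonsplit extension $B$ has barcode $\{[0,3),[1,2)\}$. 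Here the $C$-bar ends at $2$ while the $A$-bar begins at $1$, so no concatenation in your sense is available; your description would force $\mathcal{B}(B)=\{[1,3),[0,2)\}$, which is the split case only. The phenomenon you are missing is that an $A$-bar and a $C$-bar may \emph{overlap}, producing a $B$-bar that is their union together with a shorter companion bar. Your subsequent length-counting argument (``the two summand lengths sum to more than $2\delta$'') rests on exact additivity of lengths under concatenation, which fails once overlaps occur, so the deduction of $\mathcal{N}_{2\delta}(B)\le \mathcal{N}_\delta(A)+\mathcal{N}_\delta(C)$ collapses.

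The paper avoids any barcode-level description of $B$. It first proves the case $\delta=0$ via the snake lemma, counting births as dimensions of cokernels of structure maps. For $\delta>0$, it realizes $B$ as $\mathrm{coker}(j\oplus g)$ for a projective resolution $0\to R\xrightarrow{j} G\to C\to 0$ and an extension class $g:R\to A$, then builds a modified extension $0\to A'[\delta]\to B'\to C'\to 0$ in which $A',C'$ are $A,C$ with all bars of length $\le\delta$ deleted, and proves that $B$ and $B'$ are $(0,2\delta)$-interleaved via a notion of closeness of morphisms. The inequality then follows from $\mathcal{N}_{2\delta}(B)\le \mathcal{N}_0(B')\le \mathcal{N}_0(A'[\delta])+\mathcal{N}_0(C')=\mathcal{N}_\delta(A)+\mathcal{N}_\delta(C)$. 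This interleaving-with-truncation idea is the genuinely new content you correctly flagged; but it is not a matching of bars, and your proposed matching does not exist in general.
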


\begin{remark}
	In particular, Theorem \ref{Exact_Pers} applies to finite barcode modules which are upper semi-continuous and bounded from the left (upper semi-continuity can in fact be dropped by an application of Lemma 
	\ref{lma: upper sc replacement}).
	
	On a different note, we expect that the same statement should hold for $U, V, W$ being arbitrary q-tame persistence
	modules. However, this generality is not necessary for us in this paper.
\end{remark}
	
In this section we present a proof of Theorem \ref{Exact_Pers} as well as its extension which takes into account the positions of the starting points of bars. This is a key technical tool from the theory of persistence modules and barcodes. It allows us to make local-to-global estimates which are crucial for the multiscale argument in the proof of the main technical result, Theorem \ref{thm: main 2}.

\subsection{Proof of Theorem \ref{Exact_Pers}}\label{subsec:subadd}
The main technical result we will need is the following proposition.
\begin{prop}\label{prop: Ndelta}
Let \[ 0 \to A \to B \to C \to 0 \] be a short exact sequence of finite barcode modules bounded from the left. Then for every $\delta \geq 0,$ \[ \cl N_{2\delta}(B) \leq \cl N_{\delta}(A) + \cl N_{\delta}(C).\] Moreover, \[ \cl{N}_{\delta}(A) \leq \cl{N}_{\delta}(B),\] \[\cl{N}_{\delta}(C) \leq \cl{N}_{\delta}(B).\]
\end{prop}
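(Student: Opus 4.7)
The plan is to prove the three inequalities in turn. For $\cl{N}_\delta(A) \leq \cl{N}_\delta(B)$ and $\cl{N}_\delta(C) \leq \cl{N}_\delta(B)$ I would invoke the Bauer--Lesnick induced matching theorem applied respectively to the injection $A \hookrightarrow B$ and the surjection $B \twoheadrightarrow C$: each produces an injective map on barcodes under which matched bars contain one another, hence lengths of $A$-bars (or $C$-bars) can only increase when passed to $B$. Restricting each matching to bars of length $>\delta$ yields these two inequalities.

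The heart of the matter is $\cl{N}_{2\delta}(B) \leq \cl{N}_\delta(A) + \cl{N}_\delta(C)$. I would first establish the special case $\delta = 0$, namely $\cl{N}_0(B) \leq \cl{N}_0(A) + \cl{N}_0(C)$, by a local count at each birth point $a$. Write $\mu(V,a)$ for the number of bars of $V$ born at $a$, computed as $\dim V_a - \dim \mrm{Im}(\pi^V_{s,a})$ in the limit $s \to a^-$. Using exactness of $0 \to A_s \to B_s \to C_s \to 0$ at every $s$ together with naturality, the image of $A \to B$ is preserved by structure maps and annihilated in $C$, so $\mrm{Im}(\pi^A_{s,a})$ sits inside the kernel of $\mrm{Im}(\pi^B_{s,a}) \twoheadrightarrow \mrm{Im}(\pi^C_{s,a})$; this forces $\dim \mrm{Im}(\pi^B_{s,a}) \geq \dim \mrm{Im}(\pi^A_{s,a}) + \dim \mrm{Im}(\pi^C_{s,a})$ which, combined with $\dim B_a = \dim A_a + \dim C_a$, yields $\mu(B,a) \leq \mu(A,a) + \mu(C,a)$. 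Summing over the finitely many birth points gives the $\delta = 0$ subadditivity.

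To upgrade this to the main inequality, I would use the identification $\cl{N}_\delta(V) = \cl{N}_0(\mrm{Im}(\pi_\delta^V))$, where $\pi_\delta^V \colon V \to V[\delta]$ is the shift structure map. From the original short exact sequence one derives the restricted short exact sequence
\[ 0 \to K' \to \mrm{Im}(\pi_{2\delta}^B) \to \mrm{Im}(\pi_{2\delta}^C) \to 0, \]
where $K' = \mrm{Im}(\pi_{2\delta}^B) \cap A[2\delta]$. Applying the $\delta = 0$ subadditivity to this sequence yields $\cl{N}_{2\delta}(B) \leq \cl{N}_0(K') + \cl{N}_{2\delta}(C) \leq \cl{N}_0(K') + \cl{N}_\delta(C)$, so it suffices to prove $\cl{N}_0(K') \leq \cl{N}_\delta(A)$. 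I would analyze $K'$ via the snake-lemma connecting morphism $\partial \colon \ker(\pi_{2\delta}^C) \to \mrm{coker}(\pi_{2\delta}^A)$ and the further short exact sequence $0 \to \mrm{Im}(\pi_{2\delta}^A) \to K' \to \mrm{Im}(\partial) \to 0$, reducing to an accounting that matches the bar contributions of $\mrm{Im}(\partial)$ with bars of $A$ of intermediate length in $(\delta, 2\delta]$. The hardest part will be this final bookkeeping step: ensuring injectivity of the assignment and the absence of double counting. A safer alternative route is to use the structure theorem together with the $\mrm{Ext}^1$ classification of extensions to reduce to pairs of single-interval modules, where the non-trivial extension has exactly two bars of lengths $L_1, L_2$ with $L_1 + L_2 = |I| + |J|$ (so that if $L_k > 2\delta$ for some $k$ then $\max(|I|, |J|) > \delta$), and to assemble the general case from these elementary building blocks via an injective matching from bars of $B$ to bars of $A \sqcup C$ in which each matched partner has length at least $|J|/2$.
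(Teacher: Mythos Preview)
Your treatment of the ``moreover'' inequalities via the Bauer--Lesnick induced matching is correct, and your local birth-count argument for the case $\delta=0$ is essentially the same as the paper's Lemma~\ref{lma: exact sequences}. The difficulty lies entirely in the upgrade to $\delta>0$, and here your proposal contains a genuine gap.

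Your strategy reduces the main inequality to the claim $\cl{N}_0(K') \leq \cl{N}_\delta(A)$, where $K' = \mrm{Im}(\pi_{2\delta}^B)\cap A[2\delta]$. This claim is false. Take $A=\bK_{[0,\,0.1)}$, $C=\bK_{[-10,\,0.05)}$, and let $B$ be the nontrivial extension (so $B$ has bars $[-10,0.1)$ and $[0,0.05)$). With $\delta=3$ one has $\cl{N}_\delta(A)=0$, yet a direct computation gives $K'=\bK_{[-5.95,\,-5.9)}$ and hence $\cl{N}_0(K')=1$. The point is that a long bar of $B$ can terminate inside $A[2\delta]$ even when that bar ultimately ``comes from'' $C$; the intersection $K'$ then acquires a short nonzero piece that has no counterpart among the $\delta$-long bars of $A$. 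Your snake-lemma refinement $0\to\mrm{Im}(\pi_{2\delta}^A)\to K'\to\mrm{Im}(\partial)\to 0$ is correct, but bounding $\cl{N}_0(\mrm{Im}(\partial))$ by the number of $A$-bars of length in $(\delta,2\delta]$ fails for the same reason: in the example $\mrm{Im}(\partial)\cong K'$ has one bar while $A$ has no bars whatsoever of length $>\delta$. Your alternative of reducing to single-interval extensions via $\mrm{Ext}^1$ is also not a complete argument: a general extension does not split into a direct sum of elementary two-bar extensions, so assembling the inequality from the elementary case requires an injective matching from long bars of $B$ into bars of $A\sqcup C$ that you have not constructed.

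The paper circumvents this obstacle by a different mechanism. Rather than intersecting with $A[2\delta]$, it erases all bars of length $\leq\delta$ from $A$ and $C$ to obtain $A',C'$, realises $B$ as the cokernel of a map $R\to G\oplus A$ arising from a projective resolution of $C$, and builds a parallel extension $0\to A'[\delta]\to B'\to C'\to 0$ by modifying this map. A homotopy-style estimate (Lemma~\ref{lma: close maps}) then shows that $B$ and $B'[-\delta]$ are $\delta$-interleaved, whence $\cl{N}_{2\delta}(B)\leq N(B')\leq N(A')+N(C')=\cl{N}_\delta(A)+\cl{N}_\delta(C)$ by the $\delta=0$ case. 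The key difference from your approach is that the paper compares $B$ to a \emph{new} extension built from the truncated $A',C'$ rather than trying to carve $A$-like and $C$-like pieces directly out of $B$; this avoids the cross-contamination exhibited in the counterexample.
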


We defer proving Proposition \ref{prop: Ndelta} and first show how it implies Theorem \ref{Exact_Pers}.

Recall that for a persistence module $V$ and a real number $a \in \R,$ the shift $V[a]$ of $V$ by $a$ is defined as \[ V[a]_t = V_{a+t}.\] If $a \geq 0,$ there is a canonical shift morphism \[sh_{a,V}: V \to V[a]\] given by \[(sh_{a,V})_t = \pi^V_{t, a+t}: V_t \to V_{a+t} = V[a]_t\] for $\pi^V_{s,t}: V_s \to V_t,$ $s \leq t$ the structure maps of the persistence module $V.$ Denote by $V^{(a)}=\im(sh_{a,V}).$

\begin{lemma}\label{Shift_counting_lemma}
Let $V$ be a moderate persistence module. For all $\delta>0$, $V^{(\delta)}$ is a finite barcode module and $\cl N_\delta(V)=\cl N_0 (V^{(\delta)}).$
\end{lemma}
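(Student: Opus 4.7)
The approach is to exploit the direct product decomposition \eqref{SEMI_BARCODE_PRODUCT} available for moderate modules. Writing $V \cong \prod_{I \in \cl{B}(V)} \mathbb{K}_I$, where each bar $I$ has the form $[a,b)$ or $[a,\infty)$, the shift morphism $sh_{\delta, V}: V \to V[\delta]$ becomes the product of the shift morphisms $sh_{\delta, \mathbb{K}_I}$ on the factors (using $\mathbb{K}_I[\delta] = \mathbb{K}_{I-\delta}$). Since a pointwise product of linear maps has image equal to the pointwise product of the images, $V^{(\delta)}$ decomposes bar-by-bar as the product of the $\im(sh_{\delta, \mathbb{K}_I})$.

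The next step is to compute this image for each interval module. For a finite bar $I=[a,b)$, the map $(sh_{\delta, \mathbb{K}_I})_t$ is the identity on $\mathbb{K}$ when both $t$ and $t+\delta$ lie in $I$, i.e. when $t \in [a, b-\delta)$, and is zero otherwise; hence the image is isomorphic to $\mathbb{K}_{[a, b-\delta)}$ when $b-a > \delta$, and is zero when $b-a \leq \delta$. For an infinite bar $I = [a,\infty)$, both $t$ and $t+\delta$ lie in $I$ exactly when $t \geq a$, so the image is $\mathbb{K}_{[a,\infty)}$. Therefore $V^{(\delta)}$ is isomorphic to the product of the $\mathbb{K}_{I'}$ as $I$ ranges over bars of $V$ of length strictly greater than $\delta$, with $I'$ the correspondingly truncated bar.

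Moderateness guarantees that $\cl N_\delta(V)$ is finite, so only finitely many factors are nonzero and the product becomes a finite direct sum. Combined with pointwise finite-dimensionality, which is automatic from $q$-tameness since $\dim V^{(\delta)}_t = \rk(\pi^V_{t,t+\delta}) < \infty$, this shows that $V^{(\delta)}$ is a finite barcode module. Its bars are exactly the $I'$ described above, each of positive (possibly infinite) length, and the uniqueness clause of the structure theorem identifies these as the bars of $\cl{B}(V^{(\delta)})$. Consequently $\cl N_0(V^{(\delta)})$ equals the total number of bars of $V^{(\delta)}$, which is $\cl N_\delta(V)$.

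The only step requiring a bit of care is the interchange of ``image'' with the direct-product decomposition in the first paragraph: one must view $V^{(\delta)}$ as a persistence submodule of $V[\delta]$, so that images may be computed coordinate-wise with respect to the product structure. This verification is straightforward and is the only mild obstacle in the argument.
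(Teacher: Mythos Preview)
Your proposal is correct and follows essentially the same approach as the paper's proof: decompose $V$ as a product of interval modules via \eqref{SEMI_BARCODE_PRODUCT}, compute $\im(sh_{\delta,\mathbb{K}_I})$ for each bar, and use finiteness of $\cl N_\delta(V)$ to conclude the product is finite. Your write-up is more explicit about the infinite-bar case and the interchange of image with the product, but the argument is the same.
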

\begin{proof}
For $I=[a,b)$ we have that if $b-a>\delta$, $\mathbb{K}_I^{(\delta)}=\mathbb{K}_{I^{(\delta)}}$ where $I^{(\delta)}=[a,b-\delta)$ and $\mathbb{K}_I^{(\delta)}=0$ otherwise. Due to barcode decomposition \eqref{SEMI_BARCODE_PRODUCT}, we have that $V^{(\delta)}\cong \Pi_I\mathbb{K}_I^{(\delta)}$, the product going over all $I\in \B(V)$ of length greater than $\delta.$ Since $\cl N_\delta$ of a moderate persistence module is finite,  this product is finite and the claim follows. 
\end{proof}

We will also require the following auxiliary results. Recall that for a q-tame persistence module $U,$ we denote by $U_{+}$ its upper semi-continuous regularization, defined in Section \ref{subsec:reg}.

\begin{lemma}\label{lma: sub}
	Let $i:U \to V$ be an injection of q-tame persistence modules, such that $V$ is upper semi-continuous. Then the natural map $q_U: U \to U_+$ is injective.
\end{lemma}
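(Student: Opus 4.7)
The plan is to unpack the definition of $q_U$ as the canonical map to the inverse limit $(U_+)_t = \lim_{s>t} U_s$ and chase elements through the injection $i$ into $V$, exploiting the hypothesis that $q_V$ is an isomorphism.

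First I would fix $t \in \R$ and take an arbitrary element $x \in U_t$ with $(q_U)_t(x) = 0$. Since $(q_U)_t$ sends $x$ to the compatible system $\{\pi^U_{t,s}(x)\}_{s>t}$, the vanishing of this image in the inverse limit is equivalent to the vanishing of $\pi^U_{t,s}(x) \in U_s$ for every $s > t$ (because $\lim$ is a subfunctor of the product and the kernel of the map to the product is the intersection of the kernels of the projections).

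Next, I would transport this information to $V$ using naturality of $i$. Since $i$ is a morphism of persistence modules, we have $\pi^V_{t,s} \circ i_t = i_s \circ \pi^U_{t,s}$, so $\pi^V_{t,s}(i_t(x)) = i_s(\pi^U_{t,s}(x)) = 0$ for all $s > t$. By the same reasoning as before, this says precisely that $(q_V)_t(i_t(x)) = 0$ in $(V_+)_t$. Now upper semi-continuity of $V$ means that $q_V: V \to V_+$ is an isomorphism, and in particular $(q_V)_t$ is injective; hence $i_t(x) = 0$. Finally, $i$ being an injection of persistence modules means each $i_t$ is injective, so $x = 0$. Thus $\ker (q_U)_t = 0$ for every $t$, which is precisely the injectivity of $q_U$.

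There is no real obstacle here — the argument is essentially a two-line diagram chase once the inverse limit description of $q_U$ and $q_V$ is in place. The one mild point to be careful about is that injectivity of the canonical map $W \to \lim_{s>t} W_s$ is a statement about compatible systems rather than about a single structure map, but this is immediate from the universal property of the inverse limit.
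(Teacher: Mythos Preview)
Your proof is correct and is essentially the same argument as the paper's, just unpacked at the element level. The paper phrases it as a one-line diagram chase: the square with $q_U, q_V, i, i_+$ commutes, so $i_+ \circ q_U = q_V \circ i$ is injective (as a composition of injections), hence $q_U$ is injective. Your version explicitly computes what vanishing in the inverse limit means, which amounts to the same naturality relation.
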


\begin{proof}
	The maps $i,q_U,$ the induced map $i_+: U_+ \to V_+,$ and the natural map $q_V: V \to V_+,$ which is an isomorphism, fit into the commutative diagram: \[\xymatrix@C+2pc@R+1pc{
		U  \ar[d]^{q_U}\ar[r]^{i} & V \ar[d]^{q_V}\\
		 U_+ \ar[r]^{i_+} & V_+.}\] Now $q_V \circ i = i_+ \circ q_U$ is injective, and therefore $q_U$ is injective.
\end{proof}

\begin{lemma}\label{lma: im ker}
Let $f:A \to B$ be a morphism of moderate persistence modules. Then $\ker(f)$ and $\im(f)$ are moderate persistence modules.
\end{lemma}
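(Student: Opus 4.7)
The strategy is to verify the four conditions in the definition of a moderate persistence module --- $q$-tameness, upper semi-continuity, absence of $(-\infty,c)$-intervals in the direct product decomposition, and finiteness of $\cl{N}_\delta$ for every $\delta > 0$ --- for both $\ker(f)$ and $\im(f)$. Throughout, I would use that $\ker(f)$ is a persistence submodule of $A$ (its structure maps are restrictions of those of $A$) and that $\im(f)$ is simultaneously a quotient of $A$ and a persistence submodule of $B$ (with structure maps restricted from $B$). The $q$-tameness is then immediate by bounding ranks of structure maps: $\mrm{rank}(\pi^{\ker(f)}_{s,t}) \leq \mrm{rank}(\pi^A_{s,t}) < \infty$ and $\mrm{rank}(\pi^{\im(f)}_{s,t}) \leq \mrm{rank}(\pi^B_{s,t}) < \infty$.

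For upper semi-continuity I would handle the two cases separately. Applying $\lim_{s > t}$ to the exact sequence $0 \to \ker(f) \to A \to B$, and using left-exactness of inverse limits together with the upper semi-continuity of $A$ and $B$, one obtains the natural isomorphism $\ker(f)_t \cong \lim_{s > t} \ker(f)_s$. For $\im(f)$ I would apply the exact regularization functor $P$ of Lemma \ref{lma: upper sc replacement} to the short exact sequence $0 \to \ker(f) \to A \to \im(f) \to 0$; since $\ker(f)$ (just handled) and $A$ are upper semi-continuous, the natural maps $\ker(f) \to \ker(f)_+$ and $A \to A_+$ are isomorphisms, so by the five lemma $\im(f) \to \im(f)_+$ is an isomorphism as well.

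To exclude $(-\infty,c)$-intervals I would use the observation that for any moderate $V$, the direct product decomposition $V \cong \prod_I \mathbb{K}_I$ (with each $I$ of the form $[a,b)$ or $[a,\infty)$) yields $\bigcap_{s<t} \mrm{im}(\pi^V_{s,t}) = 0$ for every $t$, since an element of the intersection would have to lie in a bar whose left endpoint is $-\infty$. Because the structure maps of $\ker(f) \subseteq A$ and $\im(f) \subseteq B$ are restrictions of those of $A$ and $B$, the vanishing of the intersection passes to $\ker(f)$ and $\im(f)$, which rules out any $(-\infty,c)$-bars in their decompositions.

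The main technical obstacle is the finiteness of $\cl{N}_\delta$ for every $\delta > 0$. Via Lemma \ref{Shift_counting_lemma} this reduces to showing that $(\ker f)^{(\delta)}$ and $(\im f)^{(\delta)}$ are finite barcode modules. Naturality of the shift morphism $\mrm{sh}_\delta$ produces commutative diagrams exhibiting $(\ker f)^{(\delta)}$ as a persistence submodule of $A^{(\delta)}$ and $(\im f)^{(\delta)}$ as a persistence submodule of $B^{(\delta)}$, both of which are finite barcode modules by Lemma \ref{Shift_counting_lemma}. The remaining ingredient, which I expect to be the most delicate point, is that any persistence submodule of a finite barcode module is itself finite barcode: this follows because such an ambient module has a finite spectrum and structure maps that are isomorphisms on the complement of the spectrum, so the submodule can only be born or die at finitely many points and thus admits a finite barcode decomposition. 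Combining, $\cl{N}_\delta(\ker f) = \cl{N}_0((\ker f)^{(\delta)}) < \infty$ and analogously for $\im(f)$, completing the verification.
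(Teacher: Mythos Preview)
Your proof is correct and follows essentially the same strategy as the paper's, verifying each of the four conditions in the definition of moderateness. The main organizational difference is in how you establish upper semi-continuity: you treat $\ker(f)$ via left-exactness of inverse limits applied to $0 \to \ker(f) \to A \to B$ and then $\im(f)$ via the five lemma on the regularized short exact sequence, whereas the paper handles both at once by first invoking Lemma~\ref{lma: sub} (injectivity of $q_K$ and $q_J$ from the inclusions $K \subseteq A$, $J \subseteq B$ into upper semi-continuous modules) and then a direct diagram chase on $0 \to K_+ \to A_+ \to J_+ \to 0$ for surjectivity. Both routes work.

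There is one small gap in your sketch for finiteness of $\cl N_\delta$. You assert that since the ambient finite-barcode module $A^{(\delta)}$ has structure maps that are isomorphisms away from its finite spectrum, the submodule $(\ker f)^{(\delta)}$ ``can only be born or die at finitely many points.'' But the restriction of an isomorphism $\pi^V_{s,t}$ to a submodule $W$ is only guaranteed to be \emph{injective}, not surjective onto $W_t$; bars of $W$ may be born away from $\mrm{Spec}(V)$ (e.g.\ $W = \mathbb{K}_{[1,2)} \subset V = \mathbb{K}_{[0,2)}$). The conclusion is still true and the fix is straightforward: injectivity of the restricted maps shows that bars of $W$ can only \emph{die} at points of $\mrm{Spec}(V)$, hence there are finitely many deaths; since $\dim W_t \leq \dim V_t \leq \cl N_0(V)$, the number of births is at most the number of deaths plus this uniform bound, so $W$ is finite barcode. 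The paper is equally terse here, simply noting that finiteness follows ``by the same argument as for Lemma~\ref{Shift_counting_lemma}.''
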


\begin{remark}
We can complete the proof of Theorem \ref{Exact_Pers} using either one of $\ker(f)$ or $\im(f)$ being moderate. We include both statements in the lemma for the sake of completeness, and opt to use the latter one in our exposition.
\end{remark}

\begin{proof}
Let $K = \ker(f)$ and $J = \im(f).$ These are q-tame persistence modules, as submodules of q-tame persistence modules. They fit into the exact sequence \[ 0 \to K \to A \to J \to 0.\] Let us prove that $K, J$ are upper semi-continuous.

First, $K, J$ are submodules of upper semi-continuous q-tame modules. By Lemma \ref{lma: sub} these two facts imply that the natural maps $q_K: K \to K_+$ and $q_J: J \to J_+$ are injective. It remains to show that they are surjective. By Lemma \ref{lma: upper sc replacement} or Remark \ref{rmk: upper sc} we have an induced short exact sequence \[ 0 \to K_+ \to A_+ \to J_+ \to 0 \] of q-tame persistence modules which fits into the commutative diagram \[\xymatrix@C+2pc@R+1pc{
	0 \ar[r] & K \ar[d]^{q_K}\ar[r] & A \ar[d]^{q_A}\ar[r] &  J\ar[d]^{q_J}\ar[r] & 0\\
	0 \ar[r] & K_+\ar[r] & A_+ \ar[r] &  J_+\ar[r] & 0.}\]

Now the surjectivity of both $q_K$ and $q_J$ is a quick diagram chase. For instance, let $t \in \R$ and $y_+ \in (J_{+})_t.$ Then there exists $x_+ \in (A_+)_t$ which maps to $y_+.$ Then, since $q_A$ is an isomorphism, $x_+ = (q_A)_t(x)$ for some $x \in A_t.$ Let $x$ map to $y \in J_t.$ Then $q_J(y) = y_+$ by the commutativity of the right square.

Moreover it is easy to show that $\cl N_{\delta}(K), \cl N_{\delta}(J)$ are finite for all $\delta > 0$ for instance by the same argument as for Lemma \ref{Shift_counting_lemma}. Finally, their barcodes do not contain negative rays since those of $A, B$ do not. This finishes the proof.
\end{proof}

\begin{lemma}\label{Q-tame_extension}
Proposition \ref{prop: Ndelta} remains true for $\delta>0$ if we only assume that $A, B$ and $C$ are moderate. 
\end{lemma}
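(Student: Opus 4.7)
The plan is to reduce Lemma \ref{Q-tame_extension} to Proposition \ref{prop: Ndelta} via shifts and a limit argument. Fix $\delta > 0$ and $\epsilon \in (0, \delta]$. By Lemma \ref{Shift_counting_lemma} the shifted modules $A^{(\epsilon)}, B^{(\epsilon)}, C^{(\epsilon)}$ are finite barcode modules (and bounded from the left, since each has only finitely many bars). Shifting does not preserve exactness in the middle: while the induced map $\phi^{(\epsilon)}\colon B^{(\epsilon)} \to C^{(\epsilon)}$ is still surjective by a diagram chase and $A^{(\epsilon)}$ embeds as a sub-persistence module of $\cl{K} := \ker \phi^{(\epsilon)}$, in general $A^{(\epsilon)} \subsetneq \cl{K}$. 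Overcoming this failure of exactness is the main obstacle.

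The key observation is that $\cl{K}/A^{(\epsilon)}$ has all bars of length at most $\epsilon$. Indeed, for any $k \in \cl{K}_t \subseteq A_{t+\epsilon}$ one has $\pi^{\cl{K}}_{t,t+\epsilon}(k) = \pi^A_{t+\epsilon,\,t+2\epsilon}(k) \in A^{(\epsilon)}_{t+\epsilon}$, so the induced structure map on $\cl{K}/A^{(\epsilon)}$ vanishes at every $t$. Equivalently, $\cl{K}^{(\epsilon)} \subseteq A^{(\epsilon)}[\epsilon]$ as sub-persistence modules of $\cl{K}[\epsilon]$. The submodule inequality of Proposition \ref{prop: Ndelta} then gives $\cl{N}_{s}(\cl{K}^{(\epsilon)}) \leq \cl{N}_{s}(A^{(\epsilon)}[\epsilon]) = \cl{N}_{s+\epsilon}(A)$ for all $s \geq 0$, which translates via the shift formula into $\cl{N}_\delta(\cl{K}) \leq \cl{N}_\delta(A)$ for every $\delta \geq \epsilon$.

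Applying Proposition \ref{prop: Ndelta} to the finite-barcode short exact sequence $0 \to \cl{K} \to B^{(\epsilon)} \to C^{(\epsilon)} \to 0$ at parameter $\delta \geq \epsilon$, and using the identities $\cl{N}_{2\delta}(B^{(\epsilon)}) = \cl{N}_{2\delta+\epsilon}(B)$ and $\cl{N}_\delta(C^{(\epsilon)}) = \cl{N}_{\delta+\epsilon}(C)$, yields
\[ \cl{N}_{2\delta+\epsilon}(B) \leq \cl{N}_\delta(\cl{K}) + \cl{N}_\delta(C^{(\epsilon)}) \leq \cl{N}_\delta(A) + \cl{N}_{\delta+\epsilon}(C). \]
I finally send $\epsilon \to 0^+$. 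For any moderate module $V$ the function $s \mapsto \cl{N}_s(V)$ is right-continuous at every $s > 0$, since $\cl{N}_s(V)$ is finite and is therefore locally constant just above $s$ (the finitely many bars counted at $s$ all have length strictly greater than $s$). Right-continuity at $2\delta$ and $\delta$ then yields $\cl{N}_{2\delta}(B) \leq \cl{N}_\delta(A) + \cl{N}_\delta(C)$. The submodule and quotient inequalities $\cl{N}_\delta(A) \leq \cl{N}_\delta(B)$ and $\cl{N}_\delta(C) \leq \cl{N}_\delta(B)$ follow from the corresponding inequalities of Proposition \ref{prop: Ndelta} applied to the inclusion $A^{(\epsilon)} \hookrightarrow B^{(\epsilon)}$ and the surjection $B^{(\epsilon)} \twoheadrightarrow C^{(\epsilon)}$, together with the same $\epsilon \to 0^+$ procedure.
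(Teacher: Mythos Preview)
Your proof is correct and the key observation that $\cl{K}^{(\epsilon)} \subseteq A^{(\epsilon)}[\epsilon]$ (equivalently, that $\cl{K}/A^{(\epsilon)}$ has only bars of length $\leq \epsilon$) is verified exactly as you say. The right-continuity argument for $s \mapsto \cl{N}_s(V)$ on moderate modules is also valid: the finitely many bars of length $>s$ all have length strictly larger than $s$, so there is a gap to the right.

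Your route differs from the paper's in the main inequality. The paper does not pass through the $\epsilon$-shift of $B$; instead it fixes a barcode decomposition of $B$ and lets $B' \subset B$ be the direct summand formed by the bars of length $>2\delta$, so that $B'$ is a finite barcode submodule with $\cl{N}_{2\delta}(B') = \cl{N}_{2\delta}(B)$. Pulling back and pushing forward along $\phi, \psi$ yields a genuine short exact sequence $0 \to X \to B' \to Y \to 0$ of finite barcode modules with $X \subset A$ and $Y \subset C$, to which Proposition~\ref{prop: Ndelta} applies directly; the submodule inequality (already established in the moreover part) then gives $\cl{N}_\delta(X) \leq \cl{N}_\delta(A)$ and $\cl{N}_\delta(Y) \leq \cl{N}_\delta(C)$. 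This avoids both the analysis of the defect $\cl{K}/A^{(\epsilon)}$ and the limiting step. Your approach, by contrast, stays entirely within the shift formalism and never invokes a barcode decomposition of $B$, at the cost of a short $\epsilon \to 0^{+}$ argument. Both proofs establish the moreover part in the same way.
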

\begin{proof}
We first prove the moreover part of the proposition. Denote the maps in the exact sequence by 
$$0\to A \xrightarrow{\phi} B \xrightarrow{\psi} C \to 0$$
One readily checks that $\phi^{(\delta)}=\phi[\delta]|_{A^{(\delta)}}:A^{(\delta)}\rightarrow B^{(\delta)}$ is injective, while $\psi^{(\delta)}=\psi[\delta]|_{B^{(\delta)}}:B^{(\delta)}\rightarrow C^{(\delta)}$ is surjective.  We may complete these maps to short exact sequences
$$0\to A^{(\delta)}\xrightarrow{\phi^{(\delta)}} B^{(\delta)} \to \coker \phi^{(\delta)} \to 0 ,~0\to \ker \psi^{(\delta)} \to B^{(\delta)}\xrightarrow{\psi^{(\delta)}} C^{(\delta)} \to  0.$$
Persistence modules in these sequences are finite barcode modules bounded from the left and hence Proposition \ref{prop: Ndelta} implies that $\cl N_0(A^{(\delta)}) \leq \cl N_0(B^{(\delta)})$ and $\cl N_0(C^{(\delta)}) \leq \cl N_0(B^{(\delta)}).$ These inequalities together with Lemma \ref{Shift_counting_lemma} finish the proof of the moreover part.

Now, let us fix a decomposition of $B$ as in \eqref{SEMI_BARCODE_PRODUCT} and let $B'$ be a submodule of $B$ obtained by taking only summands corresponding to bars of length greater than $2\delta.$ Then $B'$ is a finite barcode module such that $\cl N_{2\delta} (B')=\cl N_{2\delta} (B)$ and we consider a short exact sequence
$$0 \to X \xrightarrow{f} B' \xrightarrow{g} Y \to 0,$$
where $X=\phi^{-1}({B'}),$ $Y=\psi({B'}),$ $f = \phi|_X,$ $g = \psi|_{B'}.$ Note that $B'$ is a finite barcode module bounded from the left, and hence so are $X$ and $Y$ as its submodule and quotient module respectively. Indeed, $X$ and $Y$ are pointwise finite dimensional, and therefore have barcode normal forms. Then their barcodes are finite by a local calculation and evidently bounded from the left.

Hence, we may apply Proposition \ref{prop: Ndelta} to obtain $$\cl N_{2\delta} (B)=\cl N_{2\delta} (B') \leq \cl N_\delta (X)+\cl N_\delta (Y).$$ Furthermore, since $X$ is a submodule of $A$ and $Y$ is a submodule of $C$, the moreover part which we already proved implies that $\cl N_\delta (X)\leq \cl N_\delta (A), \cl N_\delta (Y)\leq \cl N_\delta (C)$ which finishes the proof. \end{proof}

\begin{proof}[Proof of Theorem \ref{Exact_Pers}]
Denote the maps in the exact sequence by
$$U \xrightarrow{f} V \xrightarrow{g} W$$
and consider the induced exact sequence \[0 \to \mrm{im}(f) \to V \to \mrm{im}(g) \to 0.\] By Lemma \ref{lma: im ker} $\im(f), \im(g)$ are moderate. Now by Proposition \ref{prop: Ndelta} and Lemma \ref{Q-tame_extension} we obtain that \[\cl{N}_{\delta}(\mrm{im}(f)) \leq \cl{N}_{\delta}(U),\] \[\cl{N}_{\delta}(\mrm{im}(g)) \leq \cl{N}_{\delta}(W),\] since $f: U \to \mrm{im}(f)$ is surjective and the inclusion $\mrm{im}(g) \to W$ is injective. Now by Proposition \ref{prop: Ndelta} and Lemma \ref{Q-tame_extension} again, we obtain \[ \cl{N}_{2\delta}(V) \leq \cl{N}_{\delta}(\mrm{im}(f)) + \cl{N}_{\delta}(\mrm{im}(g)) \leq \cl{N}_{\delta}(U) + \cl{N}_{\delta}(W). \]
\end{proof}

Before we proceed with the proof of Proposition \ref{prop: Ndelta}, we require a few preparatory notions and results.  We start with the following key definition.	
	
\bs

\begin{dfn}\label{def: close maps}	
	
For two morphisms \[f:X \to Y, \; f':X' \to Y'\] of persistence modules, we say that $f, f'$ are $(\de_1,\de_2; \de'_1,\de'_2)$-close if there are $(\de_1,\de_2)$-interleavings \[p_X: X \to X'[\de_1], q_X: X' \to X[\de_2],\]  $q_X[\de_1] \circ p_X = sh_{\de_1+\de_2, X},$ $p_X[\de_2] \circ q_X = sh_{\de_1+\de_2, X'},$ and
\[p_Y: Y \to Y'[\de_1], q_Y: Y' \to Y[\de_2],\] $q_Y[\de_1] \circ p_Y = sh_{\de_1+\de_2, Y},$ $p_Y[\de_2] \circ q_Y = sh_{\de_1+\de_2, Y'},$ such that the following condition holds: \begin{align}\label{eq:closeness} sh_{\de'_1,Y'[\de_1]} \circ (p_Y \circ f - f'[\de_1] \circ p_X) = 0,\\ \nonumber
sh_{\de'_2,Y[\de_2]} \circ (q_Y \circ f' - f[\de_2] \circ q_X) = 0.\end{align} In other words $\mrm{im}(p_Y \circ f - f'[\de_1] \circ p_X)$ and $\mrm{im}(q_Y \circ f' - f[\de_2]\circ q_X)$ are respectively ${\de'_1}/2$ and ${\de'_2}/2$-close to $0$ in the bottleneck distance. \end{dfn}

We prove the following lemma.

\bs

\begin{lemma}\label{lma: close maps}
If $f, f'$ are $(\de_1,\de_2;\de'_1,\de'_2)$-close then their cokernels $\mrm{coker}(f),$ $\mrm{coker}(f')$ are $(\de_1+\de'_1, \de_2+\de'_2)$-interleaved.
\end{lemma}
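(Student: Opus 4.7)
The strategy is to construct the desired interleaving maps of the cokernels directly from the given interleaving data, and then verify the interleaving identities by reducing them to those for $Y$ and $Y'$.

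First, I would introduce the ``shifted interleaving morphisms''
\[
\til{p}_Y := sh_{\de'_1, Y'[\de_1]} \circ p_Y \colon Y \to Y'[\de_1 + \de'_1],
\qquad
\til{q}_Y := sh_{\de'_2, Y[\de_2]} \circ q_Y \colon Y' \to Y[\de_2 + \de'_2].
\]
The first step is to show that $\til{p}_Y$ descends to a morphism $\bar p \colon \coker(f) \to \coker(f')[\de_1 + \de'_1]$, and similarly $\til{q}_Y$ descends to $\bar q \colon \coker(f') \to \coker(f)[\de_2 + \de'_2]$. For $\bar p$, I would compose $\til{p}_Y$ with $f$ and use the first closeness relation \eqref{eq:closeness}:
\[
\til{p}_Y \circ f = sh_{\de'_1, Y'[\de_1]} \circ p_Y \circ f = sh_{\de'_1, Y'[\de_1]} \circ f'[\de_1] \circ p_X = f'[\de_1+\de'_1] \circ \bigl(sh_{\de'_1, X'[\de_1]} \circ p_X\bigr),
\]
where the last equality is naturality of the shift morphism with respect to the morphism $f'[\de_1]$. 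Hence $\til{p}_Y \circ f$ has image inside $\im(f'[\de_1+\de'_1])$, so post-composing with the quotient $Y'[\de_1+\de'_1] \to \coker(f')[\de_1+\de'_1]$ gives zero on $\im(f)$. By the universal property of the cokernel, this induces the desired $\bar p$. The construction of $\bar q$ from the second closeness relation is entirely symmetric.

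Next, I would verify the two interleaving identities $\bar q[\de_1+\de'_1] \circ \bar p = sh_{\Delta, \coker(f)}$ and $\bar p[\de_2+\de'_2] \circ \bar q = sh_{\Delta, \coker(f')}$, where $\Delta := \de_1 + \de_2 + \de'_1 + \de'_2$. It suffices to verify the corresponding equalities for the lifts $\til{p}_Y, \til{q}_Y$ at the level of $Y, Y'$, after which they descend to cokernels. Using naturality of shifts, one computes
\begin{align*}
\til{q}_Y[\de_1+\de'_1] \circ \til{p}_Y
&= sh_{\de'_2, Y[\de_2+\de_1+\de'_1]} \circ q_Y[\de_1+\de'_1] \circ sh_{\de'_1, Y'[\de_1]} \circ p_Y \\
&= sh_{\de'_2 + \de'_1, Y[\de_1+\de_2]} \circ q_Y[\de_1] \circ p_Y
= sh_{\de'_2 + \de'_1, Y[\de_1+\de_2]} \circ sh_{\de_1+\de_2, Y}
= sh_{\Delta, Y},
\end{align*}
where in the penultimate step I used the interleaving identity $q_Y[\de_1] \circ p_Y = sh_{\de_1+\de_2, Y}$. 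A symmetric computation gives $\til{p}_Y[\de_2+\de'_2] \circ \til{q}_Y = sh_{\Delta, Y'}$. Passing to the quotients yields the two interleaving identities for $\bar p, \bar q$, completing the proof that $\coker(f)$ and $\coker(f')$ are $(\de_1 + \de'_1, \de_2 + \de'_2)$-interleaved.

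The main (mild) obstacle is bookkeeping: one must systematically use naturality of the shift morphisms with respect to $p_X, q_X, p_Y, q_Y, f, f'$ and their shifts, so that everything compresses into the single interleaving identity at each step. Once this is laid out carefully, both the descent to cokernels and the interleaving identities follow directly from the hypotheses, with no further input required.
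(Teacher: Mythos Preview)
Your proof is correct and follows essentially the same approach as the paper: both construct the interleaving maps on cokernels by precomposing the given $Y$-interleaving with a shift by $\de'_1$ (respectively $\de'_2$), use the closeness relation \eqref{eq:closeness} together with naturality of shifts to see that these descend to cokernels, and then verify the interleaving identity by collapsing $q_Y[\de_1]\circ p_Y$ to $sh_{\de_1+\de_2,Y}$. The only cosmetic difference is that the paper composes with the projection $\pi'$ before checking $\til{p}_C\circ f=0$, whereas you keep the lift at the $Y$-level and observe that $\til{p}_Y\circ f$ lands in $\im(f'[\de_1+\de'_1])$; the underlying computations are identical.
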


\begin{proof}
Set $C = \mrm{coker}(f),$ $C' = \mrm{coker}(f')$ and let $\pi = \pi_C: Y \to C, \pi'=\pi_{C'}:Y \to C'$ be the natural projections.	
	
We will first construct the interleavings \[p_C: C \to C'[\de_1+\de'_1], q_C: C' \to C[\de_2+\de'_2],\] and then show that they satisfy the interleaving identities \[q_C[\de_1+\de'_1] \circ p_C = sh_{\delta, C},\] \[p_C[\de_2+\de'_2] \circ q_C = sh_{\delta, C'}\] for \[\delta = \de_1+\de'_1 + \de_2+\de'_2.\]

Note that to construct $p_C: C \to C'[\de_1+\de'_1],$ it is enough to construct $\til{p}_C: Y \to C'[\de_1+\de'_1]$ such that $\til{p}_C \circ f = 0.$ Set $\til{p}_C = \pi'[\de_1+\de'_1] \circ sh_{\de'_1,Y'[\de_1]} \circ p_Y.$

Then by condition \eqref{eq:closeness} \[\til{p} \circ f = \pi'[\de_1+\de'_1] \circ (sh_{\de'_1,Y'[\de_1]} \circ p_Y \circ f) = \pi'[\de_1+\de'_1]  \circ  (sh_{\de'_1,Y'[\de_1]} \circ f'[\de_1] \circ p_X ) = \] \[ = (\pi'[\de_1+\de'_1]  \circ  sh_{\de'_1,Y'[\de_1]}) \circ f'[\de_1] \circ p_X  =  sh_{\de'_1,C'[\de_1]}  \circ \pi'[\de_1]  \circ f'[\de_1] \circ p_X = 0,\] since $\pi'[\de_1] \circ f'[\de_1] = 0$ by definition of cokernel. This yields our desired map $p_C.$ The map $q_C$ is constructed similarly.

Now let us check that $q_C[\de_1+\de'_1] \circ p_C = sh_{\delta,C},$ where $\de = \de_1+\de'_1+\de_2+\de'_2.$ It is enough to check that ${q}_C[\de_1+\de'_1] \circ \til{p}_C = \pi_C[\delta] \circ sh_{\delta,Y} = sh_{\delta,C} \circ \pi_C.$ Indeed, note that $\til{p}_C = p_C \circ \pi_C$ so we would get that the desired identity $q_C[\de_1+\de'_1] \circ p_C = sh_{\delta,C}$ holds on the image of $\pi_C,$ which is surjective, so it holds in general.

In turn, it is now enough to calculate that \[\pi_C[\delta] \circ sh_{\de'_2, Y[\de_1+\de'_1+\de_2]} \circ (q_Y[\de_1+\de'_1] \circ sh_{\de'_1,Y'[\de_1]}) \circ p_Y =\] \[= \pi_C[\delta] \circ (sh_{\de'_2, Y[\de_1+\de'_1+\de_2]} \circ sh_{\de'_1,Y[\de_1+\de_2]}) \circ (q_Y[\de_1]  \circ p_Y) = \] \[ = \pi_C[\delta] \circ sh_{\de'_1+\de'_2,Y[\de_1+\de_2]} \circ sh_{\de_1+\de_2,Y} = \pi_C[\delta] \circ sh_{\delta,Y}.\]

\end{proof}

For a finite barcode module $V,$ let $N(V) = \cl N_0(V)$ denote the total number of bars of positive length in the barcode of $V.$ If $V$ is in addition upper semi-continuous, $N(V)$ is equal to the total number of bars in its barcode, since there are no bars of length zero. 

\bs

\begin{lemma}\label{lma: exact sequences}
Let \[ 0 \to A \to B \to C \to 0 \] be a short exact sequence of finite barcode modules bounded from the left. Then \[ N(B) \leq N(A) + N(C).\] Moreover, \[ N(A) \leq N(B),\] \[N(C) \leq N(B).\]

\end{lemma}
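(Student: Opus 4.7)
The plan is to reduce to the upper semi-continuous case via Lemma \ref{lma: upper sc replacement} and then count bars locally at every spectral point using the snake lemma. First I would replace the given exact sequence by its regularization $0 \to A_{+} \to B_{+} \to C_{+} \to 0$, which is again an exact sequence of finite barcode modules. Under regularization each bar $\langle a,b\rangle$ of $V$ becomes $[a,b)$ in $V_{+}$ (with bars of length zero becoming empty), so $N(V) = N(V_{+})$, and it suffices to prove the three inequalities for upper semi-continuous finite barcode modules whose bars are of the form $[a,b)$ with $a<b$ or $[a,\infty)$.

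Next, for such a module $V$ I would set up two equivalent local-count formulas at each spectral point $c$:
\[ N^{\mrm{fin}}(V) = \sum_c \dim \ker(\pi^V_{c^-,c}), \qquad N(V) = \sum_c \dim \coker(\pi^V_{c^-,c}), \]
where $\pi^V_{c^-,c} \co V_{c^-} \to V_c$ denotes the canonical structure map out of $V_{c^-} = \lim_{s\to c^-} V_s$. The first formula counts bars ending at $c$ and the second counts bars starting at $c$; consequently the number of infinite bars is $N^{\infty}(V) = \dim V_T$ for any $T$ larger than all finite right endpoints of bars of $V$. Moreover by exactness of rows, $\dim B_t = \dim A_t + \dim C_t$ for every $t$, and therefore $N^{\infty}(B) = N^{\infty}(A) + N^{\infty}(C)$.

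Then, I would apply the snake lemma at every spectral $c$ to the diagram with exact horizontal rows $0 \to A_\bullet \to B_\bullet \to C_\bullet \to 0$ at levels $c^-$ and $c$ and vertical arrows $\pi^{\bullet}_{c^-,c}$, yielding the six-term exact sequence
\[ 0 \to \ker\pi^A \to \ker\pi^B \to \ker\pi^C \to \coker\pi^A \to \coker\pi^B \to \coker\pi^C \to 0. \]
Summing dimension inequalities extracted from this sequence gives all three claimed bounds. Specifically: for (1), exactness at $\ker\pi^B$ yields $\dim\ker\pi^B \leq \dim\ker\pi^A + \dim\ker\pi^C$, summing gives the finite-bar inequality, and combined with additivity of $N^\infty$ gives $N(B) \leq N(A)+N(C)$. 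For (2), the injection $A \hookrightarrow B$ yields injections $\ker\pi^A_c \hookrightarrow \ker\pi^B_c$ and $\dim A_t \leq \dim B_t$, so both $N^{\mrm{fin}}$ and $N^{\infty}$ are monotone. For (3), the surjection $B \twoheadrightarrow C$ yields the surjection $\coker\pi^B_c \twoheadrightarrow \coker\pi^C_c$, and summing $\dim\coker\pi^B_c \geq \dim\coker\pi^C_c$ via the second local formula gives $N(B) \geq N(C)$.

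The only subtle point I anticipate is the asymmetry between (2) and (3): for (3) the cokernel formula is the efficient one, but for (2) the analogous assertion $\dim\coker\pi^A_c \leq \dim\coker\pi^B_c$ is false in general (the map $\coker\pi^A_c \to \coker\pi^B_c$ need not be injective because of the snake connecting morphism from $\ker\pi^C_c$), which is why one must split $N = N^{\mrm{fin}} + N^{\infty}$ and treat the two pieces separately using the kernel formula and dimensions at large $t$. Once the correct formula is chosen in each case, the rest is a routine application of the snake lemma and summation over spectral points, which are finite in number since $A, B, C$ are finite barcode modules.
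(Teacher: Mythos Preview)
Your proposal is correct and follows essentially the same approach as the paper: regularize to the upper semi-continuous case, count bar endpoints locally at each spectral point via the snake lemma, and handle the asymmetry between $N(A)\leq N(B)$ and $N(C)\leq N(B)$ exactly as you describe (kernels plus infinite-bar additivity for the former, cokernels for the latter). The only cosmetic differences are that the paper uses the map $\pi^V_{x-\eps,x+\eps}$ with $\eps$ smaller than the minimal spectral gap rather than your left-limit $\pi^V_{c^-,c}$ (equivalent for finite barcode modules, and it makes exactness of the top row immediate), and for the main inequality $N(B)\leq N(A)+N(C)$ the paper reads off the cokernel sequence directly rather than splitting into finite bars via kernels plus $N^\infty(B)=N^\infty(A)+N^\infty(C)$.
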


\begin{proof}
	
We apply Lemma \ref{lma: upper sc replacement} to assume that $A,B,C$ are upper semi-continuous.	
	
Observe that for an upper semi-continuous persistence module $V$ bounded from the left the number $N(V)$ of bars in the barcode of $V$ is equal to the number of left endpoints of bars for $V.$ The number $K(V)$ of finite bars in the barcode of $V$ is equal to the number of (finite) right endpoints of bars for $V.$ Finally, set $I(V)$ for the number of infinite bars in the barcode of $V.$

If $\eps > 0$ is smaller than the minimal gap in the spectrum of $V,$ then for every spectral point $x$ of $V$ the number $N(V,x)$ of bars starting at $x$ satisfies: \[ N(V,x) = \dim L(V,x),\] \[ L(V,x) = \mrm{coker}(\pi^V_{x-\eps,x+\eps}: V_{x-\eps} \to V_{x+\eps}),\] where $\pi^V_{s,t}: V_s \to V_t$ for $s \leq t$ are the structure maps of the persistence module $V.$ Similarly, the number of bars $K(V,x)$ ending at $x$ satisfies: \[K(V,x) = \dim R(V,x),\] \[ R(V,x) = \mrm{ker}(\pi^V_{x-\eps,x+\eps}: V_{x-\eps} \to V_{x+\eps}).\] 
	
Now, in the setting of our short exact sequence, let $\eps > 0$ be smaller than the minimal gap in the union of the spectra of $A, B, C.$ Let $x$ be a spectral point for $A, B,$ or $C.$ Then applying the snake lemma to the following commutative diagram \[
\xymatrix@C+1pc@R+1pc{
	0 \ar[r] & A_{x-\eps}  \ar[d]^{\pi^A_{x-\eps,x+\eps}} \ar[r] & B_{x-\eps} \ar[d]^{\pi^B_{x-\eps,x+\eps}} \ar[r] & C_{x-\eps} \ar[d]^{\pi^C_{x-\eps,x+\eps}} \ar[r] & 0 \\
	0 \ar[r] & A_{x+\eps} \ar[r] & B_{x+\eps} \ar[r] & C_{x+\eps} \ar[r] & 0} \]
yields the exact sequences of cokernels \[ L(A,x) \to L(B,x) \to L(C,x) \to 0\] and kernels \[ 0 \to R(A,x) \to R(B,x) \to R(C,x).\] To prove the first statement, we let $x$ be spectral for $B$ and  calculate dimensions for the cokernel exact sequence. This yields \[ N(B,x) \leq N(A,x) + N(C,x).\] Summing over all spectral points $x$ for $B,$ we obtain \[  N(B) \leq N(A) + N(C),\] as desired. 

To prove the moreover part we first suppose that $x$ is spectral for $C$ and compute dimensions for cokernels to obtain \[ N(B,x) \geq N(C,x)\] and sum up over all such $x$ to get $ N(B) \geq N(C).$

Then we suppose that $x$ is spectral for $A$ and compute dimensions for kernels to get \[ K(A,x) \leq K(B,x).\] Summing up over all such $x$ we obtain that $K(A) \leq K(B).$ However, the numbers of infinite bars in $A, B, C$ satisfy \[ I(B) = I(A) + I(C) \geq I(A),\] hence \[ N(A) \leq N(B).\] This finishes the proof. 
\end{proof}

\bs

Now we are ready to proceed to the proof of the main proposition.

\begin{proof}[Proof of Proposition \ref{prop: Ndelta}]
	
We first apply Lemma \ref{lma: upper sc replacement} to assume that $A,B,C$ are upper semi-continuous.	
	
To prove the moreover part it suffices to notice that for a persistence module $V$ and $\delta \geq 0,$ \[\cl{N}_{\delta}(V) = N(V^{(\delta)})\] for $V^{(\delta)} = \mrm{im}(sh_{\delta,V}).$ Now in our situation $A^{(\delta)} \to B^{(\delta)}$ is injective and $B^{(\delta)} \to C^{(\delta)}$ is surjective, hence by the moreover part of Lemma \ref{lma: exact sequences} we obtain the desired inequality. 

This motivates our approach to the main part of the proposition: we reduce it to Lemmas \ref{lma: exact sequences} and \ref{lma: close maps} by a suitable key construction.
	
Inspired by \cite[Section 8]{Skraba-Turner} we let \[0 \to R \xrightarrow{j} G \to C \to 0\] be a projective resolution of $C$ given by resolving every finite elementary module $\bK[a,b)$ in a normal form decomposition of $C$ by \[ 0 \to \bK[b,\infty) \to \bK[a,\infty) \to \bK[a,b) \to 0.\]

Observe that in view of the theory of extension groups, $B$ considered as an extension of $C$ by $A$ is obtained from a homomorphism \[ g:R \to A.\] Namely \[ B \cong \mrm{coker}(j \oplus g),\] for the monomorphism \[j \oplus g: R \to G \oplus A\] of persistence modules. 

The key construction in this proof reduces Proposition \ref{prop: Ndelta} to Lemmas \ref{lma: exact sequences} and \ref{lma: close maps}. We proceed as follows.

Let $A',C'$ be the submodules of $A, C$ obtained by erasing all direct summands in the normal form decompositions of $A, C$ corresponding to bars $[a,b)$ of length $b-a \leq \delta.$ Let \[ p: A \to A',\; i: A' \to A\] be the natural projection and injection.

Observe that $A, A'[\delta]$ are $(0,\delta)$-interleaved. Indeed \[\rho = p[\delta] \circ sh_{\delta, A} : A \to A'[\delta]\] \[ \sigma = i[\delta]: A'[\delta] \to A[\delta]\] provides a $(0,\delta)$-interleaving.

Consider the projective resolution \[ 0 \to R' \xrightarrow{j'} G \to C' \to 0 \] of $C',$ where $R' \xrightarrow{j'} G$ is obtained from $R \xrightarrow{j} G$ by keeping every direct summand $\bK[b,\infty) \to \bK[a,\infty)$ corresponding to a bar $[a,b)$ of length $> \delta,$ and changing every summand $\bK[b,\infty) \to \bK[a,\infty)$ corresponding to a bar of length $\leq \delta$ to $\bK[b',\infty) \to \bK[a,\infty)$ for $b' = a.$

Note that there are natural maps \[\mu: R \to R',\; \nu: R' \to R[\delta].\] They provide a $(0,\delta)$-interleaving.

Let us now construct an extension \[0 \to A'[\delta] \to B' \to C' \to 0\] of $C'$ by $A'[\delta]$ by considering homomorphism \[ g': R' \to A'[\delta]\] defined as the composition \[ R' \xrightarrow{\nu} R[\delta] \xrightarrow{g[\delta]} A[\delta] \xrightarrow{p[\delta]} A'[\delta]\] and setting \[ B' =\mrm{coker}(j' \oplus g'),\] for the map $j' \oplus g': R' \to G \oplus A'[\delta].$ 

By Lemma \ref{lma: exact sequences} we obtain \[ N(B') \leq N(A'[\delta]) + N(C') = \cl{N}_{\delta}(A) + \cl{N}_{\delta}(C).\]

It is therefore sufficient to prove that \[ N(B') \geq \cl{N}_{2\delta}(B),\] which would follow directly from the isometry theorem if $B'$ and a shift $B[a]$ of $B$ for suitable $a \in \R$ are $\delta$-interleaved.

This indeed holds by Lemma \ref{lma: close maps} combined with the following statement.

\bs

\begin{lemma}\label{lma: close check}
The maps $j \oplus g: R \to G \oplus A$ and $j' \oplus g': R' \to G \oplus A'[\delta]$ are $(0,\delta; 0, \delta)$-close. 
\end{lemma}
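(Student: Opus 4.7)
The plan is to make the interleavings completely explicit on each direct summand of the resolutions, and then verify the two closeness identities summand-by-summand, observing that the only non-trivial cancellation lives on summands corresponding to short bars.

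First I would take $(p_X, q_X) = (\mu, \nu)$ as the $(0,\delta)$-interleaving of $R$ with $R'$, and form $(p_Y, q_Y) = (\id_G \oplus \rho,\, sh_{\delta,G} \oplus \sigma)$ as a $(0,\delta)$-interleaving of $G \oplus A$ with $G \oplus A'[\delta]$. Here $(\id_G, sh_{\delta,G})$ is the tautological $(0,\delta)$-interleaving of $G$ with itself, and $(\rho,\sigma)$ is the $(0,\delta)$-interleaving of $A$ with $A'[\delta]$ already provided in the text. On each summand corresponding to a bar $[a,b)$, the map $\mu$ (resp.\ $\nu$) is the identity on long-bar summands (those with $b-a > \delta$) and the canonical structure inclusion $\bK[b,\infty) \hookrightarrow \bK[a,\infty)$ (resp.\ $\bK[a,\infty) \hookrightarrow \bK[b-\delta,\infty)$) on short-bar summands. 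From these explicit formulas one immediately checks that $\nu \circ \mu = sh_{\delta,R}$ and $\mu[\delta] \circ \nu = sh_{\delta,R'}$, so we indeed have a $(0,\delta)$-interleaving.

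Next I would verify the first closeness identity with $\delta'_1 = 0$, namely $p_Y \circ f = f' \circ \mu$ on the nose. Split componentwise: the $G$-component reads $j = j' \circ \mu$, which follows from a direct summand-by-summand check since $j'$ is identity on short-bar summands and $j$ is structure, while $\mu$ absorbs the discrepancy. The $A$-component reads $\rho \circ g = g' \circ \mu$; substituting the definitions $\rho = p[\delta] \circ sh_{\delta,A}$ and $g' = p[\delta] \circ g[\delta] \circ \nu$, this reduces to $sh_{\delta,A} \circ g = g[\delta] \circ \nu \circ \mu$, which holds because $g$ is a morphism of persistence modules (so it commutes with shifts) together with the already verified identity $\nu \circ \mu = sh_{\delta,R}$.

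The main obstacle is the second closeness identity, $sh_{\delta,Y[\delta]} \circ (q_Y \circ f' - f[\delta] \circ \nu) = 0$, which is where the tolerance $\delta'_2 = \delta$ is actually consumed. Componentwise I would prove the stronger on-the-nose identity $sh_{\delta,G} \circ j' = j[\delta] \circ \nu$ for the $G$-part by case analysis: on each long- and short-bar summand both compositions reduce to the structure inclusion into $\bK[a-\delta,\infty)$. For the $A$-part, the difference decomposes as
\[\sigma \circ g' - g[\delta] \circ \nu = (i \circ p - \id_A)[\delta] \circ g[\delta] \circ \nu,\]
and the image of $i \circ p - \id_A$ is concentrated in the summands of $A$ corresponding to bars of length $\leq \delta$, so one further shift by $\delta$ annihilates it. This is the key point where the pruning of short bars in the construction of $A'$ pays off, and once it is in place the lemma follows.
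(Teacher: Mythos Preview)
Your proof is correct and follows essentially the same approach as the paper's. You make the interleavings $(\mu,\nu)$ on $R,R'$ and $(\id_G\oplus\rho,\, sh_{\delta,G}\oplus\sigma)$ on $G\oplus A,\, G\oplus A'[\delta]$ explicit and verify the two closeness identities componentwise, which is exactly what the paper does; the only cosmetic differences are that you check the $G$-component of the second identity on the nose (the paper only checks it after the extra shift, which is all that is needed), and for the $A$-component you factor the discrepancy through $(i\circ p-\id_A)[\delta]$ rather than invoking the interleaving relation $\sigma\circ\rho=sh_{\delta,A}$, but these are equivalent computations.
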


\begin{proof}

Indeed, let us first prove that the following diagram is commutative:

\[\xymatrix@C+2pc@R+1pc{
	  R  \ar[d]^{\mu} \ar[r]^{j \oplus g} & G \oplus A \ar[d]^{\id \oplus \rho} \\
	R' \ar[r]^{j' \oplus g'} & G \oplus A'[\delta]}\]

Indeed \[(\id \oplus \rho) \circ (j \oplus g) = j \oplus p[\delta]\circ sh_{\delta,A} \circ g\] and \[(j' \oplus g') \circ \mu = j'\circ \mu \oplus g'\circ \mu = j \oplus p[\delta] \circ g[\delta] \circ \nu \circ \mu,\] however $g[\delta] \circ \nu \circ \mu = g[\delta] \circ sh_{\delta,R} = sh_{\delta,A}\circ g,$ which finishes the first part of the proof. 

Now consider the diagram:

\[\xymatrix@C+2pc@R+1pc{
	R'  \ar[d]^{\nu} \ar[r]^{j' \oplus g'} & G \oplus A'[\delta] \ar[d]^{sh_{\delta,G} \oplus \sigma} \\
	R[\delta] \ar[r]^{j[\delta] \oplus g[\delta]} & G[\delta] \oplus A[\delta]}\]

Let us prove that it commutes up to $\delta$ in the sense that \[ sh_{\delta, G[\delta] \oplus A[\delta]}\circ (sh_{\delta,G} \oplus \sigma) \circ (j' \oplus g')  = sh_{\delta, G[\delta] \oplus A[\delta]} \circ (j[\delta] \oplus g[\delta]) \circ \nu.\] Let us establish this component-wise. The first components coincide since \[sh_{\delta,G[\delta]} \circ j[\delta] \circ \nu = sh_{\delta,G[\delta]} \circ j'[\delta] \circ \mu[\delta] \circ \nu = sh_{\delta,G[\delta]} \circ j'[\delta] \circ sh_{\delta, R'} = sh_{\delta, G[\delta]} \circ sh_{\delta,G} \circ j'.\] The second components coincide since \[ sh_{\delta, A[\delta]} \circ \sigma \circ g' = sh_{\delta, A[\delta]} \circ \sigma \circ p[\delta] \circ g[\delta] \circ \nu =  (\sigma \circ \rho)[\delta] \circ g[\delta] \circ \nu = sh_{\delta, A[\delta]} \circ g[\delta] \circ \nu.\] This finishes the proof of the lemma.

\end{proof}

Now by Lemma \ref{lma: close maps}, $B = \mrm{coker}(j \oplus g)$ and $B' = \mrm{coker}(j' \oplus g')$ are $(0,2\delta)$-interleaved and hence $B$ and $B'[-\delta]$ are $\delta$-interleaved. This finishes the proof.
\end{proof}

\subsection{Subadditivity with controlled endpoints}

We will later require the following sharpening of Proposition \ref{prop: Ndelta} and Theorem \ref{Exact_Pers}, which is proven using similar methods. For a persistence module $V,$ $\delta \geq 0,$ and a subset $X \subset \R$ denote by \[ \cl N_{\delta}(V,X)\] the number of bars of length $>\delta$ in the barcode of $V,$ {\em which start at a point of $X.$} Recall that for another subset $Y \subset \R,$ one denotes $X+Y = \{ x+y|\; x\in X, \, y\in Y \}.$ 

\bs

\begin{theorem}\label{prop: Ndelta2}
Let \[ 0 \to A \to B \to C \to 0 \] be a short exact sequence of moderate persistence modules.  Then for every $Z \subset \R,$ $\delta > 0,$ \[ \cl{N}_{2\delta}(B,Z) \leq \cl{N}_{\delta}(A,Z+[-\delta,\delta])+ \cl{N}_{\delta}(C,Z+[-2\delta,0]),\] and moreover \[ \cl N_{\de}(C,Z) \leq \cl N_{\de}(B,Z).\]
\end{theorem}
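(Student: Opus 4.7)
The plan is to revisit each step of the proof of Proposition \ref{prop: Ndelta}, now carrying along the information of where each bar starts. As a preliminary reduction I would pass from the moderate case to the finite barcode case by the same truncation trick used in Lemma \ref{Q-tame_extension}: take the submodule $B^{\sharp}\subseteq B$ built from the direct summands in \eqref{SEMI_BARCODE_PRODUCT} corresponding to bars of length $>2\delta$, and set $X=\phi^{-1}(B^{\sharp})\subseteq A$, $Y=\psi(B^{\sharp})\subseteq C$. Since $B^{\sharp}$ has exactly the bars of $B$ of length $>2\delta$ with the same starting points, the substitution $B\rightsquigarrow B^{\sharp}$ is harmless on the left-hand side. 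Via Lemma \ref{lma: upper sc replacement} we may further assume the resulting finite barcode modules are upper semi-continuous, since regularization preserves both starting points and finite barcode type.

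For the main inequality, I would then recall the construction of $B'=\mrm{coker}(j'\oplus g')$ fitting into the exact sequence
\[0\to A'[\delta]\to B'\to C'\to 0,\]
together with the interleaving $B\sim B'[-\delta]$ of bottleneck distance $\leq\delta$ produced by Lemmas \ref{lma: close maps} and \ref{lma: close check}. By the isometry theorem, each bar of $B$ of length $>2\delta$ starting at some $a\in Z$ must be matched with a bar of positive length in $B'[-\delta]$ whose left endpoint lies in the open interval $(a-\delta,a+\delta)$. Since $B'[-\delta]$ has a bar $[c,d)$ precisely when $B'$ has the bar $[c-\delta,d-\delta)$, this matched bar corresponds to a positive-length bar of $B'$ whose starting point lies in $Z+[-2\delta,0]$, giving
\[\cl{N}_{2\delta}(B,Z)\;\leq\;\cl{N}_0\bigl(B',\,Z+[-2\delta,0]\bigr).\]

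Next I would sharpen the snake-lemma argument of Lemma \ref{lma: exact sequences}: applied pointwise at each spectral $x$ to $0\to A'[\delta]\to B'\to C'\to 0$, it yields a cokernel exact sequence $L(A'[\delta],x)\to L(B',x)\to L(C',x)\to 0$ and hence $N(B',x)\leq N(A'[\delta],x)+N(C',x).$ Summing over $x\in Z+[-2\delta,0]$ produces
\[\cl{N}_0(B',Z+[-2\delta,0])\;\leq\;\cl{N}_0(A'[\delta],Z+[-2\delta,0])+\cl{N}_0(C',Z+[-2\delta,0]).\]
Since erasing bars of length $\leq\delta$ preserves starting points, $\cl{N}_0(C',W)=\cl{N}_\delta(C,W)$ for any $W$; and since the shift $[\delta]$ moves starting points by $-\delta$, $\cl{N}_0(A'[\delta],W)=\cl{N}_\delta(A,W+\delta).$ Taking $W=Z+[-2\delta,0]$, so that $W+\delta=Z+[-\delta,\delta]$, yields the desired inequality.

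For the moreover part I would work with the submodules $V^{(\delta)}=\im(sh_{\delta,V})$, whose bars $[a,b-\delta)$ correspond bijectively to bars $[a,b)$ of $V$ of length $>\delta$, so $\cl{N}_\delta(V,Z)=\cl{N}_0(V^{(\delta)},Z)$. The induced map $B^{(\delta)}\twoheadrightarrow C^{(\delta)}$ is surjective (as in Lemma \ref{Q-tame_extension}), and the cokernel exact sequence $L(K,x)\to L(B^{(\delta)},x)\to L(C^{(\delta)},x)\to 0$ with $K=\ker$ gives $N(C^{(\delta)},x)\leq N(B^{(\delta)},x)$ for every $x$; summing over $x\in Z$ yields $\cl{N}_\delta(C,Z)\leq\cl{N}_\delta(B,Z).$ The step I expect to require the most care is the shift bookkeeping: the asymmetric windows $Z+[-\delta,\delta]$ for $A$ versus $Z+[-2\delta,0]$ for $C$ arise from the combination of two independent shifts, the interleaving shift of $\delta$ between $B$ and $B'[-\delta]$ and the algebraic shift of $\delta$ built into $A'[\delta]$, and verifying that the passage from the submodule $X\subseteq A$ back to $A$ in the moderate reduction stays within the window $Z+[-\delta,\delta]$ requires invoking the full slack afforded by this asymmetry.
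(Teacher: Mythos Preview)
Your argument for the finite barcode case and for the moreover part is correct and matches the paper's approach closely: you localize Lemma \ref{lma: exact sequences} at each spectral point $x$ to get $N(B',x)\le N(A'[\delta],x)+N(C',x)$, combine this with the $(0,2\delta)$-interleaving between $B$ and $B'$, and track the shift bookkeeping to land on the windows $Z+[-\delta,\delta]$ and $Z+[-2\delta,0]$. The identity $\cl N_\delta(V,Z)=\cl N_0(V^{(\delta)},Z)$ and the surjectivity $B^{(\delta)}\twoheadrightarrow C^{(\delta)}$ also give the moreover part just as in the paper.

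The gap is in your reduction from moderate to finite barcode modules. You take $X=\phi^{-1}(B^\sharp)\subseteq A$ and $Y=\psi(B^\sharp)\subseteq C$ as \emph{submodules} and then need $\cl N_\delta(X,W)\le\cl N_\delta(A,W)$ and $\cl N_\delta(Y,W)\le\cl N_\delta(C,W)$. But the moreover part of the theorem controls left endpoints only for \emph{quotients}, not for submodules: for an inclusion $X\hookrightarrow A$ a bar of $X$ can start strictly to the right of every bar of $A$. Concretely, with $\delta=\tfrac32$, $A=\bK_{[0,10)}$, $B=\bK_{[0,3)}\oplus\bK_{[0,10)}$ with $\phi(1)=(1,1)$, one has $B^\sharp=\bK_{[0,10)}$ and $X=\phi^{-1}(B^\sharp)=\bK_{[3,10)}$, so $\cl N_\delta(X,[1.5,4.5])=1>0=\cl N_\delta(A,[1.5,4.5])$. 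The ``slack'' in the asymmetric windows does not absorb this: the shift between bars of $X$ and bars of $A$ can be as large as $2\delta$, so at best you would get $\cl N_\delta(A,Z+[-3\delta,\delta])$ rather than $\cl N_\delta(A,Z+[-\delta,\delta])$, and the same problem occurs for $Y\subseteq C$.

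The paper handles this by replacing the submodule reduction with a \emph{quotient} reduction: set $L'=\ker(B\to B^\sharp)$, $K'=\phi^{-1}(L')$, $M'=\psi(L')$, and pass to the short exact sequence $0\to X'\to B^\sharp\to Y'\to 0$ with $X'=A/K'$, $Y'=C/M'$ obtained from the snake lemma. Since $X'$ and $Y'$ are now quotients of $A$ and $C$, the moreover part applies and yields $\cl N_\delta(X',W)\le\cl N_\delta(A,W)$, $\cl N_\delta(Y',W)\le\cl N_\delta(C,W)$ with no enlargement of the windows.
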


The following consequence shall be of use in Section \ref{sec: MV bezout prod}. Set \begin{equation}\label{eq: N delta 0 def} \cl{N}_{\delta}^0(V) = \cl{N}_{\delta}(V,\{0\}).\end{equation} Call a persistence module $V$ {\em non-negatively supported} if $V_t = 0$ for all $t<0.$

\begin{cor}\label{cor: subadd 0}
Suppose that $U \xrightarrow{f} V \xrightarrow{g} W$ is an exact sequence of non-negatively supported moderate persistence modules. Then \[\cl{N}_{2\delta}^0(V) \leq \cl{N}_{\delta}(U,[0,\delta])+ \cl{N}_{\delta}^0(W).\]
\end{cor}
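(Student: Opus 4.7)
We reduce the corollary to Theorem \ref{prop: Ndelta2}. From the exactness of $U \xrightarrow{f} V \xrightarrow{g} W$ we have $\mrm{im}(f) = \ker(g)$, which gives the short exact sequence
\[ 0 \to \mrm{im}(f) \to V \to \mrm{im}(g) \to 0. \]
By Lemma \ref{lma: im ker} the modules $\mrm{im}(f)$ and $\mrm{im}(g)$ are moderate, and they are non-negatively supported as submodules of the non-negatively supported modules $V$ and $W$ respectively.

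Applying Theorem \ref{prop: Ndelta2} with $Z = \{0\}$ yields
\[ \cl{N}_{2\delta}^0(V) \leq \cl{N}_\delta(\mrm{im}(f),[-\delta,\delta]) + \cl{N}_\delta(\mrm{im}(g),[-2\delta,0]). \]
Since bars in non-negatively supported modules can only start at non-negative points, the first count collapses to $\cl{N}_\delta(\mrm{im}(f),[0,\delta])$ and the second to $\cl{N}_\delta^0(\mrm{im}(g))$.

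The first term is handled by applying the moreover part of Theorem \ref{prop: Ndelta2} to the short exact sequence $0 \to \ker(f) \to U \to \mrm{im}(f) \to 0$, which gives $\cl{N}_\delta(\mrm{im}(f),[0,\delta]) \leq \cl{N}_\delta(U,[0,\delta])$. The main (and only genuinely new) obstacle is the second term, because the moreover in Theorem \ref{prop: Ndelta2} is stated for quotients, while $\mrm{im}(g) \hookrightarrow W$ is an inclusion. Here non-negative support is essential: for any non-negatively supported moderate module $X$ one has the identity $\cl{N}_\delta^0(X) = \mrm{rank}(\pi^X_{0,\delta})$, since this rank counts precisely the bars $[a,b)$ with $a \leq 0$ and $b > \delta$, and the support hypothesis forces $a = 0$. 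For an injection $A \hookrightarrow B$ of non-negatively supported modules the diagram with inclusions $A_0 \hookrightarrow B_0$ and $A_\delta \hookrightarrow B_\delta$ intertwining the structure maps yields $\mrm{rank}(\pi^A_{0,\delta}) \leq \mrm{rank}(\pi^B_{0,\delta})$, hence $\cl{N}_\delta^0(A) \leq \cl{N}_\delta^0(B)$. Applied to $\mrm{im}(g) \hookrightarrow W$ this gives $\cl{N}_\delta^0(\mrm{im}(g)) \leq \cl{N}_\delta^0(W)$, and combining the two bounds completes the proof.
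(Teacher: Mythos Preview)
Your proof is correct and follows essentially the same approach as the paper's: both pass to the short exact sequence $0 \to \mrm{im}(f) \to V \to \mrm{im}(g) \to 0$, apply Theorem \ref{prop: Ndelta2} with $Z=\{0\}$, use non-negative support to collapse the windows, bound the $\mrm{im}(f)$-term via the quotient map $U \twoheadrightarrow \mrm{im}(f)$, and handle the $\mrm{im}(g)$-term by the rank identity $\cl{N}^0_\delta(X)=\mrm{rank}(\pi^X_{0,\delta})$ together with the inclusion $\mrm{im}(g)\hookrightarrow W$. The only cosmetic difference is that for the $\mrm{im}(f)$-bound you explicitly invoke the moreover clause of Theorem \ref{prop: Ndelta2} on $0\to\ker(f)\to U\to\mrm{im}(f)\to 0$, whereas the paper refers back to the proof of Proposition \ref{prop: Ndelta}; these amount to the same thing.
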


Let us now prove Theorem \ref{prop: Ndelta2} and Corollary \ref{cor: subadd 0} by a couple extra arguments similar to those in Section \ref{subsec:subadd}. 

\begin{proof}[Proof of Theorem \ref{prop: Ndelta2}]
	
Let $\de>0, Z\subset \R.$ We first prove the moreover part. We proceed like in the proof of Proposition \ref{prop: Ndelta}, the only difference being the additional observation that \[ \cl N_{\de}(V,Z) = N(V^{(\de)}[\de],Z)\] and that the moreover part holds for finite barcode modules. The latter statement holds by summing up the local inequality $N(B,x) \geq N(C,x)$ over all $x \in Z$ which are spectral for $C.$ 
	
To prove the main inequality, as in the proof of Proposition \ref{prop: Ndelta}, we first suppose that $A,B,C$ are finite barcode modules and observe that for all $Z \subset \R,$ \[\cl{N}_{0}(B,Z) \leq \cl{N}_{0}(A,Z) + \cl{N}_{0}(C,Z).\] This follows by summing up the local inequality $N(B,x) \leq N(A,x) + N(C,x)$ over all $x \in Z$ which are spectral for $B.$

Let $0 \to A'[\delta] \to B' \to C' \to 0$ be the exact sequence introduced in the proof of Proposition \ref{prop: Ndelta}, where we showed that the modules $B$ and $B'$ are $(0,2\delta)$-interleaved. Since this is equivalent to $B$ and $B'[-\delta]$ being $\delta$-interleaved, this means that after erasing certain bars of length $<2\delta$ from the barcodes $\cl{B}(B),$ $\cl{B}(B')$, there is a bijection $\Phi: \cl{B}^{\delta}(B) \to \cl{B}^{\delta}(B')$ between the resulting barcodes $\cl{B}^{\delta}(B), \cl{B}^{\delta}(B')$, such that $\Phi(\langle a,b \rangle) = \langle c,d \rangle$ implies $c \in \{a\}+[-2\delta,0],$ $d \in \{b\} + [-2\delta,0].$ This yields \[ \cl{N}_{2\delta}(B,Z) \leq \cl{N}_{0}(B',Z+[-2\delta,0]).\] In turn \[ \cl{N}_{0}(B',Z+[-2\delta,0]) \leq \cl{N}_{0}(A'[\delta],Z+[-2\delta,0]) + \cl{N}_{0}(C',Z+[-2\delta,0]) =\] \[= \cl{N}_{\delta}(A,Z+[-\delta,\delta]) + \cl{N}_{\delta}(C,Z+[-2\delta,0]).\]

Now for $A,B,C$ moderate, we pass to the short exact sequence \begin{equation}\label{eq: quotient replacement} 0 \to X' \to B' \to Y' \to 0\end{equation} 
where $B'$ is defined as in the proof of Lemma \ref{Q-tame_extension}. In particular $\cl{N}_{2\delta}(B,Z) = \cl{N}_{2\delta}(B',Z).$ We define $X', Y'$ as follows. Observe first that there is a natural map $p_B: B \to B'.$ Let $L' = \ker(p_B).$ This is a submodule of $B.$ Let $M' = \psi(L')$ and $K' = \phi^{-1}(L').$ These are submodules of $C$ and $A$ respectively. In total we obtain the diagram of short exact sequences \[
\xymatrix@C+1pc@R+1pc{
	0 \ar[r] & K'  \ar[d]^{i_A} \ar[r] & L' \ar[d]^{i_B} \ar[r] & M' \ar[d]^{i_C} \ar[r] & 0 \\
	0 \ar[r] & A \ar[r] & B \ar[r] & C\ar[r] & 0,}\] where the vertical maps $i_A, i_B, i_C,$ are the natural inclusions. Setting $X' =\coker(i_A),$ $Y' = \coker(i_C)$ and noting that $B' \cong \coker(i_B)$ by construction, the snake lemma produces the short exact sequence \eqref{eq: quotient replacement}, as desired, since $i_C$ is injective.

Now by the finite module case: \[ \cl{N}_{2\delta}(B,Z) = \cl{N}_{2\delta}(B',Z)  \leq \cl{N}_{\delta}(X',Z+[-\delta,\delta]) + \cl{N}_{\delta}(Y',Z+[-2\delta,0]) \leq\] \[\leq \cl{N}_{\delta}(A,Z+[-\delta,\delta]) + \cl{N}_{\delta}(C,Z+[-2\delta,0]).\] In the last step we used the fact that $X', Y'$ are quotient modules of $A, C$ and the moreover part of the theorem.  
\end{proof}

\begin{remark}
In the proof of Theorem \ref{prop: Ndelta2} we could not use the same finite barcode replacement $0 \to X \to B' \to Y \to 0$ as in the proof of Lemma \ref{Q-tame_extension}, since the moreover part of the theorem {\em does not} hold for $C$ a submodule of $B$ instead of a quotient module. We expect that this replacement would allow one to prove an analogue of Theorem \ref{prop: Ndelta2} where the control is on the right endpoints of the bars instead of their left endpoints. We do not require such an analogue in this paper.
\end{remark}

\begin{proof}[Proof of Corollary \ref{cor: subadd 0}]
As in the proof of Theorem \ref{Exact_Pers}, we replace the exact sequence by the short exact sequence $0 \to A \to B \to C \to 0$ where $A = \im(f),$ $B=V,$ $C = \im(g)$ are still moderate. As in the proof of Proposition \ref{prop: Ndelta} we see that for every $Z \subset \R,$ $\cl{N}_{\delta}(A,Z) \leq \cl{N}_{\delta}(U,Z).$

Now, by Theorem \ref{prop: Ndelta2} and non-negative support, \[\cl N_{2\delta} (V,\{0\}) \leq \cl{N}_{\delta}(A,[-\delta,\delta]) + \cl{N}_{\delta}(C,[-2\delta,0]) \leq \] \[\leq \cl{N}_{\delta}(U,[0,\delta]) + \cl{N}_{\delta}(C,\{0\}).\] 

We claim that $\cl{N}_{\delta}(C,\{0\}) \leq \cl{N}_{\delta}(W,\{0\}).$ This would imply \[\cl N_{2\delta}^0(V) \leq \cl N_{\delta}(U,[0,\delta]) + \cl N_{\delta}^0(W)\] as required. To prove the claim, note that for an upper semi-continuous non-negatively supported persistence module $Q,$ \[ \cl{N}^0_{\delta}(Q) = \mrm{rank} (\pi_{0,\delta}^Q).\] Applying this identity to $C$ and $W,$ it remains to show that $\mrm{rank} (\pi_{0,\delta}^C) \leq \mrm{rank} (\pi_{0,\delta}^W),$ which is evident because $\pi_{0,\delta}^C = \pi_{0,\delta}^W|_{C_0}.$

\end{proof}


\section{Multiscale polynomial approximation and cube counting}\label{Section_Semialgebraic}

The goal of this section is to prove a polynomial, multiscale version of the simplex counting method from \cite{CSEHM}, see also \cite{PRSZ}. It is given as Theorem \ref{MDP_Simplex_counting}.

\subsection{The result}
We start by introducing a notion of a {\it multiscale dyadic partition} of $[0,1]^n$, which will be central in our arguments.

\begin{dfn} \label{dyadic_cube}
	Let $l$ be a positive integer. A set $\sigma \subset \R^n$ given by $\sigma= \Big[ \frac{m_1}{2^l}, \frac{m_1+1}{2^l} \Big] \times \ldots \times \Big[ \frac{m_n}{2^l}, \frac{m_n+1}{2^l} \Big] $ for some $m_1, \ldots, m_n \in \Z$ is called a standard dyadic cube of size $\frac{1}{2^l}.$
\end{dfn}

\begin{dfn} \label{MDP}
	A multiscale dyadic partition of $[0,1]^n$ is a finite set $K=\{\sigma_1 ,  \ldots , \sigma_{|K|}  \}$ of standard dyadic cubes such that $\bigcup\limits_{i=1}^{|K|} \sigma_i = [0,1]^n$ and $\interior(\sigma_i) \cap \interior(\sigma_j) =\emptyset$ for $i\neq j.$ We abbreviate multiscale dyadic partition to MDP.
\end{dfn}

\begin{remark}
	By convention, we consider dyadic cubes to be closed. Hence,  an MDP is not a genuine partition of $[0,1]^n$, since dyadic cubes may intersect along faces of positive codimension.  Nevertheless, the interiors of dyadic cubes form a genuine partition of a subset of $[0,1]^n$ of full measure.
\end{remark}

One may construct an MDP of $[0,1]^n$ as follows. Firstly, we divide $[0,1]^n$ into $2^n$ standard dyadic cubes of size $\frac{1}{2}$ by median hyperplanes. Then, we choose a subset of these $2^n$ cubes and further divide each cube in this subset into $2^n$ cubes of size $\frac{1}{2^2}$ by median hyperplanes. We proceed to divide certain cubes of size $\frac{1}{2^2}$ into $2^n$ cubes of size $\frac{1}{2^3}$ and repeat this procedure finitely many times. The set of all cubes we obtain in the end is an MDP of $[0,1]^n.$ One may check that each MDP of $[0,1]^n$ can be obtained using the described algorithm. In other words, the set of MDPs is in bijection with the set of ordered, full, $2^n$-ary trees, see Figure \ref{MDP_Tree}.

\begin{figure}[ht]
	\begin{center}
		\includegraphics[scale=0.6]{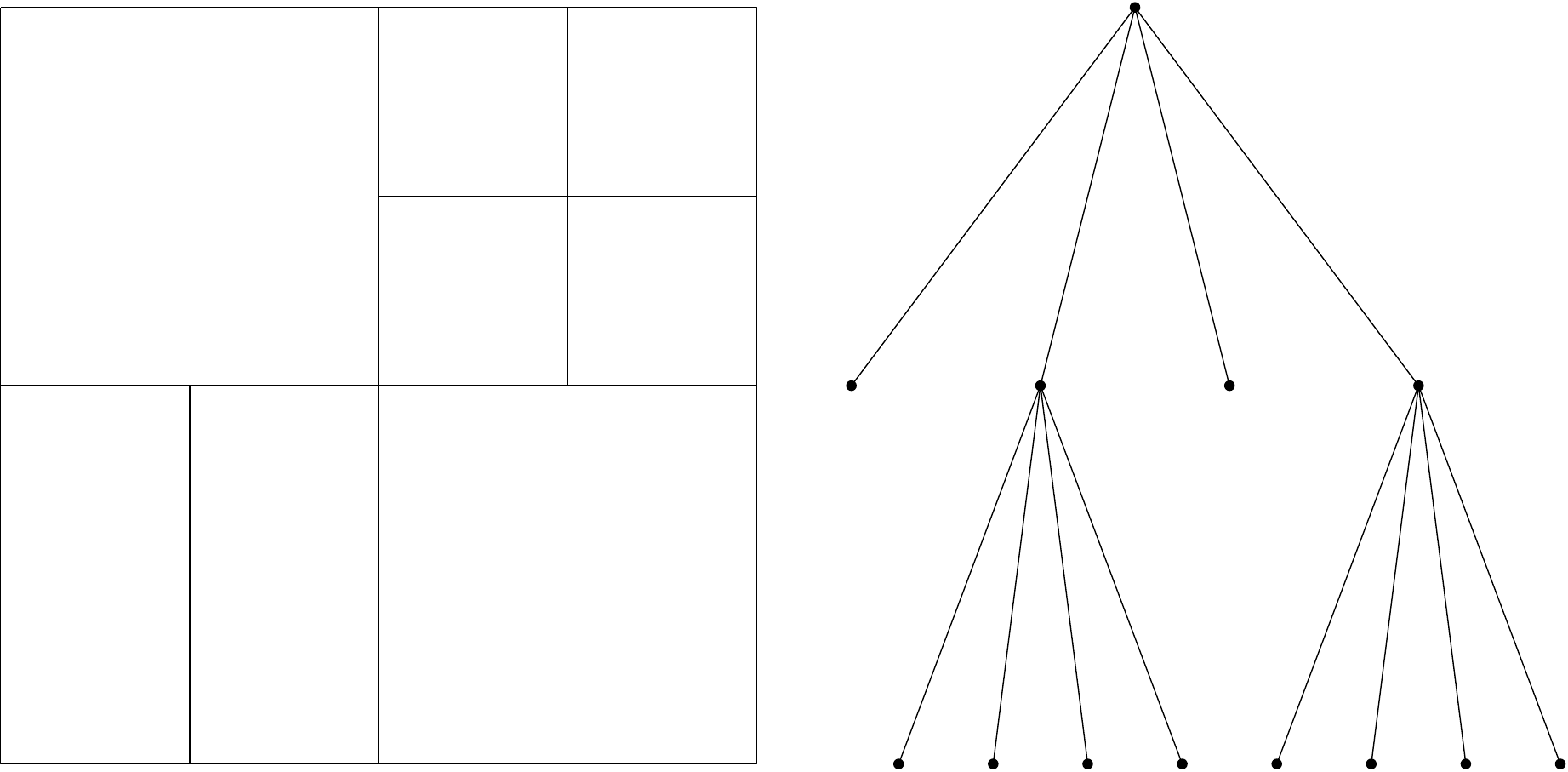}
		\caption{An MDP and a corresponding $2^n$-ary tree}
		\label{MDP_Tree}
	\end{center}
\end{figure}

Recall that $\mathcal{N}_\delta$ denotes the number of bars of length greater than $\delta$ in $\B(f)$, which is finite in all the cases we consider, see Subsection \ref{SubSec_Bar_Count}. Since we wish to use Mayer-Vietoris sequence for compact sets, in this section and Section \ref{Proof}, we consider $\mathcal{N}_\delta(f)$ to be defined using \v{C}ech homology of sublevel sets, i.e. $\mathcal{N}_\delta(f)=\mathcal{N}_\delta(\check{V}(f))$ in the notations from Section \ref{Section_Prelims}. This will not make a difference in the end result, see Proposition \ref{Sing_Cech}.

By a polynomial on a subset $U\subset \R^n$ we mean a restriction of a polynomial on $\R^n$ to $U.$ For a non-negative integer $k$, denote by $\P_k(U)$ the set of all real polynomial on $U$ of degree less than or equal to $k.$ Let $\S_k(\R^n)=\{ \sqrt{p}~|~ p\in \P_{2k}(\R^n),p\geq 0\}$ be the set of square roots of nonnegative polynomials of degree less than or equal to $2k.$ For a subset $U\subset \R^n$, denote by $\S_k(U)$ the set of restrictions of functions from $\S_k(\R^n)$ to $U.$

\begin{theorem} \label{MDP_Simplex_counting}
	Let $K$ be an MDP of $[0,1]^n$ and $f:[0,1]^n\rightarrow \R$ a continuous function. If for every $\sigma \in K$, $d_{C^0}(f|_\sigma, \P_k(\sigma))<\frac{\delta}{2}$ or $d_{C^0}(f|_\sigma, \S_k(\sigma))<\frac{\delta}{2}$,  then
	$$\mathcal{N}_{2^{n+1} \delta}  (f) \leq C_{n,k} |K|.$$
\end{theorem}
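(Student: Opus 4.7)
The plan is to prove this by combining three ingredients: a local bound on each dyadic cube via Milnor-type estimates for semi-algebraic sets, a combinatorial decomposition of $[0,1]^n$ aligned with the MDP, and the subadditivity of $\cl N_\delta$ under exact sequences (Theorem \ref{Exact_Pers}) applied to the Mayer--Vietoris sequence of this decomposition.

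First, I would establish the local building block: for any rectangular box $B$ contained in the $\frac12$-neighborhood of a single $\sigma \in K$, one has $\cl N_\delta(f|_B) \leq C_{n,k}'$. Indeed, by hypothesis $f|_\sigma$ is uniformly $\delta/2$-close to some $p$ with $p \in \P_k(\sigma)$ or $p^2 \in \P_{2k}(\sigma)$; extending $p$ polynomially and restricting to $B$, the stability theorem (Theorem \ref{Stability theorem}) gives $\cl N_\delta(f|_B) \leq \cl N_0(p|_B)$. The latter is bounded by the total Betti number of sublevel sets of a polynomial on a box, which by Milnor's theorem on real algebraic sets (and Morse theory on manifolds with corners) is at most $C_{n,k}'$, depending only on $n$ and $k$.

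Second, I would carry out the combinatorial step: express $[0,1]^n = \bigcup_{j=0}^n K_j$, where each $K_j$ is a \emph{disjoint} union of rectangular boxes $\{B_{ij}\}_{i}$, with each $B_{ij}$ contained in a small neighborhood of a $j$-face of some $\sigma \in K$. Such a decomposition is constructible by the greedy procedure suggested in the introductory sketch (thickening $j$-faces of the MDP from low to high dimension); the key properties to verify are that the number of boxes in $K_j$ is $O(|K|)$ and that every non-empty intersection $B_{i_1 j_1} \cap \cdots \cap B_{i_p j_p}$ is again a box near a face of some $\sigma \in K$, with the total number of such intersection-tuples also $O(|K|)$. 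By disjointness, the bar count is additive (using the splitting \eqref{eq:bardisjoint} over connected components), giving $\cl N_\delta(f|_{K_j}) \leq C_{n,k}'' |K|$; the same bound holds for every $\cl N_\delta$ of an intersection.

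Third, I would assemble the global estimate using the Mayer--Vietoris exact sequence (in \v Cech homology, so that it applies to compact subsets) for the cover $\{K_j\}_{j=0}^n$, combined with Theorem \ref{Exact_Pers}. A single application converts the pair $(K_j, K_{j'})$ into a bound where $\delta$ is multiplied by $2$; iterating over the $n+1$ cover sets and all their intersections produces
\[
\cl N_{2^{n+1}\delta}(f) \;\leq\; C \!\left( \sum_{j=0}^n \cl N_\delta(f|_{K_j}) \;+\; \sum_{\text{intersections}} \cl N_\delta(f|_{B_{i_1 j_1} \cap \cdots \cap B_{i_p j_p}}) \right) \;\leq\; C_{n,k} |K|,
\]
where the $(n+1)$-fold loss in $\delta$ is absorbed into the factor $2^{n+1}$ and the combinatorial factors are absorbed into $C_{n,k}$.

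The principal obstacle I anticipate is the combinatorial decomposition into the $K_j$'s: the MDP is multiscale, so neighboring cubes can have very different sizes, and one has to design the thickenings of $j$-faces carefully so that (a) the resulting boxes in $K_j$ are actually pairwise disjoint, (b) their total number is controlled by $|K|$ rather than by, say, the depth of the dyadic tree, and (c) all $p$-fold intersections remain of the same \emph{local} type (a box near a face of a single $\sigma$), so that the local bound from the first step applies uniformly. Once this geometric data is in hand, Mayer--Vietoris plus Theorem \ref{Exact_Pers} is a largely formal bookkeeping exercise.
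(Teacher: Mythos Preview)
Your proposal is correct and follows essentially the same approach as the paper: Proposition \ref{prop: barcode cube} is your local Milnor-type bound, Lemma \ref{Minimal_Covering_Lemma} together with Lemma \ref{Nested_faces} is precisely your combinatorial decomposition (boxes indexed by \emph{minimal} faces of the MDP, colored by face dimension, with intersections corresponding to nested tuples of faces counted by $C_n|K|$), and Lemma \ref{Barcodes_Colored} packages the iterated Mayer--Vietoris plus Theorem \ref{Exact_Pers} step. The one imprecision to watch is that the thickening parameter for the boxes is not $\tfrac12$ but a small $\tau_0>0$ chosen a posteriori depending on $f,\delta,K$ so that the $\delta/2$-approximation by $p$ extends by continuity from each face to its thickened box.
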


The proof of Theorem \ref{MDP_Simplex_counting} occupies the rest of the section. It has two main ingredients. The first one is a method of calculating $\mathcal{N}_\delta(f)$ from restrictions of $f$ to subsets covering its domain.  This method is explained in Subsection \ref{Subsection_Covers}. The second one is an estimate from above on  $\mathcal{N}_\delta$ of a polynomial on a box, as well as a square root of a polynomial on a box, see Proposition \ref{prop: barcode cube}. These two ingredients are combined using the stability theorem.

\begin{remark}
When considering barcodes in degree $0$ only, the proof of Theorem \ref{MDP_Simplex_counting} simplifies significantly, see Remark \ref{rmk: degree zero}.
\end{remark}

\begin{remark} Theorem \ref{MDP_Simplex_counting} can be considered a polynomial, multiscale version of the simplex counting method from \cite{CSEHM}, see also \cite{PRSZ}. To obtain the standard simplex counting one should set $k=0$ and notice that $d_{C^0}(f|_\sigma,\P_0(\sigma))=osc(f|_\sigma).$ To go from simplices to cubes, it is enough to divide a standard $n$-simplex into $n+1$ cubes by median hyperplanes, as we do in the proof of Proposition \ref{prop: cubulation}.
\end{remark}

\subsection{Barcode calculus on covers}\label{Subsection_Covers}
In this subsection, we work with barcodes of continuous functions on compact Hausdorff spaces. We wish to ensure that the corresponding persistence modules are moderate, so that results from Section \ref{sec: commutes} can be applied to them.  As explained in Section \ref{Section_Prelims}, in this situation all conditions in the definition of a moderate persistence module will be automatically satisfied, except for $q$-tameness.  To this end, we introduce the following notion.

\begin{dfn} Let $X$ be a Hausdorff topological space. A finite collection $\{A_i \}_{1\leq i \leq m}$ of compact subsets of $X$ is called tame if for every continuous function $f:X\rightarrow \R$ and any set of indices $1\leq i_1<\ldots <i_l \leq m$, $\check{V}(f|_{A_{i_1} \cap \ldots \cap A_{i_l}})$ is $q$-tame.
\end{dfn}

There are two examples of tame collections which will play important roles in the proofs of our main results. The first one is given by any finite collection of boxes in $\R^n.$ By a box we mean a product of closed intervals $[a_1,b_1]\times \ldots \times [a_n,b_n]\subset \R^n$ (here we allow also $a_i=b_i$). Persistence modules associated to continuous functions on boxes are $q$-tame, see Section 2. Hence, to see that such a collection is indeed tame, it is enough to notice that an intersection of boxes is again a box.

The second example is a collection of subsets of a manifold obtained as homeomorphic images of cubes from a fixed cubulation, see Proposition \ref{prop: cubulation}. Since two such subsets intersect along an image of face of a cube, all intersections will be homeomorphic to boxes and hence continuous functions on them will have $q$-tame persistence modules.

We wish to prove the following.
\begin{prop}\label{Barcode_Covers}
	Let $\{A_i \}_{1\leq i \leq m}$ be a tame collection of subsets of a Hausdorff topological space $X$ and $f:X\rightarrow \R$ a continuous function. Then $\check{V}(f|_{A_{i_1} \cup \ldots \cup A_{i_l}})$ is $q$-tame and for each $\delta > 0$, it holds
	$$\mathcal{N}_{2^m \delta} (f|_{A_1 \cup \ldots \cup A_m}) \leq \sum\limits_{1\leq i_1<\ldots <i_l \leq m} \mathcal{N}_ \delta (f|_{A_{i_1} \cap \ldots \cap A_{i_l}}).$$
\end{prop}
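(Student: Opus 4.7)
The plan is to proceed by induction on $m$, using Mayer--Vietoris for \v Cech homology of compact sets together with the subadditivity of the bar counting function (Theorem \ref{Exact_Pers}). The main inductive statement I would establish simultaneously is: (a) for every subfamily $\{A_{i_1},\ldots, A_{i_l}\}$ of the tame collection, $\check V(f|_{A_{i_1}\cup\ldots\cup A_{i_l}})$ is $q$-tame (and hence moderate, as upper semi-continuity and boundedness of the spectrum come for free for a continuous function on a compact Hausdorff space), and (b) the claimed inequality holds.

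For the inductive step, set $B = A_1\cup\ldots\cup A_{m-1}$. Applying the \v Cech Mayer--Vietoris long exact sequence of the compact pair $(B,A_m)$ to each sublevel set $\{f\leq t\}$, and summing over all homological degrees (the shift $n\mapsto n-1$ is absorbed by the grading), yields a three-term exact sequence of persistence modules
\[ \check V(f|_B)\oplus \check V(f|_{A_m})\longrightarrow \check V(f|_{B\cup A_m})\longrightarrow \check V(f|_{B\cap A_m}). \]
By the inductive hypothesis applied to the $m-1$ sets $A_1,\ldots,A_{m-1}$, and to the tame subcollection $\{A_i\cap A_m\}_{1\le i\le m-1}$ (whose intersections are intersections of the original $A_i$'s, hence yield $q$-tame \v Cech persistence modules by tameness), the outer two terms are $q$-tame. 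Lemma \ref{Middle_q-tame} then gives the $q$-tameness of $\check V(f|_{B\cup A_m})$, settling (a). Applying Theorem \ref{Exact_Pers} to the same exact sequence with $\epsilon=2^{m-1}\delta$ gives
\[ \mathcal N_{2^{m}\delta}(f|_{A_1\cup\ldots\cup A_m})\le \mathcal N_{2^{m-1}\delta}(f|_B)+\mathcal N_{2^{m-1}\delta}(f|_{A_m})+\mathcal N_{2^{m-1}\delta}(f|_{B\cap A_m}). \]

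To finish, I would invoke the inductive hypothesis (b) twice: to $\{A_1,\ldots,A_{m-1}\}$ to bound the first summand by the sum of $\mathcal N_\delta(f|_{A_{i_1}\cap\ldots\cap A_{i_l}})$ over all non-empty subsets $\{i_1,\ldots,i_l\}\subset\{1,\ldots,m-1\}$, and to $\{A_i\cap A_m\}_{1\le i\le m-1}$ to bound the third summand by the sum of $\mathcal N_\delta(f|_{A_{i_1}\cap\ldots\cap A_{i_l}\cap A_m})$ over all non-empty $\{i_1,\ldots,i_l\}\subset\{1,\ldots,m-1\}$; the middle term, being monotone in $\delta$, is at most $\mathcal N_\delta(f|_{A_m})$. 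A bookkeeping check shows that every non-empty subset $S\subset\{1,\ldots,m\}$ is picked up exactly once (not containing $m$: from $B$; equal to $\{m\}$: from $A_m$; containing $m$ with $|S|\ge 2$: from $B\cap A_m$), yielding the desired inequality.

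The main obstacle I foresee is the careful verification that the three-term Mayer--Vietoris sequence, assembled over sublevel sets and summed over degrees, is a genuine exact sequence of \emph{moderate} persistence modules so that Theorem \ref{Exact_Pers} applies. This hinges on (i) the naturality of the \v Cech Mayer--Vietoris sequence under the inclusions of sublevel sets (which is standard for compact pairs, see the references in Section \ref{SubSec_Bar_Count}), and (ii) confirming inductively via Lemma \ref{Middle_q-tame} that the union carries a $q$-tame module; once $q$-tameness is secured, the remaining properties in the definition of ``moderate'' follow from continuity of $f$ on a compact Hausdorff space (as discussed in Section \ref{Section_Prelims}). The induction step then runs without further difficulty, and produces a bound with the factor $2^m$ as stated (in fact even $2^{m-1}$, but the form in the proposition is sufficient for all later applications).
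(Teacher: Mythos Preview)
Your proposal is correct and follows essentially the same approach as the paper: the paper also reduces Proposition \ref{Barcode_Covers} to the case $m=2$ by induction on $m$, proving that base case (stated separately as Lemma \ref{Two_Sets}) via the \v Cech Mayer--Vietoris sequence, Lemma \ref{Middle_q-tame} for $q$-tameness, and Theorem \ref{Exact_Pers} for the inequality. Your write-up is more explicit than the paper's (which leaves the induction to the reader), and your observation that the constant $2^{m-1}$ would already suffice is correct but not used later.
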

Using induction on $m$, one readily checks that Proposition \ref{Barcode_Covers} follows from the special case of two compact sets, i.e. $m=2.$ Thus, we are left to prove the following statement.
\begin{lemma}\label{Two_Sets}
	Let $\{A_1,A_2 \}$ be a tame collection of subsets of a Hausdorff topological space $X$ and $f:X\rightarrow \R$ a continuous function.  Then $\check{V}(f|_{A_1 \cup A_2})$ is $q$-tame and for each $\delta > 0$, it holds
	$$\mathcal{N}_{2 \delta} (f|_{A_1 \cup A_2}) \leq  \mathcal{N}_\delta (f|_{A_1}) + \mathcal{N}_\delta (f|_{A_2}) + \mathcal{N}_ \delta (f|_{A_1 \cap A_2}).$$
\end{lemma}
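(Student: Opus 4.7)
The plan is to build a Mayer--Vietoris long exact sequence of persistence modules and then apply the subadditivity result of Theorem \ref{Exact_Pers} in each degree. For every $t\in\R$ the sets $A_i(t):=A_i\cap\{f\leq t\}$ are compact, and one has the set-theoretic identities $(A_1\cup A_2)(t)=A_1(t)\cup A_2(t)$ and $(A_1\cap A_2)(t)=A_1(t)\cap A_2(t)$. Because \v{C}ech homology admits a Mayer--Vietoris sequence for compact pairs (see \cite[Ch.~IX, X]{EilenbergSteenrod} or \cite[Appendix A]{Landi}), we obtain, for each $t$ and each $k$, a long exact sequence relating $\check{H}_*(A_1(t)\cap A_2(t))$, $\check{H}_*(A_1(t))\oplus \check{H}_*(A_2(t))$ and $\check{H}_*(A_1(t)\cup A_2(t))$. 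Naturality of Mayer--Vietoris with respect to the inclusions $\{f\leq s\}\hookrightarrow\{f\leq t\}$ for $s\leq t$ upgrades this, for each $k$, to an exact sequence of persistence modules
\[
\check{V}_k(f|_{A_1})\oplus \check{V}_k(f|_{A_2})\;\xrightarrow{\ \alpha_k\ }\;\check{V}_k(f|_{A_1\cup A_2})\;\xrightarrow{\ \partial_k\ }\;\check{V}_{k-1}(f|_{A_1\cap A_2}).
\]

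The next step is to check that all three persistence modules in this sequence are moderate. By the tameness assumption on the collection $\{A_1,A_2\}$, the outer modules $\check{V}_k(f|_{A_1})\oplus \check{V}_k(f|_{A_2})$ and $\check{V}_{k-1}(f|_{A_1\cap A_2})$ are $q$-tame. Lemma \ref{Middle_q-tame} then forces the middle term $\check{V}_k(f|_{A_1\cup A_2})$ to be $q$-tame as well, which in particular gives the first assertion of the lemma. Since $f$ is continuous on the compact Hausdorff space $A_1\cup A_2$, the module $\check{V}_k(f|_{A_1\cup A_2})$ is automatically bounded from the left and upper semi-continuous (as recalled in Subsection \ref{SubSec_Bar_Count}), and its $q$-tameness together with boundedness of the spectrum makes $\cl{N}_\delta$ finite for every $\delta>0$. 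Therefore all three modules are moderate, and Theorem \ref{Exact_Pers} applies to the exact sequence above.

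Applying Theorem \ref{Exact_Pers} to this three-term exact sequence in degree $k$ yields
\[
\cl{N}_{2\delta}\bigl(\check{V}_k(f|_{A_1\cup A_2})\bigr)\;\leq\;\cl{N}_{\delta}\bigl(\check{V}_k(f|_{A_1})\bigr)+\cl{N}_{\delta}\bigl(\check{V}_k(f|_{A_2})\bigr)+\cl{N}_{\delta}\bigl(\check{V}_{k-1}(f|_{A_1\cap A_2})\bigr),
\]
using that $\cl{N}_{\delta}$ is additive over direct sums. Summing over all $k\geq 0$ and reindexing the last term (note $\check{V}_{-1}=0$ so no boundary contribution is lost) gives exactly
\[
\cl{N}_{2\delta}(f|_{A_1\cup A_2})\;\leq\;\cl{N}_{\delta}(f|_{A_1})+\cl{N}_{\delta}(f|_{A_2})+\cl{N}_{\delta}(f|_{A_1\cap A_2}),
\]
which is the claimed inequality.

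The step that requires the most care is the construction of the Mayer--Vietoris \emph{persistence} sequence: one must verify that the connecting homomorphisms $\partial_k$ are compatible with the structure maps induced by $\{f\leq s\}\hookrightarrow\{f\leq t\}$, so that one genuinely obtains an exact sequence in the category of persistence modules rather than just pointwise exactness for each $t$. This is where the use of \v{C}ech homology (as opposed to singular homology) on compact sublevel sets is essential, since \v{C}ech Mayer--Vietoris is available without any good-cover or CW hypothesis on $A_1$ and $A_2$, and is natural with respect to arbitrary continuous inclusions of compact pairs.
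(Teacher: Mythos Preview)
Your proof is correct and essentially identical to the paper's. The only cosmetic difference is that the paper first takes the direct sum of the degree-$d$ exact sequences to obtain a single exact sequence $\check{V}(f|_{A_1})\oplus\check{V}(f|_{A_2})\to\check{V}(f|_{A_1\cup A_2})\to\check{V}(f|_{A_1\cap A_2})$ and then applies Theorem \ref{Exact_Pers} once, whereas you apply Theorem \ref{Exact_Pers} degree by degree and sum the resulting inequalities; these are equivalent.
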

\begin{proof}
	Since $f$ is continuous, for every $t\in \R$, $\{f|_{A_1}\leq t \}$, $\{f|_{A_2}\leq t \}$ are compact and we may apply Mayer-Vietoris sequence to obtain a long exact sequence
	$$\ldots \rightarrow \check{H}_*(\{ f|_{A_1}\leq t \})\oplus \check{H}_*(\{ f|_{A_2}\leq t \})  \rightarrow \check{H}_* (\{ f|_{A_1 \cup A_2}\leq t \}) \rightarrow \check{H}_{*-1}(\{ f|_{A_1 \cap A_2}\leq t \}) \rightarrow \ldots$$
	Naturality of the Mayer-Vietoris sequence implies that in each degree $d$ there exists the following exact sequence of persistence modules
	$$\check{V}_d(f|_{A_1}) \oplus \check{V}_d(f|_{A_2}) \rightarrow \check{V}_d (f|_{A_1 \cup A_2}) \rightarrow \check{V}_{d-1} (f|_{A_1 \cap A_2}),$$
	which after summing over all degrees $d$ gives an exact sequence
	$$\check{V}(f|_{A_1}) \oplus \check{V}(f|_{A_2}) \rightarrow \check{V} (f|_{A_1 \cup A_2}) \rightarrow \check{V} (f|_{A_1 \cap A_2}).$$
	Thus, Lemma \ref{Middle_q-tame} implies that $\check{V}_d (f|_{A_1 \cup A_2})$ is $q$-tame and we may apply Theorem \ref{Exact_Pers} to obtain the desired inequality.\end{proof}

\begin{remark}\label{rmk: degree zero}
In the case where we consider barcodes in degree $0$ only, the proof of Proposition \ref{Barcode_Covers} becomes simpler and yields the following inequality with sharper dependence on $\delta:$ \begin{equation}\label{ineq: degree zero direct} \mathcal{N}_{0,\delta} (f|_{A_1 \cup \ldots \cup A_m}) \leq \sum\limits_{1\leq i \leq m} \mathcal{N}_{0,\delta} (f|_{A_{i}}).\end{equation} Indeed, for two sets, the relevant part of the Mayer-Vietoris sequence now takes the form:	$$\ldots \rightarrow \check{H}_0(\{ f|_{A_1}\leq t \})\oplus \check{H}_0(\{ f|_{A_2}\leq t \})  \rightarrow \check{H}_0 (\{ f|_{A_1 \cup A_2}\leq t \}) \rightarrow 0.$$ It now suffices to apply the monotonicity of the bar-counting function under surjections, see Proposition \ref{prop: Ndelta} and Lemma \ref{Q-tame_extension}.

Consequently, Theorem \ref{MDP_Simplex_counting} follows directly from Equation \eqref{ineq: degree zero direct}, Proposition \ref{prop: barcode cube}, and the stability theorem for barcodes. This bypasses the use of Lemmas \ref{Barcodes_Colored}, \ref{Nested_faces}, and \ref{Minimal_Covering_Lemma} below.

\end{remark}

By a {\it compact cover} we mean a family of compact subsets of a space whose union is the whole space. Let $\U=\{ U_i \}$ be a finite compact cover of a Hausdorff topological space $X$ and $f:X\rightarrow \R$ a continuous function. If $\U$ is tame, Proposition \ref{Barcode_Covers} gives the following estimate
$$
\mathcal{N}_{2^{|\U|}\delta} (f) \leq \sum\limits_{1\leq i_1<\ldots <i_l \leq |\U|} \mathcal{N}_ \delta (f|_{U_{i_1} \cap \ldots \cap U_{i_l}}). 
$$
Under certain assumptions, the coefficient $2^{|\U|}$ in this inequality can be improved. To this end, recall that a compact cover $\U$ is called {\it $m$-colorable} if it can be partitioned into $m$ subsets ({\it colors}) $\U_1, \ldots , \U_m \subset \U$ such that each $\U_i$ consists of disjoint sets.

\begin{lemma}\label{Barcodes_Colored}
	Assume that $\U$ is $m$-colorable and tame. For all $\delta > 0$ it holds
	$$\mathcal{N}_{2^m \delta} (f) \leq \sum\limits_{1\leq i_1<\ldots <i_l \leq |\U|} \mathcal{N}_ \delta (f|_{U_{i_1} \cap \ldots \cap U_{i_l}}). $$
\end{lemma}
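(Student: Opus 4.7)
The strategy is to reduce from a cover of size $|\U|$ to a cover of size $m$ by amalgamating the elements of each color class, thereby gaining the factor $2^m$ in place of $2^{|\U|}$ coming from a direct application of Proposition \ref{Barcode_Covers}.

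First I would set $A_j := \bigcup_{U \in \U_j} U$ for each $j = 1, \ldots, m$, so that $\{A_1, \ldots, A_m\}$ is a compact cover of $X$. The key observation is that because $\U_j$ consists of pairwise disjoint sets, each intersection
\[
A_{j_1} \cap \cdots \cap A_{j_l} \;=\; \bigsqcup_{(U_{i_1}, \ldots, U_{i_l}) \in \U_{j_1} \times \cdots \times \U_{j_l}} U_{i_1} \cap \cdots \cap U_{i_l}
\]
is a \emph{finite disjoint union} of the pairwise intersections of members of $\U$, one from each selected color class. Using that the \v{C}ech persistence module of $f$ on a finite disjoint union is the direct sum of the modules on the components, and that a direct sum of finitely many $q$-tame modules is $q$-tame, the tameness of the original collection $\U$ upgrades to tameness of $\{A_1, \ldots, A_m\}$.

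Next I would apply Proposition \ref{Barcode_Covers} to the tame cover $\{A_1, \ldots, A_m\}$, which yields
\[
\mathcal{N}_{2^m \delta}(f) \;\leq\; \sum_{1 \leq j_1 < \cdots < j_l \leq m} \mathcal{N}_\delta\bigl(f|_{A_{j_1} \cap \cdots \cap A_{j_l}}\bigr).
\]
Additivity of the bar counting function over disjoint unions \eqref{eq:bardisjoint} then gives
\[
\mathcal{N}_\delta\bigl(f|_{A_{j_1} \cap \cdots \cap A_{j_l}}\bigr) \;=\; \sum_{(U_{i_1}, \ldots, U_{i_l}) \in \U_{j_1} \times \cdots \times \U_{j_l}} \mathcal{N}_\delta\bigl(f|_{U_{i_1} \cap \cdots \cap U_{i_l}}\bigr).
\]

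Finally, I would reindex the double sum. Each unordered $l$-subset $\{U_{i_1}, \ldots, U_{i_l}\} \subset \U$ whose elements belong to pairwise distinct color classes appears exactly once (namely, when $j_1 < \cdots < j_l$ are the colors of its elements, taken in order). For any $l$-subset of $\U$ containing two elements of the same color, the corresponding intersection is empty by disjointness within a color, so $\mathcal{N}_\delta$ vanishes and such terms contribute nothing. Hence the double sum equals the full sum $\sum_{1 \leq i_1 < \cdots < i_l \leq |\U|} \mathcal{N}_\delta(f|_{U_{i_1} \cap \cdots \cap U_{i_l}})$, giving the stated inequality. The only non-routine point is verifying the tameness of $\{A_1, \ldots, A_m\}$, but this follows directly from the disjoint-union description of the intersections together with the hypothesis that $\U$ is tame.
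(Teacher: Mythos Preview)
Your proof is correct and follows essentially the same approach as the paper: amalgamate each color class into a single set $A_j$, apply Proposition \ref{Barcode_Covers} to the resulting cover of size $m$, and then expand each $\mathcal{N}_\delta(f|_{A_{j_1}\cap\cdots\cap A_{j_l}})$ using additivity over the disjoint union. Your justification of tameness via direct sums of $q$-tame modules is a slight variant of the paper's (which instead invokes Proposition \ref{Barcode_Covers} for unions), and your final reindexing paragraph makes explicit a point the paper leaves implicit.
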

\begin{proof}
	Let $\U_1,\ldots , \U_m \subset \U$ be a partitioning of $\U$ into $m$ colors. Denote by $A_i=\cup_{U\in \U_i} U$ for $1\leq i \leq m.$ $\{ A_i \}$ is a compact cover of $X.$ Since sets $A_i$ are unions of sets in a tame collection $\U$, Proposition \ref{Barcode_Covers} implies that $\{ A_i \}$ is also tame. We may now apply Proposition \ref{Barcode_Covers} again to obtain
	\begin{equation}\label{Cover_A}
		\mathcal{N}_{2^m \delta} (f) \leq \sum\limits_{1\leq i_1<\ldots <i_l \leq m} \mathcal{N}_ \delta (f|_{A_{i_1} \cap \ldots \cap A_{i_l}}). 
	\end{equation}
	We have that
	\begin{equation}\label{Intersection_A}
		A_{i_1} \cap \ldots \cap A_{i_l}=\bigcup_{(U_{j_1} , \ldots , U_{j_l}) \in \U_{i_1} \times \ldots \times \U_{i_l}} U_{j_1} \cap \ldots \cap U_{j_l},
	\end{equation}
	and due to the coloring condition, sets $U_{j_1} \cap \ldots \cap U_{j_l}$ for $(U_{j_1} , \ldots , U_{j_l}) \in \U_{i_1} \times \ldots \times \U_{i_l}$ are disjoint. Now notice that given two disjoint sets $X_1,X_2 \subset X$, it holds $\check{V}(f|_{X_1\cup X_2})=\check{V}(f|_{X_1})\oplus \check{V}(f|_{X_2})$ and thus 
\begin{equation}
\label{eq:bardisjoint}
\mathcal{N}_\delta (f|_{X_1\cup X_2})= \mathcal{N}_\delta (f|_{X_1}) + \mathcal{N}_\delta (f|_{X_2}).
\end{equation}
 This property combined with (\ref{Intersection_A}) gives us
	$$\mathcal{N}_\delta (f|_{A_{i_1} \cap \ldots \cap A_{i_l}})=\sum_{(U_{j_1} , \ldots , U_{j_l}) \in \U_{i_1} \times \ldots \times \U_{i_l}} \mathcal{N}_\delta( f|_{U_{j_1} \cap \ldots \cap U_{j_l}}),$$
	which together with (\ref{Cover_A}) proves the claim.
\end{proof}

\subsection{Barcode of a polynomial on a box}

By an {\it $n$-dimensional box} we mean a subset $Q\subset \R^n$ of the form $Q=[a_1,b_1]\times \ldots \times [a_n,b_n].$ For $0\leq i \leq n$, an {\it $i$-dimensional face} or an {\it $i$-face} of $Q$ is defined by setting $n-i$ coordinates in $Q$ to be equal to either $a_j$ or $b_j$, i.e. via conditions $(x_{j_1},\ldots , x_{j_{n-i}}) \in \{ a_{j_1},b_{j_1} \} \times \ldots \times \{a_{j_{n-i}},b_{j_{n-i}} \}$ and $(x_{j_{n-i+1}} , \ldots , x_{j_n})\in [a_{j_{n-i+1}},b_{j_{n-i+1}}]\times \ldots \times [a_{j_n},b_{j_n}].$ An {\it open $i$-dimensional face} is given via conditions $(x_{j_1},\ldots , x_{j_{n-i}}) \in \{ a_{j_1},b_{j_1} \} \times \ldots \times \{a_{j_{n-i}},b_{j_{n-i}} \}$ and $(x_{j_{n-i+1}} , \ldots , x_{j_n})\in (a_{j_{n-i+1}},b_{j_{n-i+1}})\times \ldots \times (a_{j_n},b_{j_n}).$ There are exactly $\binom{n}{i}2^{n-i}$ $i$-faces of an $n$-dimensional box. An $n$-dimensional cube is an $n$-dimensional box which satisfies $b_1-a_1=\ldots = b_n-a_n.$ 

We prove the following result which provides necessary bounds on the number of bars in the barcode of a polynomial or a square root of a polynomial on a box. 

\begin{prop}\label{prop: barcode cube}
	Let $Q\subset \R^n$ be an $n$-dimensional box and $p\in \P_k(Q)$ or $p\in \S_k(Q)$, $k \geq 1$. Then there exists a constant $C_n$ depending on $n$ only, such that for every $\delta > 0,$ \[ \cl N_{\delta}(p) \leq C_n k^n.\]
	Moreover, $\B(p)$ is finite and the total number of bars satisfies $\mathcal{N}_0(p)\leq C_n k^n.$
\end{prop}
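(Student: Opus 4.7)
\medskip

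\textbf{Plan for Proposition \ref{prop: barcode cube}.} First, I would reduce the square-root case to the polynomial one. For $p = \sqrt{q}|_Q \in \S_k(Q)$ with $q \in \P_{2k}(\R^n)$, $q \geq 0$, one has $\{p \leq t\} = \{q|_Q \leq t^2\}$ for $t \geq 0$ (and empty sublevel sets for $t < 0$), so the persistence module of $p$ is obtained from that of $q|_Q$ by the monotone reparametrization $t \mapsto t^2$ on $[0,\infty)$. Such a reparametrization preserves bar counts, so it suffices to prove the bound for an arbitrary polynomial of degree $\leq 2k$; this only inflates the constant $C_n$ by a factor of $2^n$.

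Next, for $p \in \P_k(\R^n)$ I would perturb $p$ to $p_\varepsilon = p + \varepsilon q_0$ with $q_0 \in \P_k(\R^n)$ generic and $\varepsilon > 0$ small, so that the restriction of $p_\varepsilon$ to every open face of $Q$ is Morse and all critical values across all open faces are pairwise distinct. By Morse theory for manifolds with corners, the topology of $\{p_\varepsilon|_Q \leq t\}$ changes only when $t$ crosses such a critical value, with each critical point attaching a single handle; the resulting birth/death bookkeeping bounds the total number of bars of $\mathcal{B}(p_\varepsilon|_Q)$ by the total number of critical points of $p_\varepsilon$ on all open faces of $Q$. On an open $j$-face the polynomial $p_\varepsilon$ restricts to a polynomial in $j$ variables of degree $\leq k$, whose critical points are common zeros of $j$ polynomials of degree $\leq k-1$; by B\'ezout's theorem there are at most $(k-1)^j \leq k^j$ such points. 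Summing over the $\binom{n}{j}\, 2^{n-j}$ open $j$-faces of $Q$ gives at most $(k+2)^n \leq 3^n k^n$ critical points for $k \geq 1$, and hence the same upper bound on the total number of bars of $\mathcal{B}(p_\varepsilon|_Q)$.

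Finally, I would let $\varepsilon \to 0$: by the stability theorem $\mathcal{B}(p_\varepsilon|_Q) \to \mathcal{B}(p|_Q)$ in the bottleneck distance, and since each $\mathcal{B}(p_\varepsilon|_Q)$ has uniformly at most $C_n k^n$ bars of positive length, the same bound transfers to the limit, which in particular is finite. The main subtlety I expect is the stratified bookkeeping on a box with corners --- making precise that every endpoint of a bar in $\mathcal{B}(p_\varepsilon|_Q)$ arises as a critical value on some open face, and that each critical point accounts for at most one bar endpoint. I would handle this either by invoking stratified Morse theory in the spirit of Goresky--MacPherson for manifolds with corners, or alternatively by a Milnor / Ole{\u\i}nik--Petrovsky type estimate on the Betti numbers of the semi-algebraic sublevel sets $\{p_\varepsilon|_Q \leq t\}$ combined with a separate count of the finitely many values of $t$ at which these Betti numbers can change.
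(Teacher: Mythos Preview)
Your proposal is correct and follows essentially the same route as the paper: reduce $\S_k$ to $\P_{2k}$ via the monotone reparametrization $t\mapsto t^2$, perturb to a polynomial that is Morse on every open face of $Q$, bound the bar count by the total number of stratified critical points, and estimate the latter face-by-face using B\'ezout/Milnor. The paper handles the ``manifolds with corners'' subtlety you flag by invoking Handron's Morse theorems (each stratified critical point contributes at most one bar endpoint, yielding $\mathcal N_0(g)\leq 1+(C(g,Q)-1)/2$), and uses Milnor's bound to get $\sum_{i}2^{n-i}\binom{n}{i}(k-1)^i=(k+1)^n$, slightly sharper than your $(k+2)^n$ but of course equivalent up to $C_n$.
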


\begin{remark}
In fact, we obtain the bound $ \cl N_{\delta}(p) \leq \frac{1}{2}(k+1)^n+\frac{1}{2}$ for $p\in \P_k(Q)$ and $ \cl N_{\delta}(p) \leq \frac{1}{2}(2k+1)^n+\frac{1}{2}$ for $p\in \S_k(Q).$
\end{remark}

\begin{proof}
Firstly, we notice that $\mathcal{N}_{\delta}(p) \leq C_n k^n$ for all $\delta>0$ implies the finiteness of $\B(p)$ with the desired bound. Indeed, due to upper semi-continuity of $\check{V}(p)$, there are no bars of length zero in $\B(p)$, see \cite{Schmahl22} for details.  Since the bound does not depend on $\delta$ the claim follows. Hence we are left to prove the inequality for a fixed $\delta>0$.
	
	Let us first prove the case $p\in \P_k(Q). $ Having fixed $\delta,$ consider a small perturbation $g$ of $p,$ satisfying $|p-g|_{C^0(Q)} < \delta/2,$ that is a Morse polynomial of degree at most $k$ on the box $Q$ in the sense of manifolds with corners \cite[Definitions 4,6]{Handron}. In particular, we can assume that it is Morse on every open $i$-dimensional face of $Q,$ for $0 \leq i \leq n$ and each of its critical points contributes at most one endpoint of a bar to the barcode of $g$ on $Q.$ This is a consequence of the first and second Morse theorems for manifolds with corners \cite[Theorems 7,8]{Handron}. Furthermore, $\cl N_{\delta}(p) \leq \cl N_{0}(g)$ by the stability theorem. Now the number of bars in the barcode of $g$ is bounded in terms of the total number $C(g,Q)$ of the critical points of its restrictions to the open $i$-dimensional faces of $Q,$ for $0 \leq i \leq n.$ Let $F$ be such an open $i$-dimensional face. Then $g|_F$ is identified with a Morse polynomial $h=h_{F^i}$ of degree at most $k$ on $F^i \subset \R^i$, $F^i$ being the interior of an $i$-dimensional box. The number $C(h,F^i)$ of critical points of $h$ is the number of common solutions of the $i$ polynomial equations $\del_1 h = 0, \ldots, \del_i h = 0,$ of degree at most $k-1.$ Furthermore the gradients of these polynomials are everywhere linearly independent on the common zero set. Therefore, by Milnor \cite[Lemma 1]{Milnor}, $C(h_{F^i},F^i) \leq (k-1)^i \leq (k-1)^n.$ Hence \[\mathcal{N}_0(g) \leq 1+ (C(g,Q)-1)/2,\] while \[C(g,Q) = \sum_{i=0}^n \sum_{F^i} C(h_{F^i},F^i) \leq \sum_{i=0}^n 2^{n-i} {n \choose i} (k-1)^i = (k+1)^n.\] This finishes the proof for $p\in \P_k(Q). $	
	
	To prove the case $p\in \S_k(Q)$, it is enough to notice that since $p\geq 0$, it holds $\{p\leq t\} = \{p^2 \leq t^2\}$ and hence $\B(p^2)=\{[a^2,b^2)~|~[a,b)\in \B(p)\}$, where $(+\infty)^2=+\infty$ by convention.  Now $\cl N_\delta(p)\leq \cl N_0(p)=\cl N_0(p^2)$ and since $p^2\in \P_{2k}(Q)$ the proof follows from the first case.
\end{proof}

\subsection{Proof of Theorem \ref{MDP_Simplex_counting}}

Let $K$ be an MDP of $[0,1]^n.$ For $0\leq i \leq n$, an $i$-face of a dyadic cube in $K$ is called {\it minimal} if it does not contain any other $i$-face of any other dyadic cube in $K.$ We denote by $K^{(i)}$ the union of all minimal faces of cubes in $K$ of dimension at most $i$ and call $K^{(i)}$ the {\it $i$-skeleton} of $K.$ This terminology comes from the fact that minimal faces constitute cells in the "obvious" CW-decomposition of $[0,1]^n$ induced by $K.$

We call an $l$-tuple $(\eta_1, \ldots , \eta_l)$ of minimal faces of cubes in $K$ {\it nested} if $\eta_1 \subset \ldots \subset \eta_l,$ the inclusions being strict. We will need the following lemma.
\begin{lemma}\label{Nested_faces}
There exists a constant $C_n$, which depends only on $n$, such that for every MDP of $[0,1]^n$, $K$, the total number of nested tuples does not exceed $C_n |K|.$
\end{lemma}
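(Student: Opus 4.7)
The plan is to bound, for each fixed minimal face $\eta_1$ of $K,$ the number of nested tuples beginning with $\eta_1$ by a constant depending only on $n,$ and then sum over all minimal faces. First I would note that any nested tuple has length at most $n+1$ since the dimensions of its members strictly increase within $\{0,1,\ldots,n\}.$ By definition every minimal face of $K$ is a face of some cube in $K,$ and an $n$-cube has exactly $\sum_{i=0}^n \binom{n}{i}2^{n-i} = 3^n$ faces of all dimensions, so the total number of minimal faces of $K$ is bounded above by $3^n|K|.$

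The heart of the argument will be a local estimate: for any point $v \in [0,1]^n,$ the number of cubes of $K$ whose closure contains $v$ is at most $2^n.$ The idea is that the $n$ coordinate hyperplanes $\{x_i = v_i\}$ partition a small enough neighborhood of $v$ in $[0,1]^n$ into at most $2^n$ open regions, and by the interior-disjointness axiom of an MDP each region lies in the interior of exactly one cube of $K$; any cube whose closure contains $v$ must cover at least one of these regions. Consequently, given a minimal face $\eta_1$ and any fixed point $v \in \eta_1,$ every minimal face $\eta \supseteq \eta_1$ contains $v$ and is a face of some cube of $K$ whose closure contains $v,$ which limits the number of such $\eta$ to at most $2^n \cdot 3^n = 6^n.$

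To conclude, the number of strictly increasing chains of length at most $n+1$ inside a set of cardinality at most $6^n$ is bounded by a constant $B_n$ depending only on $n$ (for instance $B_n \leq (n+1)\cdot 6^{n^2}$), so the number of nested tuples beginning with $\eta_1$ is at most $B_n.$ Summing over minimal faces then produces the desired bound $3^n \cdot B_n \cdot |K| = C_n|K|.$ The step I expect to require the most care is the $2^n$-per-point local estimate, which crucially uses the interior-disjointness condition of the MDP; once that is established, the remaining face and chain counting is entirely elementary.
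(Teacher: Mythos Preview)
Your proof is correct and takes a genuinely different route from the paper's. The paper first observes that every nested tuple is a subtuple of a full-length nested $(n+1)$-tuple $(\nu_0,\ldots,\nu_n)$ with $\dim\nu_i=i$, and then counts such $(n+1)$-tuples by starting from the vertex $\nu_0$ (at most $2^n|K|$ choices) and ascending one dimension at a time, using that each minimal $m$-face lies in at most $2(n-m)$ minimal $(m+1)$-faces; this yields a constant on the order of $2^{3n}n!$. You instead fix the bottom element $\eta_1$, prove the local estimate that at most $2^n$ cubes of $K$ contain any given point (your argument for this is correct and is indeed the step requiring the most care: for small enough $\epsilon$ the only cube boundaries meeting $B(v,\epsilon)$ are hyperplanes through $v$, so each open orthant near $v$ is connected and avoids all cube boundaries, hence lies in a single $\mathring{\sigma}$; every cube containing $v$ covers at least one orthant in its interior, and interiors are disjoint), deduce that at most $6^n$ faces of cubes of $K$ contain $\eta_1$, and then crudely bound the number of chains in this small set. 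Your constant is much larger (of order $6^{n^2}$), but the lemma only asks for some $C_n$. The paper's dimension-by-dimension ascent exploits the minimality condition sharply to get bounded branching at each step, while your argument is more robust: it only uses that minimal faces are faces of cubes in $K$ together with interior-disjointness, and it sidesteps the need to justify that every nested tuple extends to a full $(n+1)$-tuple.
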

\begin{proof}
Every nested $l$-tuple $(\eta_1,\ldots , \eta_l)$ is a subtuple of a non-unique nested $(n+1)$-tuple. More precisely, there exists a non-unique $(n+1)$-tuple $(\nu_0,\ldots , \nu_n)$ such that $\eta_1=\nu_{i_1},\ldots , \eta_l=\nu_{i_l}$ for certain indices $0\leq i_1 <\ldots <i_l\leq n.$ Manifestly, every $\nu_i$ is an $i$-face of a dyadic cube. The total number of subtuples of a fixed $(n+1)$-tuple is $2^{n+1}$ and hence
\begin{equation}\label{NESTED_TUPLES_INEQ}
\# \text{ nested tuples} \leq 2^{n+1} \cdot (\# \text{ nested } (n+1) \text{-tuples}).
\end{equation}
To estimate the number of nested $(n+1)$-tuples $(\nu_0,\ldots, \nu_n)$ we first notice that the number of choices for $\nu_0$ is not greater than $2^n |K|$ because every dyadic cube has $2^n$ vertices. A chosen $\nu_0$ is contained in at most $2n$ minimal 1-faces and hence the number of pairs $\nu_0\subset \nu_1$ is at most $2^n |K|\cdot 2n.$ Similarly, if we have chosen $\nu_0\subset \nu_1 \subset \ldots \subset \nu_m$ the number of minimal $(m+1)$-faces which contain $\nu_m$ is at most $2(n-m).$ Thus
$$ \# \text{ nested } (n+1) \text{-tuples} \leq 2^n|K|\cdot 2^n \cdot n!,$$
which together with \eqref{NESTED_TUPLES_INEQ} finishes the proof.
\end{proof}

\begin{lemma}\label{Minimal_Covering_Lemma}
Let $K$ be an MDP of $[0,1]^n$ and $f:[0,1]^n\rightarrow \R$ a continuous function such that for every $\sigma \in K$, $d_{C^0}(f|_\sigma, \P_k(\sigma))<\frac{\delta}{2}$ or $d_{C^0}(f|_\sigma, \S_k(\sigma))<\frac{\delta}{2}$. There exists a compact cover $\U$ of $[0,1]^n$ which satisfies the following properties.
	\begin{enumerate}
		\item Sets in $\U$ are labelled by minimal faces in $K$, i.e. $\U=\{ U_\eta ~|~\eta \text{ a minimal face} \}$;
		\item Each $U_\eta$ is a box;
		\item $U_\eta\cap U_\nu \neq \emptyset \Leftrightarrow \eta \subset \nu$ or $\nu \subset \eta$;
		\item There exists a constant $C_{n,k}$ which depends only on $n$ and $k$ such that for each nested tuple $(\eta_1,\ldots , \eta_l)$ it holds $\mathcal{N}_\delta(f|_{U_{\eta_1}\cap \ldots \cap U_{\eta_l}})\leq C_{n,k}$.
	\end{enumerate}
\end{lemma}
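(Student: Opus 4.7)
The plan is to construct, for each minimal face $\eta$ of $K$, an axis-aligned box $U_\eta$ obtained from $\eta$ by thickening in its normal directions and shrinking in its tangential directions. Writing $\eta = \prod_{j \in J_\eta}\{p_j\} \times \prod_{j \notin J_\eta}[a_j,b_j]$ with $|J_\eta| = n - \dim \eta$, we set
$$U_\eta := \prod_{j \in J_\eta}\left[p_j - \alpha_{\eta, j},\; p_j + \alpha_{\eta, j}\right] \times \prod_{j \notin J_\eta}\left[a_j + \beta_{\eta, j},\; b_j - \beta_{\eta, j}\right]$$
for positive parameters $\alpha_{\eta, j}$ (normal thickening) and $\beta_{\eta, j}$ (tangential shrinking), each scaled proportionally to the size of the smallest cube in $K$ adjacent to $\eta$ in the relevant direction, with a hierarchy that makes them larger when $\dim \eta$ is smaller. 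Properties (1) and (2) are immediate, and coverage of $[0,1]^n$ follows from a straightforward pattern-matching argument on how close each coordinate of a given point is to a dyadic boundary.

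Property (3) is the main geometric content and is obtained by a careful choice of the parameters. A coordinate-by-coordinate computation reveals two constraints: (a) $\alpha_{\eta, j} < \beta_{\eta, j}$ for every $\eta$ and $j$, which is needed to guarantee $U_\eta \cap U_{\eta'} = \emptyset$ when $\eta$ and $\eta'$ are non-nested but share a common lower-dimensional sub-face (in such a case some direction $j$ witnesses a ``fixed vs.\ free'' discrepancy that the strict inequality (a) translates into disjointness); and (b) $\alpha_{\eta, j} \geq \beta_{\nu, j}$ whenever $\eta \subsetneq \nu$ and $j \in J_\eta \setminus J_\nu$, which ensures $U_\eta \cap U_\nu \neq \emptyset$ (the relevant interval $[a_j + \beta_{\nu, j}, a_j + \alpha_{\eta, j}]$ is then nonempty). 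Both constraints can be arranged simultaneously by letting the parameters increase strictly in the codimension of $\eta$, so that one obtains a hierarchy $\beta_{\nu, j} \leq \alpha_{\eta, j} < \beta_{\eta, j}$ as we descend the face poset. The main obstacle is executing this parameter choice consistently across the multiscale dyadic structure, particularly where cubes of different sizes meet along a common face; this is tractable because the scale ratios of neighboring cubes in an MDP are powers of $2$, which allows one to absorb the scale jumps into multiplicative constants in the parameter hierarchy.

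For property (4), a nested intersection $U_{\eta_1} \cap \cdots \cap U_{\eta_l}$ is itself an axis-aligned box, and with the parameter choice above it is confined to a neighborhood of $\eta_l$ small enough to lie within a single cube $\sigma \in K$ having $\eta_l$ as a face. The hypothesis then furnishes $p \in \P_k(\sigma)$ or $p \in \S_k(\sigma)$ with $\|f|_\sigma - p\|_{C^0} < \delta/2$, and the restriction of $p$ to the sub-box still belongs to $\P_k$ or $\S_k$ of that sub-box. Combining the stability theorem (Theorem \ref{Stability theorem}) with Proposition \ref{prop: barcode cube} then gives
$$\cl N_\delta(f|_{U_{\eta_1} \cap \cdots \cap U_{\eta_l}}) \leq \cl N_0\bigl(p|_{U_{\eta_1} \cap \cdots \cap U_{\eta_l}}\bigr) \leq C_n k^n,$$
so we may take $C_{n,k} := C_n k^n$ and the lemma follows.
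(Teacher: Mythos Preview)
Your overall construction coincides with the paper's: take each minimal face $\eta$, thicken it in its normal directions and shrink it in its tangential directions, and stratify the parameters by dimension so that the resulting boxes intersect exactly along the face poset. The paper uses a single pair $(\varepsilon_i,\tau_i)$ per dimension $i$, chosen inductively by compactness arguments, rather than your per-face, per-direction parameters scaled to local cube sizes, but this is a cosmetic difference.

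There is, however, a genuine error in your argument for property (4). You claim that the nested intersection $U_{\eta_1}\cap\cdots\cap U_{\eta_l}$ ``lies within a single cube $\sigma\in K$ having $\eta_l$ as a face.'' This is false whenever $\dim\eta_l<n$: any $U_\eta$ for a face $\eta$ of positive codimension is thickened in the normal directions and therefore straddles \emph{all} cubes of $K$ adjacent to $\eta$. Concretely, in dimension $1$ with $K=\{[0,\tfrac12],[\tfrac12,1]\}$ and the nested tuple consisting of the single vertex $\eta_1=\{\tfrac12\}$, the box $U_{\eta_1}=[\tfrac12-\alpha,\tfrac12+\alpha]$ meets both cubes, and no choice of $\alpha>0$ avoids this. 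Since nested tuples are not required to terminate in an $n$-face, your argument does not cover all cases.

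The paper's fix is to observe instead that the intersection lies in a small neighbourhood of the \emph{smallest} face $\eta_1$, and then to argue by continuity: the polynomial $p$ approximating $f$ on any cube $\sigma\supset\eta_1$ satisfies $|f-p|<\delta/2$ on $\eta_1$ itself, and since both $f$ and $p$ are continuous, this persists on a sufficiently small neighbourhood of $\eta_1$ in $[0,1]^n$. One then shrinks the thickening parameters until this holds for all (finitely many) minimal faces simultaneously. This continuity step is the substantive point you are missing.

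A minor related remark: your constraint (a), ``$\alpha_{\eta,j}<\beta_{\eta,j}$,'' is ill-posed as written, since for fixed $\eta,j$ exactly one of the two parameters is defined; you presumably intend $\alpha_{\eta,j}<\beta_{\eta',j}$ for the two faces in question. Also, disjointness of $U_\eta,U_{\eta'}$ for distinct same-dimensional faces with the \emph{same} set of fixed directions (e.g.\ two vertices) is not covered by the fixed-vs-free discrepancy argument; it requires the thickening to be smaller than half the distance between such faces, which the paper handles simply by taking the thickening parameter small enough.
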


We will first prove Theorem \ref{MDP_Simplex_counting} assuming Lemma \ref{Minimal_Covering_Lemma} and then prove Lemma \ref{Minimal_Covering_Lemma}.

\begin{proof}[Proof of Theorem \ref{MDP_Simplex_counting}]
	Let $\U$ be a compact cover of $[0,1]^n$ given by Lemma \ref{Minimal_Covering_Lemma}. 
	
	Now, by property (3) we have that for two minimal faces $\eta$ and $\nu$ of the same dimension it holds $U_\eta\cap U_\nu = \emptyset.$ Thus, sets
$$\U_i=\{ U_\eta ~|~\eta \text{ a minimal } i\text{-face} \},i=0,\ldots , n,$$
constitute a coloring of $\U$ by $n+1$ colors.  On the other hand, $U_{\eta_1} \cap \ldots \cap U_{\eta_l}\neq \emptyset$ implies that $\eta_1,\ldots , \eta_l$, in appropriate order, form a nested tuple, again due to property (3).  Moreover, since by (2) each set in $\U$ is a box, $\U$ is tame as explained in Subsection \ref{Subsection_Covers} and Lemma \ref{Barcodes_Colored} implies that
$$\mathcal{N}_{2^{n+1}\delta}(f)\leq \sum\limits_{(\eta_1,\ldots , \eta_l) \text{ nested}} \mathcal{N}_ \delta (f|_{U_{\eta_1} \cap \ldots \cap U_{\eta_l}}). $$
Property (4) gives us
$$\mathcal{N}_{2^{n+1}\delta}(f)\leq C_{n,k} \cdot (\text{the total number of nested tuples}),$$
which together with Lemma \ref{Nested_faces} proves Theorem \ref{MDP_Simplex_counting}. 
\end{proof}

\begin{proof}[Proof of Lemma \ref{Minimal_Covering_Lemma}]
	We will define $U_\eta$ as a box which approximates $\eta.$ More precisely, given $\varepsilon,\tau\geq 0$ and a minimal $m$-face $\eta= [a_{i_1},b_{i_1}]\times \ldots \times [a_{i_m},b_{i_m}] \times (x_{i_{m+1}},\ldots , x_{i_n}),$ we define an {\it $(\varepsilon,\tau)$-approximation of $\eta$} as
$$\eta^{(\varepsilon,\tau)}=[a_{i_1}+\varepsilon,b_{i_1}-\varepsilon]\times \ldots \times [a_{i_m}+\varepsilon,b_{i_m}-\varepsilon] \times [x_{i_{m+1}}-\tau, x_{i_{m+1}}+\tau]\times \ldots \times [x_{i_n}-\tau, x_{i_n}+\tau].$$
Our goal is to choose pairs $(\varepsilon_0,\tau_0),\ldots , (\varepsilon_n,\tau_n)$ in such a way that
$$\U=\bigcup_{i=0}^n \U_i,~\U_i=\{ U_\eta=\eta^{(\varepsilon_i,\tau_i)}\cap [0,1]^n ~|~\eta \text{ a minimal } i \text{-face} \}$$
satisfy (1)-(4). Manifestly, $\U$ satisfies properties (1) and (2) for any choice of $(\varepsilon_i,\tau_i).$ In order for sets in $\U$ to cover $[0,1]^n$ it is enough that
$$\tau_{n-1}<\ldots <\tau_1<\tau_0 \text{ and } \varepsilon_i<\tau_{i-1} \text{ for } i=1,\ldots , n.$$
Indeed, for any choice of $\tau_0$ sets in $\U_0$ cover the 0-skeleton $K^{(0)}.$ Condition $\varepsilon_1<\tau_0$ implies that sets in $\U_0 \cup \U_1$ cover the 1-skeleton $K^{(1)}.$ Similarly, $\varepsilon_i<\tau_{i-1}<\ldots <\tau_0$ implies that $\U_0\cup \ldots \cup \U_i$ covers the $i$-skeleton $K^{(i)}$ for all $0\leq i \leq n.$ Hence, $\U=\U_0 \cup \ldots \cup \U_n$ is a covering of $K^{(n)}=[0,1]^n.$ Figure \ref{Coloring} shows such a covering of $[0,1]^2$ with approximations of minimal faces colored in 3 colors.

\begin{figure}[ht]
	\begin{center}
		\includegraphics[scale=0.4]{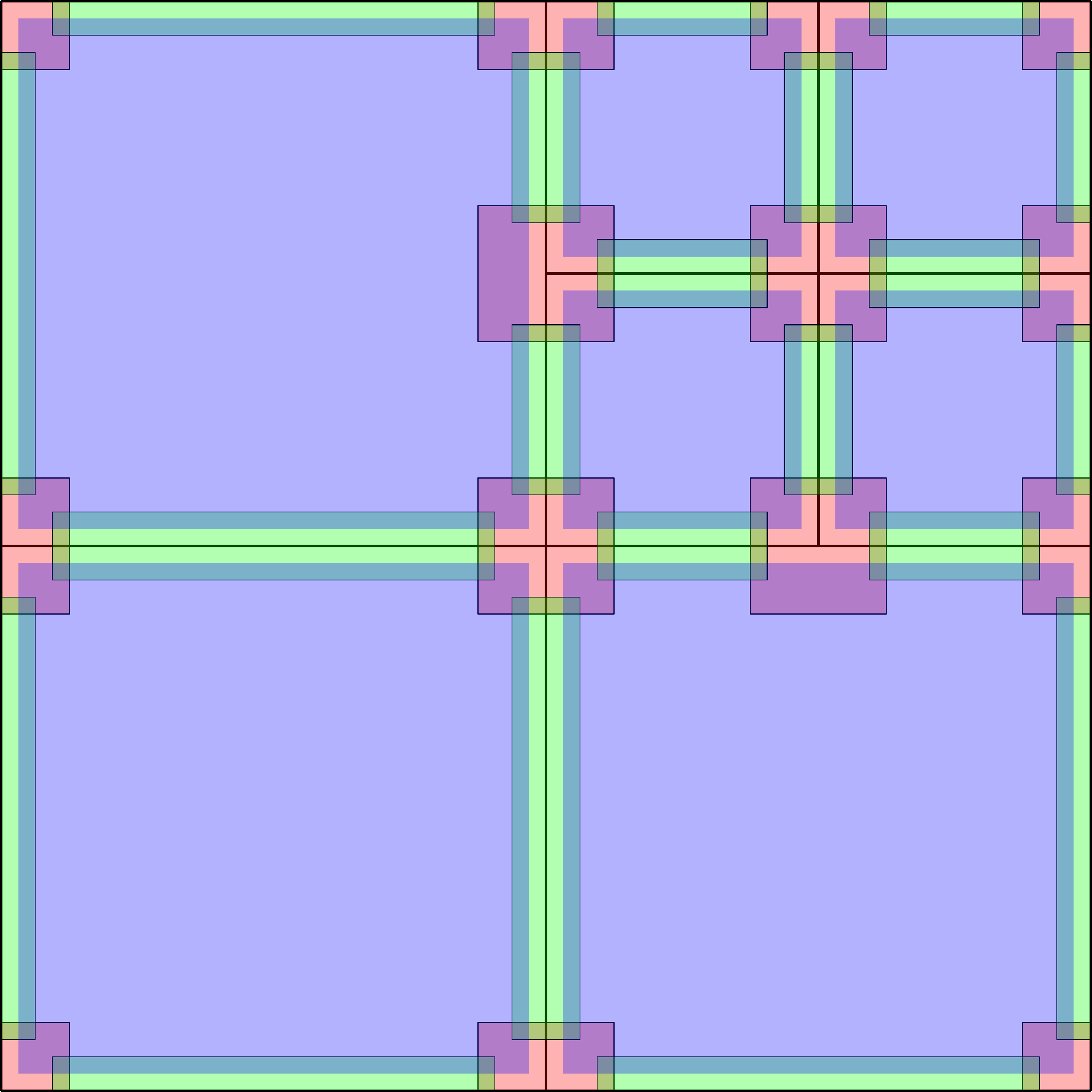}
		\caption{A cover of $[0,1]^2$ corresponding to an MDP}
		\label{Coloring}
	\end{center}
\end{figure}

What is left is to arrange for properties (3) and (4) to hold. To guarantee property (3) we choose $(\varepsilon_i,\tau_i)$ inductively in such a way that each $U_\nu\in \U_i$ intersects $U_\eta \in \U_0 \cup \ldots \cup \U_{i-1}$ if and only if $\eta \subset \nu$ and no two sets in $\U_i$ intersect. More precisely, we start by choosing $\tau_0$ small enough, so that sets in $\U_0$ are disjoint. Assume now that $(\varepsilon_0,\tau_0),\ldots , (\varepsilon_{i-1},\tau_{i-1})$ are given and let us choose $(\varepsilon_i,\tau_i).$ We first pick $\varepsilon_i$ to be an arbitrary number which satisfies $0<\varepsilon_i<\tau_{i-1}<\ldots < \tau_0.$ Notice that for each minimal $i$-face $\nu$ and all $U_\eta \in \U_0\cup \ldots \cup \U_{i-1}$ it holds
$$\nu^{(\varepsilon_i,0)}\cap U_\eta\neq \emptyset \text{ if and only if } \eta\subset \nu.$$
Since all the above sets are compact, for small enough $\tau_i'$ we have that still for each minimal $i$-face $\nu$ and all $U_\eta \in \U_0\cup \ldots \cup \U_{i-1}$ it holds
$$\nu^{(\varepsilon_i,\tau_i')}\cap U_\eta\neq \emptyset \text{ if and only if } \eta\subset \nu.$$
Similarly, notice that for any pair of minimal $i$-faces $(\nu_1,\nu_2)$ we have that $\nu_1^{(\varepsilon_i,0)} \cap \nu_2^{(\varepsilon_i,0)}=\emptyset$ and hence for small enough $\tau_i''$, $\nu_1^{(\varepsilon_i,\tau_i'')} \cap \nu_2^{(\varepsilon_i,\tau_i'')} =\emptyset$ holds as well. Taking $\tau_i<\min(\tau_i',\tau_i'')$ guarantees property (3).

Finally to arrange for property (4) to hold,notice that for a nested tuple $(\eta_1,\ldots,\eta_l)$, set $U_{\eta_1}\cap \ldots \cap U_{\eta_l}$ belongs to the $\tau_0$-neighbourhood of $\eta_1.$ By the assumption of Theorem \ref{MDP_Simplex_counting},  $d_{C^0}(f|_{\eta_1}, \P_k(\eta_1))<\frac{\delta}{2}$ or $d_{C^0}(f|_{\eta_1}, \S_k(\eta_1))<\frac{\delta}{2}$ and hence for small enough $\tau_0$ we have that
\begin{equation}\label{CZERO_FACE}
d_{C^0}(f|_{U_{\eta_1}\cap \ldots \cap U_{\eta_l}}, \P_k(U_{\eta_1}\cap \ldots \cap U_{\eta_l}))< \frac{\delta}{2},
\end{equation}
or
\begin{equation}\label{CZERO_FACE2}
d_{C^0}(f|_{U_{\eta_1}\cap \ldots \cap U_{\eta_l}}, \S_k(U_{\eta_1}\cap \ldots \cap U_{\eta_l}))<\frac{\delta}{2}.
\end{equation}
On the other hand, since $U_{\eta_1},\ldots , U_{\eta_l}$ are boxes, $U_{\eta_1}\cap \ldots \cap U_{\eta_l}$ is a box as well and hence Proposition \ref{prop: barcode cube} implies that $\mathcal{N}_{\delta'}(p)\leq C_{n,k}$ for any $\delta'>0$ and any $p\in \P_k (U_{\eta_1} \cap \ldots \cap U_{\eta_l})$ or $p\in \S_k (U_{\eta_1} \cap \ldots \cap U_{\eta_l})$. We choose $\delta'$ and $p$ such that
$$d_{C^0}(f|_{U_{\eta_1}\cap \ldots \cap U_{\eta_l}},p)<\frac{\delta}{2}-\frac{\delta'}{2}.$$
This inequality together with the stability theorem implies
$$\mathcal{N}_\delta ( f|_{U_{\eta_1}\cap \ldots \cap U_{\eta_l}})\leq \mathcal{N}_{\delta'}(p)\leq C_{n,k}.$$
Taking $\tau_0$ (and hence also all $\varepsilon_i,\tau_i$) small enough so that (\ref{CZERO_FACE}) or (\ref{CZERO_FACE2}) hold for all nested tuples of minimal faces guarantees property (4) and finishes the proof.
\end{proof}

\section{The proof of the main result}\label{Proof}

The goal of this section is to prove Theorem \ref{thm: main 2}. We first present a few preliminaries on Sobolev spaces, then we prove the local result on a cube, and finally prove the general case.

\subsection{Sobolev spaces}
\label{subsec:sobolev}
The goal of this subsection is to fix the definitions and notation for Sobolev norms that are used throughout the paper.  First, let $\Omega$ be a domain in 
$\mathbb{R}^n$. Given an integer $k\ge 0$ and a real number $p\ge 1$, we define a Sobolev space $W^{k,p}(\Omega)$ as the closure of $C^\infty(\Omega)$ with respect to the norm
\begin{equation}
	\label{eq:sobolevnorm}
	\| f\|_{W^{k,p}(\Omega)} = \left( \sum_{|\alpha| \le k} \int_\Omega \left|D^\alpha f(x)\right|^p dx \right)^{\frac{1}{p}},
\end{equation}
where the sum is taken over all multi-indices $\alpha=(\alpha_1, \dots, \alpha_n)$, $\alpha_i \in \mathbb{Z}_{\ge 0}$, such that $\alpha_1+\dots +\alpha_n \le k$, and $D^\alpha=D_{x_1}^{\alpha_1}\dots D_{x_n}^{\alpha_n}$ denotes the partial derivatives.  Similarly, the space $W_0^{k,p}(\Omega)$ is defined as the completion of the space $C_0^\infty(\Omega)$ of smooth functions with compact support with respect to the norm \eqref{eq:sobolevnorm}.

The notion of the Sobolev space together with the norm \eqref{eq:sobolevnorm} can be extended to functions on compact Riemannian manifolds and to sections of vector bundles. There exist several ways to do it yielding equivalent Sobolev norms. In the present paper we use the definition  via the partition of unity (see, for instance,  \cite[Appendix 1]{Shubin92}), and we briefly recall this construction.

Consider a finite atlas $\mathcal V = \{ (V_i,\phi_i) \}$ for a compact Riemannian manifold $M$ where $V _i \subset \R^n$ is an open set and $\phi_i:V_i \to U_i \subset M$ is a diffeomorphism and let $\{ \chi_i  \}$ be a subordinate partition of unity. Set $K_i = \mrm{supp}(\chi_i) \subset U_i.$ Then for $f \in C^{\infty}(M)$ we set $f_i = \chi_i f$ and define 
$$||f||_{W^{k,p}(M)} =  \left(\sum ||f_i \circ \phi_i||^p_{W^{k,p}(K_i)}\right)^{1/p}$$

Note that the  norm depends on the choice of the atlas and the partition of unity, however its equivalence class does not. 
This  definition extends in a straightforward way to sections of a vector bundle $E \to M$ with an inner product.

For functions on Euclidean domains $\Omega\subset \R^n$ we will also use the notation
\begin{equation}
	\label{eq:sobolevk}
	\|D^k f\|_{L^p(\Omega)} = \left( \sum_{|\alpha|=k} \int_\Omega \left|D^\alpha f(x)\right|^p dx \right)^{\frac{1}{p}}.
\end{equation}

This generalizes as follows to vector-valued functions. Given a positive integer $k$ and $s:\Omega\rightarrow \R^l$, $s=(f_1,\ldots , f_l)$, we denote
$$\| D^k s \|_{L^p}= \Bigg( \sum_{|\alpha|=k} \int_{\Omega} \Big(\sum_{i=1}^l |D^\alpha f_i(x) |^2 \Big)^{p/2} dx  \Bigg)^\frac{1}{p}.$$

\subsection{The case of a cube}

Recall that $\mathcal{N}_\delta(|s|)$ denotes the number of bars of length greater than $\delta$ in $\B(|s|)$ defined using \v{C}ech homology.  The following is the main analytic ingredient of the proof. 

\begin{prop}\label{MDP_Construction} Let $n,l$ and $k$ be positive integers and $p\geq 1$ a real number such that $kp>n.$ There exist a constant $\Cnkp$, which depends on $n,k,p$, such that for each smooth map $s:[0,1]^n\rightarrow \R^l $ and for all $\delta>0$ there exists an MDP of $[0,1]^n$, $K$, such that
\begin{enumerate}
\item $(\forall \sigma \in K)~ d_{C^0}(|s|_\sigma |, \S_{k-1})<\frac{\delta}{2}$ 
\item $|K| \leq 1 + \Cnkp \Big( \frac{\| D^k s \|_{L^p}}{\delta} \Big)^\frac{n}{k}.$
\end{enumerate}

\end{prop}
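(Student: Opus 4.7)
The plan is to build $K$ by a stopping-time subdivision based on Morrey--Sobolev approximation, and then bound the number of cubes via a crossover argument that combines a disjointness estimate with the trivial counting bound.

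First I would establish local polynomial approximation. Fix a dyadic cube $Q \subseteq [0,1]^n$ of side $\ell$. Applying the Morrey--Sobolev theorem (Theorem \ref{Morrey-Sobolev}) componentwise to $s = (f_1,\dots,f_l)$, we obtain polynomials $P_i \in \P_{k-1}$ with
\[
\|f_i - P_i\|_{C^0(Q)} \leq C_1\, \ell^{\,k-n/p}\,\|D^k f_i\|_{L^p(Q)}.
\]
Since $|D^\alpha f_i| \leq \sqrt{\sum_j |D^\alpha f_j|^2}$ pointwise, one has $\|D^k f_i\|_{L^p(Q)} \leq \|D^k s\|_{L^p(Q)}$. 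Setting $P = (P_1,\dots,P_l)$, the reverse triangle inequality gives $\bigl\| |s| - |P| \bigr\|_{C^0(Q)} \leq \|s - P\|_{C^0(Q)} \leq C_2\, \ell^{\,k-n/p}\, \|D^k s\|_{L^p(Q)}$. Moreover $|P| = \sqrt{\sum_i P_i^2}$ lies in $\S_{k-1}(Q)$, since $\sum_i P_i^2$ is a nonnegative polynomial of degree at most $2(k-1)$.

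Next I would declare $Q$ to be \emph{good} if $C_2\, \ell^{\,k-n/p}\,\|D^k s\|_{L^p(Q)} < \delta/2$ and \emph{bad} otherwise; good cubes automatically satisfy property (1). Starting with $[0,1]^n$, I would iteratively subdivide every bad cube into its $2^n$ equal dyadic children until only good cubes remain. Smoothness of $s$ yields $\|D^k s\|_{L^p(Q)} \leq \ell^{n/p}\,\|D^k s\|_{C^0}$, hence $\ell^{k-n/p}\|D^k s\|_{L^p(Q)} \leq \ell^{k}\,\|D^k s\|_{C^0} \to 0$ as $\ell \to 0$, so the process terminates after finitely many steps and produces an MDP $K$. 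The $2^n$-ary tree of subdivisions yields the relation $|K| \leq (2^n - 1)|B| + 1$, where $B$ is the set of bad cubes.

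The main obstacle is bounding $|B|$ with the sharp exponent $n/k$. Let $A := \|D^k s\|_{L^p([0,1]^n)}/(c\delta)$ with $c = 1/(2C_2)$. The level-$j$ bad cubes $B_j$ are pairwise disjoint and each satisfies $\|D^k s\|_{L^p(Q)}^p > (c\delta)^p\, 2^{j(kp-n)}$; summing over the disjoint family gives
\[
|B_j| \;\leq\; A^{p}\cdot 2^{-j(kp-n)}.
\]
Used alone, this estimate produces only the weaker bound $|B| \lesssim A^{p}$. The trick is to pair it with the trivial count $|B_j| \leq 2^{jn}$, which dominates for small $j$, and to switch at the balance point $j_* \simeq \log_2 A^{1/k}$. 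Splitting the series at $j_*$ and using $kp > n$,
\[
|B| \;\leq\; \sum_{j < j_*} 2^{jn} \;+\; \sum_{j \geq j_*} A^{p}\, 2^{-j(kp-n)} \;\leq\; \Cnkp\, A^{n/k},
\]
since both geometric tails evaluate to $O(A^{n/k})$: the first to $\frac{2^{j_* n}}{2^n - 1} \lesssim A^{n/k}$, the second to $A^{p}\cdot 2^{-j_*(kp-n)}/(1 - 2^{-(kp-n)}) = A^{p - (kp-n)/k}/(1 - 2^{-(kp-n)}) = A^{n/k}/(1 - 2^{-(kp-n)})$. This crossover estimate is precisely what I expect Lemma \ref{Bad_cubes} to encode. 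Combined with $|K| \leq (2^n - 1)|B| + 1$, it yields the desired bound $|K| \leq 1 + \Cnkp (\|D^k s\|_{L^p}/\delta)^{n/k}$.
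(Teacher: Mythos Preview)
Your proposal is correct and follows essentially the same route as the paper: Morrey--Sobolev approximation to define good/bad cubes, a stopping-time subdivision, and then the crossover between the trivial bound $|B_j|\le 2^{jn}$ and the disjointness bound $|B_j|\le A^p 2^{-j(kp-n)}$ at $j_*\sim \log_2 A^{1/k}$. One small remark: Lemma~\ref{Bad_cubes} in the paper records only the disjointness (Estimate~2) part, not the crossover itself, which is carried out afterwards exactly as you do; your leaf count $|K|\le (2^n-1)|B|+1$ is in fact slightly sharper than the paper's $|K|\le 1+2^n\sum_l|B_l|$.
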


As an immediate corollary of Proposition \ref{MDP_Construction}, we obtain the local version of our main result, Theorem \ref{thm: main 2}.
\begin{theorem}\label{CUBE} Under the assumptions of Proposition \ref{MDP_Construction} it holds
$$\mathcal{N}_\delta (|s|) \leq C_{n,k} + \Cnkp \Bigg( \frac{\| D^k s \|_{L^p}}{\delta} \Bigg)^\frac{n}{k},$$
for certain constants $C_{n,k}$ and $\Cnkp$ which respectively depend on $n,k$ and $n,k,p.$
\end{theorem}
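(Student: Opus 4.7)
The proof is essentially a direct composition of Proposition \ref{MDP_Construction} with Theorem \ref{MDP_Simplex_counting}, followed by a rescaling in $\delta$. My plan is the following.

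First, given $s:[0,1]^n \to \R^l$ smooth and $\delta > 0$, I would set an auxiliary threshold $\delta_0 := \delta / 2^{n+1}$, and apply Proposition \ref{MDP_Construction} with this threshold $\delta_0$ in place of $\delta$. This yields an MDP $K$ of $[0,1]^n$ such that for every $\sigma \in K$ we have $d_{C^0}(|s|_\sigma, \S_{k-1}(\sigma)) < \delta_0/2$, together with the combinatorial bound
\[ |K| \leq 1 + \Cnkp \Big( \tfrac{\|D^k s\|_{L^p}}{\delta_0} \Big)^{n/k}. \]
Note that $|s|$ is continuous because $s$ is smooth, so the \v{C}ech persistence module is well defined and moderate in the sense required by the bar-counting formalism of Section \ref{Section_Prelims}.

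Next, since $\S_{k-1}(\sigma) \subset \S_{k-1}(\sigma)$ (applying Theorem \ref{MDP_Simplex_counting} with the degree parameter $k-1$ instead of $k$), the hypothesis of Theorem \ref{MDP_Simplex_counting} is satisfied for $f = |s|$ with threshold $\delta_0$. The theorem therefore gives
\[ \mathcal{N}_{2^{n+1}\delta_0}(|s|) \leq C_{n,k-1} \, |K|. \]
By construction $2^{n+1}\delta_0 = \delta$, so combining with the estimate on $|K|$ and absorbing the factor $(2^{n+1})^{n/k}$ into the constant yields
\[ \mathcal{N}_\delta(|s|) \leq C_{n,k-1} + C_{n,k-1}\,\Cnkp\,(2^{n+1})^{n/k} \Big(\tfrac{\|D^k s\|_{L^p}}{\delta}\Big)^{n/k}, \]
which is the desired inequality after renaming the constants (the first becomes $C_{n,k}$ and the second is the new $\Cnkp$).

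There is no real obstacle here: both main ingredients are in place, and the only thing to watch is the bookkeeping of the $2^{n+1}$ factor coming from the cover-plus-Mayer--Vietoris step in Theorem \ref{MDP_Simplex_counting}, together with the shift from degree $k$ in the Sobolev hypothesis to degree $k-1$ in the approximating class $\S_{k-1}$ produced by Proposition \ref{MDP_Construction} (this shift is a consequence of the Morrey--Sobolev embedding: $W^{k,p}$ with $kp>n$ embeds into $C^{k-1,\alpha}$, so Taylor approximation on small cubes gives polynomial approximants of degree $k-1$). Both effects are harmless since they only affect the constants, which are already allowed to depend on $n,k,p$.
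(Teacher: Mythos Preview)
Your proof is correct and follows the same approach as the paper, which tersely combines Proposition~\ref{MDP_Construction} with Theorem~\ref{MDP_Simplex_counting}. You are simply more explicit about the rescaling $\delta_0 = \delta/2^{n+1}$ needed to convert the bound on $\mathcal{N}_{2^{n+1}\delta_0}$ from Theorem~\ref{MDP_Simplex_counting} into one on $\mathcal{N}_\delta$, and about the degree shift to $k-1$ in the approximating class; the paper absorbs both into the unspecified constants without comment.
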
	
\begin{proof}
Let $K$ be an MDP given by Proposition \ref{MDP_Construction}. Property (1) allows us to apply Theorem \ref{MDP_Simplex_counting} which together with property (2) proves the theorem. 
\end{proof}

The proof of Proposition \ref{MDP_Construction} occupies the rest of the subsection. Our goal will be to construct $K$ using a subdivision algorithm with a criterion for subdividing a dyadic cube $\sigma$ based on a Morrey-Sobolev type estimate for $d_{C^0}(|s|_\sigma|, \S_{k-1}(\sigma)).$ We first recall the relevant estimate. For a subset $Q \subset \R^n$ let $\cl{P}^l_{k-1}(Q)$ denote the space of mappings $s: Q \to \R^l$ all of whose coordinates are polynomials of degree at most $k-1.$ Endow $\R^l$ with the standard Euclidean metric.

\begin{theorem}[Morrey-Sobolev] \label{Morrey-Sobolev} Let $n,k$ be positive integers and $p\geq 1$ a real number such that $kp>n.$ There exists a constant $C_{n,k,p}'$ which depends on $n,k,p$ such that for every $n$-dimensional cube $Q\subset \R^n$ and every smooth function $s:Q\rightarrow \R^l$ it holds
	$$d_{C^0}(s,\P^l_{k-1}(Q))\leq C_{n,k,p}' (\Vol Q)^{\frac{k}{n}-\frac{1}{p}} \| D^k s\|_{L^p}.$$
\end{theorem}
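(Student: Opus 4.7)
The plan is to reduce to the unit cube $Q_0 = [0,1]^n$ by scaling, then approximate $s$ on $Q_0$ by a classical averaged Taylor polynomial, and bound the remainder via Hölder's inequality, with the assumption $kp > n$ entering precisely to ensure that the resulting singular kernel is integrable. The required volume factor $(\Vol Q)^{k/n - 1/p}$ will be generated automatically by the scaling.

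For the scaling step, given a cube $Q$ of side length $r = (\Vol Q)^{1/n}$ with corner $x_0$, I will set $\tilde{s}(y) = s(x_0 + ry)$ for $y \in Q_0$. A direct change of variables yields $\|D^k \tilde{s}\|_{L^p(Q_0)} = r^{k - n/p} \|D^k s\|_{L^p(Q)}$, while $d_{C^0}(s, \P^l_{k-1}(Q)) = d_{C^0}(\tilde{s}, \P^l_{k-1}(Q_0))$ because polynomial approximants transform covariantly under affine maps. Hence it is enough to prove the inequality on $Q_0$.

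On $Q_0$, fix once and for all a weight $\varphi \in C_c^\infty(\interior Q_0)$ with $\int \varphi = 1$, and define
\[
P(x) = \int_{Q_0} \varphi(y)\, T_y^k s(x)\, dy, \qquad T_y^k s(x) = \sum_{|\alpha| < k} \frac{(x-y)^\alpha}{\alpha!} D^\alpha s(y).
\]
Then $P \in \P^l_{k-1}(Q_0)$ componentwise. Inserting Taylor's formula with integral remainder for $s(x) - T_y^k s(x)$, averaging against $\varphi$, and applying the change of variables $z = y + t(x-y)$ for $t \in (0,1)$ to remove the dependence on $y$ produces a representation
\[
s(x) - P(x) = \sum_{|\alpha| = k} \int_{Q_0} K_\alpha(x, z)\, D^\alpha s(z)\, dz,
\]
where the kernels $K_\alpha$ satisfy the pointwise estimate $|K_\alpha(x, z)| \leq C(n,k,\varphi) |x - z|^{k - n}$. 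The exponent $k - n$ arises because the factor $(x - y)^\alpha$ produces $|x - z|^k / t^k$ after the substitution, which is balanced against the Jacobian $t^{-n}$ and then integrated in $t \in (0,1)$.

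Finally, applying Cauchy--Schwarz in the multi-index $\alpha$ and in the $\R^l$ component index, followed by Hölder's inequality in the $z$ variable with conjugate exponents $p$ and $p'$, yields
\[
|s(x) - P(x)| \leq C'(n,k,\varphi)\, \|D^k s\|_{L^p(Q_0)} \cdot \Bigl( \int_{Q_0} |x - z|^{(k - n) p'}\, dz \Bigr)^{1/p'}.
\]
The remaining integral is finite uniformly in $x \in Q_0$ precisely when $(n - k) p' < n$, equivalently $k > n/p$, which is exactly our hypothesis. Taking the supremum over $x \in Q_0$ and combining with the scaling step completes the proof, with a constant depending only on $n, k, p$ (the number of multi-indices of order $k$ and the fixed weight $\varphi$ being absorbed into $n, k$). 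The bulk of the technical work lies in verifying the pointwise kernel estimate $|K_\alpha(x, z)| \leq C |x - z|^{k - n}$ after the change of variables, which is where the smoothness and compact support of $\varphi$ in $\interior Q_0$ are used in an essential way; everything else is standard bookkeeping with Sobolev-type representations.
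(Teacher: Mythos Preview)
Your proposal is correct and follows essentially the same approach as the paper: both use the averaged Taylor polynomial with a compactly supported weight (the Dupont--Scott construction), obtain the kernel bound $|K_\alpha(x,z)| \lesssim |x-z|^{k-n}$, and finish with H\"older/Young, the integrability condition being exactly $kp>n$. The only cosmetic difference is that you first rescale to the unit cube and let the volume factor appear from scaling, whereas the paper works directly on a cube of side $r$ and tracks the $r$-dependence explicitly to make the constant $B_{n,k,p}$ precise; the two arguments are otherwise identical.
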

We include a proof of Theorem \ref{Morrey-Sobolev} following \cite{DS80} in Appendix \ref{app: Morrey-Sobolev}. As an immediate corollary of Theorem \ref{Morrey-Sobolev}, we obtain that for every smooth $s:Q\rightarrow \R^l$ 
\begin{equation}\label{Criterion_Estimate}
d_{C^0}(|s|,\S_{k-1}(Q))\leq C_{n,k,p}' (\Vol Q)^{\frac{k}{n}-\frac{1}{p}} \| D^k s\|_{L^p}.
\end{equation}
Indeed, if $s=(f_1,\ldots,f_l)$ is approximated by $\tilde{s}=(p_1,\ldots, p_l) \in \cl P^l_{k-1}(Q)$ via Theorem \ref{Morrey-Sobolev}, we obtain 
$$||s|-|\tilde{s}||\leq |s-\tilde{s}|=\Big(\sum_{i=1}^l (f_i - p_i)^2\Big)^\frac{1}{2}\leq C_{n,k,p}' (\Vol Q)^{\frac{k}{n}-\frac{1}{p}} \| D^k s\|_{L^p}.$$

Let us now fix positive integers $n,l,k$, a real $p\geq 1$ such that $kp>n$, $\delta >0$ and a smooth map $s:[0,1]^n\rightarrow \R^l.$ We call a cube $Q\subset [0,1]^n$ {\it bad} if
$$(\Vol Q)^{\frac{k}{n}-\frac{1}{p}} \| D^k (s|_Q)\|_{L^p}\geq \frac{\delta}{2 \Cnkp'},$$
and otherwise we call it {\it good}. Notice that by \eqref{Criterion_Estimate} if $Q$ is good then \[d_{C^0}(|s|_Q|, \S_{k-1}(Q))<\frac{\delta}{2}.\] We will need the following lemma.
\begin{lemma} \label{Bad_cubes}
	Let $K$ be an MDP of $[0,1]^n$ and assume that $\sigma_1,\ldots, \sigma_N \in K$ are bad. Denote $B=\cup_{i=1}^N \sigma_i.$ It holds
	$$N\leq (2\Cnkp')^\frac{n}{k} (\Vol B)^{1-\frac{n}{kp}} \Bigg( \frac{\| D^k (s|_B) \|_{L^p}}{\delta} \Bigg)^\frac{n}{k}.$$
\end{lemma}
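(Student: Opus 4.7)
The plan is to combine the badness criterion with the additivity of $\|D^k(s|_\cdot)\|_{L^p}^p$ over the interior-disjoint cubes of $K$, and close the argument with a convexity (Jensen) inequality.

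First, I would rewrite the badness criterion in a form convenient for summation. For every bad cube $\sigma_i$,
\[
\|D^k(s|_{\sigma_i})\|_{L^p}^p \;\geq\; \left(\frac{\delta}{2C_{n,k,p}'}\right)^p (\Vol \sigma_i)^{1-\frac{kp}{n}}.
\]
Because the cubes $\sigma_1,\dots,\sigma_N$ have pairwise disjoint interiors and their union is $B$, the $L^p$-additivity of the integrand yields $\sum_{i=1}^N \|D^k(s|_{\sigma_i})\|_{L^p}^p = \|D^k(s|_B)\|_{L^p}^p$. Summing the inequality above gives
\[
\|D^k(s|_B)\|_{L^p}^p \;\geq\; \left(\frac{\delta}{2C_{n,k,p}'}\right)^p \sum_{i=1}^N (\Vol \sigma_i)^{\alpha}, \qquad \alpha := 1-\frac{kp}{n}.
\]

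Now set $v_i := \Vol \sigma_i$ and $V := \Vol B = \sum_i v_i$. Since $kp>n$, we have $\alpha<0$, so $x\mapsto x^{\alpha}$ is convex on $(0,\infty)$. Jensen's inequality applied to the uniform probability measure on $\{1,\ldots,N\}$ yields
\[
\frac{1}{N}\sum_{i=1}^N v_i^{\alpha} \;\geq\; \left(\frac{1}{N}\sum_{i=1}^N v_i\right)^{\alpha} \;=\; \left(\frac{V}{N}\right)^{\alpha},
\]
and hence $\sum_{i=1}^N v_i^{\alpha} \geq N^{1-\alpha}\,V^{\alpha} = N^{kp/n}\,V^{\,1-kp/n}$. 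Substituting this into the displayed inequality gives
\[
\|D^k(s|_B)\|_{L^p}^p \;\geq\; \left(\frac{\delta}{2C_{n,k,p}'}\right)^p N^{kp/n}\,V^{\,1-kp/n}.
\]
Solving for $N$ (raising to the power $n/(kp)$) produces
\[
N \;\leq\; (2C_{n,k,p}')^{n/k}\,V^{\,1-\frac{n}{kp}}\left(\frac{\|D^k(s|_B)\|_{L^p}}{\delta}\right)^{n/k},
\]
which is exactly the claimed bound.

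There is no real obstacle here; the only mild subtlety is checking that $\alpha<0$ (so that Jensen goes in the correct direction) and that the $L^p$-norm of $D^k s$ is genuinely additive over interior-disjoint cubes, which is immediate from the definition as a Lebesgue integral since the common boundaries have measure zero.
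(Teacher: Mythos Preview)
Your proof is correct and follows essentially the same approach as the paper: both raise the badness inequality to the $p$-th power, sum using additivity of the $L^p$ norm over interior-disjoint cubes, apply the convexity of $x\mapsto x^{\alpha}$ for $\alpha=1-kp/n<0$ (the paper phrases this as ``$\sum x_i^{\alpha}$ is minimized when all $x_i$ are equal'', you call it Jensen), and then take the $n/(kp)$-th root.
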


\begin{proof}
	Since all $\sigma_i$ are bad we have that for $i=1,\ldots,  N$ it holds
	$$(\Vol \sigma_i)^{\frac{1}{p}-\frac{k}{n}} \leq 2\Cnkp' \frac{\| D^k (s|_{\sigma_i}) \|_{L^p}}{\delta}.$$
	Raising both sides of the inequality to the power $p$ and summing over $i$ gives us
	\begin{equation}\label{Sum_of_bad}
		\sum_{i=1}^N (\Vol \sigma_i)^{1-\frac{kp}{n}} \leq (2\Cnkp')^p \Bigg( \frac{\| D^k (s|_B) \|_{L^p}}{\delta} \Bigg)^p.
	\end{equation}
	One may check that if $\alpha<0$, $x_1,\ldots, x_N>0$ and $\sum_{i=1}^N x_i$ is fixed, $\sum_{i=1}^N x_i^\alpha$ attains minimum when all $x_i$ are equal. Thus, $1-\frac{kp}{n}<0$ and $\sum_{i=1}^N \Vol \sigma_i=\Vol B$ imply that
	$$\sum_{i=1}^N \Bigg( \frac{\Vol B}{N} \Bigg)^{1-\frac{kp}{n}} \leq \sum_{i=1}^N (\Vol \sigma_i)^{1-\frac{kp}{n}},$$
	which together with (\ref{Sum_of_bad}) yields
	$$N^\frac{kp}{n} (\Vol B)^{1-\frac{kp}{n}} \leq (2\Cnkp')^p \Bigg( \frac{\| D^k (s|_B) \|_{L^p}}{\delta} \Bigg)^p .$$
	Raising both sides of this inequality to the power $\frac{n}{kp}$ finishes the proof.
\end{proof}
We now have all the ingredients necessary to prove Proposition \ref{MDP_Construction}.
\begin{proof}[Proof of Proposition \ref{MDP_Construction}]
	We assume that $\| D^k s \|_{L^p}\neq 0.$ Otherwise $s=(f_1,\ldots ,f_l)$ with $f_i\in \P_{k-1}([0,1]^n)$,  which implies $|s|\in \S_{k-1}([0,1]^n)$ and hence Proposition \ref{MDP_Construction} holds for $K=\{ [0,1]^n \}$.  By convention, we consider any sum from 0 to -1 to be equal to zero.
	
	Let $K_0=\{[0,1]^n \}.$ We construct a finite sequence $K_l$ of MDPs of $[0,1]^n$ inductively, according to the following algorithm. If all $\sigma\in K_l$ are good the algorithm stops. If not, we subdivide all bad dyadic cubes in $K_l$ into $2^n$ smaller dyadic cubes. $K_{l+1}$ is the MDP obtained as a result of this subdivision, see Figure \ref{Subdivision}.
	\begin{figure}[ht]
		\begin{center}
			\includegraphics[scale=0.6]{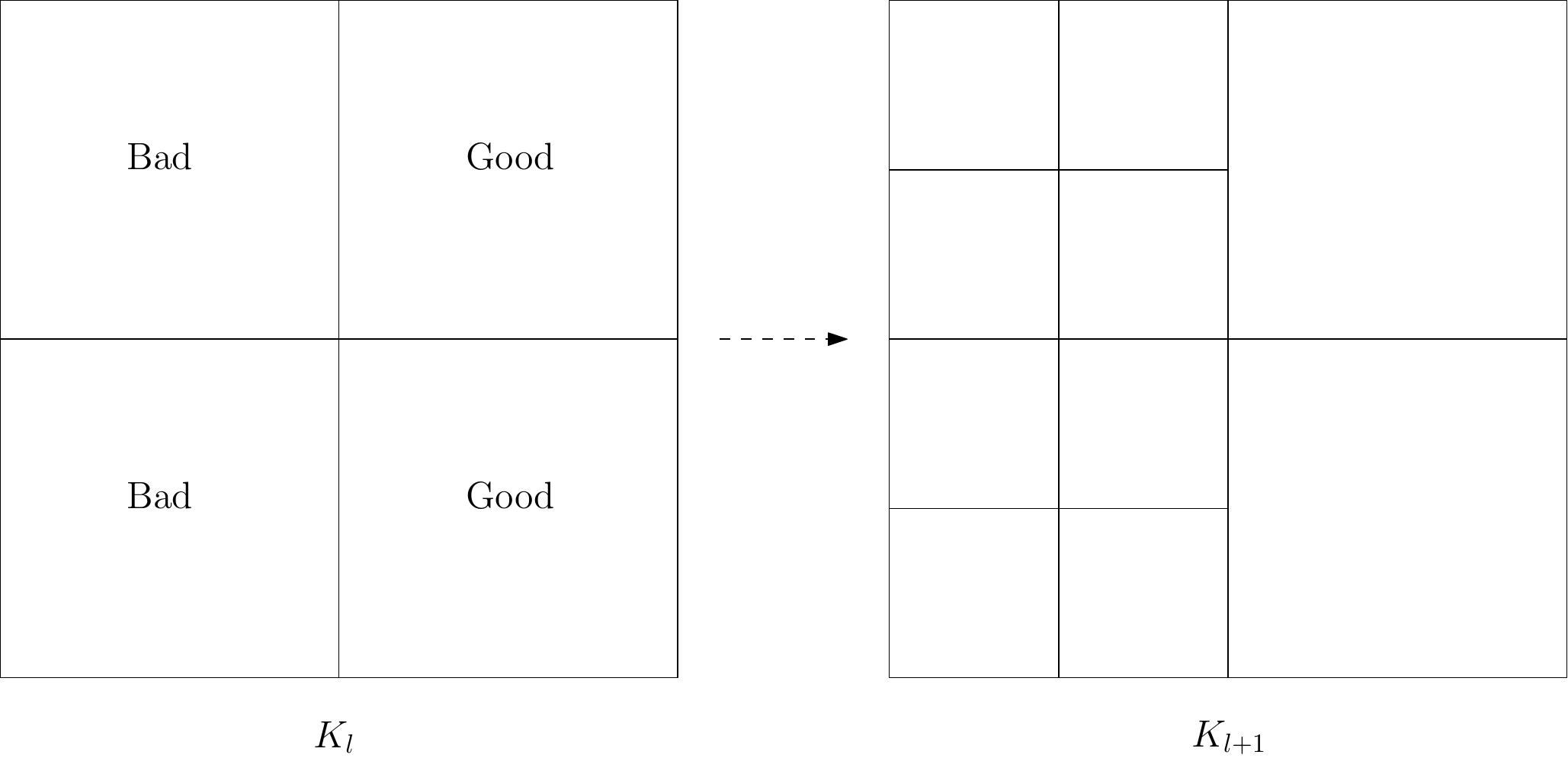}
			\caption{A step in the subdivision algorithm}
			\label{Subdivision}
		\end{center}
	\end{figure}
	
	Since $\frac{\delta}{2 \Cnkp'}$ is fixed and $\frac{k}{n}-\frac{1}{p}>0$ the algorithm stops after a finite number of steps $l_0.$ We define $K=K_{l_0}$ and proceed to prove that $K_{l_0}$ satisfies properties (1) and (2) of the proposition. All $\sigma \in K_{l_0}$ are good and hence by \eqref{Criterion_Estimate}
	$$(\forall \sigma \in K_{l_0})~d_{C^0}(|s|_\sigma|, \S_{k-1}(\sigma))<\frac{\delta}{2}.$$

	This proves property (1). 
	
	What is left to do is to estimate $|K_{l_0}|.$ Denote by $B_l\subset K_l$ the set of all bad dyadic cubes in $K_l.$ By construction
	\begin{equation}\label{INEQ_Bad}
		|K_{l_0}|\leq 1+2^n \sum_{l=0}^{l_0-1} |B_l|.
	\end{equation}
	We use two competing estimates for $|B_l|.$
	\\
	\\
	\noindent {\bf Estimate 1:} $|B_l|\leq 2^{nl}.$
	\\
	\\
	\noindent {\bf Estimate 2:} $|B_l|\leq \Cnkp'' 2^{-l(kp-n)} \Bigg( \frac{\| D^k s \|_{L^p}}{\delta} \Bigg)^p$, where $\Cnkp''=(2 \Cnkp')^p.$
	\\
	\\
	\noindent Estimate 1 follows from the construction since $|B_{l+1}|\leq 2^n |B_l|.$ Estimate 2 follows from Lemma \ref{Bad_cubes}. Indeed, this lemma gives us\footnote{We slightly abuse the notation and use $B_l$ both for the set of bad cubes and for $\cup_{\sigma \in B_l} \sigma.$}
	$$|B_l|\leq (2\Cnkp')^\frac{n}{k} (\Vol B_l)^{1-\frac{n}{kp}} \Bigg( \frac{\| D^k (s|_{B_l}) \|_{L^p}}{\delta} \Bigg)^\frac{n}{k}.$$
	Substituting $\Vol B_l=2^{-nl}|B_l|$ into this inequality and using $\| D^k (s|_{B_l}) \|_{L^p} \leq \| D^k s \|_{L^p}$ yields Estimate 2.
	
	To complete the proof, notice that Estimate 1 gets worse, while Estimate 2 improves as $l$ grows. Hence, there exists an optimal value, $l_{opt}$, starting from which Estimate 2 becomes better than Estimate 1. This $l_{opt}$ can be computed from the two conditions
	$$2^{nl_{opt}}\leq \Cnkp'' 2^{-l_{opt}(kp-n)} \Bigg( \frac{\| D^k s \|_{L^p}}{\delta} \Bigg)^p \textrm{ and } \Cnkp'' 2^{-(l_{opt}+1)(kp-n)} \Bigg( \frac{\| D^k s \|_{L^p}}{\delta} \Bigg)^p < 2^{n(l_{opt}+1)},$$
	which are equivalent to
	\begin{equation}\label{LOPT}
		2^{l_{opt}}\leq \Cnkp'''\Bigg( \frac{\| D^k s \|_{L^p}}{\delta} \Bigg)^\frac{1}{k} <2^{l_{opt}+1},
	\end{equation}
	where $\Cnkp'''=(\Cnkp'')^\frac{1}{kp}=(2 \Cnkp')^\frac{1}{k}.$ In case $l_{opt}<0$, i.e. $\Cnkp''' \big( \frac{\| D^k s \|_{L^p}}{\delta} \big)^\frac{1}{k}<1$, we set $l_{opt}=-1.$ Applying Estimates 1 and 2 to (\ref{INEQ_Bad}) yields
	\begin{equation}\label{ZERO}
		|K_{l_0}|\leq 1+2^n \sum_{j=0}^{l_{opt}} 2^{nj} + 2^n \sum_{j=l_{opt}+1}^\infty \Cnkp'' 2^{-j(kp-n)}\Bigg(\frac{\| D^k s \|_{L^p}}{\delta} \Bigg)^p.
	\end{equation}
	First inequality in (\ref{LOPT}) gives us (when $l_{opt}\geq 0$)
	\begin{equation}\label{ONE}
		\sum_{j=0}^{l_{opt}} 2^{nj}=\frac{1}{2^n-1} (2^{n(l_{opt}+1)}-1) \leq \frac{1}{2^n-1} \Bigg((2\Cnkp''')^n \Bigg( \frac{\| D^k s \|_{L^p}}{\delta} \Bigg)^\frac{n}{k} -1\Bigg) .
	\end{equation}
	On the other hand, since $kp-n>0$, we have
	$$\sum_{j=l_{opt}+1}^\infty \Cnkp'' 2^{-j(kp-n)}\Bigg(\frac{\| D^k s \|_{L^p}}{\delta} \Bigg)^p = \frac{\Cnkp''}{1-2^{n-kp}} 2^{(n-kp)(l_{opt}+1)}\Bigg(\frac{\| D^k s \|_{L^p}}{\delta} \Bigg)^p.$$
	Second inequality in (\ref{LOPT}) gives us\footnote{Here we use the assumption that $\| D^k s \|_{L^p}\neq 0.$}
	$$2^{(n-kp)(l_{opt}+1)}< (\Cnkp''')^{n-kp} \Bigg(\frac{\| D^k s \|_{L^p}}{\delta} \Bigg)^{\frac{n}{k}-p},$$
	and hence
	\begin{equation}\label{TWO}
		\sum_{j=l_{opt}+1}^\infty \Cnkp'' 2^{-j(kp-n)}\Bigg(\frac{\| D^k s \|_{L^p}}{\delta} \Bigg)^p < \frac{\Cnkp''(\Cnkp''')^{n-kp}}{1-2^{n-kp}} \Bigg(\frac{\| D^k s \|_{L^p}}{\delta} \Bigg)^{\frac{n}{k}} .
	\end{equation}
	Substituting (\ref{ONE}) and (\ref{TWO}) in (\ref{ZERO}) finishes the proof.
\end{proof}

\subsection{General case}\label{Section_Manifold}

In this subsection we prove Theorem \ref{thm: main 2} using Proposition \ref{MDP_Construction}.  We start with a consequence of a theorem of Whitney \cite[Section IV.12, Theorem 12A]{Whitney-book} regarding triangulations of manifolds. 

\begin{prop}\label{prop: cubulation}
Let $M$ be a compact manifold of dimension $n.$ There exists a finite collection of smooth embeddings $\theta_i: Q \to M,$ $1 \leq i \leq N$ where $Q=[0,1]^n$ is the standard cube in $\R^n,$ with the following properties: \begin{enumerate}
\item $ M =  \bigcup_{1 \leq i \leq N} \theta_i({Q})$
\item $\{\theta_i(\odot{Q})\}_{1 \leq i \leq N}$ are disjoint, where $\odot{Q} = (0,1)^n,$
\item Given $1 \leq i_1 < \ldots < i_l \leq N,$  $I_{i_1,\ldots,i_l} = \bigcap_{1 \leq j \leq l} \theta_{i_j}(Q) \neq \emptyset$ implies that there exist faces $F_{i_1},\ldots, F_{i_l}$ of $Q$ of the same dimension such that $I_{i_1,\ldots,i_l} = \theta_{i_j}(F_{i_j})$ for all $j.$ 
\item \label{property:affine} For all $1 \leq j < j' \leq l,$ $(\theta_{i_{j'}}|_{I_{i_1,\ldots,i_l}})^{-1} \circ \theta_{i_j}: F_{i_j} \to F_{i_{j'}}$ is an affine diffeomorphism of cubes.
\end{enumerate}
\end{prop}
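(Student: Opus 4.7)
The plan is to combine Whitney's smooth triangulation theorem with the standard subdivision of a simplex into $n+1$ cubes by median hyperplanes through the barycenter.

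First, I would invoke \cite[Section IV.12, Theorem 12A]{Whitney-book} to obtain a smooth triangulation of $M$: a finite simplicial complex $K$ together with a homeomorphism $\Phi\colon |K|\to M$ whose restriction to each closed simplex is a smooth embedding into $M$. Next, parametrize the standard $n$-simplex as $\Delta^n = \{t\in\R^{n+1} : t_j\geq 0,\ \sum_j t_j = 1\}$ with vertices $e_0,\dots,e_n$, and for each $i$ set $C_i = \{t\in\Delta^n : t_i\geq t_j \text{ for all } j\}$. The map $\psi_i\colon Q\to C_i$ given by $t_i = 1/(1+\sum_{j\neq i} y_j)$ and $t_j = y_j t_i$ for $j\neq i$ is a smooth homeomorphism with smooth inverse $y_j = t_j/t_i$ (note $t_i \geq 1/(n+1) > 0$ on $C_i$). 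The family $\{C_i\}_{i=0}^n$ covers $\Delta^n$, has pairwise disjoint interiors, and any nonempty intersection $C_{i_1}\cap\cdots\cap C_{i_l}$ is the subface $\{t_{i_1}=\cdots=t_{i_l}\geq t_j \text{ for all } j\}$ of dimension $n-l+1$; a direct computation identifies its $\psi_{i_m}$-preimage with a face of $Q$ of the same dimension, and shows that the transition $\psi_{i_{m'}}^{-1}\circ\psi_{i_m}$ on this preimage is an affine coordinate permutation.

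For each $n$-simplex $\sigma$ of $K$ and each vertex $v$ of $\sigma$ I would then fix an affine identification $\alpha_{\sigma,v}\colon\Delta^n\to\sigma$ sending $e_0$ to $v$, and define $\theta_{\sigma,v} = \Phi|_\sigma\circ\alpha_{\sigma,v}\circ\psi_0\colon Q\to M$. Enumerating these pairs $(\sigma,v)$ yields the desired family $\{\theta_i\}$. Properties (1) and (2) follow immediately from the analogous properties of $K$ and of the $C_i$'s inside each $\sigma$. Property (3) reduces, via the observation that any nonempty intersection $\bigcap_j \theta_{i_j}(Q)$ is contained in the intersection of the underlying $n$-simplices of $K$, to two cases: intersections within a single simplex, handled by the computation above, and intersections of two cubes sitting in distinct simplices meeting along a common simplicial face $\tau$, handled by the cubical subdivision of $\tau$ itself.

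The main obstacle is property (4). Within a single simplex it follows directly from affineness of $\alpha_{\sigma,v'}^{-1}\circ\alpha_{\sigma,v}$ combined with the affine coordinate permutations $\psi_j^{-1}\circ\psi_i$ on faces. For two cubes $\theta_{\sigma,v}, \theta_{\sigma',v'}$ in distinct simplices meeting along a simplicial face $\tau$, one must verify that the cubical piece of $\tau$ seen from $\sigma$ agrees with the cubical piece seen from $\sigma'$, and that the resulting parameter identification on this piece is affine. This reduces to the intrinsic-ness of the median-hyperplane subdivision, which depends only on $\tau$ and not on which $n$-simplex $\tau$ is viewed as a face of, together with the fact that simplicial face identifications are affine in barycentric coordinates. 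I would expect to package this as a short combinatorial compatibility lemma about the cubulation of $\Delta^n$ with its faces; once established, property (4) follows at once.
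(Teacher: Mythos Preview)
Your proposal is correct and follows essentially the same approach as the paper: invoke Whitney's smooth triangulation, subdivide each $\Delta^n$ into the $n+1$ barycentric cubes $\{t_i\geq t_j\}$ parametrized by $y_j=t_j/t_i$, and check that the transition maps between cube pieces are affine. The only difference is that you are more explicit about the cross-simplex compatibility (the ``intrinsic-ness'' of the cubical subdivision with respect to simplicial faces), which the paper absorbs into the phrase ``in view of the analogue of property~\eqref{property:affine} from Whitney's theorem''; your expected short compatibility lemma is exactly what is being used implicitly there.
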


\begin{proof}
Indeed Whitney's theorem produces a triangulation with similar properties, that is $g_j: \Delta^n \to M$ where $\Delta^n = \{(t_0,\ldots, t_n)\;|\; \sum t_j = 1, t_j \geq 0\}$ is the standard $n$-simplex, satisfying the properties above with $Q$ replaced by $\Delta$ and $\odot{Q}$ replaced by $\odot{\Delta}^n = \{(t_0,\ldots, t_n)\;|\; \sum t_j = 1, t_j > 0\}.$ It remains to divide the $n$-simplex into $(n+1)$ topological cubes $\{ Q_0,\ldots, Q_n \},$ where $Q_j = \{t_j \geq t_i\;, i\neq j \} \subset \Delta^n.$ Note that $Q_j$ is parametrized by $Q$ as follows: $\phi_j = (\psi_j)^{-1}: Q \to Q_j,$ where $\psi_j: Q_j \to Q$ is $\psi_k(t_0,\ldots, t_n) = \pi_{j}((t_0,\ldots,t_n)/t_j),$ where $\pi_j$ is the projection to the coordinate plane $H_j = \{t_j = 0\}$ composed with an evident isomorphism $H_j \to \R^{[n]\setminus \{j\}},$ where $[n] = \{0,1,\ldots,n\},$ which we further identify $\R^{[n]\setminus \{j\}}$ with $\R^n$ artificially by listing coordinates in increasing order. However, it is convenient to work directly in $\R^{[n]\setminus \{j\}}$ and the cube $Q^{(j)} =\{(x_k)_{k \neq j}\;|\; 0 \leq x_k \leq 1\}.$ Note that $\phi_j((x_k)_{k \neq j}) = (t_0,\ldots, t_n)$ where $t_j = \frac{1}{1+\sum {x_k}}$ and $t_i = \frac{x_i}{1+\sum {x_k}}$ for $i \neq j.$

We claim that the resulting maps $\theta_{jk} = g_j \circ \phi_k: Q \to M$ suitably reindexed satisfy the required properties. Indeed, in view of the analogue of property \eqref{property:affine} from Whitney's theorem and the definition of $Q_j$, it is enough to check that the intersection condition holds for the $Q_j$ themselves. This is a direct verification, which we illustrate in the case $l=2.$ In this case, for $i< j,$ $J_{ij} = \phi_i(Q) \cap \phi_j(Q)$ satisfies $J_{ij} = \phi_i(F_i) = \phi_j(F_j),$ $F_i = \{x_j = 1\},$  $F_j = \{x_i = 1\}.$ Moreover $(\phi_j|_{J_{ij}})^{-1} \circ \phi_i: F_i \to F_j$ sends the vector $(x_k)_{k \neq i}, x_j = 1$ to the vector $(x'_k)_{k \neq j}, x'_i=1$ where $x'_k = x_k,$ for $k \notin \{i,j\},$ which is an affine isomorphism of the cubes $F_i, F_j.$
\end{proof}

We will use the following auxiliary result.

\begin{lemma}\label{lma: replace C2}
Suppose that Theorem \ref{thm: main 2} holds with $C'_1, C'_2$ both depending on $M, E, k, p$ instead of $C_1, C_2$ as in its formulation. Then there exists $C_1$ depending on $M, E, k, p$ such that Theorem \ref{thm: main 2} holds with $C_2 = \dim H_*(M)$ as stated.
\end{lemma}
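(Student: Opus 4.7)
The plan is to dichotomize on the size of $\delta$ relative to $\|s\|_{W^{k,p}},$ exploiting two ingredients. First, since $k - n/p > 0,$ the Sobolev embedding on compact manifolds (with inner product bundle) yields a constant $C_{\mrm{sob}} = C_{\mrm{sob}}(M,E,k,p)$ such that
\[ \|s\|_{C^0} \leq C_{\mrm{sob}} \|s\|_{W^{k,p}} \]
for all $s \in W^{k,p}(M;E).$ Second, the barcode of $|s|$ has exactly $\dim H_*(M)$ infinite bars (as $\{|s|\leq t\} = M$ for $t \geq \max|s|$), and every finite bar $[a,b)$ satisfies $0 \leq a < b \leq \max|s|,$ so its length is bounded by $\|s\|_{C^0}.$

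Suppose first that $\delta \leq C_{\mrm{sob}} \|s\|_{W^{k,p}}.$ Multiplying by $\delta^{-n/k}\|s\|_{W^{k,p}}^{n/k}$ gives
\[ 1 \leq C_{\mrm{sob}}^{n/k}\, \delta^{-n/k} \|s\|_{W^{k,p}}^{n/k}, \]
so $C'_2 \leq C'_2 C_{\mrm{sob}}^{n/k} \delta^{-n/k} \|s\|_{W^{k,p}}^{n/k}.$ Substituting this into the assumed weak form of Theorem \ref{thm: main 2} yields
\[ \cl N_{\delta}(|s|) \leq \frac{C'_1}{\delta^{n/k}}\|s\|_{W^{k,p}}^{n/k} + C'_2 \leq \frac{C'_1 + C'_2 C_{\mrm{sob}}^{n/k}}{\delta^{n/k}}\|s\|_{W^{k,p}}^{n/k} + \dim H_*(M). \]
Suppose on the other hand that $\delta > C_{\mrm{sob}} \|s\|_{W^{k,p}}.$ Then $\max |s| \leq \|s\|_{C^0} < \delta,$ so by the bound on finite bar lengths no finite bar has length exceeding $\delta;$ thus $\cl N_{\delta}(|s|)$ counts only infinite bars, giving $\cl N_{\delta}(|s|) = \dim H_*(M),$ and the desired inequality is trivial.

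Setting $C_1 = C'_1 + C'_2 C_{\mrm{sob}}^{n/k}$ handles both cases simultaneously. There is essentially no obstacle here: the argument is a two-line rescaling using Sobolev embedding once we know the trivial ceiling $\max |s|$ for finite bar lengths. The only point to keep in mind is that the barcode $\cl B(|s|)$ is well defined for $s \in W^{k,p}(M;E)$ since $|s|$ is continuous (by $k > n/p$) and $M$ is a compact manifold, so the associated persistence module is moderate as discussed in Section \ref{Section_Prelims}.
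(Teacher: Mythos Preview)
Your proof is correct and essentially identical to the paper's: both dichotomize on whether $\delta$ exceeds $C_{\mrm{sob}}\|s\|_{W^{k,p}}$, use the Sobolev embedding in the small-$\delta$ regime to absorb $C'_2$ into the main term, and observe in the large-$\delta$ regime that only infinite bars survive. The resulting constant $C_1 = C'_1 + C'_2 C_{\mrm{sob}}^{n/k}$ is exactly the one the paper obtains.
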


\begin{proof}[Proof of Lemma \ref{lma: replace C2}]
We have \[ \cl N_{\delta}(|s|) \leq \frac{C'_1}{\delta^{n/k}}
||s||_{W^{k,p}}^{n/k}+C'_2,\] $C'_1, C'_2$ depending only on $M, E, k, p.$ By the Sobolev inequality \[ ||s||_{L^\infty} = \max |s| \leq C ||s||_{W^{k,p}},\] where $C$ depends on $M, k, p$ only. In particular if $||s||_{W^{k,p}} \leq \delta/C$ then $\cl N_{\delta}(|s|) = C_2 = \dim H_*(M),$ since only the infinite bars would contribute to $\cl N_{\delta}(|s|).$ If $||s||_{W^{k,p}} \geq \delta/C,$ setting $C_1 = C'_1 + C^{n/k} C'_2$ we have \[ \frac{C_1}{\delta^{n/k}}
||s||_{W^{k,p}}^{n/k} \geq \frac{C'_1}{\delta^{n/k}} ||s||_{W^{k,p}}^{n/k} + C^{n/k} C'_2 C^{-n/k} \geq \cl N_{\delta}(|s|).\] This finishes the proof.
\end{proof}

\begin{proof}[Proof of Theorem \ref{thm: main 2}] Without loss of generality, we may assume that $s$ is a smooth section. Let $l$ be the rank of $E.$  For $\{\theta_i\}_{1 \leq i \leq N}$ from Proposition \ref{prop: cubulation}, consider orthogonal trivializations $\Psi_i:\theta_i^*E\rightarrow Q\times \R^l.$ Viewing $s\circ \theta_i$ as a section of $\theta_i^*E$, we have that $s_i:=\Psi_i\circ s\circ \theta_i:Q\rightarrow Q\times \R^l$ is a section of a trivial bundle which we identify with a map $s_i:Q\rightarrow \R^l.$

Proposition \ref{MDP_Construction} shows that for all $1 \leq i \leq N$ there is an MDP $K_i$ of $Q$ such that for all $\sigma \in K_i,$ $d_{C^0}(|s_i|_{\sigma}|, \cl S_{k-1}(\sigma)) < \delta/2$ and
\[|K_i| \leq C_i(s,\delta) = 1 + \Cnkp \left( \frac{||D^k s_i||_{L^p}}{\delta} \right)^{\frac{n}{k}}.\]
Consider a face $F$ of $Q$ of dimension $m.$ It can be canonically identified with $[0,1]^m$ and the MDP $K_i$ induces an MDP $K_i^F$ of $[0,1]^m$ such that $|K_i^F| \leq |K_i|$ and still for all $\sigma \in K_i^F,$  $d_{C^0}(|s_i|_{\sigma}|, \cl S_{k-1}(\sigma)) < \delta/2.$ Theorem \ref{MDP_Simplex_counting} implies that
\begin{equation}\label{eq: N face} \cl N_{2^{n+1} \delta}(|s|_{\theta_i(F)}|) =\cl N_{2^{n+1} \delta}(|s_i|_F|) \leq C_{n,k} C_i(s,\delta),\end{equation} 
for all $i$ and every face $F$ of $Q$ of dimension $0\leq m \leq n.$ Set $C(s,\delta) = \max_{1 \leq i \leq N} C_i(s,\delta).$ Note that
\begin{equation}\label{eq: cube to manifold bound} C(s,\delta) \leq 1 + C_{M,E,k,p} \left( \frac{||s||_{W^{k,p}(M;E)}}{\delta} \right)^{\frac{n}{k}}, \end{equation}
where the constant does depend on the choice of the maps $\{\theta_i\},\{\Psi_i\}$ but this choice has been fixed given $M$ and $E.$ By (3) in Proposition \ref{prop: cubulation} $\{A_i=\theta_i(Q)\}$ is tame, as explained in Subsection \ref{Subsection_Covers}. Applying Proposition \ref{Barcode_Covers} to $\{A_i\}$ and using  \eqref{eq: N face}, we obtain that \[ \cl N_{2^{N+n+1} \delta}(|s|) \leq \sum_{1 \leq i_1 < \ldots < i_l\leq N} \cl N_{2^{n+1}\delta}(|s|_{A_{i_1}\cap \ldots \cap A_{i_l}}|) \leq C_N C_{n,k} C(s,\delta).\] By \eqref{eq: cube to manifold bound} and Lemma \ref{lma: replace C2} this finishes the proof.
\end{proof}

\begin{remark}\label{rmk: negative s detail}
In order to obtain Theorem \ref{thm: main 2} for $|s|$ replaced by $-|s|$ as in Remark \ref{rmk: negative s}, we notice that $s:Q \to \R^l$ satisfies \[d_{C^0}(|s|,\cl{S}_{k-1}(Q)) = d_{C^0}(-|s|,-\cl{S}_{k-1}(Q)),\] where $-\cl{S}_{k-1}(Q) = \{- q\;|\; q \in \cl{S}_{k-1}(Q)\},$ and Proposition \ref{prop: barcode cube} still holds for $p \in -\cl{S}_{k-1}(Q).$ The rest of the proof goes through entirely analogously.  
\end{remark}


\section{Proofs of Applications}
\label{sec:applications}

In this section we prove the applications of Theorem \ref{thm: main 2} and of Theorem \ref{CUBE}. 

We start with a general estimate of Sobolev norms of linear combination of eigenfunctions. 

\begin{prop}\label{prop: Sobolev of eigenchunk}
Let $M$ be a closed Riemannian manifold of dimension $n$ and let $D$ be a non-negative self-adjoint elliptic pseudo-differential operator of order $q$ on the sections of a vector bundle $E$ over $M$ with an inner product. Let $s = \sum_{j=1}^i a_j s_j$ be a linear combination of eigensections $s_j$ of $D$ with eigenvalues $\leq \la$, such that $||s||_{L^2} =1$. Then \[ ||s||_{W^{k,2}} \leq C_{M,E,D,k} (\la+1)^{k/q}. \]
\end{prop}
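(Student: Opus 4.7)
The plan is to reduce the Sobolev estimate to a purely spectral calculation by means of the functional calculus for $D$. The key reduction is to the fact that the operator $\Lambda := (I+D)^{1/q}$, defined by the self-adjoint functional calculus on $L^2(M;E)$, is a positive self-adjoint elliptic pseudo-differential operator of order $1$ (by Seeley's theorem on complex powers of elliptic pseudo-differential operators). Consequently, for every non-negative integer $k$, $\Lambda^k = (I+D)^{k/q}$ is a pseudo-differential operator of order $k$ which is invertible, and by standard elliptic mapping properties one has
\begin{equation}\label{eq:sobnorm-ellreg}
\|s\|_{W^{k,2}} \leq C_{M,E,D,k}\, \|\Lambda^k s\|_{L^2}
\end{equation}
for all $s \in W^{k,2}(M;E)$. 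This inequality encapsulates the elliptic regularity estimate for $D$ in the form of an equivalence of Sobolev norms and is the main (and essentially only) non-elementary input.

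With \eqref{eq:sobnorm-ellreg} in hand, the rest is a direct spectral computation. The spectral theorem, combined with the compactness of $M$ and the ellipticity of $D$, yields an $L^2$-orthonormal basis $\{\varphi_l\}$ of $L^2(M;E)$ consisting of smooth eigensections of $D$, say $D\varphi_l = \mu_l\varphi_l$. Writing $s = \sum_l b_l \varphi_l$, the hypothesis $s \in \cl{F}_\lambda$ means $b_l = 0$ whenever $\mu_l > \lambda$. Applying $\Lambda^k$ termwise gives $\Lambda^k s = \sum_l b_l (1+\mu_l)^{k/q}\varphi_l$, whence by Parseval
\[
\|\Lambda^k s\|_{L^2}^2 \;=\; \sum_{l:\, \mu_l \leq \lambda} |b_l|^2 (1+\mu_l)^{2k/q} \;\leq\; (1+\lambda)^{2k/q} \sum_l |b_l|^2 \;=\; (1+\lambda)^{2k/q}\|s\|_{L^2}^2 \;=\; (1+\lambda)^{2k/q}.
\]
Substituting into \eqref{eq:sobnorm-ellreg} yields the desired bound $\|s\|_{W^{k,2}} \leq C_{M,E,D,k}(\lambda+1)^{k/q}$.

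The main (indeed, only substantive) obstacle is establishing the norm equivalence \eqref{eq:sobnorm-ellreg} in the pseudo-differential setting and for possibly non-integer exponent $k/q$. When $D$ is a differential operator of order $q$ dividing $k$, this is immediate from Gårding's inequality. In the pseudo-differential case with non-integer $k/q$, one invokes Seeley's calculus of complex powers to assert that $\Lambda^{-k}$ is a pseudo-differential operator of order $-k$, hence maps $L^2$ boundedly into $W^{k,2}$. Alternatively, one can avoid fractional powers entirely by choosing an integer $m$ with $mq \geq k$, using the standard (integer-order) elliptic estimate $\|s\|_{W^{mq,2}} \leq C(\|D^m s\|_{L^2} + \|s\|_{L^2})$ together with $\|D^m s\|_{L^2} \leq \lambda^m$, and then interpolating between $W^{mq,2}$ and $L^2$ to recover the exponent $k/q$; this is the route we would follow if the pseudo-differential fractional calculus were to be avoided for expositional reasons.
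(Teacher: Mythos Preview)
Your proof is correct and follows essentially the same approach as the paper: take a $q$-th root of (a positive shift of) $D$ via Seeley's theorem to obtain a first-order elliptic pseudo-differential operator, invoke the fundamental elliptic estimate to control $\|s\|_{W^{k,2}}$ by the $L^2$-norm of the $k$-th power of this root applied to $s$, and finish with the obvious Parseval computation on the eigenbasis. The only cosmetic difference is that the paper first replaces $D$ by $D+I$ and then takes $D_1 = D^{1/q}$, using the estimate $\|s\|_{W^{k,2}} \leq C(\|D_1^k s\|_{L^2} + \|s\|_{L^2})$, whereas you fold the identity into the root by setting $\Lambda = (I+D)^{1/q}$ and use the slightly cleaner $\|s\|_{W^{k,2}} \leq C\|\Lambda^k s\|_{L^2}$; these are equivalent formulations.
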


\begin{proof}
Without loss of generality we can assume that $||s_j||_{L^2} = 1$ for all $j,$ and are moreover orthogonal to each other as $D$ is self-adjoint. Moreover, by possibly adding to $D$ the identity operator $I$ and adjusting the constant $C_{M,E,D,k}$, we may assume that $D$ is positive. 

We may then consider the $q$-th root $D_1$ of $D$ which is a positive self-adjoint elliptic pseudo-differential operator of degree $1$ (\cite{Seeley,Shubin-book}). Note that $D_1$ has exactly the same eigenfunctions as $D,$ but its eigenvalues are $\la^{1/q}$ where $\la$ is an eigenvalue of $D.$

A  fundamental elliptic estimate (see, for example, \cite[Lemma 1.4, p.69]{Shubin92} or \cite[Chapter III, Theorem 5.2(iii), p. 193]{lawson2016spin}) states that \[||s||_{W^{k,2}} \leq C_{M,E,D,k} (||D_1^k s||_{L^2} + ||s||_{L^2}).\]

Now $D_1^k s = \sum \la^{k/q} a_j s_j,$ whence \[ ||D_1^k s||^2 = \sum \la_j^{2k/q} |a_j|^2 \leq \la^{2k/q},\] so \[ ||D_1^k s|| \leq \la^{k/q}.\] In turn we obtain \[ ||s||_{W^{k,2}} \leq C_{M,E,D,k} (\la+1)^{k/q},\] where we absorbed the term $||s||_{L^2} = 1$ into $C_{M,E,D,k} (\la+1)^{k/q}$ by increasing the constant $C_{M,E,D,k}$ suitably. \end{proof}

The case of a manifold with boundary is more complicated, because the boundary conditions play an important role. In particular, the argument via roots of elliptic operators does not apply.

\begin{prop}\label{prop: Sobolev of eigenchunk bdry}
Let $M$ be a compact Riemannian manifold of dimension $n$ with boundary and let $D$ be a non-negative self-adjoint elliptic differential operator of order $q$ with Dirichlet boundary conditions on the sections of a vector bundle $E$ over $M$ with an inner product. Let $s = \sum_{j=1}^i a_j s_j$ be a linear combination of eigensections $s_j$ of the Dirichlet boundary value problem for $D$ with eigenvalues $\leq \la$, such that $||s||_{L^2} =1$. Then for all integers $k \geq 0,$ \[ ||s||_{W^{k,2}} \leq C_{M,E,D,k} (\la+1)^{k/q}. \]
\end{prop}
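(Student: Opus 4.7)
The plan is to treat first the case when $k$ is a multiple of the order $q$ by an iterated Dirichlet elliptic estimate, and then reduce the general case to this one by Sobolev interpolation. The spectral functional calculus route used for Proposition \ref{prop: Sobolev of eigenchunk} (taking a $q$-th root) is not natural in the boundary setting, since $D^{1/q}$ is not a differential operator respecting the Dirichlet conditions.

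As in the closed case, we may assume that the $s_j$ are $L^2$-orthonormal eigensections of the Dirichlet problem with eigenvalues $\lambda_j \le \lambda$ and $\sum |a_j|^2 = 1$. A direct computation then gives $D^m s = \sum_j \lambda_j^m a_j s_j$, so for every integer $m \ge 0$,
\[ \|D^m s\|_{L^2}^2 = \sum_j \lambda_j^{2m} |a_j|^2 \le \lambda^{2m}. \]
I would next invoke the standard a priori estimate for the Dirichlet boundary value problem associated with $D$ (see e.g.\ \cite[Ch.~5]{EgorovShubin} or \cite[Ch.~III]{lawson2016spin}): for any $v \in C^\infty(M;E)$ satisfying the Dirichlet boundary conditions,
\[ \|v\|_{W^{q,2}} \le C_{M,E,D}\bigl(\|Dv\|_{L^2} + \|v\|_{L^2}\bigr). \]
Iterating this inequality $m$ times, applied successively to $v = u, Du, \ldots, D^{m-1}u$, yields
\[ \|u\|_{W^{mq,2}} \le C_{M,E,D,m}\bigl(\|D^m u\|_{L^2} + \|u\|_{L^2}\bigr) \]
for every $u \in C^\infty(M;E)$ such that $D^j u$ obeys the Dirichlet conditions for each $j = 0,\ldots,m-1$. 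To apply this to our $s$ I would verify the compatibility conditions at each level: since $s_l$ satisfies Dirichlet BC and $D s_l = \lambda_l s_l$, induction gives $D^j s_l = \lambda_l^j s_l$, which still satisfies Dirichlet BC; taking linear combinations, so does $D^j s$ for every $j$. Combined with the bound on $\|D^m s\|_{L^2}$, this yields
\[ \|s\|_{W^{mq,2}} \le C_{M,E,D,m}\,(\lambda+1)^m, \]
which proves the statement whenever $k$ is a multiple of $q$.

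For an arbitrary integer $k$, pick the smallest integer $m$ with $mq \ge k$ and apply the standard Sobolev interpolation inequality on a compact Riemannian manifold with boundary,
\[ \|u\|_{W^{k,2}} \le C\,\|u\|_{W^{mq,2}}^{k/(mq)}\,\|u\|_{L^2}^{1 - k/(mq)}, \]
with $u = s$. Using $\|s\|_{L^2} = 1$ and the bound just obtained, we get $\|s\|_{W^{k,2}} \le C_{M,E,D,k}(\lambda+1)^{k/q}$, as desired.

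The step I expect to be the main obstacle is the \emph{iterated} Dirichlet elliptic estimate: while the single-step version is completely standard, iterating it correctly hinges on the compatibility observation that each power $D^j s$ still satisfies the Dirichlet boundary condition, which is exactly what is peculiar to \emph{linear combinations of eigensections} and would generally fail for arbitrary smooth sections. Once this is isolated, the remaining pieces (the eigenvalue computation for $\|D^m s\|_{L^2}$ and Sobolev interpolation) are routine.
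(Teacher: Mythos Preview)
Your approach is the same as the paper's: handle multiples of $q$ via the Dirichlet elliptic estimate together with the observation that each $D^j s$ still satisfies the Dirichlet boundary conditions, then interpolate for the remaining $k$. One step deserves tightening. Applying the basic estimate $\|v\|_{W^{q,2}} \le C(\|Dv\|_{L^2} + \|v\|_{L^2})$ successively to $v = u, Du, \ldots, D^{m-1}u$ only controls $\|D^j u\|_{W^{q,2}}$ for each $j$; it does not by itself bound $\|u\|_{W^{mq,2}}$. What is actually being iterated is the \emph{higher-order} regularity estimate
\[
\|v\|_{W^{m+q,2}} \le C_{M,E,D,m}\bigl(\|Dv\|_{W^{m,2}} + \|v\|_{L^2}\bigr)
\]
for $v$ satisfying the Dirichlet condition (this is precisely the ``coercivity inequality'' the paper quotes). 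With that in hand the induction reads $\|s\|_{W^{(l+1)q,2}} \le C(\|Ds\|_{W^{lq,2}} + \|s\|_{L^2})$, and the compatibility you correctly isolated---that $Ds$ is again a linear combination of Dirichlet eigensections with eigenvalues $\le \lambda$---lets the induction hypothesis apply to $Ds/\|Ds\|_{L^2}$. The interpolation step and the eigenvalue computation $\|D^m s\|_{L^2} \le \lambda^m$ are exactly as in the paper.
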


\begin{proof}
	
First of all, by standard elliptic regularity for every integer $m \geq 0$ and $s \in W_0^{m+q},$ the following coercivity inequality is satisfied \[||s||_{W^{m+q,2}} \leq C_{M,E,D,m} (||D s||_{W^{m,2}} + ||s||_{L^2}).\] 
	
Let us start by proving the statement for an integer multiple $k = l q,$ $l\geq 1,$ of $q$ by induction on $l.$ The base case is $m=0$ in the coercivity estimate from the formulation. The inductive step from $k_0 = lq$ to $k =(l+1)q = k_0 + q$ is again an application of the coercivity estimate: first as $s = \sum a_j s_j$ is a linear combination of eigensections satisfying the homogeneous boundary conditions, so is $Ds = \sum \la_j a_j s_j.$ Note that \[||Ds||_{L^2} = \big(\sum |\la_j|^2|a_j|^2 \big)^{1/2} \leq \la.\]
	
	Therefore by the coercivity estimate and the inductive hypothesis we obtain: \[ ||s||_{W^{k,2}} \leq C_{M,E,D,k}(||Ds||_{W^{k_0,2}} + 1) \leq C'_{M,E,D,k}( \la (\la+1)^{k_0/q} + 1) \leq C'_{M,E,D,k}(\la+1)^{k/q},\] possibly for a different constant $C'_{M,E,D,k}.$
	
	Now, it remains to prove the desired estimate for all $0 \leq k < q.$ Indeed, the same argument as in the inductive step will then yield the estimate in full generality. For $k = 0$ the estimate is trivial. For $0<k<q$ we use the interpolation inequality in Sobolev spaces, which is easy to obtain from 
\cite[Theorem 3.70]{Aubin1}, and the condition $||s||_{L^2}=1$: \[ ||s||_{W^{k,2}} \leq C_{M,k,q} ||s||^{k/q}_{W^{q,2}} ||s||^{1-k/q}_{L^2} \leq C''_{M,E,D,k} (\la+1)^{k/q}.\] 
	
\end{proof}

\begin{remark}
The proof of Proposition \ref{prop: Sobolev of eigenchunk bdry} only relied on the boundary value problem being self-adjoint, non-negative, homogeneous, and satisfying a suitable analogue of the coercivity inequality. This condition appears to hold in more general settings: see e.g. \cite[Section 3.1.1.4]{rempelindex} for a discussion of the pseudo-differential setting. In particular, it holds for the Neumann Laplacian, see \cite[Chapter 5, Proposition 7.2]{Taylor}.
\end{remark}

We require the following basic lemma about persistence modules and their barcodes.

\begin{lemma}\label{lma: delta count}
Let $V_r(f)$ be a persistence module of a function $f:M \to \R$ for $r \in \Z$ and $\cl{B}_r(f)$ be its barcode. Then for all $\delta>0$ and $t \in \R,$  \[\dim \im(\pi_{t,t+\delta}:(V_r(f))_{t} \to (V_r(f))_{t+\delta}) \leq \cl{N}_{r,\delta}(f).\]
\end{lemma}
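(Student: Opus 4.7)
The plan is to reduce the statement to an inspection of each interval summand in the barcode decomposition of $V_r(f)$.

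First, I would invoke the structural decomposition. Since $f$ is a continuous function on a compact manifold (possibly with corners), the persistence module $V_r(f)$ is moderate, as explained in Section \ref{SubSec_Bar_Count}. After passing to the upper semi-continuous regularization via Lemma \ref{lma: upper sc replacement}---an operation that changes neither the rank of the structure maps at a fixed pair $(t, t+\delta)$ with $\delta > 0$ nor the bar counting function $\cl{N}_{r,\delta}$---one has an honest isomorphism of persistence modules
\[
V_r(f) \;\cong\; \prod_{I \in \cl{B}_r(f)} \mathbb{K}_I,
\]
with each interval $I$ of the form $[a,b)$ or $[c,\infty)$ as in \eqref{SEMI_BARCODE_PRODUCT}.

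Next, I would compute $\dim \im(\pi_{t,t+\delta})$ summand by summand. On each factor $\mathbb{K}_I$, the structure map is the identity $\mathbb{K} \to \mathbb{K}$ when $t \in I$ and $t+\delta \in I$, and is zero otherwise. Consequently, the image of $\pi_{t,t+\delta}$ is canonically isomorphic to $\bigoplus_{I: t, t+\delta \in I} \mathbb{K}$, so
\[
\dim \im(\pi_{t,t+\delta}) \;=\; \#\{I \in \cl{B}_r(f) : t \in I \text{ and } t+\delta \in I\}.
\]
The $q$-tameness of $V_r(f)$ ensures this count is finite.

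Finally, I would observe that any interval $I$ containing both $t$ and $t+\delta$ has length strictly greater than $\delta$: if $I = [a,b)$ with $b$ finite, then $a \leq t$ and $t+\delta < b$ force $b-a > \delta$; if $I = [c,\infty)$ the length is infinite. Therefore the cardinality above is bounded by the number of bars of length $> \delta$ in $\cl{B}_r(f)$, namely $\cl{N}_{r,\delta}(f)$, which yields the claim. I do not anticipate a real obstacle; the only subtlety is maintaining the convention that all bars are of the form $[a,b)$ or $[c,\infty)$, which is precisely the convention afforded by upper semi-continuity.
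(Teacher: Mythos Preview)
Your proof is correct and follows essentially the same idea as the paper: the rank of $\pi_{t,t+\delta}$ counts precisely those bars containing both $t$ and $t+\delta$, and any such bar has length greater than $\delta$. The paper's argument is a one-line remark to this effect; you have added explicit technical care about invoking the product decomposition via upper semi-continuous regularization, which the paper simply takes for granted.
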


Indeed, the number on the left hand side counts bars which start before or at $t$ and end after $t+\delta,$ hence their lengths are all greater than $\delta.$ 

\subsection{Proofs of Theorems \ref{thm: main 2-vsp} and \ref{thm: coarse Courant}}

It suffices to observe that in view of Lemma \ref{lma: delta count}, $m_r(s,\delta) \leq \cl N_{\delta'}(-|s|)$ and $z_r(s,\delta) \leq \cl N_{\delta'}(|s|)$ for all $0 < \delta' <\delta.$ The estimate of Theorem \ref{thm: main 2-vsp} is then an immediate consequence of Theorem \ref{thm: main 2}, Remarks \ref{rmk: negative s} and \ref{rmk: 1.12 for r} and taking the limit as $\delta' \to \delta.$ Theorem \ref{thm: coarse Courant} is a direct consequence of Theorem \ref{thm: main 2-vsp} for $p=2$ and Proposition \ref{prop: Sobolev of eigenchunk}. 

\begin{remark}
In fact, the stronger inequality $m_r(s,\delta) \leq \cl N_{r,\delta}(-|s|) \leq \cl N_{\delta}(-|s|)$ holds. Moreover, a similar stronger inequality $z'_r(s,\delta) \leq \cl N_{r,\delta}(|s|) \leq \cl N_{\delta}(|s|)$ holds for the following modification $z'_r(s,\delta)$ of $z_r(s,\delta)$: \[ z'_r(s,\delta) = \dim \mrm{Im}(H_r(Z_s) \to H_r(\{|s| \leq \delta\}))\;.\] (Note the non-strict inequality on the right.) The second observation is not hard to deduce from the upper semi-continuity of the persistence module $V_r(|s|),$ which implies that all bars in $B_r(|s|)$ are closed on the left and all finite bars therein are open on the right, and the fact that both $Z_s$ and $\{|s| \leq \delta \}$ are closed sublevel sets of $|s|$. Similarly, the first observation follows from the lower semi-continuity of $\mathring{V}_r(-|s|).$ 
\end{remark}

\begin{remark}\label{rmk: no 113}
In the case of closed manifolds, we may replace Remark \ref{rmk: negative s} by an argument involving duality. Namely, observe that for any function $f$ (we will be interested in the cases $f=|s|$ and $f=-|s|$) we have \[\cl N_\delta(f) = \cl N_{\delta}^{\mrm{fin}}(f) + b_r(M),\] where $b_r(M) = \dim H_r(M)$ is the $r$-th Betti number of $M.$ Therefore it suffices to bound $\cl N_{\delta}^{\fin}(-|s|)$ and $\cl N_{\delta}^{\fin}(|s|.)$ Now Proposition \ref{prop: duality} implies that $\cl N^{\fin}_{n-r-1, \delta}(-|s|) = \cl N^{\fin}_{r,\delta}(|s|)$ for all $0\leq r < n.$ (In fact this identity is also true for $r<0$ and $r\geq n$ as in these cases it is easy to see that both sides vanish.) Hence $\cl N^{\fin}_{\delta}(-|s|) = \cl N^{\fin}_{\delta}(|s|),$ and therefore it is sufficient to bound only one of these values.
\end{remark}

\subsection{Proof of Theorem \ref{cor: product}}\label{subsec: prod proof}

We prove the following more general statement which readily yields Theorem \ref{cor: product}. Let $M$ be a manifold of dimension $n,$ and $\cl F_{\la}$ is the space of linear combinations of eigenfunctions of a non-negative self-adjoint elliptic pseudo-differential operator $D$ of order $q>0$ with eigenvalues $\leq \la.$  For a number $\la$ set $\ol{\la} = \la+1.$

\begin{theorem}[generalized coarse Courant for products]\label{thm: gen courant} Let $f_1,\ldots,f_l$ be $l$ smooth functions with $f_j \in \cl F_{\la_j}$ and $||f_j||_{L^2} = 1.$ Let $f = f_1\cdot \ldots \cdot f_l.$ Fix integers  $0 \leq r < n$ and $k > n/2$. Then for all $\delta > 0, \al > 0$ \[ m_r(f,\delta) \leq \frac{C_1}{\delta^{n/k}} \left(\sum_{j=1}^l \ol{\la}_j^{(k-n/2-\al)/q}\right)^{n/k}(\ol{\la}_1\cdot \ldots \cdot \ol{\la}_l)^{n(n/2+\al)/kq} +C_2,\] \[z_r(f,\delta) \leq \frac{C_1}{\delta^{n/k}} \left(\sum_{j=1}^l \ol{\la}_j^{(k-n/2-\al)/q}\right)^{n/k}(\ol{\la}_1\cdot \ldots \cdot \ol{\la}_l)^{n(n/2+\al)/kq} +C_2,\] where the constants $C_1,C_2,C_3$ depend only on $M,E,D,k, \al.$ 
\end{theorem}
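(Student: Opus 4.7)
The plan is to reduce the theorem to Theorem \ref{thm: main 2-vsp} with $p=2$ by bounding the Sobolev norm $\|f\|_{W^{k,2}}$ of the product $f = f_1\cdots f_l$ through a fractional Leibniz rule that mixes one high-regularity factor with low-regularity ($L^\infty$) bounds on the rest. The shape of the stated right-hand side essentially dictates the splitting.

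First, for each factor $f_j$ I would apply Proposition \ref{prop: Sobolev of eigenchunk} to get
\[
\|f_j\|_{W^{k,2}} \leq C\,\bar\lambda_j^{k/q}.
\]
Next, for an arbitrary $\alpha>0$, I would establish the $L^\infty$ bound
\[
\|f_j\|_{L^\infty}\;\leq\;C\,\bar\lambda_j^{(n/2+\alpha)/q}.
\]
This is obtained by extending Proposition \ref{prop: Sobolev of eigenchunk} to fractional Sobolev exponents (as indicated in Remark \ref{rem:fractional}), yielding $\|f_j\|_{W^{s,2}}\leq C\,\bar\lambda_j^{s/q}$ for all real $s\geq 0$, and then applying the fractional Sobolev embedding $W^{n/2+\alpha,2}\hookrightarrow L^\infty$. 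Alternatively, one can interpolate between two consecutive integer Sobolev norms: $\|f_j\|_{W^{s,2}}\leq \|f_j\|_{W^{k_0,2}}^{1-\theta}\|f_j\|_{W^{k_0+1,2}}^{\theta}$ for $s = k_0+\theta$, or, with slightly worse constants, rely on the Sobolev trace theorem as in Remark \ref{rmk: trace}.

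Third, I would invoke the fractional Leibniz (Kato--Ponce) rule from \cite{Grafakos,Naibo}, iterated across the $l$ factors, to obtain
\[
\|f_1\cdots f_l\|_{W^{k,2}}\;\leq\; C\sum_{j=1}^{l}\|f_j\|_{W^{k,2}}\prod_{i\neq j}\|f_i\|_{L^\infty}.
\]
On the manifold $M$, this is established via the standard reduction to $\mathbb{R}^n$ using a finite atlas and partition of unity, exactly as in the definition of $\|\cdot\|_{W^{k,2}(M)}$. Substituting the two previous bounds and factoring out $\prod_i\bar\lambda_i^{(n/2+\alpha)/q}$,
\[
\|f\|_{W^{k,2}}\;\leq\; C\,\Bigl(\prod_{i=1}^{l}\bar\lambda_i^{(n/2+\alpha)/q}\Bigr)\sum_{j=1}^{l}\bar\lambda_j^{(k-n/2-\alpha)/q}.
\]
Finally, applying Theorem \ref{thm: main 2-vsp} with $p=2$ and raising to the power $n/k$ produces exactly the claimed bounds on both $m_r(f,\delta)$ and $z_r(f,\delta)$, with $C_2 = \dim H_r(M)$.

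The main obstacle is the rigorous implementation of the fractional Sobolev step needed to reach every $\alpha>0$: both the extension of Proposition \ref{prop: Sobolev of eigenchunk} to non-integer exponents and the curved-geometry version of Kato--Ponce require some care, but both are classical once localized to charts. The alternative trace-theorem route in Remark \ref{rmk: trace} circumvents fractional Sobolev at the cost of losing optimality in the constants, and gives a completely self-contained proof without appealing to Remark \ref{rem:fractional}.
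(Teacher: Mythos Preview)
Your proposal is correct and follows essentially the same approach as the paper: both arguments combine the fractional Leibniz (Kato--Ponce) inequality, the Sobolev embedding $W^{n/2+\alpha,2}\hookrightarrow L^\infty$, and Proposition~\ref{prop: Sobolev of eigenchunk} (at the fractional exponent $n/2+\alpha$) to bound $\|f\|_{W^{k,2}}$, and then feed this into the main Sobolev barcode estimate. The only cosmetic difference is that the paper writes the intermediate product inequality with $\|f_i\|_{W^{n/2+\alpha,2}}$ in place of your $\|f_i\|_{L^\infty}$, which is equivalent via the embedding you invoke.
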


We use the following fractional Leibniz rule for Sobolev spaces, which holds for instance for $f,g \in W^{s,2}$ where $s>n/2$ \[ ||fg||_{W^{s,2}} \leq C\left( ||f|| _{W^{s,2}} ||g||_{L^{\infty}} + ||f||_{L^{\infty}} ||g|| _{W^{s,2}}\right).\] This estimate is easily verified on $\R^n$ by means of the Fourier transform and then extended to a closed manifold using a partition of unity. For further generalizations and relation to the Kato-Ponce inequality see \cite{Naibo, Grafakos}. Combined with Sobolev's inequality, this yields the following estimate for $k > n/2:$ \begin{equation}\label{est: prod}||f||_{W^{k,2}} \leq C  \sum_{j=1}^l ||f_j||_{W^{k,2}} \prod_{i \neq j} ||f_i||_{W^{n/2+\al,2}},\end{equation} where $C$ depends on $M,n,l$ only. 

By Proposition \ref{prop: Sobolev of eigenchunk} and $f_j \in \cl F_{\la_j}$ this becomes \[||f||_{W^{k,2}} \leq C'  \sum_{j=1}^l (\la_j+1)^{k/q} \prod_{i \neq j} (\la_i+1)^{(n/2+\al)/q}\] for $C'$ depending on $M,E,D,k,\al.$

With this estimate, Theorem \ref{thm: gen courant} follows directly from Theorem \ref{thm: main 2} for $s=f,$ Remark \ref{rmk: 1.12 for r}, and the inequalities $m_r(s,\delta)\leq \cl N_{\delta'}(|s|),$ $z_r(s,\delta) \leq \cl N_{\delta'}(|s|)$ for all $\delta' < \delta.$ \qed

Theorem \ref{cor: product} then follows by replacing all  $\ol{\la}_j$ by $\ol{\la},$ so that $\ol{\la}$ enters with the exponent $n/q +  b/k$ for $b = (l-1)n(n/2+\al)/q$ and taking $k$ large enough so that $b/k < \varepsilon.$

\begin{remark}\label{rmk: trace}
Instead of the fractional Leibniz rule, we could have used the Sobolev trace theorem for restricting $F:M^l \to \R,$ $F(x_1,\ldots,x_l) = f_1(x_1) \cdot \ldots \cdot f_l(x_l)$ to the diagonal $M \cong \Delta \subset M^l$ consisting of points $(x_1,\ldots,x_l)$ with $x_i = x_j$ for all $i,j$ (see \cite[p. 121]{EgorovShubin}). It yields a weaker estimate than \eqref{est: prod}, which is, however, still sufficient to deduce Theorem \ref{cor: product}.
\end{remark}

\subsection{Proof of Theorem \ref{cor: Bezout}}

We prove the following more general result from which Theorem \ref{cor: Bezout} follows directly.

\begin{theorem}[general coarse B\'ezout]\label{thm: Bezout general}  Let $f_j \in \cl F_{\la_j},$ $1 \leq j \leq l$ be $l$ functions. Fix integers $0 \leq r < n$ and $k > n/2$. Then for all $\delta > 0$,
	\[ z_r(s,\delta) \leq \frac{C_1}{\delta^{n/k}} \left(\sum_{j=1}^l (\la_j+1)^{k/q}\right)^{n/k}+C_2,\]
	\[ m_r(s,\delta) \leq \frac{C_1}{\delta^{n/k}} \left(\sum_{j=1}^l (\la_j+1)^{k/q}\right)^{n/k}+C_2,\]
	where $C_1$ depends only on $M,D,k$ and $C_2 = \dim H_r(M).$ 
	
\end{theorem}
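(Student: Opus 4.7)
The plan is to view $s=(f_1,\dots,f_l)$ as a section of the trivial Euclidean bundle $E = M\times \R^l$ and apply Theorem \ref{thm: main 2-vsp} directly with $p=2$. Since $k>n/2$, the hypothesis $k>n/p$ of Theorem \ref{thm: main 2-vsp} is satisfied, and it yields
\[
z_r(s,\delta),\; m_r(s,\delta) \;\leq\; \frac{C_1'}{\delta^{n/k}}\, \|s\|_{W^{k,2}}^{n/k} \;+\; C_2,
\]
with $C_2=\dim H_r(M)$ and $C_1'$ depending only on $M$ and $k$ (the dependence on $E$ is harmless for the trivial bundle with standard inner product, as can be checked by tracing through the proofs in Sections \ref{Section_Semialgebraic}--\ref{Proof}; the relevant Morrey--Sobolev and cube-counting estimates have constants independent of the target dimension $l$).

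The remaining task is to bound $\|s\|_{W^{k,2}}$ in terms of the $\lambda_j$. Because the inner product on $\R^l$ is the standard Euclidean one and Sobolev norms for the trivial bundle split componentwise, we have
\[
\|s\|_{W^{k,2}}^{2} \;=\; \sum_{j=1}^{l} \|f_j\|_{W^{k,2}}^{2}.
\]
Assuming (as in Theorem \ref{cor: Bezout}) that $\|f_j\|_{L^2}=1$, I invoke Proposition \ref{prop: Sobolev of eigenchunk}, applied to each $f_j \in \cF_{\la_j}$ as a section of the trivial line bundle, to obtain
\[
\|f_j\|_{W^{k,2}} \;\leq\; C_3\,(\la_j+1)^{k/q},
\]
with $C_3$ depending only on $M,D,k$. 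Using the elementary inequality $\|\cdot\|_{\ell^2}\le \|\cdot\|_{\ell^1}$ on $(a_j)=((\la_j+1)^{k/q})$, this combines to give
\[
\|s\|_{W^{k,2}} \;\leq\; C_3 \Bigl(\sum_{j=1}^{l}(\la_j+1)^{2k/q}\Bigr)^{\!1/2} \;\leq\; C_3 \sum_{j=1}^{l}(\la_j+1)^{k/q}.
\]

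Raising to the power $n/k$ and plugging back into the bound from Theorem \ref{thm: main 2-vsp} yields
\[
z_r(s,\delta),\; m_r(s,\delta) \;\leq\; \frac{C_1' C_3^{n/k}}{\delta^{n/k}} \Bigl(\sum_{j=1}^{l}(\la_j+1)^{k/q}\Bigr)^{\!n/k} + C_2,
\]
which is the desired estimate with $C_1 = C_1' C_3^{n/k}$, depending only on $M,D,k$. Theorem \ref{cor: Bezout} is the special case $l=n$ with all $\la_j\le \la$, in which the sum is bounded by $n\,(\la+1)^{k/q}$ and $n$ is absorbed into $C_1$.

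The only subtle point is verifying that the constant $C_1'$ from Theorem \ref{thm: main 2-vsp} can indeed be chosen independently of $l$ for the trivial bundle; this is the step one must double-check, but it follows by inspecting the proof of Theorem \ref{thm: main 2}: the local reduction to Theorem \ref{CUBE} uses the Morrey--Sobolev inequality (Theorem \ref{Morrey-Sobolev}) and Proposition \ref{MDP_Construction}, both stated for $\R^l$-valued maps with constants $C'_{n,k,p}$ and $C_{n,k,p}$ depending only on $n,k,p$, and the passage to $|s|$ thereafter loses no dependence on $l$.
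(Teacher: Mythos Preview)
Your proof is correct and follows essentially the same approach as the paper: bound $\|s\|_{W^{k,2}}$ by $\sum_j \|f_j\|_{W^{k,2}}$ via Proposition \ref{prop: Sobolev of eigenchunk}, then apply the main Sobolev-to-barcode estimate. The paper phrases the last step as Lemma \ref{lma: delta count} plus Theorem \ref{thm: main 2}, while you invoke Theorem \ref{thm: main 2-vsp} directly, but these are equivalent since Theorem \ref{thm: main 2-vsp} is itself derived from Theorem \ref{thm: main 2} via Lemma \ref{lma: delta count}. Your observation about the $l$-independence of $C_1'$ (tracing through Theorem \ref{Morrey-Sobolev} and Proposition \ref{MDP_Construction}) is a point the paper leaves implicit; you are right that the statement requires it and that the proofs in Sections \ref{Section_Semialgebraic}--\ref{Proof} deliver it.
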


In view of Lemma \ref{lma: delta count}, $z_r(s,\delta) \leq \cl N_{r,\delta'}(|s|) \leq \cl N_{\delta'}(|s|)$ for all $\delta'<\delta.$ Furthermore, in view of Proposition \ref{prop: Sobolev of eigenchunk}, \[||s||_{W^{k,2}} \leq \sum_{j=1}^l ||f_j||_{W^{k,2}} \leq C_{M,D} \sum_{j=1}^l (\la_j+1)^{k/q}.\] Therefore this is now a direct consequence of Theorem \ref{thm: main 2} and Remark \ref{rmk: 1.12 for r}.

\subsection{Proofs of Theorems \ref{thm: barcode conj delta} and \ref{thm: barcode conj norm}}

Theorem \ref{thm: barcode conj delta} is a direct application of Theorem \ref{thm: main 2} for $p=2$ together with Proposition \ref{prop: Sobolev of eigenchunk}.

Theorem \ref{thm: barcode conj norm} is proven as follows. Set $p=2.$ Then by Theorem \ref{thm: barcode conj delta} applied once with $n/2<k_1<n$ and once with $k_2 > n,$ we obtain \begin{equation}\label{eq: dominated N delta} \cl{N}_{\delta}(|s|) \leq C_1 (\la+1)^{n/q} \min\{\delta^{-n/k_1},\delta^{-n/k_2}\} + C_2,\end{equation} 
where $C_1, C_2$ are suitable maxima of the constants for the two cases. Note that $\cl{N}_{\delta}(|s|)$ is a measurable function of $\delta$ on $[0,\infty)$ and the right hand side of \eqref{eq: dominated N delta} is integrable on every compact interval in $[0,\infty).$ Therefore by Lebesgue's dominated convergence theorem the function $\cl{N}_{\delta}(|s|)$ is integrable on $[0,\max(|s|)].$ Now \begin{equation}\label{eq:moment estimate} \left|\cl{B}(|s|)\right| \leq \int_{0}^{\max(|s|)} \cl{N}_{\delta}(|s|)\,d\delta.\end{equation} 
Indeed, every finite bar $[a,b)$ contributes $b-a$ to both sides (see \cite[Proof of Moment Lemma]{CSEHM}) and every infinite bar $[c,\infty),$ satisfies $0 \leq c\leq \max(|s|),$ and contributes $\max(|s|)-c$ to the left hand side and $\max(|s|)$ to the right hand side. Now \eqref{eq:moment estimate} and \eqref{eq: dominated N delta} imply that \[ \left|\cl{B}(|s|)\right| \leq C_1 B_{n,k_1,k_2} (\la+1)^{n/q} + C_2 \max(|s|)\] for $B_{n,k_1,k_2} = \int_0^{\infty} \min\{\delta^{-n/k_1},\delta^{-n/k_2}\}\,d\delta < \infty.$ Finally, in view of the Sobolev inequality, Proposition \ref{prop: Sobolev of eigenchunk}, and the choice $n/2 < k_1 < n,$ \[\max(|s|) \leq C_3 ||s||_{W^{k_1,2}} \leq C_4 (\la + 1)^{k_1/q} \leq C_4 (\la+1)^{n/q}.\] Hence we obtain \[ \left|\cl{B}(|s|)\right| \leq C(\la+1)^{n/q},\] with $C = C_1 B_{n,k_1,k_2} + C_2 C_4.$

\begin{remark}\label{rmk: Lp barcode proof}
Here we provide some details of the proof of the estimate from Remark \ref{rmk: Lp barcode}. For the $L^p$ norm, we modify  \eqref{eq:moment estimate} as follows: \[ |\cl{B}(|s|)|_p^p \leq p \int_{0}^{\max(|s|)} \delta^{p-1}\cl{N}_{\delta}(|s|) d\delta.\] Let $n/2 < k_1 < n.$ Then \eqref{eq: dominated N delta} implies that for $p \neq n/k_1,$ \[ |\cl{B}(|s|)|_p^p \leq (\la+1)^{n/q} B_{p,n,k_1,k_2} \max(1,\max(1,|s|)^{p-n/k_1}),\] where $\max(1,-) = \max(1, \max(-)).$ Now for $p-n/k_1 < 0$ \[\max(1,\max(1,|s|)^{p-n/k_1}) = 1,\] whence \[|\cl{B}(|s|)|_p \leq C (\la+1)^{n/pq} \leq C (\la+1)^{n/q},\] whereas for $p-n/k_1 > 0$ \[ \max(1,\max(1,|s|)^{p-n/k_1}) \leq C_5 \max(1,(\la+1)^{(k_1 p -n)/{q}}),\] whence \[ |\cl{B}(|s|)|_p \leq C (\la+1)^{k_1/q} \leq C (\la+1)^{n/q}.\]
\end{remark}

\subsection{Proof of Proposition \ref{prop: no}}

The first part regarding the existence of $f_{i_j}, \la_{i_j}$ for a metric $g_{BLS}$ on $T^2$ is a reformulation of the main result of \cite{BLS}.
The statement on $T^3 = T^2 \times S^1$ with $g_{BLS} \oplus g_{st}$ is a direct calculation. The only part which remains to be proven is the statement regarding $T^4 = T^2 \times T^2$ with $g_{BLS} \oplus g_{BLS}$ and $d_{i_j}(x,y) = f_{i_j}(x) - f_{i_j}(y).$ Clearly $d_{i_j}$ is an eigenfunction of the Laplacian on $T^4$ of eigenvalue $\la_{i_j}.$ Recall that for any function $f$, $\mathring{V}_*(f)_t=H_*(\{ f<t \}).$ By \cite{BLS} the barcode of $\mathring{V}_*(f_{i_j})$ has infinitely many bars $(a_k,b_k],$ $k \in \N,$ in degree $1.$ Respectively, by Proposition \ref{prop: duality}, the barcode of $\mathring{V}_*(-f_{i_j})$ has infinitely many bars $(-b_k,-a_k]$ in degree $0.$ Now, using K\"unneth formula for persistence modules proven in Section \ref{subsec-Kunneth}, we obtain that the barcode of $\mathring{V}_*(d_{i_j})$ contains the infinite family of bars $(a_k-b_k,0]$ in degree $1$ (and the infinite family of bars $(0,b_k-a_k]$ in degree $2,$ which we do not use).  In turn we obtain by definition that $\dim H_1(\{d<0\}) = + \infty.$

\section{Proof of Theorem \ref{thm-sharp mini}}\label{sec: sharpness}
\subsection{Construction}
Let  $ (M,g) $ be a closed Riemannian manifold,  and, as before,  let $ \cF_\lambda $ be the linear span of the Laplace eigenfunctions with eigenvalue $ \leq \lambda $.


The idea of the construction is as follows.  
Take  a smooth function $\phi$ which is supported in a unit ball, takes a positive value at the center, and the same negative value at any point of the sphere of 
radius $1/2$.  Consider  a collection of $\sim \lambda^{n/2}$ small disjoint balls on $M$, and let us transplant $\phi$ to each ball. 
Take the sum $F$ of all these transplanted functions and consider its $L^2$ projection $P$ on the space $\cF_\lambda$. We show that at least on a half of all the balls the remainder $F-P$ is small in the $L^\infty$ norm.  Therefore, on every such ball the function $P$ takes a positive value at the center and a negative value on a sphere in the middle. Taking into account additional control in $\delta$,  one can assure that after renormalization in $L^2$ these values are larger than $\delta$ in absolute value. This implies that at least half of all the balls contain a $\delta$-deep nodal component of the function 
$f=P/\|P\|_{L^2}$, which gives the desired lower bound for $m_0(f,\delta)$ and $z_0(f,\delta)$.

Let us now formalize this idea. Choose a local chart $ U \subset M $ which admits an extension to a slightly larger one. 
For convenience we will consider Euclidean distance $ d_{\eu} (\cdot,\cdot) $ on $ U $ as well as Euclidean balls $ B_\eu(x,\rho) $ for $ x \in U $ and $ \rho > 0 $ (we will always take $ \rho $ small enough so that the Euclidean ball sits in 
$ U $ and consequently can be considered as a subset of $ M $).  The following simple auxiliary lemma holds.

\begin{lemma} \label{lemma:Sobolev-local}
Let $ \eps > 0 $ be small enough. Then for every integer $ l > n/4 $, every $ x \in U $ such that $ B_\eu(x,2\eps) \subset U $, and every smooth function $ f : B_\eu(x,2\eps) \rightarrow \R $, we have
$$ \| f|_{B_\eu(x,\eps)} \|_{L^\infty} \leqslant C \eps^{-n/2} (\eps^{2l} \| \Delta^l f \|_{L^2} + \| f \|_{L^2}) .$$
\end{lemma}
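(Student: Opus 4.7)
The plan is to reduce the claim to a standard Sobolev estimate on a fixed ball by a rescaling argument. First I would establish the unscaled version: for every smooth function $g$ on $B_{\eu}(0,2) \subset \R^n$,
\[ \| g \|_{L^{\infty}(B_{\eu}(0,1))} \leq C \bigl( \| \Delta^l g \|_{L^2(B_{\eu}(0,2))} + \| g \|_{L^2(B_{\eu}(0,2))} \bigr). \]
This follows by combining two classical results. The Sobolev embedding theorem gives $W^{2l,2}(B_{\eu}(0,1)) \hookrightarrow C^0(B_{\eu}(0,1))$ since $2l > n/2$ by the hypothesis $l > n/4$, so $\|g\|_{L^\infty(B_{\eu}(0,1))} \leq C \|g\|_{W^{2l,2}(B_{\eu}(0,1))}$. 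Then interior elliptic regularity for the iterated Laplacian (a standard consequence of the $L^2$-theory for elliptic operators with constant coefficients) provides the bound
\[ \| g \|_{W^{2l,2}(B_{\eu}(0,1))} \leq C \bigl( \| \Delta^l g \|_{L^2(B_{\eu}(0,2))} + \| g \|_{L^2(B_{\eu}(0,2))} \bigr). \]

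Next I would perform the rescaling. Given $f$ on $B_{\eu}(x,2\eps)$, set $g(y) = f(x + \eps y)$ for $y \in B_{\eu}(0,2)$. A direct computation gives $\Delta^l g(y) = \eps^{2l} (\Delta^l f)(x + \eps y)$, and the change-of-variables formula yields
\[ \| g \|_{L^2(B_{\eu}(0,2))} = \eps^{-n/2} \| f \|_{L^2(B_{\eu}(x,2\eps))}, \qquad \| \Delta^l g \|_{L^2(B_{\eu}(0,2))} = \eps^{2l - n/2} \| \Delta^l f \|_{L^2(B_{\eu}(x,2\eps))}, \]
while $\| g \|_{L^\infty(B_{\eu}(0,1))} = \| f \|_{L^\infty(B_{\eu}(x,\eps))}$. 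Substituting into the unscaled inequality yields exactly the stated bound.

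The only subtlety I anticipate is the cosmetic issue that the lemma is stated for functions on a manifold $M$ equipped with the Laplace--Beltrami operator $\Delta$, whereas the argument above uses the flat Laplacian on a Euclidean ball sitting inside the chart $U$. Since $U$ is a local chart with a slightly larger extension, for $\eps$ small enough the Riemannian metric on $B_{\eu}(x,2\eps)$ is uniformly comparable to the Euclidean metric (with constants depending only on the chart), and the Laplace--Beltrami operator differs from the flat Laplacian by lower order terms with coefficients bounded uniformly in $x$ and $\eps$. Iterating this and keeping track of the scaling of each lower order term (each derivative costs a factor of $\eps^{-1}$ under the rescaling, which is absorbed by the leading $\eps^{2l}$), one gets the same bound up to adjusting the constant $C$. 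This is the step requiring the most bookkeeping, but it is routine once the Euclidean version above is in hand.
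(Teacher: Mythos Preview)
Your proposal is correct and follows exactly the approach indicated in the paper, which simply says the result follows from Sobolev's inequality and the fundamental elliptic estimate applied to the rescaled function, leaving the details to the reader. Your treatment of the discrepancy between the flat and Riemannian Laplacians is a reasonable elaboration of those omitted details.
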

\begin{proof}
The result follows from Sobolev's inequality and the fundamental elliptic estimate (cf. Section \ref{sec:applications}) applied to the rescaled function.  
We leave the details to the reader.
\end{proof}
%
	

Let us now fix some integer $ l > n/4 $ and  a smooth  function $ \phi : \R^n \rightarrow \R $ such that:
\begin{enumerate}
 \item $ \supp(\phi) \subset B(0,1) $.
 \item $ \phi(0) = 1 $.
 \item $ \phi(x) = -1 $ when $ |x| = 1/2 $.
\end{enumerate}

\begin{figure}[ht]
	\begin{center}
		\includegraphics[scale=0.4]{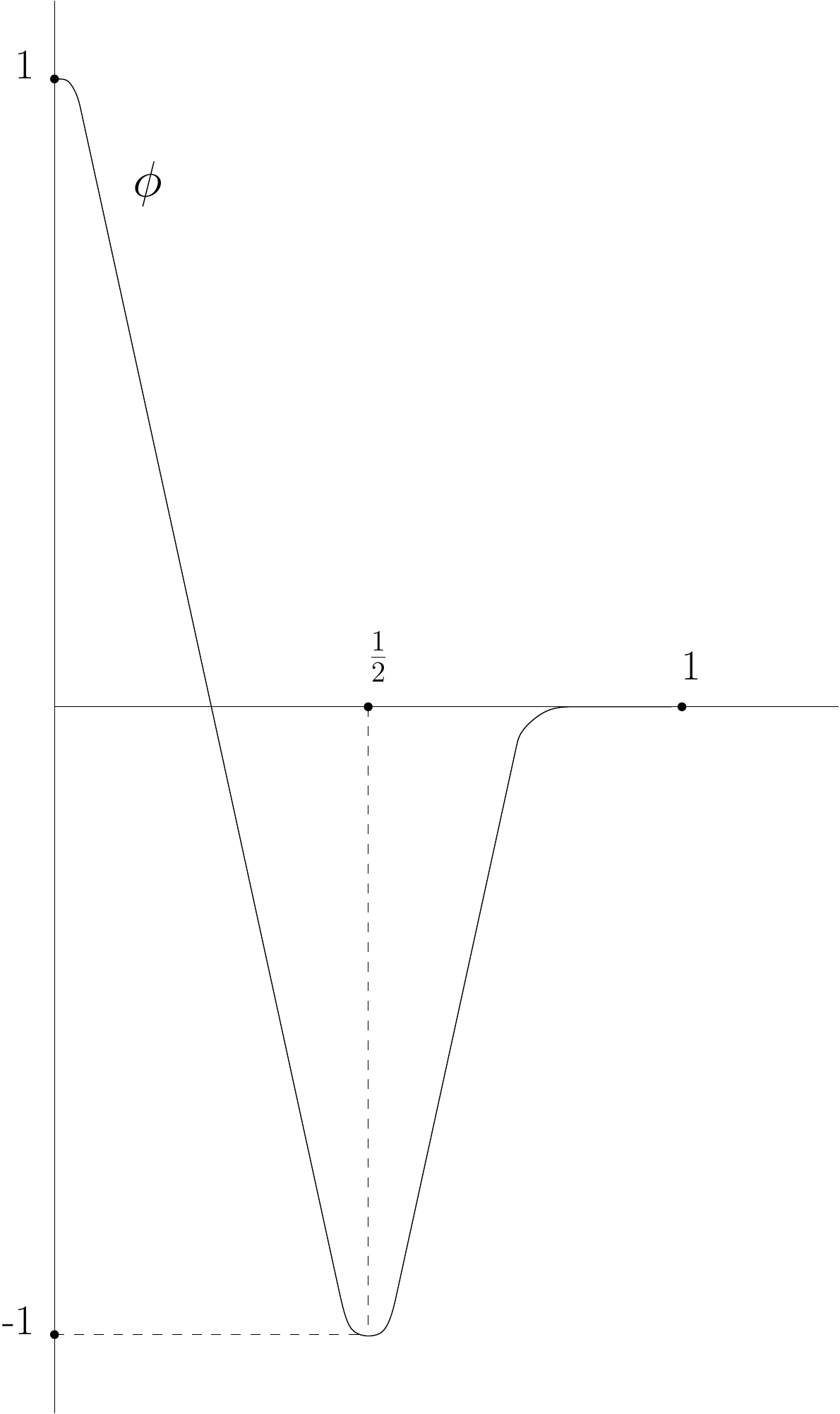}
		\caption{The function $\phi$ can be radial with this profile.}
		\label{bump}
	\end{center}
\end{figure}

Along the proof, constants $ c_i, C_i > 0 $ will depend only on $ (M,g,U,l,\phi) $. Suppose that $ \lambda = \lambda_m $ for a sufficiently large $ m $ so that 
\begin{equation} \label{eq:a-choice}
1 \leqslant \delta \leqslant a \lambda^{n/4} ,
\end{equation}
for some $ a > 0 $. We will later show that we may assume this for the choices of constants that we will make.

Denote $ \eps = (A/ \lambda)^{1/2} $ for some $ A > 1 $.

Consider a collection of disjoint balls   
\begin{equation} \label{eq:ldist-balls}
 B_j = B_\eu(x_j,2\eps) \subset U ,
\end{equation}
$ j = 1,\ldots,N $, such that 
\begin{equation}\label{eq: def N} N = \lfloor a_1 \delta^{-2} \eps^{-n} \rfloor = \lfloor a_1 A^{-n/2} \delta^{-2} \lambda^{n/2} \rfloor \geqslant 1, \end{equation}
for some $ a_1 > 0 $. We are able to do that when $ N \leqslant c\eps^{-n} $ (for $ c=c(M,g,U) $) which holds if
\begin{equation} \label{eq:choice-a1-cond}
0 < a_1 \leqslant a_1(M,g,U)
\end{equation}
(recall that $ \delta \geqslant 1 $), and at the same time when
\begin{equation} \label{eq:choice-a-cond}
a_1a^{-2}A^{-n/2} \geqslant 1   
\end{equation}
so that by $(\ref{eq:a-choice})$ we have $ N \geqslant 1 $. 
Constants $ a, a_1, A $ will be chosen in the course of the proof, and will eventually depend 
only on $ (M,g,U,l,\phi) $.

Define the smooth function $ F : M \rightarrow \R $ by
$$ F(x) = \sum_{j=1}^N \phi((x-x_j) / \eps) $$
for $ x \in U $, and $ F(x) = 0 $ when $ x \in M \setminus U $. The rest of the proof is devoted to showing that we can take the desired 
function $ f $ to be the $ L^2 $-normalized orthogonal $ L^2 $-projection of $ F $ onto $ \cF_\lambda $. 
Denote by $ P : M \rightarrow \R $ the function given by the orthogonal $ L^2 $-projection of $ F $ onto $ \cF_\lambda $, and then denote $ R:= F-P $. First we show that the remainder $ R $ is small in a certain sense.

\subsection{Estimating the remainder} 
Let us prove two technical lemmas.
\begin{lemma} \label{claim:DeltaF-est}
For any integer $ k \geqslant 0 $ we have 
$$ \| \Delta^k F \|_{L^2} \leqslant C N^{1/2} \eps^{-2k+n/2} ,$$
where $ C = C(M,g,U,k,\phi) $.
\end{lemma}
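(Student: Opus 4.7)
The plan is to exploit two features of the construction: the summands $\phi_j(x) := \phi((x-x_j)/\eps)$ have \emph{pairwise disjoint supports}, and each summand is a $\eps$-rescaling of the fixed smooth bump $\phi$. Disjointness will let us pass from an $L^2$-estimate on each $\phi_j$ to the global $L^2$-estimate by a Pythagorean identity, picking up the factor $N^{1/2}$. The rescaling will account for the factor $\eps^{-2k+n/2}$ in a transparent way. The constant will depend only on $(M,g,U,k,\phi)$, since all balls lie in the fixed chart $U$.

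First I would observe that $\supp \phi_j \subset B_\eu(x_j,\eps) \subset B_j$, so by the disjointness assumption \eqref{eq:ldist-balls} on the $B_j$ the functions $\phi_j$ have pairwise disjoint supports. Consequently the same holds for $\Delta^k \phi_j$, since $\Delta^k$ is a differential operator and hence does not enlarge supports. Therefore
\begin{equation*}
 \|\Delta^k F\|_{L^2}^2 \;=\; \sum_{j=1}^N \|\Delta^k \phi_j\|_{L^2}^2.
\end{equation*}

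Next I would estimate $\|\Delta^k \phi_j\|_{L^2}$ uniformly in $j$ via rescaling. In local coordinates on $U$, the operator $\Delta^k$ is a differential operator of order $2k$ with smooth coefficients, which we write as $\Delta^k = \sum_{|\alpha|\le 2k} b_\alpha(x)\partial^\alpha$, with $b_\alpha \in C^\infty(\overline U)$. Using $\partial^\alpha \phi_j(x) = \eps^{-|\alpha|}(\partial^\alpha \phi)((x-x_j)/\eps)$, one gets
\begin{equation*}
 \|\Delta^k \phi_j\|_{L^\infty} \;\le\; \sum_{|\alpha|\le 2k} \|b_\alpha\|_{L^\infty(\overline U)}\,\eps^{-|\alpha|}\,\|\partial^\alpha \phi\|_{L^\infty} \;\le\; C\,\eps^{-2k},
\end{equation*}
for small $\eps$, with $C=C(M,g,U,k,\phi)$ (the sum is dominated by the top-order term $|\alpha|=2k$ because $\eps<1$). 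Combined with $|\supp \phi_j|\le |B_\eu(x_j,\eps)| \le C'\eps^n$, this yields $\|\Delta^k \phi_j\|_{L^2}^2 \le C^2 C'\,\eps^{-4k+n}$.

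Summing over $j=1,\ldots,N$ and taking square roots then gives
\begin{equation*}
 \|\Delta^k F\|_{L^2} \;\le\; C\,N^{1/2}\,\eps^{-2k+n/2},
\end{equation*}
as required. The only mildly delicate point is keeping the constant uniform in $\eps$ and $N$; this is automatic since the coefficients $b_\alpha$ of $\Delta^k$ are bounded on the fixed chart $\overline U$ and all derivatives of the fixed bump $\phi$ are bounded. No real obstacle arises.
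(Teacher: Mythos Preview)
Your proof is correct and follows essentially the same approach as the paper: the paper's own argument is the one-line observation that $|\Delta^k F| \leq C\eps^{-2k}$ on each $B_j$ and vanishes elsewhere, from which the $L^2$-bound follows immediately, and you have simply spelled out this computation in detail (disjoint supports, chain rule under rescaling, volume of the support). The only minor remark is that the $L^2$-norm and the volume of $\supp\phi_j$ are taken with respect to the Riemannian measure on $M$, but since the chart $U$ extends to a slightly larger one the density $\sqrt{\det g}$ is uniformly comparable to $1$ on $\overline U$, so your Euclidean volume estimate $C'\eps^n$ is legitimate after adjusting the constant.
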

\begin{proof}
By a straightforward computation we have $ | \Delta^k F | \leqslant C\eps^{-2k} $ on each $ B_j $, and we have $ \Delta^k F = 0 $ on the complement of the union of the balls $ B_j $. 
\end{proof}

\begin{lemma} \label{claim:general-trunc-ineq}
Let $ H : M \rightarrow \R $ be a smooth function, denote by $ P_H $ the orthogonal $ L^2 $-projection of $ H $ onto $ \cF_\lambda $, and
then denote $ R_H := H-P_H $ (the remainder). Then
$$ \| R_H \|_{L^2} \leqslant \lambda^{-1} \| \Delta H \|_{L^2}. $$
\end{lemma}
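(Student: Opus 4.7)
The plan is to exploit the spectral decomposition of the Laplace--Beltrami operator. Since $(M,g)$ is closed, $\Delta$ is a non-negative self-adjoint operator on $L^2(M)$ with pure point spectrum. Let $\{\phi_j\}_{j \geq 1}$ be an $L^2$-orthonormal basis of eigenfunctions with corresponding eigenvalues $0 \leq \mu_1 \leq \mu_2 \leq \ldots$ (so $\mu_j = \lambda_j$ in the notation of the paper, though here we avoid overloading $\lambda$).

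First I would expand $H = \sum_{j} c_j \phi_j$ in $L^2$, where $c_j = \langle H, \phi_j\rangle_{L^2}$; this series converges in $L^2$ by completeness. Since $P_H$ is by definition the orthogonal $L^2$-projection of $H$ onto $\cF_\lambda = \mathrm{span}\{\phi_j : \mu_j \leq \lambda\}$, we have
\[
P_H = \sum_{\mu_j \leq \lambda} c_j \phi_j, \qquad R_H = H - P_H = \sum_{\mu_j > \lambda} c_j \phi_j.
\]
Consequently $\|R_H\|_{L^2}^2 = \sum_{\mu_j > \lambda} |c_j|^2$ by Parseval.

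Next I would compute the Fourier coefficients of $\Delta H$. Because $H$ is smooth, $\Delta H \in L^2$ and, by self-adjointness of $\Delta$, its coefficients are $\langle \Delta H, \phi_j\rangle = \langle H, \Delta \phi_j\rangle = \mu_j c_j$. Hence again by Parseval,
\[
\|\Delta H\|_{L^2}^2 = \sum_j \mu_j^2 |c_j|^2 \geq \sum_{\mu_j > \lambda} \mu_j^2 |c_j|^2 \geq \lambda^2 \sum_{\mu_j > \lambda} |c_j|^2 = \lambda^2 \|R_H\|_{L^2}^2.
\]
Taking square roots yields the desired inequality $\|R_H\|_{L^2} \leq \lambda^{-1} \|\Delta H\|_{L^2}$.

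There is no real obstacle here: the argument is the standard spectral-truncation estimate, and the only mild point to keep in mind is that one must justify termwise application of $\Delta$ to the expansion of $H$, which is immediate once one uses that $\Delta H \in L^2$ and reads off its coefficients via the self-adjointness identity above (rather than differentiating the series termwise).
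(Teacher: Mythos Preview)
Your proof is correct and follows essentially the same approach as the paper: expand $H$ in an orthonormal basis of Laplace eigenfunctions, observe that the remainder picks out the modes with eigenvalue $>\lambda$, and apply Parseval to compare $\|R_H\|_{L^2}^2$ with $\|\Delta H\|_{L^2}^2$.
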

\begin{proof}
Let $ f_0 \equiv 1, f_1,f_2, \ldots $ be an orthogonal basis of $ L^2 $ consisting of eigenfunctions of $ \Delta $, and let
$ \lambda_0 < \lambda_1 < \lambda_2 \leqslant \ldots $ be the corresponding eigenvalues. If we decompose 
$ H = \sum_{j=0}^\infty b_j f_j $, then $ \Delta H = \sum_{j=0}^\infty \lambda_j b_j f_j $, and now the claim follows from the Parseval identity.
\end{proof}

By Lemmas \ref{claim:DeltaF-est} and \ref{claim:general-trunc-ineq} we have 
\begin{equation} \label{eq:R-estimate}
\begin{gathered}
 \eps^{2l} \| \Delta^l R \|_{L^2} + \| R \|_{L^2} \leqslant \eps^{2l}\lambda^{-1} \| \Delta^{l+1} F \|_{L^2} + \lambda^{-1} \| \Delta F \|_{L^2} \\\leqslant C_1 N^{1/2} \eps^{n/2-2} \lambda^{-1} = C_1 A^{-1} N^{1/2} \eps^{n/2} .
\end{gathered}
\end{equation}
Hence 
$$  \int_{M} \left( \eps^{4l} (\Delta^l R(x))^2 +  (R(x))^2 \right) \, d\vol \leqslant  C_1^2 A^{-2} N \eps^{n} .$$
Therefore for at least $ N/2 $ of the $ B_j $'s we have 
$$  \int_{B_j} \left( \eps^{4l} (\Delta^l R(x))^2 +  (R(x))^2 \right) \, d\vol \leqslant  2C_1^2 A^{-2} \eps^{n} ,$$
hence
$$  \eps^{2l} \| \Delta^l R |_{B_j} \|_{L^2} +  \| R|_{B_j} \|_{L^2}  \leqslant  2C_1 A^{-1} \eps^{n/2} ,$$
and then Lemma \ref{lemma:Sobolev-local} implies 
$$ \| R|_{B_j'} \|_{L^\infty} \leqslant C_2 A^{-1} ,$$
where $ B_j' = B_\eu(x_j,\eps) $.

We are now in a position to  complete the proof of Theorem \ref{thm-sharp mini}.
If $ A $ is chosen to be greater than $ 2C_2 $, we conclude that for the function $ P=F-R $ and for 
at least $ N/2 $ of the $ x_j $'s we have 
$$ P(x_j) \geqslant 1/2 $$
and 
$$ P(x) \leqslant -1/2 $$
for $ d_\eu(x,x_j) = \eps /2 $. Also note that by $(\ref{eq:R-estimate}),$ by Lemma  \ref{claim:DeltaF-est} (used with $ k = 0 $), and by the choice of $A,$ we have 
$$ \| P \|_{L^2} = \| F-R \|_{L^2} \leqslant \|F \|_{L^2} + \|R\|_{L^2} \leqslant C_3 N^{1/2}\eps^{n/2} \leqslant C_3 a_1^{1/2} \delta^{-1}. $$
Hence the normalized function 
$$ f:= \frac{P}{\|P\|_{L^2}} $$
has the property that for at least $ N/2 $ of the $ x_j $'s we have 
$$ f(x_j) \geqslant c_1 a_1^{-1/2} \delta $$
and 
$$ f(x) \leqslant -c_1 a_1^{-1/2} \delta $$
for $ d_\eu(x,x_j) = \eps /2 $. Moreover, by \eqref{eq: def N} we have
$$ N/2 \geqslant \frac{1}{4} a_1 \delta^{-2} \eps^{-n} = \frac{1}{4} a_1 A^{-n/2} \delta^{-2} \lambda^{n/2}. $$
Now recall that we can first choose $A = 2C_2.$ Then choose $ a_1 > 0 $ small enough so that we have $ a_1 < c_1^2 $ and $(\ref{eq:choice-a1-cond})$ holds. Then choose $ a := a_1^{1/2} A^{-n/4} $ (according to $(\ref{eq:choice-a-cond})$). Note that these choices of $A, a_1$ can be done so that they depend only on $(M,g,U,l,\phi)$ and hence so does $a.$ As a result we get \[ N/2 \geqslant \frac{1}{4}  a^2 \delta^{-2} \la^{n/2},\] which implies $(\ref{eq:sthm-statement})$ with $c = \frac{1}{4} a^2$ and $\la$ instead of $\la+1$ (note that the right hand side being positive implies that \eqref{eq:a-choice} is satisfied). We can then replace $\la$ by $\la+1$ in $(\ref{eq:sthm-statement})$ by further decreasing $c.$ 

 \qed


We conclude this section by a few remarks.

\begin{remark}
To simplify exposition we stated Theorem \ref{thm-sharp mini} for the Laplace-Beltrami operator. Using similar ideas it is not 
hard to extend it  to arbitrary non-negative self-adjoint elliptic pseudo-differential operators on a closed manifold. 
\end{remark}

\begin{remark}\label{rmk: sharpness}
Recall that Theorem \ref{thm: coarse Courant} gives the bound
\begin{equation} \label{eq:upper-bound}
m_r(f,\delta) \leqslant \frac{C_1}{\delta^{n/k}} (\lambda+1)^{n/2} + C_2 
\end{equation}
for every $ f \in \cF_\lambda $ with $ \| f \|_{L^2} = 1 $ and any $ \delta > 0 $, where $ 0 \leqslant r < n $ and $ k > n/2 $. By Theorems \ref{thm: coarse Courant} and \ref{thm: barcode conj delta}, bounds of the same form hold also for $z_r(f,\delta)$ and $\cl N_\delta(f).$
Theorem \ref{thm-sharp mini} implies sharpness of \eqref{eq:upper-bound} if we fix $ \delta > 0 $ and send $ \lambda $ to infinity. (See Remark \ref{remark:general-r} for the case $r>0.$) Let us now discuss the sharpness in $\delta.$ To this end we compare $\max\{1,\delta^2\}^{-1}$ in Theorem \ref{thm-sharp mini} to $\delta^{-n/k}$ in \eqref{eq:upper-bound}. For the regime $ 0 < \delta \leqslant 1 $, since we can choose $ k $ arbitrarily large, we get that Theorem \ref{thm-sharp mini} implies  ``almost sharpness'' of \eqref{eq:upper-bound} in terms of $ \delta $. Namely for every $\eps>0$, we can obtain $\delta^{-\eps}$ in \eqref{eq:upper-bound}. Also, in some cases, such as the circle or more generally the flat torus $ \R^n / \mathbb{Z}^n $, one cannot improve the example given by the theorem: generic trigonometric polynomals of degree $ \leqslant k $ have no more than $ C k^n $ critical points. 

In the regime $ \delta \gg 1 $, Theorem \ref{thm-sharp mini} does not imply sharpness (or almost sharpness) of $(\ref{eq:upper-bound})$ as stated, since $k>n/2$ is an integer. However it  should not be hard to generalize our approach to non-integer $ k $ (see Remark \ref{rem:fractional}) and  obtain $(\ref{eq:upper-bound})$ for any real $ k $ with $ 2k > n$, which  by Theorem \ref{thm-sharp mini} is ``almost sharp" in this regime as well. Namely, for every $\eps>0$, we can obtain $\delta^{-2+\eps}$ in \eqref{eq:upper-bound}. 
\end{remark}

\begin{remark} \label{remark:general-r}
Note that the proof of Theorem \ref{thm-sharp mini} also provides the same lower bound for $ z_{n-1}(f,\delta) $.
Moreover, by making a different choice of the function $ \phi $ from the proof, we get lower bound for each $ m_r, z_r $ when $ 0 \leqslant r < n $. Namely, instead of taking a point and a sphere around it, we can take an $ r $-dimensional sphere, and require that $ \phi = 1 $ at any point on the $ r $-sphere, while $ \phi = -1 $ at any point from the boundary of its tubular neighbourhood. Instead of the $r$-sphere we can of course take another $ r $-dimensional closed submanifold. 
\end{remark}


\section{Coarse nodal estimates and the Mayer-Vietoris argument}\label{sec: MV bezout prod}

Here we derive an alternative coarse nodal estimate for the union of nodal sets
(see inequality \eqref{eq-improved-vsp} below) and compare it with the one coming
from the product of the eigenfunctions (see Section \ref{sec-prodrev}).

\subsection{Preliminaries}
Let $V,W$ be non-negatively supported persistence modules, that is $W_s=V_s=0$ for all $s < 0$, which
are multiplicatively $C$-interleaved with $C>1$. This means that for every
$s >0$ the compositions
$$V_s \to W_{Cs} \to V_{C^2s},\; W_s \to V_{Cs} \to W_{C^2s}$$
are the persistence morphisms in $V$ and $W$, respectively.

Recall from Section 3 that for $\delta > 0,$ $\cl N^0_{\delta}(V)$ denotes the number of bars of $V$ of length $>\delta$ 
starting at $0$ (see Equation \eqref{eq: N delta 0 def}), and $\cl N_{\delta}(V)$ is the number of all bars of $V$ of length $>\delta$.

\begin{prop}\label{prop-0-vsp}
	\begin{equation}
		\label{eq-N0-vsp}
		\cl N^0_{\delta}(V) \leq \cl N^0_{\delta/C}(W)\;.
	\end{equation}
\end{prop}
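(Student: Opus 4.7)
The strategy is to reinterpret both sides as ranks of specific structure maps, and then extract the inequality from a diagram chase using the interleaving morphisms directly.

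First I would reduce to the case where $V$ and $W$ are upper semi-continuous by applying the regularization functor from Lemma \ref{lma: upper sc replacement}. This reduction preserves left endpoints of bars (hence both $\cl N^0_\delta(V)$ and $\cl N^0_{\delta/C}(W)$), preserves the non-negative support, and, being functorial, carries the multiplicative $C$-interleaving over. Under these hypotheses, the barcode decomposition gives
$$ \cl N^0_\delta(V) = \rk(\pi^V_{0,\delta}), \qquad \cl N^0_{\delta/C}(W) = \rk(\pi^W_{0,\delta/C}),$$
exactly as in the end of the proof of Corollary \ref{cor: subadd 0}: all bars of upper semi-continuous non-negatively supported modules are of the form $[a,b)$ with $a \geq 0$, and such a bar contributes to $\rk(\pi^V_{0,\delta})$ if and only if $a=0$ and $b>\delta$.

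Next, from the multiplicative $C$-interleaving I would extract the constituent morphisms $\phi_s\co V_s\to W_{Cs}$ and $\psi_s\co W_s\to V_{Cs}$, $s>0$, which by definition compose to the structure maps of $V$ and $W$. These morphisms commute with structure maps (after rescaling by $C$), so $\phi$ is a morphism of persistence modules in the usual sense up to this rescaling. Upper semi-continuity gives $V_0=\lim_{s>0}V_s$ and $W_0=\lim_{s>0}W_s$, so $\phi$ extends uniquely to a morphism $\phi_0\co V_0\to W_0$ compatible with all structure maps. Now setting $s=\delta/C^2$, the interleaving identity reads
$$ \pi^V_{0,\delta} \;=\; \psi_{\delta/C}\circ\phi_{\delta/C^2}\circ\pi^V_{0,\delta/C^2},$$
while compatibility of $\phi$ with structure maps at $0$ gives
$$ \phi_{\delta/C^2}\circ \pi^V_{0,\delta/C^2} \;=\; \pi^W_{0,\delta/C}\circ \phi_0.$$
Combining these,
$$ \rk(\pi^V_{0,\delta}) \;\leq\; \rk(\phi_{\delta/C^2}\circ\pi^V_{0,\delta/C^2}) \;=\; \rk(\pi^W_{0,\delta/C}\circ\phi_0) \;\leq\; \rk(\pi^W_{0,\delta/C}),$$
which, by the rank interpretation from the first step, is precisely the required inequality.

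The only genuine technical point is making sense of the extension $\phi_0\co V_0\to W_0$ and verifying that the square involving it and the structure maps of $V$ and $W$ commutes. This is a direct consequence of upper semi-continuity together with the fact that $\phi$ is a morphism of persistence modules after the multiplicative reparametrization; everything else reduces to monotonicity of rank under composition.
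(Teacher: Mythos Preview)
Your proof is correct and takes a genuinely different route from the paper. The paper argues in two sentences: under a multiplicative $C$-interleaving one obtains a multiplicative $C$-matching of barcodes (an implicit multiplicative version of the isometry theorem, equivalently the usual isometry theorem after the reparametrization $t\mapsto \log t$), and bars starting at $0$ correspond to infinite bars on the left under this reparametrization, hence cannot be discarded and must match bars of $W$ starting at $0$ with endpoint at least $\delta/C$. Your approach bypasses the isometry theorem entirely: you identify $\cl N^0_\delta(V)=\rk(\pi^V_{0,\delta})$ via upper semi-continuity, extend the interleaving map to $\phi_0\co V_0\to W_0$ using $V_0=\lim_{s>0}V_s$, and factor $\pi^V_{0,\delta}=\psi_{\delta/C}\circ\pi^W_{0,\delta/C}\circ\phi_0$ to conclude by rank monotonicity. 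This is more elementary and fully self-contained within the paper (it uses only Lemma~\ref{lma: upper sc replacement} and the rank identity already recorded in the proof of Corollary~\ref{cor: subadd 0}), whereas the paper's argument is shorter but leans on a multiplicative isometry statement that is not explicitly formulated. One small comment: your regularization step tacitly assumes $V,W$ are q-tame so that Lemma~\ref{lma: upper sc replacement} applies and barcodes exist; this is harmless since the only use of the proposition (Section~\ref{sec: MV bezout prod}) is for sublevel homology of continuous functions on compact manifolds.
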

\begin{proof} Indeed, the bars starting at $0$ cannot be discarded
	under the multiplicative $C$-matching between the barcodes.
	The result follows from the isometry theorem.
\end{proof}

\subsection{Mayer-Vietoris for thickened nodal sets}
Let $E \to M$ be a vector bundle with an inner product
over a Riemannian manifold $M$. For the sake of simplicity, we are interested
in a pair (as opposed to an arbitrary tuple) of sections $f,g: M \to E$. Put $F(c) = \{||f|| < c\}$, $G(c) = \{||g|| < c\}$.
Consider persistence modules
$$U_f = H_*(F(c)), U_g = H_*(G(c)), V= H_*( F(c) \cup G(c)), W = H_{*-1} (F(c) \cap G(c))\;.$$ These modules are non-negatively supported and we have the piece
$$U_f \oplus U_g \to V \to W$$
of the Mayer-Vietoris sequence. By Corollary \ref{cor: subadd 0} we have
$$ \cl N^0_{2\delta}(V,2\delta) \leq \cl N^0_{\delta}(W) + \cl N_{\delta}(U_f)+ \cl N_{\delta}(U_g)\;.$$
At the same time $W$ is multiplicatively $\sqrt{2}$-interleaved with
$$W'= H_{*-1}(\sqrt{||f||^2+||g||^2} <c)\;.$$
It follows from  Proposition \ref{prop-0-vsp} that
$$ \cl N^0_{\delta}(W) \leq  \cl N^0_{\delta/\sqrt{2}}(W')\;.$$
Combining these inequalities we conclude that
\begin{equation}\label{eq-improved-vsp-0}
	\cl N^0_{2\delta}(V) \leq \cl N^0_{\delta/\sqrt{2}}(W') + \cl N_{\delta}(U_f)+ \cl N_{\delta}(U_g)\;.
\end{equation}

Assume now that $M$ is a surface ($n=2$) and $f,g$ are $L^2$-normalized linear combinations of
eigenfunctions of the Laplacian in $\cl F_{\lambda}.$ Then by the coarse Bezout theorem
$$ \cl N^0_{\delta/\sqrt{2}}(W') \leq C_1 \delta^{-\epsilon}(\lambda+1)+C_2$$
and by the coarse Courant theorem
$$\cl N_{\delta}(U_f)+ \cl N_{\delta}(U_g) \leq C_1 \delta^{-\epsilon}(\lambda+1)+C_2\;.$$
Thus, \eqref{eq-improved-vsp-0} yields the following coarse nodal estimate for the minimum of the absolute values of two eigenfunctions:
\begin{equation}\label{eq-improved-vsp}
	\cl N^0_{\delta}(V) \leq C_1 \delta^{-\epsilon}(\lambda+1)+C_2\;.
\end{equation}

\subsection{Product revisited}\label{sec-prodrev}
In the notations of the previous section, introduce the bundle
$E \otimes E$ with the inner product coming from $E$. Put
$$h(x): = ||f(x) \otimes g(x)|| =||f(x)|| \cdot ||g(x)||\;.$$
Consider the persistence module
$H:= H_*(\{||h|| < c\})$.
Put $$v(x) = \min(||f(x)||,||g(x)||),\; K= \max_{x \in M} (||f(x)||,||g(x)||)\;.$$
With this notation
$$V= H_*( F(c) \cup G(c)) = H_*(\{||v|| < c\})\;.$$

\begin{prop}\label{prop-prod-vsp}
	$\cl N^0_{\delta}(H) \leq \cl N^0_{\delta/K}(V)$.
\end{prop}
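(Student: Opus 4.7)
The plan is to upgrade the pointwise inequality $h \leq Kv$ into a factorization of the persistence structure map whose rank computes $\cl N^0_\delta(H)$. Since $v = \min(\|f\|,\|g\|)$ and $h = \|f\|\cdot\|g\| = v \cdot \max(\|f\|,\|g\|)$, one has the pointwise bounds $v^2 \leq h \leq K\, v$ on $M$. Translating the right-hand one into sublevel-set language gives $\{v \leq c/K\} \subseteq \{h \leq c\}$ for every $c \geq 0$, and the key observation is that at $c = 0$ both containments degenerate to the equality
$$\{v \leq 0\} \;=\; \{h \leq 0\} \;=\; \{f=0\}\cup\{g=0\},$$
because for non-negative real numbers $a,b$ one has $ab = 0 \iff \min(a,b) = 0$.

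I would work throughout with the upper semi-continuous versions $V^{uc}_c = H_*(\{v \leq c\})$ and $H^{uc}_c = H_*(\{h \leq c\})$; these are $\eps$-interleaved with $V$ and $H$ for every $\eps > 0$, so by Lemma \ref{lma: upper sc replacement} and Remark \ref{Remark_Observable} they have the same barcodes as $V$ and $H$, and in particular the same $\cl N^0_\delta$. For any non-negatively supported upper semi-continuous $Q$, one has the clean identity $\cl N^0_\delta(Q) = \dim \im \pi^Q_{0,\delta}$, since bars starting at $0$ and of length exceeding $\delta$ are exactly those contributing a dimension to this image. The inclusion $\{v \leq \delta/K\} \subseteq \{h \leq \delta\}$ furnishes a morphism $\iota \colon V^{uc}_{\delta/K} \to H^{uc}_\delta$, and the equality of zero-level sets above gives a canonical identification $H^{uc}_0 = V^{uc}_0$. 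The composition
$$H^{uc}_0 \;=\; V^{uc}_0 \;\xrightarrow{\pi^{V^{uc}}_{0,\delta/K}}\; V^{uc}_{\delta/K} \;\xrightarrow{\;\iota\;}\; H^{uc}_\delta$$
is induced by the set inclusion $\{v = 0\} \subseteq \{v \leq \delta/K\} \subseteq \{h \leq \delta\}$, which coincides with $\{h = 0\} \subseteq \{h \leq \delta\}$; hence this composition equals $\pi^{H^{uc}}_{0,\delta}$.

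Taking ranks in this factorization immediately yields
$$\cl N^0_\delta(H) \;=\; \dim \im \pi^{H^{uc}}_{0,\delta} \;\leq\; \dim \im \pi^{V^{uc}}_{0,\delta/K} \;=\; \cl N^0_{\delta/K}(V),$$
as claimed. I do not anticipate any serious obstacle: the entire argument reduces to the factorization above, and the only technical point is the bookkeeping between the $<$-convention (used to define $V,H$ in the paper) and the $\leq$-convention (which is what makes the factorization transparent); this is absorbed, as indicated, by passing to the upper semi-continuous regularizations. Note in passing that the second pointwise inequality $v^2 \leq h$ is not needed for this argument and would only produce a weaker, square-root-type bound in the opposite direction.
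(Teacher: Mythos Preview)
Your proof is correct and follows essentially the same approach as the paper: factorize the persistence structure map of $H$ near zero through that of $V$ via the pointwise bound $h \leq Kv$, then compare ranks. The only difference is technical: the paper works with the strict-sublevel modules and uses the auxiliary inequality $v^2 \leq h$ to produce inclusions $\{h<s\}\subset\{v<\sqrt{s}\}\subset\{v<\delta\}\subset\{h<K\delta\}$, then sends $s\to 0$; you instead pass to the $\leq$-convention, where the identity $\{h\leq 0\}=\{v\leq 0\}$ lets you write the factorization directly at $c=0$ without a limit and without invoking $v^2\leq h$. Your observation that the second inequality is unnecessary for this direction is correct.
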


\begin{proof}
	We have
	$$v^2 \leq h \leq Kv\,$$
	and hence for $s < \delta^2$ holds
	$$\{h < s\} \subset \{v < \sqrt{s}\} \subset \{v < \delta\} \subset \{h < K\delta\}\;.$$
	Thus,  the persistence map
	$H_s \to V_{K\delta}$ factors through $V_{\sqrt{s}} \to V_\delta$. Taking $s \to 0$,
	we get the proposition.
\end{proof}

\begin{cor}\label{cor-prod} Let $M$ be a surface and $f,g$ be $L^2$-normalized linear combinations of 
	eigenfunctions of the Laplacian in $\cl{F}_{\lambda}$ and whose maximum
	does not exceed $K$,
	Then
	\begin{equation}
		\label{eq-prod-3-vsp}
		\cl N^0_{\delta}(H) \leq  C_1 \delta^{-\epsilon}K^\epsilon(\lambda+1)+C_2\;.
	\end{equation}
\end{cor}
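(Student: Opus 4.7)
The plan is direct: combine Proposition \ref{prop-prod-vsp} with the coarse nodal estimate \eqref{eq-improved-vsp} that was established in the preceding subsection.

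First, I would invoke Proposition \ref{prop-prod-vsp}, which yields the comparison
\[ \cl N^0_{\delta}(H) \leq \cl N^0_{\delta/K}(V), \]
reducing the problem about the product $h = \|f\|\cdot\|g\|$ to one about the minimum $v = \min(\|f\|,\|g\|)$, at the cost of rescaling $\delta$ by a factor of $K$. The key input here is the elementary chain of inclusions $\{h<s\} \subset \{v<\sqrt{s}\} \subset \{v<\delta\} \subset \{h < K\delta\}$ valid for $s<\delta^2$, which has already been verified in the proof of Proposition \ref{prop-prod-vsp}.

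Next, I would apply inequality \eqref{eq-improved-vsp} at the rescaled threshold $\delta/K$. This is legitimate since $f,g$ are $L^2$-normalized elements of $\cl F_\lambda$ on a surface, which are exactly the hypotheses under which \eqref{eq-improved-vsp} was derived via the Mayer--Vietoris argument combined with the coarse Courant and coarse B\'ezout theorems. This yields
\[ \cl N^0_{\delta/K}(V) \leq C_1 (\delta/K)^{-\epsilon}(\lambda+1) + C_2 = C_1 \delta^{-\epsilon} K^{\epsilon}(\lambda+1) + C_2, \]
which is exactly the claimed bound.

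There is essentially no obstacle, since all the real work has been done in establishing Proposition \ref{prop-prod-vsp} (which handles the passage from the product to the minimum) and inequality \eqref{eq-improved-vsp} (which handles the coarse nodal count of the minimum via Mayer--Vietoris). The corollary is simply the composition of these two estimates with the appropriate rescaling of $\delta$.
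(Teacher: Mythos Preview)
Your proposal is correct and matches the paper's proof exactly: the paper simply states that the corollary ``follows from \eqref{eq-improved-vsp} and Proposition \ref{prop-prod-vsp}'', which is precisely the two-step composition you describe.
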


This follows from \eqref{eq-improved-vsp} and Proposition \ref{prop-prod-vsp}.

Since by the Sobolev inequality and Proposition \ref{prop: Sobolev of eigenchunk} the upper bound $K$ can be taken as $\sim (\lambda+1)^{(1+\alpha)/2}$ for any fixed $\alpha>0$, taking $\epsilon$ small we get $(\lambda+1)^{1+\epsilon'}$ in the right hand side
of \eqref{eq-prod-3-vsp}. Thus the approach presented above recovers the bound
on $\cl N^0_{\delta}(H)$ from Theorem \ref{cor: product} but does not improve it. 

\begin{remark}
In this section we have discussed the case where $f,g$ are two sections and $M$ is a surface. It would be interesting to provide an argument along the same lines which works for an arbitrary tuple of sections and in arbitrary dimension. It is likely that the key new ingredient in this approach would be generalizing the coarse B\'{e}zout theorem for the coarse count $z_0$ measured by suitable fiberwise $L^{2m}$-type norms of sections instead of their Euclidean norms.
\end{remark}

\section*{Acknowledgements}

We are grateful to Misha Sodin for numerous helpful conversations, to Dmitry Faifman for useful discussions related to Proposition 1.11, as well as to Gleb Smirnov and Fabian Ziltener for a stimulating remark related to the last bullet in Section 1.2.


\appendix


\section{Morrey-Sobolev inequality on a cube}\label{app: Morrey-Sobolev}

Let $Q$ be a cube of side-length $r$ and diameter $d=\sqrt{n}r.$ Let $B \subset Q$ be the ball of radius $r_1 \leq r/2$ and diameter $d_1 = 2r_1.$ We will prove Theorem \ref{Morrey-Sobolev}, in fact a more precise version thereof, by more carefully calculating the constants in the results of Dupont-Scott \cite{DS80}, see also \cite{Hudson}. Recall that they first prove the following averaged Taylor formula, where the function $\phi$ plays the role of the mollifier with support $B.$ Endow $\R^l$ with the Euclidean metric.

\begin{prop}\label{prop: Taylor average}
Let $\phi \in C^{\infty}_c(B)$ be a smooth function with integral $1.$ Then every $f \in C^{\infty}(Q,\R^l)$ can be written as \[ f = T_k(f) + R_k(f),\] where $T_k(f) \in \cl{P}^l_{k-1}(Q)$ is a polynomial mapping of degree at most $k-1$ given by \[ T_k(f)(x) = \sum_{|\al|<k} \int_B \phi(y) \frac{\partial_{\alpha} f(y)}{\alpha !} (x-y)^{\alpha} dy,\] and the remainder term is given by \[ R_k(f)(x) = \sum_{|\al|=k} 
\int_Q r_{\al}(x,y) \frac{\partial_{\alpha} f(y)}{\alpha !} dy,\] \[r_{\al}(x,y) = k (x-y)^{\al} r(x,y)\] \[r(x,y) = \int_0^1 s^{-n-1} \phi(x+ s^{-1}(y-x)) ds.\]
\end{prop}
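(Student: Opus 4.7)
The plan is to derive the formula by averaging the classical one-variable Taylor expansion with integral remainder against the mollifier $\phi$. For fixed $x \in Q$ and $y \in B$, I would apply Taylor's theorem to the $C^\infty$ function $g(t) = f(y + t(x-y))$ on $[0,1]$, which makes sense because convexity of $Q$ keeps the segment inside $Q$. Writing $g^{(j)}$ via the multinomial expansion $g^{(j)}(s) = j! \sum_{|\alpha|=j} \frac{\partial_\alpha f(y+s(x-y))}{\alpha!}(x-y)^\alpha$ converts the classical Taylor identity into
\[ f(x) = \sum_{|\alpha|<k} \frac{\partial_\alpha f(y)}{\alpha!}(x-y)^\alpha + k\sum_{|\alpha|=k} \frac{(x-y)^\alpha}{\alpha!} \int_0^1 (1-t)^{k-1} \partial_\alpha f(y+t(x-y))\,dt. \]

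Next, I would multiply both sides by $\phi(y)$ and integrate over $B$; since $\int \phi = 1$, the left-hand side returns $f(x)$. The polynomial part integrates directly to $T_k(f)(x)$, and expanding $(x-y)^\alpha$ in $x$ shows $T_k(f) \in \cl{P}^l_{k-1}(Q)$ as claimed. All that then remains is to recast the resulting averaged remainder
\[ k \sum_{|\alpha|=k} \frac{1}{\alpha!} \int_0^1 (1-t)^{k-1} \int_B \phi(y)(x-y)^\alpha \partial_\alpha f(y + t(x-y))\,dy\,dt \]
in the form $\sum_{|\alpha|=k}\int_Q r_\alpha(x,w)\frac{\partial_\alpha f(w)}{\alpha!}\,dw$.

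For this, at each fixed $t \in [0,1)$ I would substitute $w = (1-t)y + tx$, with $dy = (1-t)^{-n}\,dw$ and $x-y = (x-w)/(1-t)$, and then replace the time variable by $s = 1-t$. A short bookkeeping check then gives $\phi\!\left(\tfrac{w-tx}{1-t}\right) = \phi(x + s^{-1}(w-x))$, while the factors $(1-t)^{k-1}(1-t)^{-k}(1-t)^{-n} = (1-t)^{-n-1} = s^{-n-1}$ combine cleanly; after swapping the order of integration by Fubini, the inner $s$-integral becomes exactly $r(x,w)$, and the claim follows.

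The only genuinely delicate point is justifying Fubini and confirming that $r(x,y)$ is well-defined (the apparent singularity at $s = 0$ in $s^{-n-1}\phi(x+s^{-1}(y-x))$ is illusory). The compact support of $\phi$ in $B$ forces the integrand in $r(x,y)$ to vanish once $s^{-1}|y-x|$ exceeds a constant depending on the distance from $x$ to the center of $B$ and on $r_1$, so the $s$-integration is in fact over an interval bounded away from $0$. This uniform cutoff on a compact set also makes the applications of Fubini above completely routine; everything else is standard Taylor theory plus a Jacobian computation.
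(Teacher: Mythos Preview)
Your argument is correct and is precisely the standard derivation of the averaged Taylor formula; the paper does not give its own proof of this proposition but quotes it from Dupont--Scott \cite{DS80}, where the same computation (Taylor expansion along segments, average against $\phi$, then the affine change of variables $w=(1-t)y+tx$, $s=1-t$) is carried out. One small imprecision: the singularity of $r(x,y)$ at $s=0$ is genuinely present when $x=y\in\mrm{supp}(\phi)$, not illusory; what saves the formula is that this is a null set in $y$ and that $r_\alpha(x,y)=k(x-y)^\alpha r(x,y)$ satisfies $|r_\alpha(x,y)|\le C|x-y|^{k-n}$, which is locally integrable --- exactly the estimate the paper records immediately after the proposition.
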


They also prove the following estimates: 

\begin{equation}\label{eq: estimate r}
|r(x,y)| \leq \frac{d_1^n}{n} ||\phi||_{L^{\infty}} |x-y|^{-n},
\end{equation}

\begin{equation}\label{eq: estimate r_al}
	|r_{\al}(x,y)| \leq \frac{k d_1^n}{n} ||\phi||_{L^{\infty}} |x-y|^{k-n},
\end{equation}
for all $\alpha$ with $|\al|=k.$

Set $I_k(x) = \chi_{B_0(d)} |x|^{k-n},$ $x\neq 0,$ where $\chi_{B_0(d)}$ is the characteristic function of the ball $B_0(d)$ of radius $d$ around $0.$ Now for a continuous function $g$ on $Q,$ extend it by $0$ to $\R^n$ and set \[I_k(g)(x) = I_k \ast g(x)  = \int_Q I_k(x-y) g(y) dy.\]

For us it is enough to estimate the uniform norm $|R_k (f)|_{L^{\infty}}$ of the remainder. We first estimate pointwise \[ |R_k (f)| \leq \sum_{|\al|=k} \frac{kd_1^n}{\al! n} ||\phi||_{L^{\infty}} I_k(|\del_{\al} f|).\] It is convenient to observe that we can take $\phi$ with $||\phi||_{L^{\infty}} \leq 2 C_n d_1^{-n}$ for $C_n = 2^n \om_n^{-1},$ where $\om_n = \pi^{\frac{n}{2}}/\Gamma(\frac{n}{2} + 1)$ is the volume of the unit $n$-ball, which yields \[ |R_k (f)| \leq 2 k \frac{C_n}{n} \sum_{|\al|=k} \frac{I_k(|\del_{\al} f|)}{\al!} .\] (We could improve the coefficient $2$ to $1 + \eps$ for arbitrary $\eps > 0.$)

It remains to use Young's convolution inequality to estimate $|I_k(g)|$ pointwise in terms of the $L^p$ norm of $g,$ where $k - n/p > 0,$ and apply this to $g = |\del_{\al} f|$ for multi-indices $\al$ with $|\al| = k.$ Indeed for $1/p+ 1/t = 1$ we get \[ |I_k(g)|_{L^{\infty}} \leq |I_k|_{L^t} |g|_{L^p}.\] We calculate $|I_k|_{L^t} = (n\omega_n \int_0^d r^{t(k-n)+n-1} dr)^{1/t} = \left(n\omega_n \frac{d^{t(k-n)+n}}{t(k-n)+n}\right)^{1/t},$ the integrability being ensured by $t(k-n)+n-1>-1 \Leftrightarrow k-n> -n/t   \Leftrightarrow  k-n > -n(1-1/p)   \Leftrightarrow k-n/p > 0.$ In total, we obtain \[ |R_k (f)| \leq 2k \frac{C_n}{n} \left(n\omega_n \frac{d^{t(k-n)+n}}{t(k-n)+n}\right)^{1/t}  \sum_{|\al|=k} \frac{|\del_{\al}f|_{L^p}}{\al! },\] for $1/p+1/t = 1.$ Estimating\footnote{Using H\"{o}lder's inequality at this point yields a more precise yet more complicated bound.} every $|\del_{\al}f|_{L^p}$ by $|D^k f|_{L^p},$ using the fact that $\sum_{|\al|=k} \frac{1}{{\al!}} = \frac{n^k}{k!},$ and calculating the power of $d = \sqrt{n}r,$ we get \begin{equation}\label{eq: estimate} |R_k (f)| \leq B_{n,k,p} r^{k-n/p} \frac{|D^k f|_{L^p}}{k!},\end{equation} 
for $B_{n,k,p} = 2 k C_n n^{k/2-1-n/2p} \left(\frac{n\omega_n}{t(k-n)+n}\right)^{1/t} n^k$ with $1/t = 1 - 1/p.$ This finishes the proof. \qed

\bibliographystyle{abbrv}
\bibliography{bibliographySobolev0126.bib}

\end{document}